\DeclareSymbolFont{arrows3}{LS2}{stix2tt}{m}{n}
\DeclareMathSymbol{\urtriangle}{\mathord}{arrows3}{"98}
\DeclareMathSymbol{\lltriangle}{\mathord}{arrows3}{"99}
\def\thmt@counterformatters{@Alph}
\newcommand{\Hom}{\mbox{\rm Hom}}
\newcommand{\GL}{\mbox{\rm GL}}
\newcommand{\SL}{\mbox{\rm SL}}
\newcommand{\PSL}{\mbox{\rm PSL}}
\newcommand{\PGL}{{\rm PGL}}
\newcommand{\tr}{\mbox{\rm tr}}
\newcommand{\Stab}{\mbox{\rm Stab}}
\newcommand{\Ad}{\textnormal{Ad}}
\newcommand{\Aut}{\textnormal{Aut}}
\newcommand{\Mat}{\textnormal{Mat}}
\newcommand{\Flag}{\textnormal{Flag}}
\newcommand{\Fix}{\textnormal{Fix}}
\newcommand{\pr}{\textnormal{pr}}
\newcommand{\Hit}{\textnormal{Hit}}
\newcommand{\red}{\textnormal{red}}
\newcommand{\ev}{\textnormal{ev}}
\newcommand{\Gr}{\textnormal{Gr}}
\newcommand{\rsp}{\textnormal{RSp}}
\newcommand{\CR}{\textnormal{CR}}
\newcommand{\BD}{\textnormal{BD}}
\newcommand{\Pos}{\textnormal{Pos}}
\newcommand{\Graph}{\textnormal{Graph}}
\newcommand{\cl}{\textnormal{cl}}
\newcommand{\Id}{\rm Id}
\newcommand*\from{\colon}
\newcommand{\Z}{\mathbb{Z}}
\newcommand{\K}{\mathbb{K}}
\newcommand{\Q}{\mathbb{Q}}
\newcommand{\N}{\mathbb{N}}
\newcommand{\R}{\mathbb{R}}
\renewcommand{\H}{\mathbb{H}}
\newcommand{\F}{\mathbb{F}}
\newcommand{\PP}{\mathbb{P}}
\renewcommand{\Im}{\textnormal{Im}}
\DeclareRobustCommand{\SkipTocEntry}[5]{}
\newtheorem*{theorem*}{Theorem}
\newtheorem*{lemma*}{Lemma}
\newtheorem{propo}{Proposition}[section]
\newtheorem{lem}[propo]{Lemma}
\newtheorem{corol}[propo]{Corollary}
\newtheorem{theor}[propo]{Theorem}
\theoremstyle{definition}
\newtheorem{dfn}[propo]{Definition}
\newtheorem*{dfn*}{Definition}
\newtheorem{examp}[propo]{Example}
\newtheorem{rem}[propo]{Remark}
\newtheorem*{rem*}{Remark}
\newtheorem*{ques*}{Question}
\title[Real spectrum compactification of Hitchin components]{Real spectrum compactification of\\Hitchin components, Weyl chamber\\valued lengths,  and dual spaces}
\author{Xenia Flamm}
\address{Department of Mathematics, ETH Z\"{u}rich, Switzerland}
\email{xenia.flamm@math.ethz.ch}
\begin{document}

\def\subjclassname{\textup{2020} Mathematics Subject Classification}
\subjclass{
22E40, 
30F60, 
14P10
}
\keywords{higher Teichm\"uller theory,  real algebraic geometry,  compactifications}

\begin{abstract}
The main result of this article is that Hitchin representations over real closed field extensions $\F$ of $\R$ correspond precisely to those representations of the fundamental group of a closed surface into $\PSL(n,\F)$ that are conjugate to $\F$-positive representations, i.e.\ representations that admit an equivariant limit map from the set of fixed points in the boundary of the universal cover of the surface into the set of full flags in $\F^n$ satisfying specific positivity properties.
As the theorem treats general real closed fields,  and not only the reals,  the tools of analysis are not available. 
Instead,  our proof is based on the Tarski--Seidenberg transfer principle and a multiplicative version of the Bonahon--Dreyer coordinates.

We use this result to prove that $\F$-positive representations form semi-algebraically connected components of the space of all representations,  that consist entirely of injective and discrete representations,  which are positively hyperbolic and weakly dynamics-preserving over $\F$.
Furthermore,  we show how to associate intersection geodesic currents to $\F$-positive representations,  and conclude with applications to the Weyl chamber length compactification and to dual spaces of geodesic currents.
\end{abstract}

\maketitle

\section{Introduction}
\label{section_Intro}
Let $S$ be a closed,  connected and oriented surface of genus at least two.
Higher Teichm\"uller theory concerns itself with the study of connected components of the character variety $\chi(S,G)$, for $G$ a semisimple Lie group of higher rank,  consisting entirely of injective representations with discrete image, called \emph{higher Teichm\"uller spaces} \cite{Wienhard_InvitationHigherTeichmuellerTheory}.
They are conjectured to parametrize geometric structures on $S$; see e.g.\ \cite{ChoiGoldman_ConvexRealProjStructuresClosedSurfaces, GuichardWienhard_ConvexFolProjStr}.
To understand these and degenerations thereof it is often useful to compactify them in a geometrically meaningful way; see for example \cite{MorganShalen_ValuationsTreesDegHypStructures, 
Bonahon_GeometryTeichmuellerSpaceGeodesicCurrents,
Loftin_CompModuliSpaceConvexProjStrSurfaces,
Alessandrini_CompactificationConvexProjStrMnfd,
Parreau_CompactificationsReprSpacesFinGenGroups,
OuyangTamburelli_LimitsBlaschkeMetric,
LoftinTamburelliWolf_LimitsCubicDifferentialsBuildings,
SagmanSmillie_UnstableMinSurfSymSpaces,
Reid_LimitsConvexProjectiveSurfacesFinslerMetrics}.

In this article, we concentrate on the \emph{real spectrum compactification} of $\chi(S,G)$, whose study was initiated by Burger--Iozzi--Parreau--Pozzetti in their seminal works \cite{BurgerIozziParreauPozzetti_RSCCharacterVarieties,BurgerIozziParreauPozzetti_RSCCharacterVarieties2}, inspired by the work of Brumfiel in the case of Teichm\"uller space \cite{Brumfiel_RSCTeichmullerSpace}.
Its boundary points are identified with equivalence classes of reductive representations from $\pi_1(S)$ into $G_\F$,  where $\F$ is a real closed field containing $\R$ and $G_\F$ is the $\F$-extension of $G$ (\Cref{dfn_ExtSemiAlgSets}) \cite[Theorem 1.1]{BurgerIozziParreauPozzetti_RSCCharacterVarieties2}.
Thus boundary points can be studied in terms of representations.
For a higher Teichm\"uller space $\mathcal{C}$ of $\chi(S,G)$ a natural question in this context is the following:
Given a reductive representation from $\pi_1(S)$ to $G_\F$,  can we determine ``geometrically'' whether or not it is in the boundary of $\mathcal{C}$?

We answer this question in the case when $\mathcal{C}$ is the Hitchin component---a higher Teichm\"uller space of the $\PSL(n,\R)$-character variety.
In analogy to the real case, see \cite{FockGoncharov_ModuliSpacesLocalSystemsHigherTeichmuellerTheory}, we give a characterization of $\F$-Hitchin representations in terms of positivity of an equivariant limit map from a subset of the boundary of the universal cover of $S$ to the set of full flags in $\F^n$.

\begin{restatable}{theor}{thmA}
\label{thm_FHitchinEquivFPositive}
Let $\F$ be a real closed extension of $\R$.
A representation $\rho \from \pi_1(S) \to \PSL(n,\F)$ is $\PGL(n,\F)$-conjugate to an $\F$-Hitchin representation if and only if it is $\F$-positive.
\end{restatable}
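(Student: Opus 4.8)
The plan is to prove both implications at once by reducing the notion ``$\F$-positive'' to a countable family of \emph{semi-algebraic} conditions on $\rho$ and then invoking the Tarski--Seidenberg transfer principle: over $\R$ these conditions already cut out the Hitchin component (by the theorems of Hitchin, Labourie, Fock--Goncharov, Guichard and Bonahon--Dreyer), so over any real closed extension $\F$ they cut out exactly the $\F$-extension of it, i.e.\ the representations that are $\PGL(n,\F)$-conjugate to an $\F$-Hitchin representation. Throughout one fixes, once and for all, the $\rho$-independent combinatorial data that enters: an auxiliary hyperbolic structure identifies $\partial\tilde S$ with $S^1$ and equips $\Fix$ --- the set of fixed points in $\partial\tilde S$ of the nontrivial elements of $\pi_1(S)$ --- with a $\pi_1(S)$-invariant cyclic order, and one fixes a pants decomposition together with a compatible topological triangulation, i.e.\ the combinatorial input of Bonahon--Dreyer coordinates. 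None of this depends on $\F$, and everything below is phrased so that no use is made of the topology of $\F$ or of compactness of $\Flag(\F^n)$.

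The first step is rigidity of the limit map. Suppose $\rho$ is $\F$-positive with equivariant positive map $\xi\from\Fix\to\Flag(\F^n)$. For $\gamma\neq e$ the flags $\xi(\gamma^+),\xi(\gamma^-)$ are fixed by $\rho(\gamma)$ and transverse (pairwise transversality being part of the positivity hypothesis), so in a basis adapted to this transverse pair $\rho(\gamma)$ becomes diagonal; bringing in two further fixed points lying, in the cyclic order, on the two sides of the ``axis'' $\{\gamma^+,\gamma^-\}$, together with the equivariance $\rho(\gamma)^k\xi(x)=\xi(\gamma^k x)$ and the fact that $\rho(\gamma)$ preserves the cyclic order on $\Fix$ and has north--south dynamics there, positivity of the resulting configurations forces $\rho(\gamma)$ to lie in the totally positive semigroup relative to $\xi(\gamma^\pm)$. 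In particular $\rho(\gamma)$ is $\F$-loxodromic with $\xi(\gamma^\pm)$ as its attracting and repelling eigenflags. This is a purely order-theoretic and algebraic argument about signs of minors, so it goes through over $\F$; its upshot is that $\xi$ is uniquely determined by $\rho$, namely $\xi(\gamma^+)$ is the attracting flag of $\rho(\gamma)$, which on the locus where $\rho(\gamma)$ is loxodromic is an explicit semi-algebraic function of $\rho$.

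The second step is finiteness and propagation. Using the fixed triangulation, the Bonahon--Dreyer triple- and double-ratio invariants of $\rho$ are computed from the eigenflags attached to the finitely many (modulo $\pi_1(S)$) pants curves and seams, hence each is a semi-algebraic partial function of $\rho$, defined wherever those finitely many group elements are loxodromic and the relevant eigenflags are transverse. Over $\R$ the conditions ``these finitely many elements are loxodromic, the relevant flags are transverse, and all Bonahon--Dreyer invariants are positive'' cut out exactly the Hitchin component; by transfer they cut out exactly the representations that are $\PGL(n,\F)$-conjugate to an $\F$-Hitchin representation. Moreover, for each fixed finite subset $T\subseteq\Fix$ --- equivalently, for each finite set of group elements together with a prescribed cyclic-order pattern of their fixed points, which is fixed combinatorial data --- the implication ``$\rho$ has positive Bonahon--Dreyer invariants $\Rightarrow$ the eigenflag map restricted to $T$ is positive'' is a single semi-algebraic implication about $\rho$; it holds over $\R$ because there it is precisely Fock--Goncharov's propagation of positivity through a triangulation (using that Bonahon--Dreyer positivity singles out the Hitchin component, whose Frenet limit curve restricts to the eigenflag map on $\Fix$), and therefore it holds over $\F$. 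Conjoining over all finite $T$, Bonahon--Dreyer positivity over $\F$ implies that the eigenflag map is a positive equivariant map on all of $\Fix$, i.e.\ that $\rho$ is $\F$-positive; and conversely $\F$-positivity implies, via the first step, that the distinguished elements are loxodromic and, by restriction, that the Bonahon--Dreyer invariants are positive.

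Putting the two steps together proves the theorem in both directions: if $\rho$ is $\PGL(n,\F)$-conjugate to an $\F$-Hitchin representation, then by transfer all $\rho(\gamma)$ are loxodromic and its Bonahon--Dreyer invariants are positive, so by the propagation statement the eigenflag map is a positive equivariant limit map on $\Fix$ and $\rho$ is $\F$-positive; and if $\rho$ is $\F$-positive, then by rigidity its limit map is the eigenflag map, the distinguished elements are loxodromic with positive Bonahon--Dreyer invariants, and transfer places $\rho$ in the $\F$-Hitchin locus. The main obstacle is the uniformity required for the transfer principle: one must verify that \emph{both} the rigidity argument of the first step \emph{and} Fock--Goncharov's propagation of positivity are genuinely first-order over ordered fields --- that every appeal in the classical proofs to limits, to compactness of $\Flag(\F^n)$, or to the archimedean order of $\R$ can be replaced by an argument about signs of minors, total positivity of transition matrices, and the combinatorics of the cyclic order on $\Fix$ --- so that each of the countably many semi-algebraic implications can be transferred tuple by tuple. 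This is where the bulk of the technical work lies.
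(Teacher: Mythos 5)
Your strategy is close to the paper's: both directions rest on the Tarski--Seidenberg transfer principle, on the algebraic, minor-based argument that an $\F$-positive representation is positively hyperbolic and dynamics-preserving (so that its limit map is forced to equal the eigenflag map), and on the Bonahon--Dreyer invariants as the finite bookkeeping device. Your step~1 is the paper's \Cref{propo_positiveReprHaveDistinctEigenvalues}, proved via the double-ratio monotonicity of \Cref{propo_5TuplePosInequDR}, and your tuple-by-tuple transfer of positivity in the forward direction is the paper's argument as well.

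The genuine gap is in your step~(2a). You assert that over $\R$ the finite list of conditions ``the distinguished group elements are loxodromic, their eigenflags are transverse, and the Bonahon--Dreyer invariants land in $\mathcal{X}$'' cuts out exactly the $\PGL(n,\R)$-conjugates of the Hitchin component, and attribute this to Hitchin, Labourie, Fock--Goncharov, Guichard and Bonahon--Dreyer. None of those results says this directly: Bonahon--Dreyer give a bijection $\Hit(S,n)\to\mathcal{X}$, which by itself does not rule out a \emph{non-Hitchin} representation whose finitely many eigenflags happen to produce a point of $\mathcal{X}$. What closes this is the uniqueness, up to $\PGL(n,\R)$, of a flag decoration $\partial_\infty\tilde{\lambda}\to\Flag(\R^n)$ with prescribed Bonahon--Dreyer coordinates, i.e.\ \Cref{propo_FlagDecFromCoord} (the paper's form of \cite[Lemma~24]{BonahonDreyer_ParametrizingHitchinComponents}), combined with the triviality of the stabilizer of a transverse triple (\Cref{propo_StabilizerTripleFlags}): the $\rho$-equivariant eigenflag map on $\partial_\infty\tilde{\lambda}$ must then agree, up to $\PGL$, with the flag decoration of the unique Hitchin representation having the same coordinates, and equivariance forces conjugacy. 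The paper does not transfer the set-theoretic equality you propose; it instead proves the stronger \Cref{thm_BDFParam} --- that $\F$-positive representations up to conjugacy are \emph{parametrized} by $\mathcal{X}_\F$ --- and deduces \Cref{thm_FHitchinEquivFPositive} by comparison with the $\F$-extension of the Bonahon--Dreyer model (\Cref{corol_CoordBDF}). With the flag-decoration-uniqueness argument supplied, your proposal becomes a valid reorganization of the paper's proof rather than a genuinely different route.
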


This characterization gives a semi-algebraic description of the set of $\F$-positive representations and thus allows a direct study of these representations via Hitchin representations.
For example,  our description allows to prove that for every $\F$-positive representation $\rho$ there exists a continuous path $\rho_t$ of real Hitchin representations,  such that the renormalized length functions of $\rho_t$ converge to the length function of $\rho$ (\Cref{corol_PiecewiseSemiAlgPath}).
As the theorem treats general $\F$ instead of just the reals,  the tools of analysis---crucially used for the analog result over $\R$---are not available. 
Instead,  our proof is based on carefully describing and characterizing all involved notions semi-algebraically and using the Tarski--Seidenberg transfer principle. 
Furthermore,  we provide a multiplicative variant of the Bonahon--Dreyer-coordinates for the set of $\F$-Hitchin representations.
For maximal components in $\chi(S,\textrm{Sp}(2n,\R))$ the analogue of the above result was announced in  \cite{BurgerIozziParreauPozzetti_RSCCharacterVarieties}.
More generally,  one can ask if the closure of the set of $\Theta$-positive representations in the real spectrum compactification of $\chi(S,G)$,  for $G$ a Lie group admitting a $\Theta$-positive structure,  can be characterized in a similar way \cite{ GuichardWienhard_PositivityAndHigherTeichmuellerTheory, GuichardLabourieWienhard_PositivityReprSurfceGroups, GuichardWienhard_GeneralizingLusztigsTotalPositivity,
BurgerIozziParreauPozzetti_RSCCharacterVarieties2}.

\medskip
Let us now explain the objects and results of this article in more detail.
In his seminal paper \cite{Hitchin_LieGroupsAndTeichmuellerSpace}, Hitchin used Higgs bundles to show that $\chi(S,\PSL(n,\R))$ for $n\geq 3$ contains three connected components, if $n$ is odd, and six connected components,  if $n$ is even.
In the odd case, one of the three components, and in the even case, two of the six components, are homeomorphic to $\R^{(2g-2)(n^2-1)}$.
They can be characterized as follows.
Denote by $\iota_n$ the irreducible $n$-dimensional representation of $\PSL(2,\R)$ into $\PSL(n, \R)$.

\begin{dfn}
\label{dfn_HitchinComp}
Fix $j \from \pi_1(S) \to \PSL(2,\R)$ a holonomy representation of a marked hyperbolic structure on $S$,  i.e.\ $j$ is faithful with discrete image.
The \emph{Hitchin component} $\Hit(S,n)$ is the connected component of $\chi(S,\PSL(n,\R))$ containing $\iota_n \circ j$.
A representation $\pi_1(S) \to \PSL(n,\R)$ whose $\PSL(n,\R)$-conjugacy class lies in the Hitchin component is a \emph{Hitchin representation}.
\end{dfn}

Denote by $\rsp(\chi(S,\PSL(n,\R)))$ the real spectrum compactification of the $\PSL(n,\R)$-character variety.
To study its points we need some notions from real algebraic geometry,  that will be introduced in detail in \Cref{section_BackgroundRealAlgGeom}.
A \emph{semi-algebraic set} is a subset of $\R^d$ that is cut out by finitely many polynomial equalities and inequalities.
We refer the interested reader to \cite[Sections 7.1, 7.2]{BochnakCosteRoy_RealAlgebraicGeometry} for the general definition of the real spectrum compactification of closed semi-algebraic sets.
The special case of the character variety, which by \cite{RichardsonSlodowy_MinimumVectorsForRealReductiveGroups} is semi-algebraic, is treated in \cite{BurgerIozziParreauPozzetti_RSCCharacterVarieties2}.
If $\F$ is a \emph{real closed field} extension of $\R$,  i.e.\ $\F$ is an orderable field for which $\F[\sqrt{-1}]$ is algebraically closed, then we can look at the $\F$-solutions of a semi-algebraic set, which we will call its \emph{$\F$-extension}.
Note that as a connected component of a semi-algebraic set,  $\Hit(S,n)$ is semi-algebraic.
With this we can define the following.

\begin{dfn}
\label{dfn_FHitchin}
A representation $\pi_1(S) \to \PSL(n,\F)$ is \emph{$\F$-Hitchin} if its $\PSL(n,\F)$-conjugacy class lies in the $\F$-extension of $\Hit(S,n)$,  called the \emph{$\F$-Hitchin component} and denoted $\Hit(S,n)_\F$.
\end{dfn}

Let us now describe the notion of positivity over real closed fields.
Fix an auxiliary hyperbolic structure on $S$.
Its universal cover $\tilde{S}$ is identified with $\H^2$, and let $\partial_\infty \tilde{S} \coloneqq \partial\H^2$ denote its circle at infinity.
The action of $\pi_1(S)$ on $\tilde{S}$ extends to a continuous action on $\partial_\infty \tilde{S}$.
The latter is identified with $\mathbb{S}^1$ and inherits a cyclic order.
Let $\Fix(S) \subseteq \partial_\infty \tilde{S}$ be the $\pi_1(S)$-invariant subset of $\partial_\infty\tilde{S}$ of $\pi_1(S)$-fixed points.
A full flag in $\F^n$ is a nested sequence of $n+1$ subspaces of $\F^n$ of strictly increasing dimension.
Denote by $\Flag(\F^n)$ the set of full flags in $\F^n$.
Given any ordered field $\F$, the concept of positive tuples of full flags in $\F^n$ can be defined in the same way as for $\R$, since it only involves positivity conditions on triple and double ratios (\Cref{dfn_TripleRatio} and \Cref{dfn_DoubleRatio}).
This leads to the following.

\begin{dfn}
\label{dfn_Fpositivity}
A map $\xi \from \Fix(S) \to \Flag(\F^n)$ is \emph{positive} if it maps any triple respectively quadruple of cyclically ordered points in $\Fix(S)$ to a positive triple respectively quadruple of flags (\Cref{dfn_PostivityTuples}).

A representation $\rho \from \pi_1(S) \to \PSL(n,\F)$ is \emph{positive} if there exists a (not necessarily continuous) positive map
\[ \xi_\rho \from \Fix(S) \to \Flag(\F^n), \]
that is $\rho$-equivariant,  i.e.\ for all $x \in \Fix(S)$ and $\gamma \in \pi_1(S)$ we have $\xi_\rho(\gamma x) = \rho(\gamma) \xi_\rho(x)$.
The map $\xi_\rho$ is called a \emph{limit map} of $\rho$.
\end{dfn}

For $\F=\R$, it is known that a representation is Hitchin if and only if it is $\PGL(n,\R)$-conjugate to an $\R$-positive representation in the sense of our definition,  see \cite[Theorem 1.14, Theorem 1.15]{FockGoncharov_ModuliSpacesLocalSystemsHigherTeichmuellerTheory}.
\Cref{thm_FHitchinEquivFPositive} is thus a direct generalization of this result to real closed field extensions of $\R$.

From the construction of $\rsp(\chi(S,\PSL(n,\R)))$ it follows that $\F$-Hitchin representations are exactly those representations representing points in the boundary of $\Hit(S,n)$ in the real spectrum compactification of the character variety,  see \Cref{subs_RSCCharVar}.
In fact,  the following corollary of the above considerations and \Cref{thm_FHitchinEquivFPositive} completes the proof of a result announced in \cite[Theorem 46]{BurgerIozziParreauPozzetti_RSCCharacterVarieties} and answers the question of geometric characterization for $\mathcal{C}=\Hit(S,n)$.

Denote by $\Hit(\bar{S},n)$ the connected component of $\chi(S,\PSL(n,\R))$ containing $\iota'_n \circ j$,  where $\iota'_n$ is any conjugate of $\iota_n$ under an element of $\PGL(n,\R) \setminus \PSL(n,\R)$.
If $n$ is odd,  $\Hit(\bar{S},n)$ is the same as $\Hit(S,n)$.

\begin{corol}
\label{corol_RSpCharHit}
A reductive representation $\pi_1(S) \to \PSL(n,\F)$ represents a point in the closure of $\Hit(S,n) \cup \Hit(\bar{S},n)$ in $\rsp(\chi(S,\PSL(n,\R)))$ if and only if it is $\F$-positive.
\end{corol}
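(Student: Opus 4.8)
The plan is to combine Theorem \ref{thm_FHitchinEquivFPositive} with the general description of points of the real spectrum compactification recalled in the introduction. First I would unwind what it means for a reductive representation $\rho \from \pi_1(S) \to \PSL(n,\F)$ to represent a point in the closure of $\Hit(S,n) \cup \Hit(\bar S, n)$ inside $\rsp(\chi(S,\PSL(n,\R)))$. By \cite[Chapter 7]{BochnakCosteRoy_RealAlgebraicGeometry}, the closure of a semi-algebraic subset $A$ of a semi-algebraic set $X$ in the real spectrum $\rsp(X)$ consists exactly of those points (equivalently, those $\F$-points of $X$ for $\F$ real closed, by \cite[Theorem 1.1]{BurgerIozziParreauPozzetti_RSCCharacterVarieties2}) that lie in the $\F$-extension $A_\F$. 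Hence $\rho$ represents a point in the closure of $\Hit(S,n) \cup \Hit(\bar S,n)$ if and only if its conjugacy class lies in $\Hit(S,n)_\F \cup \Hit(\bar S,n)_\F$, which is precisely the statement that $\rho$ is $\F$-Hitchin or that $\rho$ is $\bar{\F}$-Hitchin, i.e.\ lies in the $\F$-extension of the ``other'' Hitchin component $\Hit(\bar S,n)$.

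Next I would observe that these two cases are interchanged by conjugation with an element of $\PGL(n,\F) \setminus \PSL(n,\F)$: since $\Hit(\bar S,n)$ is by definition the component containing $\iota_n' \circ j$ with $\iota_n'$ a $\PGL(n,\R)\setminus\PSL(n,\R)$-conjugate of $\iota_n$, conjugating the whole component $\Hit(S,n)$ by such an element $g$ carries it to $\Hit(\bar S,n)$, and this operation is semi-algebraic, hence commutes with passing to $\F$-extensions. Therefore $\rho$ lies in $\Hit(S,n)_\F \cup \Hit(\bar S,n)_\F$ if and only if $\rho$ is $\PGL(n,\F)$-conjugate to an $\F$-Hitchin representation. (When $n$ is odd the two components coincide, as noted in the excerpt, and the statement collapses accordingly; when $n$ is even one genuinely needs the union.)

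Finally I would invoke Theorem \ref{thm_FHitchinEquivFPositive}: a representation $\pi_1(S) \to \PSL(n,\F)$ is $\PGL(n,\F)$-conjugate to an $\F$-Hitchin representation if and only if it is $\F$-positive. Chaining the three equivalences—closure in $\rsp(\chi(S,\PSL(n,\R)))$ $\Leftrightarrow$ membership in $\Hit(S,n)_\F \cup \Hit(\bar S,n)_\F$ $\Leftrightarrow$ $\PGL(n,\F)$-conjugate to $\F$-Hitchin $\Leftrightarrow$ $\F$-positive—yields the corollary. I expect the main subtlety to be purely bookkeeping: making sure that ``represents a point in the closure in the real spectrum compactification'' is correctly identified with ``the $\PSL(n,\F)$-conjugacy class lies in the $\F$-extension'' for the specific semi-algebraic set $\Hit(S,n)\cup\Hit(\bar S,n)$ and the reductive representative chosen, and checking that the $\PGL$-versus-$\PSL$ discrepancy is exactly accounted for by the union of the two Hitchin components rather than introducing or losing components. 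Everything else is a formal consequence of Theorem \ref{thm_FHitchinEquivFPositive} and the transfer principle for real closed fields.
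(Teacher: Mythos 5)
Your proposal is correct and follows the same route as the paper: identifying points in the closure with conjugacy classes in the $\F$-extension $\Hit(S,n)_\F \cup \Hit(\bar S,n)_\F$ via the real spectrum construction, translating the union into the statement of being $\PGL(n,\F)$-conjugate to an $\F$-Hitchin representation, and then invoking Theorem~\ref{thm_FHitchinEquivFPositive}. The paper leaves exactly this bookkeeping implicit (stating the corollary follows from ``the above considerations'' and Theorem~\ref{thm_FHitchinEquivFPositive}), and your write-up makes those considerations explicit without deviating from them.
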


For the backward direction in the proof of \Cref{thm_FHitchinEquivFPositive},  we introduce a multiplicative variant of the Bonahon--Dreyer coordinates for the Hitchin component \cite{BonahonDreyer_ParametrizingHitchinComponents}.
This variant can be extended to give coordinates for $\Hit(S,n)_\F$ for every real closed field $\F$ (\Cref{corol_CoordBDF}).
We then prove the following.

\begin{restatable}{theor}{thmB}
\label{thm_BDFParam}
The set of $\PGL(n,\F)$-equivalence classes of $\F$-positive representations is parametrized by the Bonahon--Dreyer coordinates over $\F$ and hence in bijection with a semi-algebraic subset of some $\F^N$.
\end{restatable}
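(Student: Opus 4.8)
The plan is to transport the Bonahon--Dreyer construction \cite{BonahonDreyer_ParametrizingHitchinComponents} to $\F$, exploiting that triple and double ratios (\Cref{dfn_TripleRatio}, \Cref{dfn_DoubleRatio}) make sense for flags in $\F^n$ and are invariant under $\PGL(n,\F)$. Fix a pants decomposition of $S$ and the induced $\pi_1(S)$-invariant ideal triangulation $\tilde{\mathcal{T}}$ of $\tilde{S}$, whose set $V$ of ideal vertices is a $\pi_1(S)$-invariant subset of $\Fix(S)$. Given an $\F$-positive representation $\rho$ with positive limit map $\xi_\rho$, I assign to each triangle of $\tilde{\mathcal{T}}$ the triple ratios of the triple of flags $\xi_\rho$ puts on its three vertices, and to each edge the double ratios of the four flags on the vertices of its two adjacent triangles. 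Because $\tilde{\mathcal{T}}$ is $\pi_1(S)$-invariant and $\xi_\rho$ is $\rho$-equivariant, only finitely many of these numbers are independent, so one obtains a point $\Phi(\rho)$ in some $\F^N$; by $\F$-positivity every coordinate lies in $\F_{>0}$, and by $\PGL(n,\F)$-invariance of the ratios the point $\Phi(\rho)$ depends only on the $\PGL(n,\F)$-equivalence class of $\rho$ (and, one checks, not on the chosen positive limit map).

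Next I identify the image of $\Phi$. The coordinates satisfy the \emph{multiplicative Bonahon--Dreyer relations}: finitely many polynomial equalities — the closed leaf equalities and the equalities expressing compatibility of the flags along each pants curve and around each vertex — which hold because $\xi_\rho$ is a genuine $\rho$-equivariant flag map, the verification being the same identity among minors used over $\R$; passing to the multiplicative form (monomials in the coordinates equal to $1$, rather than linear relations among their logarithms) is exactly what makes the relations meaningful over a field in which one cannot take logarithms. Together with the strict inequalities $>0$ coming from $\F$-positivity, these cut out a semi-algebraic subset $\mathcal{B}\subseteq\F^N$; by the classical Bonahon--Dreyer theorem, $\mathcal{B}$ over $\R$ is precisely the image of $\Hit(S,n)$ under the (multiplicative) Bonahon--Dreyer chart, so $\mathcal{B}$ is the $\F$-extension of a semi-algebraic set. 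It remains to prove that $\Phi$ is a bijection onto $\mathcal{B}$.

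Injectivity and surjectivity both run through a reconstruction procedure. Given $p\in\mathcal{B}$, normalise the three flags over the vertices of a base triangle $T_0$ to a standard position; the triple ratios of $p$ at $T_0$ then determine those three flags uniquely, and propagating across the edges of $\tilde{\mathcal{T}}$ using the double ratios of $p$ determines a map $\xi\from V\to\Flag(\F^n)$, where the closed leaf equalities guarantee that the value at a vertex is independent of the chain of triangles used to reach it and that $\xi$ is equivariant for a well-defined homomorphism $\rho_p\from\pi_1(S)\to\PGL(n,\F)$. Injectivity of $\Phi$ follows: two $\F$-positive representations with the same coordinates yield, after this normalisation, limit maps on $V$ that agree up to a single element $g\in\PGL(n,\F)$, and since $V$ contains three flags in general position (positivity supplies the genericity) whose joint $\PGL(n,\F)$-stabiliser is trivial, the two representations are $\PGL(n,\F)$-conjugate.

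The main obstacle is surjectivity: one must show that the reconstructed $\rho_p$ is genuinely $\F$-positive, i.e.\ produce a positive $\rho_p$-equivariant map on all of $\Fix(S)$, not merely on $V$, and over a general real closed field the analytic and connectedness arguments available over $\R$ are lost. I would resolve this with the transfer principle for real closed fields (\Cref{section_BackgroundRealAlgGeom}). First, every $\gamma\neq 1$ acts by a regular element of $\PGL(n,\F)$ — a first-order statement, true over $\R$ by the Anosov property of Hitchin representations, hence true over $\F$ — so its attracting flag exists algebraically, and one extends $\xi$ to a fixed point of $\gamma$ by declaring its value there to be that attracting flag. Second, positivity of the extended $\xi$ on a given cyclically ordered triple or quadruple of points of $\Fix(S)$ reduces, after passing to a finite refinement of $\tilde{\mathcal{T}}$ containing those points, to a finite list of sign conditions on monomials in the coordinates of $p$; each such implication ``if the generating triple and double ratios are positive then so are those of every sub-configuration'' is a first-order statement about a fixed finite number of flags in $\F^n$, true over $\R$ by the classical theory, hence true over $\F$. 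This shows $\rho_p$ is $\F$-positive with $\Phi(\rho_p)=p$, so $\Phi$ is a bijection onto the semi-algebraic set $\mathcal{B}\subseteq\F^N$, proving \Cref{thm_BDFParam}.
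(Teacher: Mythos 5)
Your sketch omits the point the paper singles out as the main difficulty: the \emph{closed leaf inequality} \ref{item_ClosedLeafInquality} in \Cref{subsection_BDRelations}. When you list ``the multiplicative Bonahon--Dreyer relations'' you include only the closed leaf \emph{equalities} and the rotation/compatibility equalities, plus the termwise positivity of individual triple and double ratios. But $\mathcal{X}$ is also cut out by the strict inequalities $L_a^{\textnormal{right}}(\gamma)>1$ for every closed leaf $\gamma$ of the lamination, and these do not reduce to positivity of the individual ratios: $L_a^{\textnormal{right}}(\gamma)$ is a product of many invariants, and one must show the product exceeds $1$, not merely that it is positive. The paper proves this via \Cref{propo_ClosedLeafInequality}, which identifies $L_a^{\textnormal{right}}(\gamma)$ with an eigenvalue ratio $\lambda_a/\lambda_{a+1}$ of $\rho(\gamma)$, together with \Cref{propo_positiveReprHaveDistinctEigenvalues}, which shows $\F$-positive representations are positively hyperbolic and dynamics-preserving. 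That proposition is established purely algebraically via the strict monotonicity of double ratios along a positive map (\Cref{propo_5TuplePosInequDR} and \Cref{lem_TotPosMatricesInverses}), replacing the continuity/dynamics argument of Fock--Goncharov which is not available over a non-Archimedean $\F$. Without this, you cannot show $\Im(\Phi)\subseteq\mathcal{B}$, and you also lack the well-definedness step you parenthetically defer (``one checks, not on the chosen positive limit map''): uniqueness of the limit map is itself a consequence of dynamics-preservation.

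There is also a soft spot in your surjectivity argument. You want to show that the representation $\rho_p$ reconstructed from an arbitrary $p\in\mathcal{B}$ is $\F$-positive, and you invoke transfer to claim every $\rho_p(\gamma)$ is regular ``by the Anosov property of Hitchin representations''. But Tarski--Seidenberg transfers first-order statements about \emph{semi-algebraic sets}, and the set of Hitchin representations is the relevant one only once you know $\rho_p$ lands in the $\F$-extension of $\Hom_\Hit$; that is what has to be established, not assumed. The paper avoids this circularity by proving the forward direction of \Cref{thm_FHitchinEquivFPositive} first (using transfer from the semi-algebraic description of $\Hom_\Hit$ and \Cref{thm_FlagCurveHitchinRepr}), and then obtains surjectivity of $\Psi$ onto $\mathcal{X}_\F$ as a one-line corollary of \Cref{corol_CoordBDF}: every $p\in\mathcal{X}_\F$ is realized by some $\F$-Hitchin representation, which is $\F$-positive. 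Your route can likely be repaired by making this ordering explicit, but as written the transfer step is not justified.
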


We summarize general properties for $\F$-Hitchin representations.

\begin{propo}[\Cref{lem_hitchinReprAreDiagonalizable}, \Cref{propo_positiveReprHaveDistinctEigenvalues}, \Cref{propo_FHitchinDiscrete}]
\label{propo_PropertiesFHitchin}
Let $\rho \from \pi_1(S) \to \PSL(n,\F)$ be $\F$-Hitchin.
Then $\rho$ is injective,  discrete\footnote{Careful,  unless $\F=\R$ the topology is not locally compact.},  positively hyperbolic (\Cref{dfn_PosHypRepr}) and weakly dynamics-preserving (\Cref{dfn_DynPres}).
\end{propo}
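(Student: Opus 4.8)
The plan is to reduce all four assertions to the case of an $\F$-positive representation. Each of ``injective'', ``discrete'', ``positively hyperbolic'' (\Cref{dfn_PosHypRepr}) and ``dynamics-preserving'' (\Cref{dfn_DynPres}) is invariant under $\PGL(n,\F)$-conjugation, so by \Cref{thm_FHitchinEquivFPositive} it suffices to treat an $\F$-positive $\rho \from \pi_1(S) \to \PSL(n,\F)$ equipped with a fixed (possibly discontinuous) positive $\rho$-equivariant limit map $\xi_\rho \from \Fix(S) \to \Flag(\F^n)$. The structural input I would use throughout is that, $S$ being closed and hyperbolic, every $\gamma \in \pi_1(S) \setminus \{1\}$ acts on the genuine circle $\partial_\infty \tilde S \cong \mathbb S^1$ as a hyperbolic isometry with distinct attracting and repelling fixed points $\gamma^+ \neq \gamma^-$ in $\Fix(S)$, and $\gamma^{\pm k} x \to \gamma^{\pm}$ for every other $x$. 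Thus the dynamics I need lives in the Archimedean source $\partial_\infty \tilde S$, and only the target side genuinely involves the field $\F$.

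First I would prove the eigenvalue statement, which yields diagonalizability, positive hyperbolicity and injectivity at once. Fix $\gamma \neq 1$ and choose $x \in \Fix(S)$ strictly between $\gamma^-$ and $\gamma^+$ in the cyclic order (possible since $\Fix(S)$ is dense). By equivariance $\xi_\rho(\gamma^{\pm})$ is a $\rho(\gamma)$-fixed flag, and applying positivity to the triple $(\gamma^-, x, \gamma^+)$ shows $\xi_\rho(\gamma^-)$ and $\xi_\rho(\gamma^+)$ are transverse; hence $\F^n$ decomposes as a direct sum of $n$ lines common to these two flags and preserved by $\rho(\gamma)$, so $\rho(\gamma)$ is diagonalizable over $\F$ (\Cref{lem_hitchinReprAreDiagonalizable}). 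Feeding the cyclically ordered quadruples $(\gamma^-, \gamma^k x, \gamma^{k+1} x, \gamma^+)$, $k \geq 0$, into the positivity of $\xi_\rho$ and using $\xi_\rho(\gamma^{k+1}x) = \rho(\gamma)\,\xi_\rho(\gamma^k x)$ turns the Fock--Goncharov positivity of triple and double ratios into sign conditions on the matrix of $\rho(\gamma)$ in a basis adapted to $(\xi_\rho(\gamma^-), \xi_\rho(\gamma^+))$; running the real argument of \cite{Labourie_AnosovFlowsSurfaceGroupsAndCurvesInProjectiveSpace, FockGoncharov_ModuliSpacesLocalSystemsHigherTeichmuellerTheory} over the ordered field $\F$ then shows the eigenvalues $\lambda_1, \dots, \lambda_n$ of $\rho(\gamma)$ can be ordered so that every successive ratio $\lambda_i/\lambda_{i+1}$ is positive and $\neq 1$ in $\F$; in particular they are pairwise distinct, which is \Cref{propo_positiveReprHaveDistinctEigenvalues}, and $\rho$ is positively hyperbolic. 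Injectivity follows: for $\gamma \neq 1$ the element $\rho(\gamma)$ has $n \geq 2$ distinct eigenvalues, hence $\rho(\gamma) \neq \id$.

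Next, dynamics-preservation. Since $\rho(\gamma)$ has distinct eigenvalues, its fixed flags in $\Flag(\F^n)$ are exactly the finitely many flags built from its eigenlines; there is a unique attracting one $F^+_\gamma$ (eigenlines in order of decreasing eigenvalue) and a unique repelling one $F^-_\gamma$. I would identify $\xi_\rho(\gamma^+) = F^+_\gamma$ and $\xi_\rho(\gamma^-) = F^-_\gamma$ using positivity again: on the affine cell of flags transverse to $\xi_\rho(\gamma^\pm)$ positivity induces an order, the sequence $\rho(\gamma)^k\xi_\rho(x) = \xi_\rho(\gamma^k x)$ is monotone for it, and this forces $\rho(\gamma)$ to push $\xi_\rho(x)$ towards $\xi_\rho(\gamma^+)$ --- which in the eigen-coordinates of $\rho(\gamma)$ is only possible if $\xi_\rho(\gamma^+)$ is the decreasing-eigenvalue flag $F^+_\gamma$. (I would take care not to invoke topological convergence of $\rho(\gamma)^k\xi_\rho(x)$, as $\F$ need not be Archimedean; the input is only the monotonicity from positivity together with $\gamma^k x \to \gamma^+$ in the real circle.) This gives dynamics-preservation.

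The main obstacle I anticipate is discreteness, because it must be read in the order topology of the semi-algebraic group $\PSL(n,\F)$, where monotone bounded sequences need not converge and there is no invariant metric. A natural plan is to deduce it from \Cref{thm_BDFParam}: the multiplicative $\F$-length of $\gamma$ is an explicit positive Laurent expression in the Bonahon--Dreyer coordinates of $\rho$, and bounding it below along the Cayley graph of $\pi_1(S)$ by a quantity that grows with the word length of $\gamma$ rules out a sequence of distinct $\rho(\gamma_n)$ accumulating at $\id$, giving discreteness. An alternative I would keep in reserve is that, on the Hitchin component, ``discrete and faithful'' is cut out by semi-algebraic conditions over $\R$ (for example via a finite Anosov-type singular value criterion), so it passes to $\F$-points by Tarski--Seidenberg transfer. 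Either way this is \Cref{propo_FHitchinDiscrete}, and it is where the genuinely non-Archimedean difficulty sits; everything before it reduces, via the limit map and the Archimedean nature of $\partial_\infty\tilde S$, to the classical real arguments.
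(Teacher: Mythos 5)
Your reduction to $\F$-positive representations via \Cref{thm_FHitchinEquivFPositive}, together with your outline for positive hyperbolicity, injectivity and dynamics-preservation, is in the right spirit. The paper obtains all three at once in \Cref{propo_positiveReprHaveDistinctEigenvalues} from the positivity of a single $5$-tuple $(\gamma^+,x,\gamma^-,y,\gamma y)$ with $x,y$ on opposite sides of the axis of $\gamma$ (rather than the sequence of quadruples you propose), via the monotonicity inequality \Cref{propo_5TuplePosInequDR} for double ratios, which is proved self-containedly over any ordered field by Cauchy--Binet and total positivity; in a basis adapted to $\xi_\rho(\gamma^{\pm})$ it yields $\lambda_a/\lambda_{a+1}>1$ for all $a$, and the indexing by the flag $\xi_\rho(\gamma^+)$ makes this identical to dynamics-preservation. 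The paper also needs the standalone transfer argument of \Cref{lem_hitchinReprAreDiagonalizable} earlier, before the limit map exists, so your reduction is not a full replacement of that step, but as a proof of the combined statement it is acceptable.

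For discreteness, however, there is a genuine gap: neither of your two suggested routes closes it. The transfer route cannot work, because discreteness quantifies over all of $\pi_1(S)$ and is therefore not a first-order sentence in the language of ordered fields, so the Tarski--Seidenberg principle does not apply to it --- this is exactly the obstruction the paper flags when it says the transfer principle cannot be used for notions involving infinitely many conditions, and there is no ``finite Anosov-type criterion'' known to be first-order and to imply discreteness over an arbitrary real closed field. The Bonahon--Dreyer length route is also not viable as stated: over a non-Archimedean $\F$ the ratios $\lambda_a(\rho(\gamma))/\lambda_{a+1}(\rho(\gamma))$ are all $>1$, but they may be infinitesimally close to $1$ as $\gamma$ varies, so strict inequality for each $\gamma$ does not by itself yield a single open neighbourhood of $\id$ avoided by $\rho(\pi_1(S))\setminus\{\id\}$; you supply no uniform estimate, and it is unclear one exists in the form you describe. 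The paper's proof of \Cref{propo_FHitchinDiscrete} is entirely different and more direct. Fix an ideal triangle $\widetilde{t}$ of a maximal geodesic lamination with vertices $a,b,c$ in $\partial_\infty\tilde S$; by \Cref{lem_PoskTupleNormalForm} there is a basis in which $\xi_\rho(a)$, $\xi_\rho(c)$ are the ascending and descending flags and $\xi_\rho(b)$ lies in the open cell $\mathcal{F}_1$ of flags $uF^+$ with $u$ totally positive lower triangular. Since distinct $\pi_1(S)$-translates of $\widetilde{t}$ have disjoint interiors, for $\gamma\neq e$ the vertices $\gamma a,\gamma b,\gamma c$ lie in the closure of one component of $\partial_\infty\tilde S\setminus\{a,b,c\}$, and positivity of $\xi_\rho$ then forces $\xi_\rho(\gamma b)=\rho(\gamma)\xi_\rho(b)$ into a different cell (for instance $\{v^{-1}F^- : v \in\mathcal{U}^\urtriangle_{>0}\}$, disjoint from $\mathcal{F}_1$ because $vuF^+\neq F^-$ for $vu$ totally positive). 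Pulling back an open neighbourhood of $\xi_\rho(b)$ inside $\mathcal{F}_1$ under the orbit map at $\xi_\rho(b)$ gives an open set $O\ni\id$ in $\PSL(n,\F)$ with $\rho(\gamma)\notin O$ for all $\gamma\neq e$, using nothing beyond the order topology and disjointness of open cells --- and this is the argument your proposal is missing.
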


For real closed field extensions $\F$ of $\R$,  this implies that the $\F$-Hitchin component---the semi-algebraically connected component of the $\F$-extension of $\chi(S,\PSL(n,\R))$ containing the $\PSL(n,\F)$-conjugacy class of $\iota_n \circ j$---can be thought of as an example of a ``non-Archimedean higher Teichm\"uller space''.
\medskip

Next we discuss how to associate a geodesic current to an $\F$-positive representation.
Geodesic currents were introduced by Bonahon in \cite{Bonahon_BoutsVarHypDim3}, providing a unifying framework for Thurston's compactification.
They are now an active field of research by themselves and have applications in various fields in the study of surfaces; see e.g.\ \cite{ErlandssonSouto_MirzakhaniCurveCountingGeodCurrents}.
To simplify we choose a hyperbolic structure on $S$, even though the following concept can be defined in a purely topological way,  see e.g.\ \cite[Fact 1]{Bonahon_GeometryTeichmuellerSpaceGeodesicCurrents}.

\begin{dfn}
\label{dfn_GeodesicCurrent}
A \emph{geodesic current} on $S$ is a locally finite, $\pi_1(S)$-invariant, regular Borel measure on the space of (unoriented and unparametrized) geodesics in the universal cover $\tilde{S}$ of $S$.
\end{dfn}

Denote by $\mathscr{C}(S)$ the space of geodesic currents on $S$ endowed with the weak$^\star$-topology, and by $\mathbb{P}\mathscr{C}(S):=\mathscr{C}(S)/\mathbb{R}_{>0}$ the space of projectivized geodesic currents, the latter being compact \cite[Proposition 4, Corollary 5]{Bonahon_GeometryTeichmuellerSpaceGeodesicCurrents}.
Many seemingly different objects are in fact geodesic currents, e.g.\ homotopy classes of closed curves on $S$ or isotopy classes of marked hyperbolic structures on $S$ \cite[Lemma 9]{Bonahon_GeometryTeichmuellerSpaceGeodesicCurrents}.
Martone-Zhang proved in \cite[Theorem 3.4]{MartoneZhang_PositivelyRatioedRepr},  that we can associate to a Hitchin representation a geodesic current such that $k$-length functions of the representation can be computed as intersections (\Cref{dfn_IntersectionGeodCurr}) with this current.
More precisely, let $\F\supseteq \R$ be a real closed field that admits an order-compatible valuation $\upsilon$,  see \Cref{dfn_OrderCompVal} (for $\F=\R$ we can take $\upsilon=-\log$).
For $g \in \PSL(n,\F)$ with distinct eigenvalues of the same sign, let $\lambda_1(g) > \ldots > \lambda_n(g)>0$ be the eigenvalues of a lift of $g$ to $\SL(n,\F)$.
For every $k=1,\ldots,n-1$ we define the \emph{$k$-length of $g$} as
\begin{equation}
\label{eqn:kLengths}
L_k(\rho(\gamma))\coloneqq -\sum_{j=1}^k \upsilon(\lambda_j(g)) + \sum_{j=n-k+1}^n \upsilon(\lambda_j(g)).
\end{equation}
These length functions can be interpreted as the translation length of an element $g \in \PSL(n, \F)$ acting on the metric space $\mathscr{B}_{\textnormal{PSL}(n,\F)}$ as defined in \cite[Section 3.4 and 4]{BurgerIozziParreauPozzetti_RSCCharacterVarieties}---the non-Archimedean analogue of the symmetric space of $\PSL(n,\R)$ that generalizes real trees to higher rank.

We establish the following result announced by Burger--Iozzi--Parreau--Pozzetti \cite[Theorem 47]{BurgerIozziParreauPozzetti_RSCCharacterVarieties}.
An equivalent version was proven for maximal representations by Burger--Iozzi--Parreau--Pozzetti \cite[Theorem 1.2]{BIPP_PositiveCR},  and recently extended to positively ratioed representations \cite[Theorem 9.6]{BurgerIozziParreauPozzetti_RSCCharacterVarieties2}.
Recall that to every $\rho \in \Hom_\Hit(\pi_1(S),\PSL(n,\R))$ are associated $n-1$ geodesic currents \cite{MartoneZhang_PositivelyRatioedRepr}.
This result can be extended in the following way.

\begin{theor}
\label{thm_FHitchinCR}
Let $\F\supseteq \R$ be a real closed field with an order-compatible valuation $\upsilon$ (assumed to be $-\log$ if $\F=\R$) and let $\rho \from\pi_1(S) \to \PSL(n,\F)$ be $\F$-positive.
Then for every $k=1,\ldots,n-1$,  there exists a geodesic current $\mu^k_\rho$ such that for any $e \neq \gamma \in \pi_1(S)$ we have
\[ i(\mu^k_\rho,\gamma)=L_k(\rho(\gamma)).\]
The current $\mu_\rho^k$ is non-zero if and only if there exists $\gamma \in \pi_1(S)$ with $\upsilon(|\tr(\rho(\gamma))|) < 0$.
\end{theor}

Considering the subset $\rsp^\cl(\Hit(S,n))$ of closed points in $\rsp(\Hit(S,n))$,  this implies the following.

\begin{corol}
\label{corol_RSpHitGeodCurr}
For all $k=1,\ldots,n-1$ the map
\begin{align*}
\rsp^\cl(\Hit(S,n)) &\to \PP  \mathscr{C}(S),\\
[(\rho,\F)] &\mapsto \left[\mu^k_\rho\right]
\end{align*}
is well-defined and continuous.
\end{corol}

\addtocontents{toc}{\SkipTocEntry}
\subsection*{Applications}
We end the introduction with two applications of our results: one concerning Weyl chamber length functions \cite{Parreau_CompactificationsReprSpacesFinGenGroups},  and the other dual spaces of geodesic currents \cite{DeRosaMartinezGranado_DualSpacesGeodesicCurrents}.
The objects will be introduced in detail in \Cref{section_ProofApplications}.

Let $\F$ be a real closed field with an order-compatible valuation $\upsilon \from \F \to \R \cup \{\infty\}$ (if $\F=\R$,  we take $\upsilon=-\log$). 
Given $\rho \from \pi_1(S) \to \PSL(n,\F)$ an $\F$-positive representation,  then $\rho(\gamma)$ is positively hyperbolic with eigenvalues $\lambda_1(\rho(\gamma))> \ldots > \lambda_n(\rho(\gamma))>0$.
This is classical for $\F=\R$ and follows from \Cref{propo_positiveReprHaveDistinctEigenvalues} for general $\F$.
Define the \emph{Weyl chamber length function}
\begin{align*}
L_\rho \from \pi_1(S)&\to \mathfrak{a}^+,\\
\gamma &\mapsto (-\upsilon(\lambda_1(\rho(\gamma))),\ldots,-\upsilon( \lambda_n(\rho(\gamma)))),
\end{align*}
where $\mathfrak{a}$ denotes the Cartan subalgebra of the Lie algebra of $\SL(n,\R)$ consisting of diagonal matrices of trace zero,  and $\mathfrak{a}^+$ the Weyl chamber consisting of those elements $\textnormal{diag}(a_1,\ldots,a_n)\in \mathfrak{a}$ with $a_1>\ldots>a_n$.

\begin{corol}
\label{corol_PiecewiseSemiAlgPath}
Let $\rho \from \pi_1(S) \to \PSL(n,\F)$ be an $\F$-positive representation,  where $\F$ is a non-Archimedean field with an order-compatible valuation.
Assume $L_\rho$ does not vanish identically.
Then there is a continuous piecewise semi-algebraic path 
\[
\R_{\geq 0} \to \Hom_\Hit(\pi_1(S),\PSL(n,\R)), \quad
t \mapsto \rho_t
\]
and a constant $c>0$ such that
\[ \lim_{t \to \infty}\frac{L_{\rho_t}(\gamma)}{t}=c L_\rho(\gamma).\]
\end{corol}

In particular,  the renormalized $k$-length functions (as defined in (\ref{eqn:kLengths})) converge (up to scaling) to the length function of $\rho$.

\medskip
This has the following application to dual spaces of geodesic currents.
Recall that to any current $\mu$ one associates the dual space as defined in \cite{DeRosaMartinezGranado_DualSpacesGeodesicCurrents}.
In the case when $\mu$ is the geodesic current associated to some $\rho \in \Hom_\Hit(\pi_1(S),\PSL(3,\R))$,  taking the dual space gives a distance on $\H^2$ that is isometric $\Omega_\rho$ endowed with its Hilbert metric,  where $\Omega_\rho$ is the properly convex subset of $\R\mathbb{P}^2$ associated to $\rho$ \cite{ChoiGoldman_ConvexRealProjStructuresClosedSurfaces}.
Using the above corollary we obtain the following.

\begin{corol}
\label{corol_DualSpace}
In the setting of \Cref{corol_PiecewiseSemiAlgPath} for $n=3$,  the sequence of convex projective structures with Hilbert metrics $(\Omega_{\rho_t},\tfrac{1}{t}d_{\Omega_{\rho_t}})$ converges in the equivariant Gromov--Hausdorff topology to the dual space of the current $\mu_\rho$ associated to $\rho$ (up to scaling).
\end{corol}

\addtocontents{toc}{\SkipTocEntry}
\subsection*{Methods}
The key tool to establish the forward direction of \Cref{thm_FHitchinEquivFPositive} is the \emph{Tarski--Seidenberg transfer principle} (\Cref{thm_TarskiSeidenberg}),  a real closed analogue of the Lefschetz principle for algebraically closed fields.
It enables to deduce results for real closed extensions of $\R$,  that are stated in first-order logic in the language of ordered fields,  from results that hold true over $\R$.
This allows to prove positive hyperbolicity for $\F$-Hitchin representations.
Equipped with this we construct a limit map for an $\F$-Hitchin representation $\rho$.
Namely to the fixed point of a hyperbolic element $\gamma$ we associate the stable flag of the positively hyperbolic element $\rho(\gamma)$.
To prove its positivity properties,  we use again the Tarski--Seidenberg transfer principle and the positivity properties of real Hitchin representations (\Cref{thm_FlagCurveHitchinRepr}).
The backward direction of \Cref{thm_FHitchinEquivFPositive} is more involved.
Here we cannot apply the transfer principle, as the notion of $\F$-positive representation involves infinitely many conditions.
We will show that, actually, a finite set of data points suffices to determine an $\F$-positive representation up to conjugation.
The equivariant limit map that comes together with an $\F$-positive representation allows us to associate to the representation the variant of the Bonahon--Dreyer coordinates over $\F$. 
We then verify that these coordinates satisfy the same polynomial equalities and inequalities as the ones for $\F$-Hitchin representations,  and that two $\F$-positive representations have the same coordinates if and only if they are conjugate.
This proves \Cref{thm_BDFParam}, and then also \Cref{thm_FHitchinEquivFPositive}.
The main difficulty compared to the case $\F=\R$ lies in the proof of the closed leaf inequality \ref{item_ClosedLeafInquality} in \Cref{subsection_BDRelations}.
For this we prove that $\F$-positive representations are positively hyperbolic and weakly dynamics-preserving (\Cref{propo_positiveReprHaveDistinctEigenvalues}) without any analytic tools, relying solely on their positivity properties.

To prove \Cref{thm_FHitchinCR} we show that $\F$-positive representations are positively ratioed using the Tarski--Seidenberg transfer principle,  to then apply a result of Burger--Iozzi--Parreau--Pozzetti about geodesic currents associated to positive cross-ratios (\Cref{thm_BIPPGeodCurrAssToPosCR}).

\addtocontents{toc}{\SkipTocEntry}
\subsection*{Organization}
The article is organized as follows.
\Cref{section_BackgroundRealAlgGeom} states the important notions and results from real algebraic geometry and sets up notation.
We introduce Hitchin representations over real closed fields in \Cref{section_SemiAlgHitchin}.
In \Cref{section_PreliminariesFlags} we give the necessary preliminaries on flags, including the definition of the triple and double ratios.
This leads to the notion of positive tuples of full flags in \Cref{section_Positivity},  and we hint at the connection between positivity of flags and positivity properties of matrices.
Even though most results from \Cref{section_BackgroundRealAlgGeom}-\ref{section_Positivity} are known to the experts for $\R$, we carefully study all objects in question focusing in particular on their semi-algebraic structures,  with the goal of generalizing to real closed fields different from $\R$.

\Cref{section_PosHypRepr}-\ref{section_ProofOfMainTheorem} are the heart of the proof of \Cref{thm_FHitchinEquivFPositive}:
\Cref{section_PosHypRepr} is devoted to the proof of positive hyperbolicity of $\F$-Hitchin and $\F$-positive representations.
In Section \ref{section_VariantOfBDCoordinates} we define a semi-algebraic variant of the Bonahon--Dreyer coordinates that generalizes to real closed fields different from $\R$.
We finish by giving a proof of Theorem \ref{thm_FHitchinEquivFPositive} in Section~\ref{section_ProofOfMainTheorem}.

\Cref{section_PropBoundaryRepr} collects properties of $\F$-Hitchin representations and finishes the proof of \Cref{propo_PropertiesFHitchin}.
In \Cref{section_GeodCurrentsFPosRepr},  we give some background on geodesic currents,  positive cross-ratios,  and valuations. 
We then prove \Cref{thm_FHitchinCR} and \Cref{corol_RSpHitGeodCurr}.
The proof of the two applications in \Cref{section_ProofApplications} finishes the paper.

\addtocontents{toc}{\SkipTocEntry}
\subsection*{Acknowledgments}
I thank Marc Burger for introducing me to this topic,  his helpful remarks and explaining the interesting applications of the main result to me.
A special thanks goes to Giuseppe Martone for pointing me to the correct way of proving \Cref{propo_positiveReprHaveDistinctEigenvalues}.
I am particularly grateful to Jacques Audibert, Karin Baur, Peter Feller, Gerhard Hi{\ss}, Alessandra Iozzi,  Fanny Kassel, Mareike Pfeil,  Valdo Tatitscheff and Tengren Zhang for constructive discussions or detailed feedback on this manuscript.

\tableofcontents
\section{Real algebraic geometry}
\label{section_BackgroundRealAlgGeom}
In this section we recall general definitions and results from real algebraic geometry and set up notation.
We refer the reader to \cite{BochnakCosteRoy_RealAlgebraicGeometry}, in particular Chapters 1, 2,5 and 7, for more details and proofs.

\subsection{Semi-algebraic sets}

Let $\F$ be an ordered field.
It is \emph{real closed} if every positive element is a square and every odd degree polynomial has a root.
Examples include the real numbers, the real algebraic numbers and the field of Puiseux series with coefficients in $\R$.
Every ordered field $\K$ has a \emph{real closure} $\overline{\K}^r$, i.e.\ an algebraic field extension that is real closed and that is extending the order.
The main objects of study in real algebraic geometry are semi-algebraic sets.

\begin{dfn}\label{dfn_SemiAlgSet}
Let $\F$ be a real closed field.
A subset $B \subseteq \F^d$ is a \emph{basic semi-algebraic set}, if there exists a polynomial $f \in \F[X_1,\ldots,X_d]$ such that
\[ B = \{ x \in \F^d \mid f(x)>0\}.\]
A subset $X \subseteq \F^d$ is \emph{semi-algebraic} if it is a Boolean combination of basic semi-algebraic sets,  i.e.\ $X$ is obtained by taking finite unions and intersections of basic semi-algebraic sets and their complements.

Let $X \subseteq \F^d$ and $Y \subseteq \F^m$ be two semi-algebraic sets.
A map $f \from X \to Y$ is called \emph{semi-algebraic} if its graph $\textnormal{Graph}(f) \subseteq X \times Y$ is semi-algebraic in $\F^{d+m}$.
\end{dfn}

Note that algebraic sets are semi-algebraic and any polynomial or rational map is semi-algebraic.

\begin{propo}[{\cite[Proposition 2.2.6 (i)]{BochnakCosteRoy_RealAlgebraicGeometry}}]
\label{propo_CompSemiAlgMap}
The composition $g \circ f$ of semi-algebraic maps $f \from X \to Y$ and $g \from Y \to Z$ is semi-algebraic.
\end{propo}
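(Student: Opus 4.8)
The plan is to reduce the statement to the Tarski--Seidenberg projection theorem by exhibiting the graph of the composition as the image, under a coordinate projection, of an intersection of two semi-algebraic ``cylinders''. Write $X \subseteq \F^d$, $Y \subseteq \F^m$ and $Z \subseteq \F^p$, so that by hypothesis the graphs $\Graph(f) \subseteq \F^{d+m}$ and $\Graph(g) \subseteq \F^{m+p}$ are semi-algebraic, and what has to be shown is that
\[ \Graph(g \circ f) = \{ (x,z) \in \F^{d+p} \mid x \in X,\ z = g(f(x)) \} \]
is semi-algebraic.

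First I would introduce, inside $\F^{d+m+p}$ with coordinates $(x,y,z)$, the two subsets $C_f \coloneqq \{(x,y,z) \mid (x,y) \in \Graph(f)\}$ and $C_g \coloneqq \{(x,y,z) \mid (y,z) \in \Graph(g)\}$. Each of these is semi-algebraic: up to a permutation of coordinates $C_f$ is the product $\Graph(f) \times \F^p$ and $C_g$ is the product $\F^d \times \Graph(g)$, and a product of a semi-algebraic set with some $\F^k$ is again semi-algebraic (the defining polynomial equalities and inequalities may simply be read in the larger polynomial ring), while permuting coordinates clearly preserves semi-algebraicity. Consequently the intersection $C_f \cap C_g = \{(x,y,z) \in \F^{d+m+p} \mid (x,y) \in \Graph(f) \text{ and } (y,z) \in \Graph(g)\}$ is semi-algebraic, being a Boolean combination of semi-algebraic sets.

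Next I would verify the set-theoretic identity $\Graph(g \circ f) = \pi(C_f \cap C_g)$, where $\pi \from \F^{d+m+p} \to \F^{d+p}$, $(x,y,z) \mapsto (x,z)$, is the coordinate projection eliminating $y$. Indeed, if $(x,y,z) \in C_f \cap C_g$ then $x \in X$ and $y = f(x)$, so in particular $y = f(x) \in Y$ and $z = g(y) = g(f(x))$, whence $(x,z) \in \Graph(g \circ f)$; conversely, given $(x,z) \in \Graph(g \circ f)$, the point $(x, f(x), z)$ lies in $C_f \cap C_g$ and projects to $(x,z)$. Finally, invoking the Tarski--Seidenberg theorem (\Cref{thm_TarskiSeidenberg}) in its geometric form --- the image of a semi-algebraic set under a projection $\F^{k+1} \to \F^k$ forgetting one coordinate is semi-algebraic, applied here $m$ times --- we conclude that $\pi(C_f \cap C_g)$, and hence $\Graph(g \circ f)$, is semi-algebraic, i.e.\ $g \circ f$ is a semi-algebraic map.

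The only substantive ingredient is this last projection step, which is exactly the content of Tarski--Seidenberg (equivalently, elimination of the existential quantifier over $y$); everything else is routine manipulation with Boolean combinations and products, so I do not anticipate a real obstacle. The single point deserving a moment's care is that, in the existential description of $\Graph(g \circ f)$, the witness $y$ ranges a priori over all of $\F^m$ rather than over $Y$ --- but this is harmless, since $(x,y) \in \Graph(f)$ already forces $y = f(x) \in Y$, so that $g(y)$ is defined.
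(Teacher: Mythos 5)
Your proof is correct, and since the paper states this proposition by citation to Bochnak--Coste--Roy without reproducing a proof, there is no internal argument to compare against; your argument is the standard one found in that reference, reducing composition to elimination of the intermediate variable $y$ via Tarski--Seidenberg. The reduction $\Graph(g \circ f) = \pi\bigl((\Graph(f) \times \F^p) \cap (\F^d \times \Graph(g))\bigr)$ is exactly the right mechanism, the set-theoretic identity is verified correctly in both directions, and the remark that the witness $y$ automatically lands in $Y$ (so that $g(y)$ is defined) is the right point of care; nothing is missing.
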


\begin{propo}[{\cite[Proposition 2.2.7]{BochnakCosteRoy_RealAlgebraicGeometry}}]
\label{propo_ImSemiAlgMap}
Let $f \from X \to Y$ be a semi-algebraic map. 
If $S \subseteq X$ is semi-algebraic, then so is its image $f(S)$. 
If $T \subseteq Y$ is semi-algebraic, then so is its preimage $f^{-1}(T)$.
\end{propo}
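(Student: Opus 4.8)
The plan is to deduce both assertions from the Tarski--Seidenberg theorem, which states that the image of a semi-algebraic subset of $\F^{d+1}$ under the projection forgetting one coordinate is again semi-algebraic; iterating, the image of any semi-algebraic set under a coordinate projection $\F^{d+m} \to \F^k$ is semi-algebraic. This is the only substantial input. Before using it I would record three routine facts that follow directly from \Cref{dfn_SemiAlgSet}: finite unions, intersections and complements of semi-algebraic subsets of a fixed $\F^d$ are semi-algebraic; if $A \subseteq \F^d$ and $B \subseteq \F^m$ are semi-algebraic then so are $A \times \F^m$, $\F^d \times B$ and hence $A \times B$ inside $\F^{d+m}$ (the Boolean combination of polynomial inequalities defining $A$, read as a condition on $X_1, \dots, X_{d+m}$, cuts out $A \times \F^m$); and any permutation of the coordinates of $\F^{d+m}$ sends semi-algebraic sets to semi-algebraic sets.

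Write $X \subseteq \F^d$ and $Y \subseteq \F^m$, so that by hypothesis $\Graph(f) \subseteq \F^{d+m}$ is semi-algebraic. For the first assertion, given a semi-algebraic $S \subseteq X$, the set $\Graph(f) \cap (S \times \F^m)$ equals $\{(x, f(x)) \mid x \in S\}$ and is semi-algebraic by the preliminary facts; applying the projection $\F^{d+m} \to \F^m$ onto the last $m$ coordinates yields exactly $f(S)$, which is therefore semi-algebraic by Tarski--Seidenberg. For the second assertion, given a semi-algebraic $T \subseteq Y$, the set $\Graph(f) \cap (\F^d \times T) = \{(x, f(x)) \mid x \in f^{-1}(T)\}$ is semi-algebraic, and its projection onto the first $d$ coordinates is $f^{-1}(T)$, so again Tarski--Seidenberg applies. (When $f$ happens to be a polynomial map one can avoid the projection theorem for the preimage, since taking preimages commutes with Boolean operations and $f^{-1}(\{g>0\}) = \{g \circ f > 0\}$ is basic semi-algebraic; but for a general semi-algebraic $f$ the argument above seems to be the natural route.)

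The entire difficulty is concentrated in the appeal to the Tarski--Seidenberg theorem --- equivalently, quantifier elimination for real closed fields --- whose proof I would treat as a black box. Granting it, the argument is purely formal bookkeeping with Boolean combinations, products and coordinate projections, and no genuine computation is needed.
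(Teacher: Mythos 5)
Your argument is correct, and it is the standard proof. Note that the paper does not actually prove this proposition---it is quoted verbatim from Bochnak--Coste--Roy \cite[Proposition 2.2.7]{BochnakCosteRoy_RealAlgebraicGeometry} with no proof supplied---so there is nothing in the paper to compare against; your route (reduce both statements to the Tarski--Seidenberg projection theorem applied to $\Graph(f)\cap(S\times\F^m)$ and $\Graph(f)\cap(\F^d\times T)$, after recording the stability of semi-algebraic sets under Boolean operations, products, and coordinate permutations) is precisely the one given in that reference.
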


Using the order on $\F$ we can define a topology on $\F$, where a basis is given by the open intervals.
Note that if $\F\neq \R$ then $\F$ is totally-disconnected.
However we have the following notion of connectedness for semi-algebraic sets.

\begin{dfn} \label{dfn_SemiAlgConn}
Let $\F$ be a real closed field.
A semi-algebraic set $X \subseteq \F^d$ is \emph{semi-algebraically connected} if it cannot be written as the disjoint union of two non-empty semi-algebraic subsets of $\F^d$ both of which are closed in $X$.
\end{dfn}

\begin{theor}[{\cite[Theorem 2.4.5]{BochnakCosteRoy_RealAlgebraicGeometry}}]
\label{thm_RConnSemiAlgConn}
A semi-algebraic set of $\R^d$ is connected if and only if it is semi-algebraically connected.
Every semi-algebraic set of $\R^d$ has a finite number of connected components, which are semi-algebraic.
\end{theor}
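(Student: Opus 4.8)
The plan is to deduce both assertions from a single structural fact, which I will call \textbf{(a)}: every semi-algebraic set $X\subseteq\R^d$ has only finitely many connected components, each of which is semi-algebraic. Granting (a), the equivalence is immediate. If $X$ is connected, it admits no partition into two nonempty subsets both closed in $X$, a fortiori no such \emph{semi-algebraic} partition, so $X$ is semi-algebraically connected. Conversely, if $X$ is not connected, then by (a) its components $X_1,\dots,X_m$ are semi-algebraic with $m\geq 2$; since connected components are always closed in $X$ and a finite union of closed sets is closed, $X = X_1 \sqcup (X_2\cup\dots\cup X_m)$ exhibits $X$ as a disjoint union of two nonempty semi-algebraic subsets both closed in $X$, so $X$ is not semi-algebraically connected (\Cref{dfn_SemiAlgConn}). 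And (a) itself contains the finiteness and semi-algebraicity of the components.

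To prove (a) I would invoke the \emph{cylindrical algebraic decomposition}: given any finite family of semi-algebraic subsets of $\R^d$, there is a finite partition of $\R^d$ into semi-algebraic \emph{cells}, each semi-algebraically homeomorphic to an open cube $(0,1)^k$ (a point when $k=0$), compatible with the given family in the sense that each set in the family is a union of cells. One proves this by induction on $d$. For $d=1$ it is the structure theorem that a semi-algebraic subset of $\R$ is a finite union of points and open intervals, so any finite family admits a common refinement of that form. For the step $d\to d+1$, the given sets are Boolean combinations of conditions $P(x,t)\gtrless 0$ with $P\in\R[X_1,\dots,X_d,T]$; one shows that the number of real roots in $t$ of the polynomials $P(x,\cdot)$, their multiplicities, and their mutual order are constant as $x$ ranges over each cell of a suitable decomposition of $\R^d$ obtained by applying the inductive hypothesis to an appropriate auxiliary finite family, and that over such a base cell $C$ the roots are continuous semi-algebraic functions $\xi_1<\dots<\xi_\ell$ on $C$. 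The cells of $\R^{d+1}$ are then the graphs of the $\xi_i$ over $C$ together with the open bands between consecutive graphs (and below $\xi_1$, above $\xi_\ell$); these are semi-algebraically homeomorphic to $C$ or to $C\times(0,1)$, hence, since $C\cong(0,1)^n$, to a cube. Securing this uniform control on the roots (e.g.\ via Thom's lemma and continuity of roots) is the technical heart, and the step I expect to be the main obstacle; the rest is bookkeeping.

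It remains to check that each cell $C\cong(0,1)^n$ — and, trivially, a point — is connected and semi-algebraically connected. Connectedness of $(0,1)^n$ is standard. For semi-algebraic connectedness I would first treat a closed interval $[a,b]\subseteq\R$: if $[a,b]=U\sqcup V$ with $U,V$ nonempty, semi-algebraic and closed and, say, $a\in U$, then $c\coloneqq\sup\{x\in[a,b] : [a,x]\subseteq U\}$ exists by order-completeness of $\R$, lies in $U$ since $U$ is closed, and if $c<b$ then the points of $[a,b]$ immediately above $c$ cannot all lie in $U$, forcing $c\in\overline V=V$ and contradicting $U\cap V=\varnothing$; hence $V=\varnothing$. (This use of the completeness of $\R$ is precisely what breaks down over a general real closed $\F$, which is exactly what makes \Cref{dfn_SemiAlgConn} the right notion there.) Then, if $(0,1)^n=U\sqcup V$ were a nontrivial partition into closed semi-algebraic sets, choosing $p\in U$ and $q\in V$ and composing with the affine path $s\mapsto(1-s)p+sq$ from $[0,1]$ into the convex set $(0,1)^n$ pulls this back to a nontrivial partition of $[0,1]$ into closed semi-algebraic sets, a contradiction. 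With this in hand, every cell of a decomposition of a semi-algebraic set $X$ lies in a single connected component of $X$, so the components of $X$ are finite unions of cells; hence $X$ has finitely many components and each is semi-algebraic, which is (a).
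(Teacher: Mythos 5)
Your proposal is correct and follows essentially the same route as the proof in Bochnak--Coste--Roy to which the paper defers: reduce everything to cylindrical algebraic decomposition, observe that cells are (semi-algebraically) connected, so connected components are finite unions of cells, and then both assertions drop out. The only minor remark is that for deducing the theorem from your fact (a), the semi-algebraic connectedness of $[a,b]$ and of cells is not actually needed (plain connectedness of cells already gives (a), and from (a) the equivalence follows formally as you note), though it is a correct and natural supplementary observation.
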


From now let $\K$ be a real closed field extension of $\F$.

\begin{dfn}\label{dfn_ExtSemiAlgSets}
Let $X \subseteq \F^d$ be a semi-algebraic set given as
\[ X = \bigcup_{i=1}^s\bigcap_{j=1}^{r_i} \{x \in \F^d \mid f_{ij}(x) \ast_{ij} 0 \},\]
with $f_{ij} \in \F[X_1,\ldots,X_d]$ and $\ast_{ij}$ is either $<$ or $=$ for $i = 1, \ldots,s$ and $j=1,\ldots,r_i$.
The \emph{$\K$-extension $X_{\K}$} of $X$ is the set given by the same Boolean combination of sign conditions as $X$, more precisely
\[X_{\K} = \bigcup_{i=1}^s\bigcap_{j=1}^{r_i} \{x \in \K^d \mid f_{ij}(x) \ast_{ij} 0 \}.\]
\end{dfn}

Note that $X_{\K}$ is semi-algebraic and depends only on the set $X$, and not on the Boolean combination describing it, see \cite[Proposition 5.1.1]{BochnakCosteRoy_RealAlgebraicGeometry}.
The proof of this is based on the Tarski--Seidenberg principle.

\begin{theor}[Tarski--Seidenberg principle, {\cite[Theorem 5.2.1]{BochnakCosteRoy_RealAlgebraicGeometry}}]
\label{thm_TarskiSeidenberg}
Let $X \subseteq \F^{d+1}$ be a semi-algebraic set.
Denote by $\pr$ the projection $\F^{d+1} \to \F^n$ onto the first $d$ coordinates.
Then $\pr(X) \subseteq \F^d$ is semi-algebraic.
Furthermore, if $\K$ is a real closed extension of $\F$, and $\pr_\K \from \K^{d+1} \to \K$ is the projection on the first $d$ coordinates,  then
\[\pr_\K(X_\K) = (\pr(X))_\K.\]
\end{theor}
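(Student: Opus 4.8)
The plan is to deduce the statement from quantifier elimination for real closed fields, in the effective and field-uniform form that makes the transfer clause fall out automatically. Since $\pr$ forgets a single coordinate, I would write the last coordinate as $T$ and view $X \subseteq \F^{d} \times \F$ as a Boolean combination of sets of the form $\{(a,t) : f_{ij}(a,t) \ast_{ij} 0\}$, regarding each $f_{ij} \in \F[X_1,\dots,X_d][T]$ as a polynomial in $T$ whose coefficients lie in $\F[X_1,\dots,X_d]$. Membership of $a$ in $\pr(X)$ then reads ``there is $t \in \F$ such that the sign vector of $(f_{ij}(a,t))_{ij}$ meets the prescribed Boolean condition'', and the goal is to replace this by a quantifier-free condition on $a$ alone, using nothing beyond the axioms of real closed fields.

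The technical core would be a parametrized root-structure lemma: a finite partition of $\F^{d}$ (equivalently of $\K^{d}$) into semi-algebraic pieces, each defined by sign conditions on finitely many polynomials built from the $f_{ij}$, such that over each piece the combinatorial type of the family $T \mapsto (f_{ij}(a,T))_{ij}$ is constant --- the $T$-degrees, the numbers of distinct roots of the $f_{ij}(a,\cdot)$ in $\F$, and the interleaving order of all these roots do not vary as $a$ ranges over the piece. To obtain this I would use the classical machinery: signed subresultant (Sturm--Habicht) sequences, which express the number of roots of a univariate polynomial --- and of the roots realizing a prescribed sign pattern of its derivatives --- as sign conditions on polynomial expressions in the coefficients; together with Thom's lemma, which pins down a root of $f_{ij}(a,\cdot)$ among all roots by the signs there of $f_{ij}(a,\cdot)$ and its successive $T$-derivatives. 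The point is that every identity used (the subresultant recursion, Euclidean remainder manipulations, Thom's lemma) is a polynomial identity valid over any field, while the only further input is that positive elements are squares and odd-degree polynomials have roots, i.e.\ exactly real-closedness; so the partition and its sign data are given by the same polynomials whether one works over $\F$ or over $\K$.

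Granting the lemma, I would fix a piece $C$: over $C$ the ordered list of roots of all the $f_{ij}(a,\cdot)$ cuts the line into finitely many open intervals and points, on each of which the vector of signs of the $f_{ij}(a,t)$ is constant and is determined --- through the subresultant data --- by the sign conditions defining $C$. Hence the truth value of ``$\exists t : (a,t) \in X$'' is constant on $C$ and is a Boolean combination of those sign conditions; unioning over the pieces where it holds exhibits $\pr(X)$ as a semi-algebraic subset of $\F^{d}$, cut out by an explicit quantifier-free formula $\varphi$. For the final assertion I would run the identical construction over $\K$: since $\varphi$ is built from the same polynomials and the same Boolean structure, $(\pr(X))_\K = \{a \in \K^{d} : \varphi(a)\}$ by the definition of the $\K$-extension, whereas $\pr_\K(X_\K) = \{a \in \K^{d} : \exists t \in \K,\ (a,t) \in X_\K\} = \{a \in \K^{d} : \varphi(a)\}$ by the lemma applied over $\K$; therefore $\pr_\K(X_\K) = (\pr(X))_\K$.

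The hard part is the parametrized root-structure lemma --- developing signed subresultant sequences and Thom's lemma over an arbitrary real closed field and verifying that the resulting descriptions are genuinely uniform in the field. Once that is in place, the rest is bookkeeping: assembling the induced cell decomposition of the line, reading off the constant sign vectors, and matching up Boolean combinations.
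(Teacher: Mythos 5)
The paper does not prove this theorem; it is cited verbatim from Bochnak--Coste--Roy, Theorem 5.2.1, where it is established by quantifier elimination over real closed fields. Your sketch correctly reproduces that approach --- a field-uniform parametrized root-structure partition built from Sturm-type/subresultant sequences and Thom's lemma, with the transfer clause falling out because the resulting quantifier-free formula $\varphi$ has coefficients in $\F$ and the same construction applies over $\K$ --- so this is essentially the cited argument rather than a different route.
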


Using this one can prove an extension theorem for semi-algebraic maps.

\begin{theor}[{\cite[Propositions 5.3.1, 5.3.3]{BochnakCosteRoy_RealAlgebraicGeometry}}]
\label{thm_ExtSemiAlgMaps}
Let $X \subseteq \F^d$ and $Y \subseteq \F^m$ be two semi-algebraic sets, and $f \from X \to Y$ a semi-algebraic map.
Then $(\textnormal{Graph}(f))_\K$ is the graph of a semi-algebraic map $f_\K \from X_\K \to Y_\K$, that is called the \emph{$\K$-extension} of $f$.
Furthermore, $f$ is injective (respectively surjective, respectively bijective) if and only if $f_\K$ is injective (respectively surjective, respectively bijective),  and f is continuous if and only if $f_\K$ is continuous.
\end{theor}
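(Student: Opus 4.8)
The plan is to reduce both assertions to the \emph{transfer principle} --- the statement that a first-order property in the language of ordered fields with parameters in $\F$ holds over $\F$ if and only if it holds over $\K$ --- and then to exhibit the relevant properties (``the graph defines a function'', injectivity, surjectivity, continuity) as such first-order properties.

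\emph{Step 1: from Tarski--Seidenberg to full transfer.} First I would upgrade \Cref{thm_TarskiSeidenberg} to the following: for every first-order formula $\phi(x_1,\dots,x_k)$ over the language of ordered fields whose coefficients lie in $\F$, the subset of $\K^k$ it defines is the $\K$-extension of the subset of $\F^k$ it defines; in particular $\F \models \phi(a)$ if and only if $\K \models \phi(a)$ for every $a \in \F^k$. This is proved by induction on the structure of $\phi$: atomic formulas are polynomial sign conditions, handled by \Cref{dfn_ExtSemiAlgSets}; conjunction, disjunction and negation are handled by the same definition, using crucially that the $\K$-extension is independent of the Boolean presentation (\cite[Proposition 5.1.1]{BochnakCosteRoy_RealAlgebraicGeometry}), so that complementation commutes with $\K$-extension; an existential quantifier is a coordinate projection, for which the identity $\pr_\K(X_\K) = (\pr(X))_\K$ of \Cref{thm_TarskiSeidenberg} is exactly what is needed; and a universal quantifier is handled via $\forall = \lnot\exists\lnot$.

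\emph{Step 2: the $\K$-extension of a function graph.} Set $G \coloneqq \Graph(f) \subseteq \F^{d+m}$. That $G$ is the graph of a map from $X$ to $Y$ is equivalent to the conjunction of three first-order sentences with coefficients in $\F$, namely (i) every $(x,y)\in G$ has $x\in X$ and $y\in Y$; (ii) for every $x\in X$ there is $y$ with $(x,y)\in G$; and (iii) $(x,y),(x,y')\in G$ imply $y=y'$. These hold over $\F$, hence by Step 1 they hold over $\K$ with $G,X,Y$ replaced by $G_\K,X_\K,Y_\K$; so $G_\K$ is the graph of a map $f_\K\colon X_\K\to Y_\K$, and it is semi-algebraic because $G_\K$ is (\Cref{dfn_ExtSemiAlgSets}). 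For the statement that the domain of $f_\K$ is all of $X_\K$ one can also argue directly: $\pr(G)=X$ gives $\pr_\K(G_\K)=(\pr(G))_\K=X_\K$ by \Cref{thm_TarskiSeidenberg}.

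\emph{Step 3: injectivity, surjectivity, continuity.} Injectivity of $f$ is the sentence ``$(x,y),(x',y)\in G$ imply $x=x'$'', surjectivity is ``for every $y\in Y$ there is $x$ with $(x,y)\in G$'', and bijectivity is their conjunction; Step 1 transfers each of these equivalently between $f$ and $f_\K$. For continuity I would use the first-order $\varepsilon$--$\delta$ formulation with respect to the order topology, whose balls $\{v : \lVert v\rVert^2 < t^2\}$, $t>0$, where $\lVert v\rVert^2 \coloneqq \sum_i v_i^2$, form a definable basis: $f$ is continuous if and only if
\[
\forall x_0 \in X\ \forall \varepsilon>0\ \exists \delta>0\ \forall x\in X\ \forall y\ \forall y_0\colon\ \big((x,y)\in G \wedge (x_0,y_0)\in G \wedge \lVert x-x_0\rVert^2 < \delta^2\big)\ \Rightarrow\ \lVert y-y_0\rVert^2 < \varepsilon^2 .
\]
This is a first-order sentence with coefficients in $\F$, so Step 1 gives that $f$ is continuous if and only if $f_\K$ is. I expect the genuinely delicate point to be Step 1 --- specifically, the verification that $\K$-extension commutes with complementation, which is what makes the induction on formula complexity go through --- together with the (routine but necessary) observation in Step 3 that the order topology admits a definable basis of balls, so that continuity is expressible in first-order terms; once these are in place, the remaining steps are bookkeeping.
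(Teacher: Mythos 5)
The paper does not supply its own proof of this theorem; it is cited verbatim from Bochnak--Coste--Roy, Propositions 5.3.1 and 5.3.3. Your reconstruction is correct and follows essentially the same route as that reference: the full first-order transfer principle you derive in Step 1 (by induction on formula complexity, using independence of the Boolean presentation for negation and Tarski--Seidenberg for quantifiers) is their Proposition 5.2.2, and the reductions in Steps 2 and 3 --- the ``graph of a function'' sentences, injectivity, surjectivity, and the definable $\varepsilon$--$\delta$ formulation of continuity with squared norms --- are precisely how 5.3.1 and 5.3.3 are deduced from it.
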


Finally, we have the following relation between extension of semi-algebraic sets and semi-algebraically connected components.

\begin{theor}[{\cite[Proposition 5.3.6 (ii)]{BochnakCosteRoy_RealAlgebraicGeometry}}]
\label{thm_ExtConnComp}
Let $X \subseteq \F^d$ be semi-algebraic.
Then $X$ is semi-algebraically connected if and only if $X_\K$ is semi-algebraically connected. 
More generally, if $C_1, \ldots, C_m$ are the semi-algebraically connected components of $X$, then $(C_1)_\K, \ldots, (C_m)_\K$ are the semi-algebraically connected components of $X_\K$.
\end{theor}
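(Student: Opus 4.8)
The plan is to reduce the whole statement to the combinatorics of a cylindrical algebraic decomposition of $\F^d$ adapted to $X$, and then to observe that this combinatorial data does not change when we pass from $\F$ to $\K$. Concretely, I would first invoke cylindrical algebraic decomposition over the real closed field $\F$ (the basic finiteness machinery of \cite{BochnakCosteRoy_RealAlgebraicGeometry}, Chapters~1--2) to write $X = \bigsqcup_{i=1}^{r} D_i$ as a finite disjoint union of semi-algebraic cells, where each $D_i$ carries an $\F$-semi-algebraic homeomorphism onto an open box $(0,1)^{k_i}$ (or is a point) and the decomposition is compatible with closures, meaning that $\overline{D_i}\cap X$ is a union of cells for every $i$. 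By \Cref{thm_ExtSemiAlgMaps} each such homeomorphism extends to a homeomorphism $(D_i)_\K \to ((0,1)^{k_i})_\K = (0,1)^{k_i}$ over $\K$, so $X_\K = \bigsqcup_{i=1}^{r}(D_i)_\K$ is again a cell decomposition of this shape, with the \emph{same} index set.

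Next I would record two soft facts about semi-algebraic connectedness valid over an arbitrary real closed field: (a) an open box $(0,1)^k$, hence every cell $D_i$ and every $(D_i)_\K$, is semi-algebraically connected (straight segments give semi-algebraic paths, a bounded interval is semi-algebraically connected, and the preimage of a clopen set under a continuous semi-algebraic map is clopen and semi-algebraic, using \Cref{propo_ImSemiAlgMap}); and (b) if $Y$ is semi-algebraically connected and $Y \subseteq Z \subseteq \overline{Y}$ with $Z$ semi-algebraic, then $Z$ is semi-algebraically connected, while if $Y_1,Y_2$ are semi-algebraically connected with $Y_1 \cap Y_2 \neq \emptyset$ then $Y_1 \cup Y_2$ is semi-algebraically connected. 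Form the graph $G$ on the vertex set $\{1,\dots,r\}$ joining $i$ to $j$ whenever $D_j \subseteq \overline{D_i}$ or $D_i \subseteq \overline{D_j}$ (equivalently, by closure-compatibility, whenever $\overline{D_i}\cap D_j \neq \emptyset$ or $\overline{D_j}\cap D_i \neq \emptyset$). Using (a), (b) and closure-compatibility, one checks that for each connected component $J$ of $G$ the union $\bigsqcup_{i\in J} D_i$ is semi-algebraically connected (walk along a spanning tree of $G|_J$, adjoining one cell at a time) and is clopen in $X$ (its closure in $X$, and the closure of its complement, remain unions of cells with the same $G$-component structure, by closure-compatibility and finiteness). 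Consequently the semi-algebraically connected components of $X$ are exactly the sets $\bigsqcup_{i\in J} D_i$ as $J$ runs over the connected components of $G$; in particular there are finitely many, and the identical description holds for $X_\K$.

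It then remains to see that the graph $G$ is the same whether built from $X$ or from $X_\K$, and this is where the Tarski--Seidenberg principle is used. The closure operation commutes with $\K$-extension, since $x\in\overline{S}$ is expressed by the first-order formula $\forall\varepsilon>0\,\exists y\in S\,(\lVert x-y\rVert<\varepsilon)$, so $(\overline{D_i})_\K = \overline{(D_i)_\K}$ by \Cref{thm_TarskiSeidenberg}; and an inclusion $D_j\subseteq\overline{D_i}$ of $\F$-semi-algebraic sets holds if and only if the corresponding inclusion holds over $\K$, because $S\subseteq T$ transfers in both directions (equivalently $S\setminus T=\emptyset$ iff $(S\setminus T)_\K = S_\K\setminus T_\K=\emptyset$, using that a nonempty $\F$-semi-algebraic set contains an $\F$-rational point and hence has nonempty $\K$-extension). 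Therefore $G$ has literally the same edges over $\F$ and over $\K$, so the same connected components. Combining with the previous paragraph: if $C_1,\dots,C_m$ are the semi-algebraically connected components of $X$, say $C_\ell = \bigsqcup_{i\in J_\ell} D_i$, then $(C_\ell)_\K = \bigsqcup_{i\in J_\ell}(D_i)_\K$ are precisely the semi-algebraically connected components of $X_\K$; specialising to $m=1$ gives the first assertion.

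The step I expect to carry the real weight is the very first one: producing, over a general real closed field $\F$ rather than merely over $\R$ (where one could lean on \Cref{thm_RConnSemiAlgConn}), a finite semi-algebraic cell decomposition of $X$ that is compatible with closures and whose cells are $\F$-semi-algebraically boxes. This is the substantive input, namely cylindrical algebraic decomposition developed independently of the ground field, and it is exactly what lets us bypass the naive transfer of ``disconnectedness'': a splitting of $X_\K$ into two nonempty clopen semi-algebraic pieces need not be defined over $\F$, but the bounded, $\F$-definable scaffolding furnished by the cell decomposition is, and it already pins down the component structure. Once that decomposition and the elementary facts (a), (b) are in place, everything else is bookkeeping together with two applications of \Cref{thm_TarskiSeidenberg} and \Cref{thm_ExtSemiAlgMaps}.
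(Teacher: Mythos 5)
The paper does not prove this statement itself; it cites \cite[Proposition 5.3.6 (ii)]{BochnakCosteRoy_RealAlgebraicGeometry}, whose proof rests on the previously established theorem that a semi-algebraic set over an arbitrary real closed field has finitely many semi-algebraically connected components, each semi-algebraic and clopen, and then transfers that structure. Your reconstruction goes back down to the cell level and is a valid, more hands-on route: cylindrical algebraic decomposition over $\F$, a finite adjacency graph on the cells, and the observation that every datum defining that graph transfers to $\K$ via \Cref{thm_TarskiSeidenberg} and \Cref{thm_ExtSemiAlgMaps}. The one point to be careful about is the frontier condition (your ``closure-compatibility''): the assertion that $\overline{D_i}\cap X$ is a union of cells is \emph{not} an automatic feature of the CAD from \cite[Chapters 1--2]{BochnakCosteRoy_RealAlgebraicGeometry}, and obtaining a decomposition with this property genuinely requires either a dedicated refinement or the semi-algebraic triangulation theorem later in that book. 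Happily your argument does not actually need it: define the edges of $G$ by $\overline{D_i}\cap D_j\neq\emptyset$ or $\overline{D_j}\cap D_i\neq\emptyset$, rather than by containment in the closure. For adjacent $i,j$ the set $D_i\cup\bigl(D_j\cap\overline{D_i}\bigr)$ lies between $D_i$ and $\overline{D_i}$, so your fact (b) makes it semi-algebraically connected and it meets $D_j$, whence the spanning-tree induction still shows $\bigsqcup_{i\in J}D_i$ is semi-algebraically connected; and since for a finite union $\overline{\bigcup_{i\notin J}D_i}=\bigcup_{i\notin J}\overline{D_i}$ misses every $D_j$ with $j\in J$ by the definition of $G$, the set $\bigsqcup_{i\in J}D_i$ is clopen in $X$. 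The transfer of the (intersection-based) adjacency relation to $\K$ works exactly as you wrote, since closure commutes with $\K$-extension and nonemptiness transfers both ways. With this small adjustment the proof is complete, and it has the virtue of making explicit why the component structure is entirely $\F$-definable.
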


\subsection{Real spectrum of a ring}

In this section we introduce the notion of the real spectrum of a ring.
We follow \cite[Chapters 7.1 and 7.2]{BochnakCosteRoy_RealAlgebraicGeometry}.
Let $A$ be a commutative ring with $1$.
In our examples,  $A$ will often be a polynomial ring,  compare \Cref{subs_RSCSemiAlgebraicSets}.

\begin{dfn}
\label{dfn_RealSpectrum}
The \emph{real spectrum} $\rsp(A)$ is the topological space
\begin{align*}
\rsp(A) = \{ (\mathfrak{p},\leq) \mid \mathfrak{p} \subseteq &A \textrm{ prime ideal }, \\
&\leq \textrm{ is an order on the fraction field } \textnormal{Frac}(A/\mathfrak{p}) \}
\end{align*}
together with the following subbasis of open sets: For $a \in A$ let
\[ \mathcal{U}(a) \coloneqq \{ (\mathfrak{p}, \leq) \in \rsp(A) \mid \overline{a}^\mathfrak{p} >0\}, \]
where $\overline{a}^\mathfrak{p}$ is the image of $a$ in $\textnormal{Frac}(A/\mathfrak{p})$ under the homomorphism
\[ A \to A/\mathfrak{p} \to \textnormal{Frac}(A/\mathfrak{p}).\]
This topology is called the \emph{spectral topology} on $\rsp(A)$.
\end{dfn}

Contrary to the spectrum of a ring (where we consider all prime ideals), we restrict our attention here to so called \emph{real ideals}, which motivates the name of the real spectrum.
An ideal is called real if, whenever $a_1^2 + \ldots +a_k^2 \in I$ for some $a_1,\ldots, a_k \in A$, we have $a_i \in I$ for all $i=1,\ldots k$, see \cite[Definition 4.1.3]{BochnakCosteRoy_RealAlgebraicGeometry}.
By \cite[Lemma 4.1.6]{BochnakCosteRoy_RealAlgebraicGeometry} we see that a prime ideal $I \subseteq A$ is real if and only if the fraction field of $A/I$ is orderable.

If $k$ is a field,  the real spectrum $\rsp(k)$ of $k$ is homeomorphic to the set of orders on $k$ together with the Harrison topology \cite[Example 7.1.4 a)]{BochnakCosteRoy_RealAlgebraicGeometry}.
It is non-empty if and only if $k$ is orderable.
The real spectrum of $\Z$ is one point corresponding to the zero prime ideal $(0)$ and the unique order on the fraction field $\Q$ of $\Z$.

\begin{examp}[{\cite[Example 7.1.4 (b), 7.5.2]{BochnakCosteRoy_RealAlgebraicGeometry}}]
\label{examp_RSpR}
Let $\R[X]$ be the polynomial ring in one variable.
Since $\R$ is real closed and $\R[X]$ is a principal ideal domain, all non-zero prime ideals are generated by an irreducible polynomial which is either of degree one or two.
Irreducible polynomials of degree one correspond to maximal ideals, hence to elements of $\R$, with residue field $\R$.
Since $\R$ is real closed it has a unique order.
Irreducible polynomials of degree two correspond to algebraic extensions of $\R$ of degree two, which are algebraically closed, and hence not orderable.
We also need to describe the orders on the field of rational function $\R(X)$, which is the fraction field of $\R[X]$ corresponding to the prime ideal $(0)$.
It suffices to order the variable $X$ with respect to $\R$.
More precisely, we define the following orders.
For $\lambda \in \R$ we set
\begin{itemize}
\item $\lambda^+$: We have $\lambda <_{\lambda^+} X$, but $X<_{\lambda^+} \mu$ for every $\mu \in \R$ with $\lambda <\mu$.
\item $\lambda^-$: We have $X <_{\lambda^-}  \lambda$, but $\mu<_{\lambda^-} X$ for every $\mu \in \R$ with $\mu<\lambda$.
\item $+\infty$: We have $\mu <_{+\infty} X$ for all $\mu \in \R$.
\item $-\infty$: We have $X <_{-\infty} \mu$ for all $\mu \in \R$.
\end{itemize}
It turns out that this list is the complete set of total orders on $\R[X]$.
Putting everything together we obtain
\[\rsp(\R[X]) =\{ (\langle X-\lambda \rangle, \leq_\R) \mid \lambda \in \R\} \cup \{((0),\leq_o) \mid o \in \{\lambda^{\pm}, \pm \infty\}\}.\]
\end{examp}

There is an equivalent way of viewing points in the real spectrum.

\begin{propo}[{\cite[Proposition 7.1.2]{BochnakCosteRoy_RealAlgebraicGeometry}}]
\label{propo_RealSpecHomom}
Points $(\mathfrak{p},\leq) \in \rsp(A)$ are in bijection with equivalence classes of ring homomorphisms $[\varphi \from A \to \F]$ for a real closed field $\F$, where we consider the equivalence relation generated by proclaiming two homomorphisms $\varphi \from A \to \F$ and $\varphi' \from A \to \F'$ to be \emph{equivalent} if there exists an order-preserving field homomorphism $\F \to \F'$ such that the diagram commutes:
\[
\begin{tikzcd}
A \arrow[r,"\varphi"] \arrow[rd,swap,"\varphi'"] & \F \arrow[d]\\
& \F'
\end{tikzcd}
\]
More precisely,  given $(\mathfrak{p},\leq)$ we get a homomorphism from $A$ into the real closure of the residue field $\textnormal{Frac}(A/\mathfrak{p})$ with the order $\leq$ by composing the following maps
\[\varphi \from A \to A/\mathfrak{p} \hookrightarrow \textnormal{Frac}(A/\mathfrak{p}) \hookrightarrow \overline{(\textnormal{Frac}(A/\mathfrak{p}),\leq)}^r.\]
On the other hand, given $[\varphi \from A \to \F]$ for a real closed field $\F$, take $(\ker(\varphi),\leq)$ with the restriction of the order of $\F$ to $\textnormal{Frac}(A/\ker(\varphi))$.
Under this identification we have that for $a \in A$
\[\mathcal{U}(a) = \{ [\varphi \from A \to \F] \mid \F \textrm{ real closed}, \, \varphi(a) > 0 \}.\]
\end{propo}

\begin{theor}[{\cite[Proposition 7.1.12]{BochnakCosteRoy_RealAlgebraicGeometry}}]
\label{thm_RSCTopolProp}
The topological space $\rsp(A)$ with its spectral topology is compact (but not necessarily Hausdorff).
\end{theor}

In order to define the real spectrum compactification of semi-algebraic sets,  we need the following.
\begin{dfn}
A subset of $\rsp(A)$ is called \emph{constructible} if it can be obtained as a Boolean combination, i.e.\ finite unions, finite intersections and complements, from the sets $\mathcal{U}(a)$ defined above.
\end{dfn}

\begin{propo}[{\cite[Proposition 7.1.25 (ii)]{BochnakCosteRoy_RealAlgebraicGeometry}}]
\label{propo_RSpClHausdorff}
Let $C \subseteq \rsp(A)$ be a constructible subset endowed with the subspace topology of the spectral topology.
The topological space $C^\cl$ of closed points of $C$ is a compact Hausdorff space.
In particular $\rsp^\cl(A)$, the set of closed points of $\rsp(A)$,  is a compact Hausdorff space.
\end{propo}

\begin{examp}[continuation of \Cref{examp_RSpR}]
\label{examp_RSpRCont}
The points $((0),\pm \infty) \in \rsp(\R[X])$ are closed.
The points $((0),\lambda^\pm) \in \rsp(\R[X])$ for $\lambda \in \R$ are not closed.
We have $\overline{\{\lambda^\pm\}}=\{\lambda^\pm,\lambda\}$.
The space $\rsp^\cl(\R[X])$ is homeomorphic to the two point compactification of $\R$, i.e.\ to the closed interval $[0,1]$. 
\end{examp}

We have the following characterization of closed points of $\rsp(A)$.

\begin{dfn}
Let $A \subseteq A' \subseteq \F$ be two subsets of an ordered field $\F$.
We say that $A'$ is \emph{Archimedean over $A$} if for all $a' \in A$ there exists $a \in A$ with $a' < a$. 
An ordered field is \emph{Archimedean} if it is Archimedean over $\N$.
\end{dfn}

\begin{propo}[{\cite[Proposition 2.2 (e)]{Brumfiel_RSCTeichmullerSpace}}]
\label{propo_ClosedPointsChar}
A point $(\mathfrak{p},\leq) \in \rsp(A)$ is closed if and only if 
$\textnormal{Frac}(A/\mathfrak{p})$ is Archimedean over $\varphi(A)$, where 
\[\varphi \from A \to A/\mathfrak{p} \hookrightarrow \textnormal{Frac}(A/\mathfrak{p}) \hookrightarrow \overline{(\textnormal{Frac}(A/\mathfrak{p}),\leq)}^r\]
is defined as in \Cref{propo_RealSpecHomom}.
\end{propo}

\subsection{Real spectrum compactification of closed semi-algebraic sets}
\label{subs_RSCSemiAlgebraicSets}

We now describe the real spectrum compactification of closed semi-algebraic sets.
Note that these include the algebraic subsets.
Let $A \coloneqq \R[X_1,\ldots,X_d]$ the coordinate ring with coefficients in $\R$.
Note that $A$ is naturally an $\R$-algebra,  hence contains $\R$.

\begin{lem}
\label{lem_VarphiRLin}
Let $\varphi \from A \to \F$,  for $\F$ a real closed field,  represent a point in $\rsp(A)$.
Then $\R \subseteq \F$ and $\varphi$ is $\R$-linear.
\end{lem}
\begin{proof}
The ring homomorphism $\varphi$ restricted to $\R$ is injective (since the only ideals of $\R$ are trivial and $\varphi$ sends one to one),  hence $\R \subseteq \F$ via $\varphi$.
Furthermore,  for $\lambda \in \R$ and $f \in A$, we have $\varphi(\lambda f) =\varphi(\lambda) \varphi(f)$, since $\varphi$ is a ring homomorphism.
\end{proof}

\begin{propo}[{\cite[Proposition 7.1.5]{BochnakCosteRoy_RealAlgebraicGeometry}}]
\label{propo_ImageVInRealSpec}
The map 
\[\Psi \from \R^d \to \rsp(A), \quad v \mapsto (\langle X_1-v_1,\ldots,X_d-v_d\rangle, \leq_\R),\]
where $\leq_\R$ is the unique order on $\R$,  is injective and induces a homeomorphism
from $\R^d$ with its Euclidean topology, onto its image in $\rsp(A)$ with its spectral topology.
\end{propo}

If $X \subseteq \R^d$ is a semi-algebraic set given by a Boolean combination of the basic semi-algebraic sets $\mathcal{B}(f_i)$ for some $f_i \in A$ (\Cref{dfn_SemiAlgSet}),  then we define $\widetilde{X}$ to be the constructible subset of $\rsp(A)$ given by the same Boolean combination of the open sets $\mathcal{U}(f_i)$ (\Cref{dfn_RealSpectrum}).
It turns out that $\widetilde{X}$ is intrinsically defined by the semi-algebraic set $X$ (up to homeomorphism) and does not depend on the ambient space $\R^d$ in which $X$ is embedded,  see \cite[Corollary 7.2.4, Remark 7.2.5]{BochnakCosteRoy_RealAlgebraicGeometry}.

\begin{theor}[{\cite[Theorem 7.2.3]{BochnakCosteRoy_RealAlgebraicGeometry}}]
\label{thm_SemiAlgSetsConstructibleSubsets}
\hfill

\begin{enumerate}
\item There is an isomorphism of Boolean algebras
\begin{align*}
\{\textrm{semi-algebraic subsets of } \R^d\} &\leftrightarrow \{\textrm{constructible subsets of } \rsp(A)\}\\
X &\mapsto \widetilde{X},\\
\Psi^{-1}(\widetilde{X}\cap \Psi(\R^d)) &\mapsfrom \widetilde{X}.
\end{align*}
\item 
\label{item_thm_SemiAlgSetsConstructibleSubsets}
$X$ is closed (resp.\ open) if and only if $\widetilde{X}$ is closed (resp.\ open).
\end{enumerate}
\end{theor}

\begin{propo}[{\cite[Proposition 7.2.7]{BochnakCosteRoy_RealAlgebraicGeometry}}]
\label{propo_CharTilde}
Let $X$ be a closed (respectively open) semi-algebraic subset of $\R^d$.
Then $\widetilde{X}$ is the smallest (respectively largest) closed (respectively open) subset of $\rsp(A)$ whose intersection with $\Psi(\R^d)$ is $\Psi(X)$.
\end{propo}

With this at hand we are now ready to define the real spectrum compactification of closed semi-algebraic sets.

\begin{dfn}
Let $X \subseteq \R^d$ be a closed semi-algebraic subset.
Its \emph{real spectrum compactification} $\rsp(X)$ is the closure of its image 
\[X \subseteq \R^d \xhookrightarrow{\Psi} \rsp(A).\]
\end{dfn}

The definition of the compactification might \`a priori depend on the embedding $X \subseteq \R^d$.
Using \Cref{propo_CharTilde},  one easily sees that if $X \subseteq \R^n$ be a closed semi-algebraic subset of $\R^d$, then $\rsp(X) = \widetilde{X}$. 
This implies that the real spectrum compactification $\rsp(X)$ is intrinsic to $X$,  and does not depend on the embedding $X \subseteq \R^d$.
Note that in \cite[\S 2.7]{BurgerIozziParreauPozzetti_RSCCharacterVarieties2},  they define the real spectrum compactification of a semi-algebraic set by its associated constructible set,  which in the case of closed semi-algebraic sets gives the same definition as here.

From now on let $X \subseteq \R^d$ be a closed semi-algebraic set.
We saw in \Cref{examp_RSpRCont} that in general $X$ is not open in $\rsp(X)$ (not even for algebraic sets).
Moreover,  $\rsp(X)$ with its spectral topology is in general not Hausdorff (\Cref{thm_RSCTopolProp}),  which is something one often desires when compactifying a Hausdorff topological space.
Both of these issues can be resolved when considering the subset of closed points.

\begin{dfn}
Let $\rsp^\cl(X) \coloneqq \widetilde{X}^\cl $ be the subset of closed points in $\rsp(X)$.
\end{dfn}

Note that $\rsp^\cl(X)  = \widetilde{X} \cap \rsp^\cl(A)$, since $\widetilde{X}$ is closed by \Cref{thm_SemiAlgSetsConstructibleSubsets}~(\ref{item_thm_SemiAlgSetsConstructibleSubsets}).

\begin{propo}[{\cite[Corollary 2.32,  Proposition 2.33]{BurgerIozziParreauPozzetti_RSCCharacterVarieties2}}]
\label{thm_CompClosedSemiAlg}
The closed semi-algebraic set $X$ is dense in $\rsp(X)$, and open and dense in $\rsp^\cl(X)$.
\end{propo}
\begin{equation*}
\begin{tikzcd}
& & &\rsp^\cl(X) \arrow[dd,hook]\\
X \arrow[hook]{urrr}[near end]{\textnormal{open and dense}} \arrow[hook]{drrr}[swap]{\textnormal{dense}} &&&\\
&&& \rsp(X)
\end{tikzcd}
\end{equation*}

Let us collect some important properties of the real spectrum compactification of closed semi-algebraic sets.
Recall that $\rsp(X)=\widetilde{X}$.

\begin{propo}[{\cite[Proposition 7.5.1]{BochnakCosteRoy_RealAlgebraicGeometry}}]
\label{propo_RSCConnComp}
Let $X$ be a semi-algebraic set.
Then $X$ is semi-algebraically connected if and only if $\widetilde{X}$ is connected.
Furthermore,  if $X_1,\ldots,X_m$ are the semi-algebraically connected components of $X$, then $\widetilde{X}_1,\ldots,\widetilde{X}_m$ are the connected components of $\widetilde{X}$.
\end{propo}

From this remark we immediately obtain that the real spectrum compactification of a closed connected semi-algebraic set is connected.
The following proposition tells us that semi-algebraic maps extend continuously to the compactification.

\begin{propo}[{\cite[Proposition 7.2.8]{BochnakCosteRoy_RealAlgebraicGeometry}}]
Let $X$ and $Y$ be two semi-algebraic sets and $f \from X \to Y$ a semi-algebraic map.
Then there exists a unique map $\tilde{f} \from \widetilde{X} \to \widetilde{Y}$ such that for all semi-algebraic subsets $Y' \subseteq Y$ we have
\[\tilde{f}^{-1}(\widetilde{Y'}) = \widetilde{f^{-1}(Y')}.\]
If additionally $f$ is a homeomorphism then so if $\tilde{f}$.
\end{propo}

\section{$\F$-Hitchin component}
\label{section_SemiAlgHitchin}
Let $\F$ be a real closed field.
In this section we describe the semi-algebraic structure of the projective special linear group,  the character variety and the Hitchin component.
This allows to define the $\F$-Hitchin component and $\F$-Hitchin representations.

\subsection{Semi-algebraic structure of $\PSL(n,\F)$}
\label{subsection_SemiAlgPSLn}
The group $\SL(n,\F)$ is naturally an algebraic subset of the set of $(n\times n)$-matrices, denoted $\Mat(n,\F)$, which we can identify with $\F^{n^2}$.

\begin{lem}
The group $\PGL(n,\F)$ is algebraic.
The group $\PSL(n,\F)$ is semi-algebraic.
\end{lem}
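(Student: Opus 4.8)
The plan is to realise both groups concretely as (semi-)algebraic subsets of an affine space by means of the adjoint representation on $\Mat(n,\F)\cong\F^{n^{2}}$, and then to read off the defining polynomial (in)equalities.

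First I would treat $\PGL(n,\F)$. Consider the homomorphism $\Ad\from\GL(n,\F)\to\GL(\Mat(n,\F))$ sending $g$ to the conjugation map $X\mapsto gXg^{-1}$. Its kernel is the centre $\{\lambda I:\lambda\in\F^{\times}\}$, so it descends to an injective homomorphism $\overline{\Ad}\from\PGL(n,\F)\hookrightarrow\GL(\Mat(n,\F))\subseteq\End_{\F}(\Mat(n,\F))\cong\F^{n^{4}}$. I would then show that its image is the set $A$ of unital $\F$-algebra endomorphisms of $\Mat(n,\F)$: the inclusion $\im\overline{\Ad}\subseteq A$ is immediate, while for the reverse inclusion one uses that $\Mat(n,\F)$ is simple, so that any unital algebra endomorphism has zero kernel, hence is bijective, i.e.\ an algebra automorphism, and that $\Mat(n,\F)$ is central simple over $\F$, so that by the Skolem--Noether theorem every such automorphism is inner, i.e.\ of the form $\Ad(g)$. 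Finally $A$ is an algebraic subset of $\End_{\F}(\Mat(n,\F))$: in coordinates with respect to the standard basis of matrix units $E_{ij}$, multiplicativity of a linear endomorphism $\phi$ is, by bilinearity, equivalent to the finitely many quadratic equations $\phi(E_{ij}E_{kl})=\phi(E_{ij})\phi(E_{kl})$, and unitality $\phi(I)=I$ is linear. Thus $\PGL(n,\F)$ is, via $\overline{\Ad}$, identified with the algebraic set $A$; one also checks directly that the group operations become polynomial, so it is even a linear algebraic group.

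Next I would deduce that $\PSL(n,\F)$ is semi-algebraic. Since $\F$ is real closed it is formally real, hence contains no roots of unity other than $\pm1$; therefore the centre of $\SL(n,\F)$ is $\{\lambda I:\lambda^{n}=1\}$, which is trivial for $n$ odd and $\{\pm I\}$ for $n$ even. Restricting $\Ad$ to $\SL(n,\F)$ gives a homomorphism with precisely this kernel, hence an injection $\PSL(n,\F)\hookrightarrow\PGL(n,\F)$ whose image is $\Ad(\SL(n,\F))$. The map $\SL(n,\F)\to\End_{\F}(\Mat(n,\F))$, $g\mapsto\Ad(g)$, is polynomial in the entries of $g$ --- on $\SL(n,\F)$ the inverse $g^{-1}$ is the adjugate of $g$, whose entries are polynomials in those of $g$ --- hence semi-algebraic, and $\SL(n,\F)$ is algebraic, hence semi-algebraic. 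By \Cref{propo_ImSemiAlgMap} the image $\Ad(\SL(n,\F))\cong\PSL(n,\F)$ is then semi-algebraic, as claimed.

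The only step that requires an idea rather than bookkeeping with \Cref{propo_CompSemiAlgMap} and \Cref{propo_ImSemiAlgMap} is the identification of $\im\overline{\Ad}$ with an algebraic set, and I expect this to be the main obstacle; one could alternatively invoke that the image of a morphism of linear algebraic groups is Zariski closed together with surjectivity of $\GL(n,\F)\to\PGL(n,\F)$ on $\F$-points, but the argument above is more self-contained. It is worth noting, finally, that $\PSL$ cannot be expected to be algebraic: already for $\F=\R$ and $n$ even, $\PSL(n,\R)$ is a proper index-two subgroup of the irreducible group $\PGL(n,\R)$, cut out by a sign condition on the determinant of a representative, and so is not Zariski closed.
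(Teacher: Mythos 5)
Your argument for $\PGL(n,\F)$ is the same as the paper's (identify $\PGL(n,\F)$ with $\Aut(\Mat(n,\F))$ via $\Ad$ using Skolem--Noether, and observe that being a unital $\F$-algebra automorphism is algebraic), just spelled out in slightly more detail. The $\PSL(n,\F)$ step is where you diverge: the paper characterizes $\PSL(n,\F)$ as the semi-algebraically connected component of $\PGL(n,\F)$ containing the identity and then invokes \Cref{thm_RConnSemiAlgConn} and \Cref{thm_ExtConnComp} to conclude it is semi-algebraic, whereas you realize $\PSL(n,\F)$ directly as the image of the algebraic set $\SL(n,\F)$ under the polynomial (hence semi-algebraic) map $g \mapsto \Ad(g)$, noting that $g^{-1}$ is the adjugate on $\SL$, and then apply \Cref{propo_ImSemiAlgMap}. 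Both are valid; your route has the mild advantage of not presupposing that $\PSL(n,\F)$, defined as $\SL(n,\F)$ modulo its centre, coincides with the identity component of $\PGL(n,\F)$ over an arbitrary real closed field (that is of course true, but it is an extra fact), and it is close to the argument the paper itself uses just afterwards in \Cref{lem_ProjMapSemiAlg}. The paper's route has the advantage that the connected-component description is exactly what is needed later (e.g.\ for the $\F$-extension statements), so establishing it here does useful double duty.
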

\begin{proof}
By the Skolem-Noether theorem, see e.g.\ \cite[Theorem 2.7.2]{GilleSzamuely_CentralSimpleAlgebrasGaloisCohomology}, $\PGL(n,\F)$ is isomorphic to the set of $\F$-algebra automorphisms \linebreak$\Aut(\Mat(n,\F))$ of the set of $(n\times n)$-matrices, and the isomorphism is given by the adjoint representation
\[ \Ad \from \PGL(n,\F) \to \Aut(\Mat(n,\F)), \, [A] \mapsto (M \mapsto AMA^{-1}).\]
Thus $\PGL(n,\F) \cong \Aut(\Mat(n,\F)) \subseteq \GL(n^2,\F)$ is naturally an algebraic subset of $\F^{n^4+1}$, since being an $\F$-algebra automorphism is an algebraic condition.
Hence $\PSL(n,\F)$---the semi-algebraically connected component of $\PGL(n,\F)$ containing the identity---is semi-algebraic; see \Cref{thm_RConnSemiAlgConn} and \Cref{thm_ExtConnComp}.
\end{proof}

As in the real case, we have $\PSL(n,\F)=\SL(n,\F)$ when $n$ is odd, and $\PSL(n,\F)$ is an index two subgroup of $\SL(n,\F)$ when $n$ is even.
Thus the following considerations only concern the case when $n$ is even.

\begin{lem}
\label{lem_ProjMapSemiAlg}
The natural projection map $\SL(n,\F) \to \PSL(n,\F)$ is semi-algebraic.
\end{lem}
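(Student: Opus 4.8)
The plan is to identify the projection with the restriction of the adjoint representation and to observe that this restriction is a polynomial map, hence semi-algebraic. Recall that in the previous lemma $\PGL(n,\F)$ was realized as $\Aut(\Mat(n,\F)) \subseteq \GL(n^2,\F) \subseteq \F^{n^4+1}$ via $[A] \mapsto \Ad(A)$, the extra coordinate of the ambient $\F^{n^4+1}$ recording the inverse of the determinant. Under this identification the natural homomorphism $\SL(n,\F) \to \PGL(n,\F)$ is $A \mapsto \Ad(A) = (M \mapsto AMA^{-1})$, and we must show that its corestriction to $\PSL(n,\F)$ is semi-algebraic.

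First I would check that $A \mapsto \Ad(A)$ is polynomial on $\SL(n,\F)$: when $\det(A) = 1$ one has $A^{-1} = \mathrm{adj}(A)$, whose entries are signed $(n-1)$-minors of $A$ and hence polynomials in the entries of $A$, so every matrix entry of the endomorphism $M \mapsto AM\,\mathrm{adj}(A)$ of $\F^{n^2}$ is a polynomial in the entries of $A$; moreover this endomorphism is left multiplication by $A$ composed with right multiplication by $A^{-1}$, so its determinant is $\det(A)^n \det(A^{-1})^n = 1$ and the remaining Rabinowitsch coordinate is the constant $1$. Thus $\Ad$ restricts to a polynomial --- in particular semi-algebraic --- map $\SL(n,\F) \to \F^{n^4+1}$, its graph being the intersection of an algebraic set with $\SL(n,\F) \times \F^{n^4+1}$. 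Next I would argue that its image lands in $\PSL(n,\F)$ and in fact fills it. Since $\SL(n,\F)$ equals the $\F$-extension of the connected real algebraic set $\SL(n,\R)$, it is semi-algebraically connected by \Cref{thm_RConnSemiAlgConn} and \Cref{thm_ExtConnComp}; as $\Ad$ is continuous, $\Ad(\SL(n,\F))$ is a semi-algebraically connected subset of $\PGL(n,\F)$ containing the identity, hence contained in its identity component $\PSL(n,\F)$. Surjectivity onto $\PSL(n,\F)$ follows by transfer: over $\R$ the projection $\SL(n,\R) \to \PSL(n,\R)$ is given by this same polynomial map and is onto, so by \Cref{thm_ExtSemiAlgMaps} its $\F$-extension is onto $(\PSL(n,\R))_\F$, and $(\PSL(n,\R))_\F = \PSL(n,\F)$ by \Cref{thm_ExtConnComp} together with the definition of $\PSL(n,\F)$ as the identity component of $\PGL(n,\F)$. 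Intersecting the graph of $\Ad$ with $\SL(n,\F) \times \PSL(n,\F)$ then exhibits the natural projection as a semi-algebraic map.

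The content here is essentially bookkeeping. The one step that is not a bare computation is the surjectivity of $\Ad|_{\SL(n,\F)}$ onto the whole identity component $\PSL(n,\F)$, rather than onto some smaller semi-algebraically connected subgroup; this is where the Tarski--Seidenberg transfer principle, via \Cref{thm_ExtSemiAlgMaps} and \Cref{thm_ExtConnComp}, does the work. Everything else reduces to the observation that adjugates --- and hence the adjoint action restricted to $\SL(n,\F)$ --- are polynomial, so I do not anticipate any serious obstacle.
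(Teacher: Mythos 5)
Your proposal is correct and uses the same key idea as the paper: the natural projection is realized by the restriction of the adjoint representation $A \mapsto (M \mapsto AMA^{-1})$, which is polynomial on $\SL(n,\F)$ and hence semi-algebraic. The paper's own proof is exactly this one-line observation; your additional verifications (that $\Ad|_{\SL(n,\F)}$ is polynomial via the adjugate, that the image lands in and fills the identity component $\PSL(n,\F)$) are sound and a sensible level of care, but they supply details the paper takes as understood rather than a different route.
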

\begin{proof}
The projection map is realized by the semi-algebraic map
\[ \Ad \from \SL(n,\F) \to \Aut(\Mat(n,\F)),  \, A \mapsto (M \mapsto AMA^{-1}).\qedhere
\]
\end{proof}

\begin{rem}
\label{rem_ExtensionSL}
If $\F$ is a real closed extension of $\R$, the $\F$-extension of the real semi-algebraic sets $\SL(n,\R)$,  $\PSL(n,\R)$ and $\PGL(n,\R)$ correspond to the groups $\SL(n,\F)$, $\PSL(n,\F)$ respectively $\PGL(n,\F)$.
\end{rem}

\subsection{Semi-algebraic models for character varieties}
\label{subsection_SemiAlgModels}

As in the introduction,  let $S$ be a closed,  connected and oriented surface of genus at least two, and $\pi_1(S)$ its fundamental group.
We denote by $G$ either $\SL(n,\R)$ or $\PSL(n,\R)$.
The real semi-algebraic structure on $G$ endows the space $\Hom(\pi_1(S),G)$ with a real semi-algebraic structure by choosing a finite set of generators for $\pi_1(S)$.
More precisely,  for $F=\{\gamma_1,\ldots,\gamma_{2g}\}$ a finite generating set for $\pi_1(S)$, the map
\[ \ev \from \Hom(\pi_1(S),G) \to G^F, \quad \rho \mapsto (\rho(\gamma_1),\ldots,\rho(\gamma_{2g}))\]
induces a homeomorphism between $\Hom(\pi_1(S),G)$ and its image \linebreak $X_F(\pi_1(S),G)$.
Note that $X_F(\pi_1(S),G)$ is a semi-algebraic subset of some $\R^d$.
If $F'$ is a different choice of a generating set for $\pi_1(S)$,  then $X_F(\pi_1(S),G)$ and $X_{F'}(\pi_1(S),G)$ are semi-algebraically homeomorphic.
From now on we drop the choice of a generating set from the notation.

A representation $\rho \from \pi_1(S) \to G$ is \emph{reductive} if,  seen as a linear representation on $\R^m$,  it is completely reducible, i.e.\ a direct sum of irreducible representations.
Similarly,  if $\F$ is a real closed field extension and $G_\F$ the $\F$-extension of $G$ (\Cref{dfn_ExtSemiAlgSets}),  we say that $\rho \from \Gamma \to G_\F$ is reductive if, seen as a linear representation on $\F^n$, it is completely reducible.
Denote the set of reductive homomorphisms from $\pi_1(S)$ to $G$ by $\Hom_\red(\pi_1(S),G)$.
Any irreducible representation is reductive.
It follows from \cite[\S 20,  p.\ 376 Corollaire a)]{Bourbaki_AlgChap8} that reductive representations are up to conjugation in $\GL(n,\R)$ determined by their trace function.
The same holds true for reductive representations into $\GL(n,\F)$,  where $\F$ is a real closed field.

Denote by $X^\red(\pi_1(S),G)$ the image under the map $\ev$ of the space of reductive homomorphisms $\Hom_\red(\pi_1(S),G)$.
By the theory of Richardson-Slodowy \cite[Section 4]{RichardsonSlodowy_MinimumVectorsForRealReductiveGroups}, $X^\red(\pi_1(S),G)$ is real semi-algebraic as a subset of $X(\pi_1(S),G)$. 
From now on we will identify $\Hom(\pi_1(S),G)$ and $\Hom_\red(\pi_1(S),G)$ with their respective images under the map $\ev$.
Let $\F$ be a real closed field containing $\R$.

\begin{rem}
The adjoint representation $\Ad$ in the proof of \Cref{lem_ProjMapSemiAlg} induces a continuous semi-algebraic map, which we also denote by $\Ad$, between the spaces of homomorphisms
\[\Ad \from \Hom(\pi_1(S),\SL(n,\F)) \to \Hom(\pi_1(S),\PSL(n,\F)).\]
The same is true if we restrict to the set of reductive homomorphisms.
\end{rem}

The group $G$ acts on $\Hom(\pi_1(S),G)$ by conjugation, i.e.\ for all $\rho \in \Hom(\pi_1(S),G)$ and $g \in G$ we have
\[(g.\rho)(\gamma) \coloneqq g \rho(\gamma) g^{-1} \, \textrm{ for all } \gamma \in \pi_1(S).\]
The subset of reductive homomorphisms is invariant under the action of $G$ on $\Hom(\pi_1(S),G)$ by conjugation, which allows us to define the following.

\begin{dfn}
The \emph{character variety} is the topological quotient
\[\chi(S,G)\coloneqq \Hom_\red(\pi_1(S),G)/G,\]
where $G$ acts on $\Hom_\red(\pi_1(S),G)$ by conjugation.
\end{dfn}

We would like to describe the semi-algebraic structure of $\chi(S,G)$.

\begin{dfn}
\label{dfn_SemiAlgModel}
A \emph{semi-algebraic model for $\chi(S,G)$} is a semi-algebraic, continuous map
\[ p \from \Hom_\red(\pi_1(S),G) \to \R^l\]
for some $l \in \N$, such that the fibers over the image of $p$ are exactly the $G$-orbits, and $p$ induces a homeomorphism $\Hom_\red(\pi_1(S),G)/G \xrightarrow{\sim} \textrm{Im}(p) \subseteq \R^l$.
Note that $ \textrm{Im}(p) $ is semi-algebraic, see \Cref{propo_ImSemiAlgMap}.
\end{dfn}

A semi-algebraic model for the character variety has been given in \cite[Theorem 7.6]{RichardsonSlodowy_MinimumVectorsForRealReductiveGroups}.
For more details on this construction,  especially in the context of character varieties,  we recommend \cite[\S 7.2]{BurgerIozziParreauPozzetti_RSCCharacterVarieties2}.

\begin{lem}
\label{lem_UniqueSemiAlgModel}
A semi-algebraic model for $\chi(S,G)$ is unique up to semi-algebraic homeomorphism,  that means that if $p \from \Hom_\red(\pi_1(S),G) \to \R^d$ and $p' \from \Hom_\red(\pi_1(S),G) \to \R^{d'}$ are two such models, then there exists a semi-algebraic homeomorphism $\bar{p} \from \textrm{Im}(p') \to \textrm{Im}(p)$ with $\bar{p}\circ p' = p$.
\end{lem}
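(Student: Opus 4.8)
The plan is to mimic the classical argument that any two categorical quotients are canonically isomorphic, carried out in the semi-algebraic category. Let $p \from \Hom_\red(\pi_1(S),G) \to \R^d$ and $p' \from \Hom_\red(\pi_1(S),G) \to \R^{d'}$ be two semi-algebraic models, with images $Y = \Im(p)$ and $Y' = \Im(p')$, both semi-algebraic by \Cref{propo_ImSemiAlgMap}. By definition the fibers of $p$ and of $p'$ are exactly the $G$-orbits in $\Hom_\red(\pi_1(S),G)$; in particular $p$ and $p'$ have precisely the same fibers. Hence there is a well-defined set-theoretic bijection $\bar{p} \from Y' \to Y$ characterized by $\bar{p} \circ p' = p$, and likewise its inverse $\bar{q} \from Y \to Y'$ satisfies $\bar{q} \circ p = p'$.

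The first real step is to check that $\bar p$ is semi-algebraic. Consider the set
\[
\Gamma = \{ (y',y) \in Y' \times Y \mid \exists\, \rho \in \Hom_\red(\pi_1(S),G) \text{ with } p'(\rho) = y' \text{ and } p(\rho) = y \}.
\]
This is the image of $\Graph(p') \times_{\Hom_\red} \Graph(p)$ — equivalently, of the semi-algebraic set $\{(\rho, y', y) : p'(\rho)=y',\ p(\rho)=y\}$, which is semi-algebraic because $p$ and $p'$ are — under the projection forgetting the $\rho$-coordinate. By the Tarski--Seidenberg principle (\Cref{thm_TarskiSeidenberg}, or just \Cref{propo_ImSemiAlgMap}), $\Gamma$ is semi-algebraic. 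Since $p$ and $p'$ have identical fibers, $\Gamma$ is exactly the graph of $\bar p$; thus $\bar p$ is a semi-algebraic map. The same argument applied with the roles of $p$ and $p'$ exchanged shows $\bar q = \bar p^{-1}$ is semi-algebraic, so $\bar p$ is a semi-algebraic bijection with semi-algebraic inverse.

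It remains to show $\bar p$ is a homeomorphism, but this is immediate from the hypotheses: by \Cref{dfn_SemiAlgModel}, $p$ induces a homeomorphism $\Hom_\red(\pi_1(S),G)/G \xrightarrow{\sim} Y$ and $p'$ induces a homeomorphism $\Hom_\red(\pi_1(S),G)/G \xrightarrow{\sim} Y'$. The map $\bar p$ is precisely the composite of the second homeomorphism inverted with the first, so it is a homeomorphism $Y' \to Y$, and by construction $\bar p \circ p' = p$. This completes the proof. The only point requiring care — and the one I would flag as the crux — is the verification that $\bar p$ is semi-algebraic, since a priori it is only defined through the (non-semi-algebraic) quotient; the fix, as above, is to realize its graph as a projection of a semi-algebraic incidence set and invoke Tarski--Seidenberg. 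Everything else is formal, using only that the two models have exactly the $G$-orbits as fibers.
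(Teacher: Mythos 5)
Your proof is correct and follows essentially the same route as the paper: define $\bar p$ by factoring through the common fibers, realize $\Graph(\bar p)$ as the projection of the semi-algebraic incidence set and invoke Tarski--Seidenberg, and deduce the homeomorphism property from the fact that both models induce homeomorphisms onto their images (the paper phrases this last step as "reversing the roles of $p$ and $p'$"). No substantive differences.
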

\begin{proof}
Assume we have two semi-algebraic models $p$ and $p'$ for $\chi(S,G)$.
The map $p$ factors through $\textrm{Im}(p')$, since both $p$ and $p'$ have the same fibres, and induces a bijective map $\bar{p} \from \textrm{Im}(p') \to \textrm{Im}(p)$.
Note that $\bar{p}$ is continuous,  since $p'$ induces a homeomorphism between $\Hom_\red(\pi_1(S),G)/G$ and $\textrm{Im}(p')$.
We claim that $\bar{p}$ is also semi-algebraic.
We have\begin{align*}
\textrm{Graph}(\bar{p}) &= \{(x,y) \in \textrm{Im}(p') \times \textrm{Im}(p) \mid \bar{p}(x)=y\}\\
&=\{(x,y) \in \textrm{Im}(p') \times \textrm{Im}(p) \mid \exists\, c \in \Hom_\red(\pi_1(S),G) \\
&\quad\quad\quad\quad \textrm{ such that } p'(c)=x,\, p(c)=y\},
\end{align*}
which by Tarski--Seidenberg (\Cref{thm_TarskiSeidenberg}), is a semi-algebraic set since both $p$ and $p'$ are semi-algebraic.
Reversing the roles of $p$ and $p'$ in the above argument proves the claim.
\end{proof}

If $\mathcal{C} \subseteq \Hom(\pi_1(S),G)$ is a $G$-invariant connected component---hence semi-algebraic by \Cref{thm_RConnSemiAlgConn}---consisting of reductive homomorphisms, we can define in an analogous way a semi-algebraic model for the quotient $\mathcal{C}/G$.
Indeed, the restriction to $\mathcal{C}$ of the semi-algebraic model for the whole character variety coming from \cite[Theorem 7.6]{RichardsonSlodowy_MinimumVectorsForRealReductiveGroups} provides a semi-algebraic model for $\mathcal{C}/G$.
In general we can find different such models, which are related by semi-algebraic homeomorphisms, see \Cref{lem_UniqueSemiAlgModel}.
In fact, for connected components of geometric significance,  there are often several semi-algebraic models that exploit their geometric interpretation.
For example in \Cref{section_VariantOfBDCoordinates} we introduce a variant of the Bonahon--Dreyer coordinates for the Hitchin component, which generalize the shear coordinates for Teichm\"uller space (the case of $\PSL(2,\R)$) developed by Thurston \cite[Section 9]{Thurston_MinimalStretchMaps} and \cite[Theorem A]{Bonahon_ShearingHyperbolicSurfaces}.

\begin{lem}
\label{lem_ExtSemiAlgModel}
Let $p \from \mathcal{C} \to \R^d$ be a semi-algebraic model for $\mathcal{C}/G$, with image $p(\mathcal{C})$.
For a real closed field extension $\F$ of $\R$,  we can consider the $\F$-extension of this model,  i.e.\
\[ p_\F \from \mathcal{C}_\F \to \F^d\]
with image $p(\mathcal{C})_\F=p_\F(\mathcal{C}_\F)$.
Then the fibres over $p(\mathcal{C})_\F$ are exactly the $G_\F$-orbits.
\[
\begin{tikzcd}[row sep=large, column sep = large]
\mathcal{C} \arrow[d,swap,twoheadrightarrow, "p"] \arrow[r,swap, hook,  "\F\textrm{-extension}"]  &  \mathcal{C}_\F \arrow[d,twoheadrightarrow, "p_\F"] \\
p(\mathcal{C}) \arrow[r,swap, hook, "\F\textrm{-extension}"] & p(\mathcal{C}) _\F  
\end{tikzcd}
\]
\end{lem}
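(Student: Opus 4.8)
The plan is to transfer to $\F$, via the Tarski--Seidenberg principle (\Cref{thm_TarskiSeidenberg}), the two facts that make $p$ a model over $\R$: that $p$ is constant on $G$-orbits, and that distinct $G$-orbits have distinct images under $p$. Both are encoded in a single identity of semi-algebraic sets. Write $g\cdot\rho$ for the conjugation action, a semi-algebraic map $a\from G\times\mathcal{C}\to\mathcal{C}$ (matrix inversion being given by polynomials on $\SL(n,\R)$, with $\PSL(n,\R)$ handled via the model of \Cref{subsection_SemiAlgPSLn}; here one uses that $\mathcal{C}$ is $G$-invariant, so the action does land in $\mathcal{C}$). Set
\[ D \coloneqq \{(\rho_1,\rho_2) \in \mathcal{C}\times\mathcal{C} \mid p(\rho_1)=p(\rho_2)\}, \qquad O \coloneqq \{(\rho_1,\rho_2,g) \in \mathcal{C}\times\mathcal{C}\times G \mid g\cdot\rho_1 = \rho_2\}. \]
Both are semi-algebraic: $D$ is the preimage of the diagonal of $\R^d$ under $p\times p$, and $O$ is the preimage of the diagonal of $\mathcal{C}$ under $(\rho_1,\rho_2,g)\mapsto(g\cdot\rho_1,\rho_2)$, so \Cref{propo_ImSemiAlgMap} applies. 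Let $\pr\from\mathcal{C}\times\mathcal{C}\times G\to\mathcal{C}\times\mathcal{C}$ denote the projection.

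Next I would verify that over $\R$ one has $D=\pr(O)$. The inclusion $\pr(O)\subseteq D$ is the $G$-invariance of $p$: if $g\cdot\rho_1=\rho_2$, then $\rho_1,\rho_2$ lie in one $G$-orbit, which by hypothesis is a fibre of $p$, so $p(\rho_1)=p(\rho_2)$. The inclusion $D\subseteq\pr(O)$ is the injectivity of the induced map on orbits: if $p(\rho_1)=p(\rho_2)=:y$, then $y\in\im(p)$ and $\rho_1,\rho_2\in p^{-1}(y)$, which by hypothesis is a single $G$-orbit, so $\rho_2=g\cdot\rho_1$ for some $g\in G$.

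Then I would pass to $\F$-extensions. Since the $\F$-extension of a semi-algebraic set depends only on the set (\cite[Proposition 5.1.1]{BochnakCosteRoy_RealAlgebraicGeometry}), the identity $D=\pr(O)$ yields $D_\F=(\pr(O))_\F=\pr_\F(O_\F)$, the last step by \Cref{thm_TarskiSeidenberg}. By \Cref{thm_ExtSemiAlgMaps}, $p_\F$ is the $\F$-extension of $p$, and the $\F$-extension of $a$ is the conjugation action of $G_\F=\SL(n,\F)$ or $\PSL(n,\F)$ on $\mathcal{C}_\F$; since forming $\F$-extensions commutes with finite intersections and with projections, it commutes with the preimage operations defining $D$ and $O$, so (using $(G\times\mathcal{C})_\F=G_\F\times\mathcal{C}_\F$)
\[ D_\F=\{(\rho_1,\rho_2)\in\mathcal{C}_\F\times\mathcal{C}_\F\mid p_\F(\rho_1)=p_\F(\rho_2)\},\qquad O_\F=\{(\rho_1,\rho_2,g)\in\mathcal{C}_\F\times\mathcal{C}_\F\times G_\F\mid g\cdot\rho_1=\rho_2\}. \]
Reading off $D_\F=\pr_\F(O_\F)$: for $\rho_1,\rho_2\in\mathcal{C}_\F$ one has $p_\F(\rho_1)=p_\F(\rho_2)$ if and only if $\rho_2\in G_\F\cdot\rho_1$. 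Equivalently, for every $\rho_1\in\mathcal{C}_\F$ the fibre $p_\F^{-1}(p_\F(\rho_1))$ equals the orbit $G_\F\cdot\rho_1$; since every point of $p(\mathcal{C})_\F=p_\F(\mathcal{C}_\F)$ has this form, this is the assertion of the lemma.

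The routine but slightly delicate point is the bookkeeping in the last step: one must check that $\F$-extension indeed commutes with the finitely many Boolean operations, products and preimages used to build $D$ and $O$ from the graphs of $p$ and $a$, so that $D_\F$ and $O_\F$ have the stated descriptions. All ingredients are \Cref{thm_TarskiSeidenberg}, \Cref{thm_ExtSemiAlgMaps} and \cite[Proposition 5.1.1]{BochnakCosteRoy_RealAlgebraicGeometry}, but they have to be assembled carefully. Alternatively, one can collapse the whole argument by invoking the model-theoretic form of Tarski--Seidenberg: the first-order sentence $\forall\,\rho_1,\rho_2\in\mathcal{C}\ \bigl(p(\rho_1)=p(\rho_2)\leftrightarrow\exists\,g\in G\ g\cdot\rho_1=\rho_2\bigr)$ in the language of ordered fields with parameters in $\R$ holds in $\R$, hence holds in the real closed extension $\F$.
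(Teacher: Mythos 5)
Your proof is correct and uses the same key idea as the paper: encode both the $G$-invariance of $p$ and the injectivity of the induced map on orbits in a single identity $D=\pr(O)$ between semi-algebraic sets, where the projection forgets the conjugating element $g$, and then push this identity to $\F$ via Tarski--Seidenberg. The paper's proof phrases this by fixing $\rho\in\mathcal{C}$ and writing $p^{-1}([\rho])$ as a projection of $\{(\rho',g)\in\mathcal{C}\times G\mid \rho=g\rho'g^{-1}\}$, which leaves the quantification over the parameter $\rho$ implicit; your version, with both $\rho_1$ and $\rho_2$ treated as variables in $D$ and $O$, is a cleaner formulation of the same transfer, and it is actually what is needed, since the relevant assertion must hold for all $\F$-points of $\mathcal{C}_\F$, not just for extensions of $\R$-points. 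Your closing remark about collapsing the argument into a single first-order sentence is the model-theoretic shorthand for the same computation; the paper stays at the level of semi-algebraic set identities, as you do in the main body of your argument. No gap.
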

\begin{proof}
For $\rho \in \mathcal{C}$ the fibre over $p(\rho) \coloneqq [\rho]$ is exactly its $G$-orbit, since $p$ is a semi-algebraic model.
Hence
\begin{align*} 
p^{-1}([\rho]) &= \{ \rho' \in \mathcal{C} \mid p(\rho)=p(\rho')\}\\
&= \{ \rho' \in \mathcal{C} \mid \exists\, g \in G : \rho=g \rho' g^{-1}\}\\
&= \pr_{\mathcal{C}}\left(\{(\rho',g) \in \mathcal{C} \times G \mid \rho = g \rho' g^{-1} \} \right),
\end{align*}
which is semi-algebraic as a projection of a semi-algebraic set by the Tarski--Seidenberg principle (\Cref{thm_TarskiSeidenberg}).
Thus the $p_\F$-fibre over $p(\mathcal{C})_\F$ is a $G_\F$-orbit; see also \Cref{thm_ExtSemiAlgMaps}.
\end{proof}

\subsection{$\F$-Hitchin component and representations}
\label{subsection_FHit}
Let us now focus our attention on the Hitchin component.

\begin{dfn}
\label{dfn_HitchinHom}
We define $\Hom_\Hit(\pi_1(S),\PSL(n,\R))$ as the connected component of $\Hom(\pi_1(S),\PSL(n,\R))$ that contains $\iota_n\circ j$, where $\iota_n$ and $j$ are defined as in \Cref{dfn_HitchinComp}.
\end{dfn}

\begin{rem}
With this definition we have that
\[\Hit(S,n) = \Hom_\Hit(\pi_1(S),\PSL(n,\R))/\PSL(n,\R),\]
 and a Hitchin representation as defined in \Cref{dfn_HitchinComp} is the same as an element of $\Hom_\Hit(\pi_1(S),\PSL(n,\R))$.
The set $\Hom_\Hit(\pi_1(S),\PSL(n,\R))$ is real semi-algebraic as it is by definition a connected component of the semi-algebraic set $\Hom(\pi_1(S),\PSL(n,\R))$,  see \Cref{thm_RConnSemiAlgConn}.
\end{rem}

Let $\F$ be a real closed field containing $\R$.
Recall from \Cref{dfn_FHitchin} that a representation from $\pi_1(S)$ to $\PSL(n, \F)$ is $\F$-Hitchin if its $\PSL(n,\F)$-conjugacy class lies in the $\F$-extension of the semi-algebraic set $\Hit(S,n)$.

\begin{dfn}
Let $\Hom_\Hit(\pi_1(S),\PSL(n,\F))$ be the $\F$-extension of \linebreak $\Hom_\Hit(\pi_1(S),\PSL(n,\R))$.
\end{dfn}

\begin{lem}
\label{lem_FHitchinHomHit}
The set of $\F$-Hitchin representations is equal to \linebreak$\Hom_\Hit(\pi_1(S),\PSL(n,\F))$.
Furthermore,  $\Hom_\Hit(\pi_1(S),\PSL(n,\F))$ is the semi-algebraically connected component of $\Hom(\pi_1(S), \PSL(n,\F))$ that contains $\iota_n \circ j$ as defined in \Cref{dfn_HitchinComp}.
\end{lem}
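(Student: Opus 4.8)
The plan is to prove the two assertions of Lemma~\ref{lem_FHitchinHomHit} in turn, both by reducing everything to the semi-algebraic model for the Hitchin quotient together with the functoriality of $\F$-extension under projection (Tarski--Seidenberg, \Cref{thm_TarskiSeidenberg}) and its behaviour on connected components (\Cref{thm_ExtConnComp}).

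\textbf{Step 1: The set of $\F$-Hitchin representations equals $\Hom_\Hit(\pi_1(S),\PSL(n,\F))$.}
By \Cref{dfn_HitchinHom} and the subsequent remark, $\mathcal{C} \coloneqq \Hom_\Hit(\pi_1(S),\PSL(n,\R))$ is a $\PSL(n,\R)$-invariant connected component of $\Hom_\red(\pi_1(S),\PSL(n,\R))$ (Hitchin representations being reductive), and $\Hit(S,n) = p(\mathcal{C})$ for $p$ a semi-algebraic model of $\chi(S,\PSL(n,\R))$ restricted to $\mathcal{C}$. By \Cref{lem_ExtSemiAlgModel}, applied to this $p$, the $\F$-points $\mathcal{C}_\F = \Hom_\Hit(\pi_1(S),\PSL(n,\F))$ surject via $p_\F$ onto $p(\mathcal{C})_\F = \Hit(S,n)_\F$ with fibres exactly the $\PSL(n,\F)$-orbits. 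Hence a representation $\rho \from \pi_1(S) \to \PSL(n,\F)$ has $p_\F(\rho) \in \Hit(S,n)_\F$ --- i.e.\ its $\PSL(n,\F)$-conjugacy class lies in the $\F$-Hitchin component, which by \Cref{dfn_FHitchin} means exactly that $\rho$ is $\F$-Hitchin --- if and only if $\rho \in \mathcal{C}_\F$. Here I would spell out that for a $\PSL(n,\F)$-invariant set the membership ``the conjugacy class lies in $\Hit(S,n)_\F$'' is equivalent to ``$\rho$ itself lies in $p_\F^{-1}(\Hit(S,n)_\F)=\mathcal{C}_\F$'', precisely because the $p_\F$-fibres over $\Hit(S,n)_\F$ are single $\PSL(n,\F)$-orbits.

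\textbf{Step 2: $\Hom_\Hit(\pi_1(S),\PSL(n,\F))$ is the semi-algebraically connected component containing $\iota_n \circ j$.}
By \Cref{thm_RConnSemiAlgConn}, $\mathcal{C}$ is one of the finitely many connected components $C_1,\dots,C_m$ of the semi-algebraic set $\Hom(\pi_1(S),\PSL(n,\R))$, say $\mathcal{C} = C_k$. By \Cref{thm_ExtConnComp}, $(C_1)_\F,\dots,(C_m)_\F$ are exactly the semi-algebraically connected components of $\Hom(\pi_1(S),\PSL(n,\R))_\F = \Hom(\pi_1(S),\PSL(n,\F))$; in particular $\mathcal{C}_\F = (C_k)_\F$ is one of them. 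It remains to note that $\iota_n \circ j \in \Hom(\pi_1(S),\PSL(n,\R)) \subseteq \Hom(\pi_1(S),\PSL(n,\F))$ lies in $\mathcal{C}$ by \Cref{dfn_HitchinHom}, hence in $\mathcal{C}_\F$, so $\mathcal{C}_\F$ is the (unique) semi-algebraically connected component of $\Hom(\pi_1(S),\PSL(n,\F))$ containing $\iota_n \circ j$. This also matches the definition of $\Hom_\Hit(\pi_1(S),\PSL(n,\F))$ as the $\F$-extension of $\Hom_\Hit(\pi_1(S),\PSL(n,\R))$.

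\textbf{Main obstacle.} The one point that needs genuine care, rather than a citation, is the identification in Step~1 of ``the conjugacy class lies in $\Hit(S,n)_\F$'' with ``$\rho$ lies in $\mathcal{C}_\F$''. Over $\R$ this is immediate because $\Hit(S,n)$ is literally the quotient $\mathcal{C}/\PSL(n,\R)$ and the conjugation action is the fibre structure of $p$; over $\F$ one cannot speak of a quotient space directly, so one must invoke \Cref{lem_ExtSemiAlgModel} to know that the $p_\F$-fibres over $\Hit(S,n)_\F$ are still exactly $\PSL(n,\F)$-orbits --- the subtlety being that a priori $p_\F^{-1}(\Hit(S,n)_\F)$ could be larger than $\mathcal{C}_\F$, which \Cref{lem_ExtSemiAlgModel} rules out by identifying $p(\mathcal{C})_\F = p_\F(\mathcal{C}_\F)$ with the image and using that $p_\F$ has the expected orbit fibres. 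I would make this implication explicit rather than leave it to the reader.
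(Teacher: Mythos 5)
Your proposal is correct and follows essentially the same route as the paper's (very terse) proof: the first assertion via \Cref{lem_ExtSemiAlgModel}, the second via \Cref{thm_RConnSemiAlgConn} and \Cref{thm_ExtConnComp}. Your additional care in Step~1 --- spelling out that $p_\F^{-1}(\Hit(S,n)_\F) = \mathcal{C}_\F$ precisely because $p_\F$ has $\PSL(n,\F)$-orbit fibres over the image --- is exactly the content that \Cref{lem_ExtSemiAlgModel} provides and that the paper leaves implicit.
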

\begin{proof}
The first statement follows from \Cref{lem_ExtSemiAlgModel}.
The second statement follows since the extension of semi-algebraic sets to $\F$ preserves semi-algebraically connected components, see \Cref{thm_RConnSemiAlgConn} and \Cref{thm_ExtConnComp}.
\end{proof}

All of these different points of view are helpful in the following and we might confound a representation with its conjugacy class.

\subsection{Real spectrum compactification of the Hitchin component}
\label{subs_RSCCharVar}

We have seen in \Cref{subs_RSCSemiAlgebraicSets} how to define the real spectrum compactification of a closed semi-algebraic set,  which we now would like to apply in the context of character varieties and the Hitchin component.

Let $G$ be either $\SL(n,\R)$ or $\PSL(n,\R)$.
In \Cref{subsection_SemiAlgModels},  we alluded to the fact that there exists a semi-algebraic model $p$ for $\chi(S, G)$,  that identifies $\chi(S, G)$ with a closed semi-algebraic subset $Y(S,G)\coloneqq \Im(p)$ of some $\R^d$.
We can embed $\chi(S, G)$ in the compact space
\[\rsp(\chi(S, G)) \coloneqq \widetilde{Y(S, G)} \subseteq \rsp(\R[X_1,\ldots,X_d])\]
with dense image,  and hence the latter provides a compactification of $\chi(S, G)$.

To get a better description of the points in $\chi(S,G)$,  we need to introduce some notation.
Recall that if $\F$ is a real closed field,  we write $G_\F$ for the $\F$-extension of $G$,  which agrees with either  $\SL(n,\F)$ or $\PSL(n,\F)$,  see \Cref{rem_ExtensionSL}.

\begin{dfn}
\label{dfn_RhoMinField}
Let $\F$ be a real closed field and $\rho \from \pi_1(S) \to G$ a homomorphism.
Then $\F$ is \emph{$\rho$-minimal} if there is no proper real closed subfield $\K \subseteq \F$ such that $\rho$ is conjugate into $G_\K$ by an element of $G_\F$.
\end{dfn}

\begin{propo}[{\cite[Corollary 7.9]{BurgerIozziParreauPozzetti_RSCCharacterVarieties2}}]
If $\rho \from \pi_1(S) \to G_\F$ is reductive, then there exists a unique real closed subfield $\F_\rho \subseteq \F$ such that $\rho$ is $G_\F$-conjugate to a representation $\rho' \from \pi_1(S) \to G_{\F_\rho}$ and $\F_\rho$ is $\rho'$-minimal. 
\end{propo}

%


\begin{theor}[{\cite[Theorem 1.1]{BurgerIozziParreauPozzetti_RSCCharacterVarieties2}}]
\label{thm_RSCCharBoundary}
Let $G$ be either $\SL(n,\F)$ or $\PSL(n,\F)$.
Then
\[
\rsp(\chi(S, G))\cong \left\{ (\rho, \F_\rho) \ \middle\vert 
\begin{array}{l}
	\rho \from \pi_1(S) \to  G_{\F_\rho} \textrm{ reductive,}\\
\F_\rho \supseteq \R \textrm{ real closed and } \rho\textrm{-minimal}
\end{array}
\right\}_{\Big/ \sim} \,,
\]
where two representations $\rho_1 \from \Gamma \to G_{\F_{\rho_1}}$ and $\rho_2 \from \Gamma \to G_{\F_{\rho_2}}$ are \emph{equivalent} ($\sim$) if there exists an order-preserving isomorphism $i \from \F_{\rho_1} \to \F_{\rho_2}$ such that $\rho_2$ and $i \circ \rho_1$ are $G_{\F_{\rho_2}}$-conjugate.
Points in the boundary correspond to representations into $ \PSL(n,\F_\rho)$ for $\F_\rho$ a non-Archimedean field.
Moreover $\F_\rho$ is of finite transcendence degree over $\R$.
\end{theor}

The Hitchin component $\Hit(S,n) \subseteq \chi(S,\PSL(n,\R))$ is again closed semi-algebraic as it is a connected component of a closed semi-algebraic set.
We denote by $\rsp(\Hit(S,n))$ its real spectrum compactification,  i.e.\ the closure of $\Hit(S,n)$ in $\rsp(\chi(S,\PSL(n,\R)))$.
%
Recall from \Cref{dfn_FHitchin} that a representation $\rho \from \pi_1(S) \to \PSL(n,\F)$ is $\F$-Hitchin if its $\PSL(n,\F)$-equivalence class lies in the $\F$-extension of $\Hit(S,n)$.
\begin{theor}[{\cite[Theorem 46]{BurgerIozziParreauPozzetti_RSCCharacterVarieties}}]
\label{thm_RSpHitchinComponent}
\[
\rsp(\Hit(S,n))\cong \left\{ (\rho, \F_\rho) \ \middle\vert 
\begin{array}{l}
	\rho \from \pi_1(S) \to \PSL(n,\F_\rho) \textrm{ is } \F_\rho\textrm{-Hitchin,} \\
	\F_\rho \supseteq \R \textrm{ real closed and } \rho\textrm{-minimal}
\end{array}
\right\}_{\Big/ \sim} \, ,
\]
for the same equivalence relation as in the theorem before.
%
%
Furthermore, $\F_\rho = \overline{\R(\tr(\Ad(\rho)))}^r$,  where $\Ad$ is the adjoint representation of $\PSL(n,\F_\rho)$.
In addition, $(\rho,\F_\rho)$ represents a closed point if and only if $\F_\rho$ is Archimedean over the ring of traces $\R[\tr(\Ad(\rho))]$ of $\Ad \circ \rho$.
%
%
\end{theor}

Loosely speaking,  the goal of the following chapters is to replace the word ``$\F_\rho$-Hitchin'' by the word ``$\F_\rho$-positive'' in this theorem.

\section{Flags} \label{section_PreliminariesFlags}
Let $\F$ be any field.
A \emph{(full) flag $E$ in $\F^n$} is an increasing sequence of subspaces in the finite-dimensional $\F$-vector space $\F^n$, i.e.\
\[E=\left( E^{(1)} \subset \ldots \subset E^{(n-1)} \right),\]
such that $\dim\left(E^{(a)}\right)=a$ for all $a=1, \ldots, n-1$.
Given a flag $E$ we will use the notation $E^{(a)}$ to denote the $a$-dimensional subspace of $\F^n$ defined by $E$.
In this article we are only concerned with full flags, and we will omit the word \emph{full} in the following when referring to full flags in $\F^n$.
The natural action of the general linear group $\GL(n,\F)$ on $\F^n$ induces an action on the set of flags $\Flag(\F^n)$, which descends to a transitive action of the projective linear group $\PGL(n,\F)$ on $\Flag(\F^n)$.

\begin{dfn}\label{dfn_TransverseFlags}
A $k$-tuple $(E_1,\ldots, E_k)$ of flags in $\F^n$ is called \emph{transverse} if for every $a_1, \ldots, a_k \in \{0, \ldots, n\}$ with $\sum_{i=1}^k a_i = n$
\[ E_1^{(a_1)} + \ldots + E_k^{(a_k)} = \F^n. \]
The space of $k$-tuples of transverse flags will be denoted by $\Flag(\F^n)^{(k)}$.
\end{dfn}

We observe that $\PGL(n,\F)$ acts transitively on pairs of transverse flags.
As soon as we consider $k>2$ there is a whole configuration space of $k$-tuples of transverse flags up to $\PGL(n,\F)$-action.
Let us begin by considering the case $k=3$.

\begin{propo} \label{propo_StabilizerTripleFlags}
Let $(E,F,G) \in \Flag(\F^n)^{(3)}$ be a transverse triple of flags.
Then $\Stab_{\PGL(\F^n)}(E,F,G) = \{ \Id_{\PGL(\F^n)} \}$.
\end{propo}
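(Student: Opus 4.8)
The plan is to reduce to a normal form using the transitivity of $\PGL(n,\F)$ on transverse triples, and then compute the stabilizer of that explicit triple. First I would recall that $\PGL(n,\F)$ acts transitively on pairs of transverse flags, so after conjugating we may assume $E$ is the standard flag spanned by $e_1, e_1\wedge e_2, \ldots$ (i.e.\ $E^{(a)} = \Span(e_1,\ldots,e_a)$) and $F$ is the opposite standard flag $F^{(a)} = \Span(e_n, e_{n-1}, \ldots, e_{n-a+1})$. An element $g \in \PGL(n,\F)$ stabilizing both $E$ and $F$ must be represented by a matrix preserving each $E^{(a)}$ and each $F^{(a)}$; preserving all the $E^{(a)}$ forces the matrix to be upper triangular, and preserving all the $F^{(a)}$ forces it to be lower triangular, hence $g$ is represented by a diagonal matrix $D = \mathrm{diag}(\lambda_1, \ldots, \lambda_n)$ with all $\lambda_i \neq 0$.

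The next step is to bring in the third flag $G$. Transversality of $(E, F, G)$ together with transversality of the pair $(E,F)$ means $G$ is in "general position" with respect to the coordinate axes; concretely one shows that $G$ can be chosen, after a further conjugation by an element of $\Stab_{\PGL(n,\F)}(E,F)$ (i.e.\ by a diagonal matrix), so that $G^{(1)} = \Span(e_1 + e_2 + \cdots + e_n)$, and more generally $G^{(a)}$ is spanned by the appropriate "all-ones" type vectors — the key point being that the transversality conditions $E^{(a)} + F^{(b)} + G^{(c)} = \F^n$ for $a+b+c = n$ guarantee that in the expansion of a generator of $G^{(1)}$ in the basis $e_1, \ldots, e_n$ all coordinates are nonzero, so rescaling basis vectors normalizes them to $1$. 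Then the diagonal matrix $D$ above, in order to also fix $G$, must send $e_1 + \cdots + e_n$ to a scalar multiple of itself, which forces $\lambda_1 = \lambda_2 = \cdots = \lambda_n$, i.e.\ $D$ is scalar, i.e.\ $g = \Id$ in $\PGL(n,\F)$.

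I would organize the argument so that the two normalizations are done cleanly: first use transitivity on pairs to fix $(E,F)$, observe the stabilizer of the pair is the diagonal torus $T \subset \PGL(n,\F)$, then show $T$ acts simply transitively on the set of flags $G$ making $(E,F,G)$ transverse — transitivity gives the normal form $G = G_0$ with the all-ones generators, and the stabilizer computation above gives that the action is free, which is exactly the claim $\Stab_{\PGL(n,\F)}(E,F,G) = \{\Id\}$. The main obstacle, and the only step needing genuine care, is verifying that the transversality hypothesis really does force every coordinate of a generator of $G^{(1)}$ (and, inductively, the relevant minors describing $G^{(a)}$) to be nonzero, so that the diagonal normalization is actually available; this is a linear-algebra bookkeeping argument with the conditions $E^{(a)} + F^{(b)} + G^{(c)} = \F^n$, and it works uniformly over any field $\F$, so no hypothesis beyond "field" is needed. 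Everything else is formal.
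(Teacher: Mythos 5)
Your proof follows essentially the same route as the paper's: normalize $E$ to the standard ascending flag and $F$ to the standard descending flag (so that $\Stab_{\PGL(n,\F)}(E,F)$ is the diagonal torus), use the transversality conditions $E^{(a)}+F^{(b)}+G^{(c)}=\F^n$ to show a generator of $G^{(1)}$ has all coordinates nonzero, rescale the basis so that generator becomes $e_1+\cdots+e_n$, and conclude that a diagonal matrix fixing this line is scalar. Two small remarks on the organizational aside. First, the claim that the torus $T$ acts simply transitively on the set of all flags $G$ making $(E,F,G)$ transverse is false for $n\geq 3$: $T$ has dimension $n-1$ whereas the space of such $G$ has dimension $n(n-1)/2$; what is true, and what your computation actually uses, is that $T$ acts simply transitively on the set of lines $G^{(1)}$ having all coordinates nonzero. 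Second, for the same reason you do not need to worry about normalizing $G^{(a)}$ for $a\geq 2$ (the \emph{``inductively, the relevant minors''} step you flag as needing care): once $G^{(1)}$ is pinned down, freeness of the $T$-action is already forced, exactly as in your last sentence, which is also precisely where the paper's proof stops.
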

\begin{proof}
By the transversality of the triple, we can choose a basis $e'_1,\ldots, e'_n$ of $\F^n$ such that for all $a=0,\ldots,n$
\[ E^{(a)} = \langle e'_1, \ldots, e'_a \rangle, \textrm{ and } F^{(a)} = \langle e'_n, \ldots, e'_{n-a+1} \rangle.\]
Let $0 \neq g \in G^{(1)}$ be a generator, and write $g = \sum_{i=1}^n g_i e'_i$ for some $g_i \in \F$.
Again by transversality, $g_i \neq 0$ for all $i=1,\ldots,n$, and we set $e_i \coloneqq \tfrac{1}{g_i} e'_i$.
Then $e_1, \ldots, e_n$ is a basis of $\F^n$ such that $g= e_1 + \ldots + e_n$.
Let now $\varphi \in \GL(\F^n)$ be in the stabilizer of $(E,F,G)$.
The matrix $M$ representing $\varphi$ in the basis $e_1,\ldots,e_n$ is diagonal.
Since $M$ maps the vector $e_1+\ldots + e_n$ to a non-trivial multiple of itself, it follows that there exists $0 \neq \alpha \in \F$ such that $M = \textrm{diag}(\alpha,\ldots,\alpha)$, and thus $M$ lies in the center of $\GL(n,\F)$.
\end{proof}

To parametrize the configuration space of triples of transverse flags,  Fock--Goncharov \cite{FockGoncharov_ModuliSpacesLocalSystemsHigherTeichmuellerTheory} introduced so-called triple ratios.
The triple ratios are expressed in terms of the exterior algebra $\bigwedge^n \F^n $ of $\F^n$.
If $E$ is a full flag, then for every $a$ between $1$ and $n$ the space $\bigwedge^a E^{(a)}$ is isomorphic to $\F$.
Choose a non-zero element $e^{(a)} \in \bigwedge^a E^{(a)}$.
We use the same notation to denote its image in $\bigwedge^a \F^n $.
The following definition is independent of the choices of $e^{(a)} \in \bigwedge^a E^{(a)}$.

\begin{dfn}
\label{dfn_TripleRatio}
Let $(E,F,G)$ be a transverse triple of flags in $\F^n$.
For $a, b, c \in \{1, \ldots,  n-2\}$ with $a+b+c=n$, we define the \emph{$(abc)$-triple ratio $T_{abc}$} of $(E,F,G)$ by
\begin{align*}
T_{abc}(E,F,G)&= 
\frac{e^{(a+1)}\wedge f^{(b)} \wedge g^{(c-1)}}{e^{(a-1)}\wedge f^{(b)} \wedge g^{(c+1)}} \\
&\quad \quad \quad \cdot \frac{e^{(a)}\wedge f^{(b-1)} \wedge g^{(c+1)}}{e^{(a)}\wedge f^{(b+1)} \wedge g^{(c-1)}}\cdot \frac{e^{(a-1)}\wedge f^{(b+1)} \wedge g^{(c)}}{e^{(a+1)}\wedge f^{(b-1)} \wedge g^{(c)}} \in \F.
\end{align*}
\end{dfn}

Note that all of the involved expressions are non-zero by the transversality of the triple.
The triple ratios are invariant under the action of $\PGL(n,\F)$.

Let us now turn our attention to the case $k=4$.
We consider other $\PGL(n,\F)$-invariant rational functions, so-called double ratios, which are, similarly to the triple ratios, expressed in terms of the exterior algebra $\bigwedge^n \F^n $ of $\F^n$, and we keep the notions that were introduced above.

\begin{dfn}
\label{dfn_DoubleRatio}
Let $(E,F,G,H)$ be a transverse quadruple of flags in $\F^n$.
For $a=1,\ldots, n-1$ we define the \emph{$a$-th double ratio $D_a$} of $(E,F,G,H)$ by
\[D_{a}(E,F,G,H)= -
\frac{e^{(a)}\wedge f^{(n-a-1)} \wedge g^{(1)}}{e^{(a)}\wedge f^{(n-a-1)} \wedge h^{(1)}} \cdot \frac{e^{(a-1)}\wedge f^{(n-a)} \wedge h^{(1)}}{e^{(a-1)}\wedge f^{(n-a)} \wedge g^{(1)}} \in \F.
\]
\end{dfn}

We remark that the definition of the double ratios only involves the one-dimensional subspaces of the flags $G$ and $H$.
We summarize Lemmas $5$, $6$ and $7$ from \cite{BonahonDreyer_ParametrizingHitchinComponents}, which relate the different ratios defined above and explain how they behave under permutations of the involved flags.
The proof is a direct computation.

\begin{lem}[{\cite[Lemmas 5, 6, 7]{BonahonDreyer_ParametrizingHitchinComponents}}] \label{lem_PropertiesOfRatiosUnderPermutation}
Let $(E,F,G)$ be a transverse triple of flags in $\F^n$.
Then
\begin{enumerate}
\item \label{lem_PropertiesOfRatiosUnderPermutation_item_TripleRatio}
$T_{abc}(E,F,G) = T_{bca}(F,G,E) = T_{bac}(F,E,G)^{-1}$ for all $a+b+ c=n$, $a,b,c \in \N_{\geq 1}$.
\end{enumerate}
Let $H$ be a forth flag in $\F^n$ such that $(E,F,G,H)$ is a transverse quadruple.
Then
\begin{enumerate}[resume]
\item \label{lem_PropertiesOfRatiosUnderPermutation_item_DoubleRatio1}
$D_a(E,F,H,G)= D_a(E,F,G,H)^{-1}$ for all $a=1, \ldots, n-1$,  and
\item \label{lem_PropertiesOfRatiosUnderPermutation_item_DoubleRatio2}
$D_a(F,E,G,H)= D_{n-a}(E,F,G,H)^{-1}$ for all $a=1, \ldots, n-1$.
\end{enumerate}
\end{lem}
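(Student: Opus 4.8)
The plan is to reduce everything to the graded-commutativity of the exterior algebra and then match terms directly. Recall that for $\alpha\in\bigwedge^p\F^n$ and $\beta\in\bigwedge^q\F^n$ one has $\alpha\wedge\beta=(-1)^{pq}\beta\wedge\alpha$, so a nonzero top-degree product $u^{(p)}\wedge v^{(q)}\wedge w^{(r)}$ with $p+q+r=n$ equals, up to a sign depending only on the ordered triple of degrees, the same three factors written in any prescribed order. First I would rewrite, in each of the three fractions of $T_{bca}(F,G,E)$ and of $T_{bac}(F,E,G)$, every wedge in the canonical order $e^{(\cdot)}\wedge f^{(\cdot)}\wedge g^{(\cdot)}$; then observe that in each fraction the reordering sign of the numerator coincides with that of the denominator, so these signs cancel; and finally read off that the three resulting fractions are literally the three fractions of $T_{abc}(E,F,G)$ (respectively their reciprocals), merely listed in a different order. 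This proves \ref{lem_PropertiesOfRatiosUnderPermutation_item_TripleRatio}.

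The substance is the sign cancellation, and this is the step I expect to require the most care. For the cyclic identity one moves the $e$-block from the third slot to the first, so $f^{(p)}\wedge g^{(q)}\wedge e^{(r)}$ picks up $(-1)^{r(n-r)}$; in each of the three fractions of $T_{bca}(F,G,E)$ the two relevant $e$-degrees are either both equal to $a$ or are $a-1$ and $a+1$, and since $(a-1)(n-a+1)\equiv(a+1)(n-a-1)\pmod 2$ the numerator and denominator signs agree. For the transposition identity one swaps the adjacent $e$- and $f$-blocks, picking up $(-1)^{(\deg e)(\deg f)}$; in every fraction one of the two degrees is fixed and the other changes by $2$, or both change by $2$, and the parities $a(b+1)\equiv a(b-1)$, $(a-1)b\equiv(a+1)b$, $(a+1)(b-1)\equiv(a-1)(b+1)\pmod 2$ show the signs cancel fraction by fraction. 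After these cancellations, a direct comparison with \Cref{dfn_TripleRatio}, matching the three fractions up to a cyclic rearrangement, completes \ref{lem_PropertiesOfRatiosUnderPermutation_item_TripleRatio}.

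The double-ratio identities are lighter. For \ref{lem_PropertiesOfRatiosUnderPermutation_item_DoubleRatio1}, exchanging the third and fourth flags in \Cref{dfn_DoubleRatio} only swaps the two wedges within each of the two fractions, hence replaces the product $P$ of those fractions by $P^{-1}$; since $D_a(E,F,G,H)=-P$ and $D_a(E,F,H,G)=-P^{-1}=(-P)^{-1}$, the identity follows with no further computation. For \ref{lem_PropertiesOfRatiosUnderPermutation_item_DoubleRatio2}, I would swap $E$ and $F$ and rewrite each wedge in the order $e^{(\cdot)}\wedge f^{(\cdot)}\wedge\cdot$; here the reordering sign $(-1)^{(\deg e)(\deg f)}$ is the same in the numerator and denominator of each fraction because only the one-dimensional $g$- or $h$-part differs between them, so the signs cancel. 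Substituting $a\mapsto n-a$ into \Cref{dfn_DoubleRatio} then identifies the four wedges in the sign-corrected $D_a(F,E,G,H)$ with those in $D_{n-a}(E,F,G,H)$ in such a way that the product $D_a(F,E,G,H)\cdot D_{n-a}(E,F,G,H)$ telescopes to $1$, the two leading minus signs contributing a factor $+1$. Throughout, transversality of the tuple guarantees that every wedge is nonzero, so all ratios are defined; the only genuine obstacle is bookkeeping the indices and the signs correctly.
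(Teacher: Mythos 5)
Your proof is correct and is precisely the ``direct computation'' the paper alludes to when citing Bonahon--Dreyer: one reorders each wedge into the canonical $e\wedge f\wedge g$ order, checks that the graded-commutativity signs appearing in numerator and denominator of each fraction agree (the parity calculations you give are all right), and then matches fractions against \Cref{dfn_TripleRatio} and \Cref{dfn_DoubleRatio}, with the telescoping argument handling the two double-ratio identities. There is no gap and no departure from the paper's approach.
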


From now on let $\F$ be real closed.

\begin{lem}
\label{lem_FlagSpacesAlg}
The spaces $\Flag(\F^n)$, $\Flag(\F^n)^k$ and $\Flag(\F^n)^{(k)}$ are semi-algebraic.
\end{lem}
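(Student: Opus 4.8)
The plan is to realise each of these flag spaces as (a projection of) an explicitly described semi-algebraic set and then invoke the Tarski--Seidenberg principle together with the stability of semi-algebraic sets under images. First I would describe $\Flag(\F^n)$ as a quotient, or rather as a semi-algebraic subset of a product of Grassmannians. Concretely, the Grassmannian $\Gr(a,\F^n)$ embeds into projective space via Pl\"ucker coordinates, and projective space $\PP(\bigwedge^a \F^n)$ is itself semi-algebraic since $\PGL$ acts and one can use affine charts; alternatively, and more elementarily, I would parametrise a flag by an ordered $n$-tuple of vectors $(v_1,\dots,v_n)$ forming a basis of $\F^n$, with $E^{(a)} = \langle v_1,\dots,v_a\rangle$. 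The set of such bases is the semi-algebraic set $\GL(n,\F) \subseteq \F^{n^2}$ (cut out by $\det \neq 0$), and the map sending a basis to its flag is a surjection onto $\Flag(\F^n)$ whose fibres are the cosets of the Borel subgroup of upper-triangular matrices. So $\Flag(\F^n)$ is semi-algebraically identified with $\GL(n,\F)/B$; to see that this quotient carries a semi-algebraic structure I would either use the standard cell decomposition into Schubert cells (each an affine space $\F^{d}$, glued along semi-algebraic maps) or, cleanest, embed $\Flag(\F^n)$ into $\prod_{a=1}^{n-1}\Gr(a,\F^n)$ and note that the condition ``the $a$-th subspace is contained in the $(a{+}1)$-st'' is a polynomial (Pl\"ucker-type) condition, hence $\Flag(\F^n)$ is a closed semi-algebraic subset of a finite product of Grassmannians, each of which is semi-algebraic by the affine-chart description.

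Next, $\Flag(\F^n)^k$ is just the $k$-fold Cartesian product of $\Flag(\F^n)$ with itself, and a finite product of semi-algebraic sets is semi-algebraic (the defining Boolean combinations are pulled back along the coordinate projections $\F^{kd}\to\F^d$, which are polynomial, hence semi-algebraic by \Cref{propo_ImSemiAlgMap}), so this case is immediate once the first is done. Finally, for $\Flag(\F^n)^{(k)}$ I would observe that transversality of a $k$-tuple $(E_1,\dots,E_k)$ is, by \Cref{dfn_TransverseFlags}, the conjunction over the finitely many tuples $(a_1,\dots,a_k)$ with $\sum a_i = n$ of the conditions $E_1^{(a_1)}+\dots+E_k^{(a_k)}=\F^n$; each such condition says that a certain matrix built polynomially from the Pl\"ucker (or basis) coordinates of the flags has full rank, i.e.\ some maximal minor is nonzero, which is a finite disjunction of polynomial inequations. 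Hence $\Flag(\F^n)^{(k)}$ is the intersection of $\Flag(\F^n)^k$ with a semi-algebraic set, so it is semi-algebraic.

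The main obstacle is really just the bookkeeping of step one: giving $\Flag(\F^n)$ a \emph{canonical} semi-algebraic structure (independent of chart choices) and checking that the various descriptions — as $\GL(n,\F)/B$, as a Schubert-cell complex, and as a subvariety of a product of Grassmannians — all agree up to semi-algebraic homeomorphism, and that the natural $\PGL(n,\F)$-action is semi-algebraic. Once that is in place, the rest is a routine application of \Cref{propo_CompSemiAlgMap} and \Cref{propo_ImSemiAlgMap}: products, finite intersections, and images under polynomial maps all preserve semi-algebraicity, and full-rank conditions are semi-algebraic. I would also remark, for use later in the paper, that when $\F$ is a real closed extension of $\R$ these sets are precisely the $\F$-extensions of the corresponding real flag spaces, which follows because the defining polynomial conditions have coefficients in $\Q$ (or $\R$) and \Cref{dfn_ExtSemiAlgSets} respects Boolean combinations of such sign conditions.
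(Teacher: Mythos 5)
Your proposal is correct and follows essentially the same route as the paper: embed $\Flag(\F^n)$ into a product of Grassmannians, cut out by the nesting conditions; note $\Flag(\F^n)^k$ is a finite product; and express transversality as a finite conjunction of full-rank conditions, each a finite disjunction of nonvanishing minors. The one place the paper is more careful than your sketch is in making the Grassmannian itself semi-algebraic: it uses the model of \cite[Proposition 3.4.4]{BochnakCosteRoy_RealAlgebraicGeometry}, which identifies a subspace $V$ with its orthogonal projection matrix $P_V$, giving an honest embedding of $\Gr(a,\F^n)$ into $\F^{n^2}$ and making the nesting condition simply $P_{W}P_{V}=P_{V}$; your appeal to ``affine charts'' for $\PP(\bigwedge^a\F^n)$ does not by itself realize the Grassmannian as a subset of a single $\F^d$, which is what \Cref{dfn_SemiAlgSet} requires, and the canonicality worry you raise is unnecessary once one fixes this concrete model.
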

\begin{proof}
Denote by $\Gr(k,\F^n)$ the Grassmannian of $k$-dimensional subspaces of $\F^n$.
The product of Grassmannians $\Gr(1,\F^n) \times \ldots \times \Gr(n-1,\F^n)$ is an algebraic set, see for example \cite[Proposition 3.4.4]{BochnakCosteRoy_RealAlgebraicGeometry}.
In the proof the authors define a bijection
\begin{align*}
\Psi_k \from \Gr(k,\F^n) &\to \{ M \in \Mat(n,\F) \mid M^\top =M,\, M^2 = M, \, \tr(M)=k\} \eqqcolon H_k\\
V &\mapsto P_V,
\end{align*}
where $P_V$ is the matrix of the orthogonal projection on $V$ with respect to some scalar product on $\F^n$.
If $V, W \subseteq \F^n$ are two subspaces,   then $V  \subseteq W$ if and only if $P_{W} P_{V} = P_{V}$.
Thus the image of $\Flag(\F^n)$ under the map $\Psi_1 \times \cdots \times \Psi_{n-1}$ is the algebraic set 
\[ \{ (M_1,\ldots,M_{n-1}) \in H_1 \times \cdots \times H_{n-1} \mid M_{i+1}M_i = M_i \textrm{ for all } i=1,\ldots,n-2 \}.\]
Thus  $\Flag(\F^n)$ and  $\Flag(\F^n)^{k}$ are algebraic.

Given a $k$-tuple of flags $(E_1,\ldots,E_k)$ and $a_1,\ldots,a_k \in \{1,\ldots,n-1\}$ with $\sum_{i=1}^k a_k = n$,  then $E_1^{(a_1)} + \ldots + E_k^{(a_k)} = \F^n$ is equivalent to asking that
\[ \textnormal{rank} \left( M_{1,a_1} | \ldots | M_{k,a_k} \right) = n,\]
where each $E_i = (M_{i,1}, \ldots, M_{i,n-1}) \in H_1 \times \ldots \times H_{n-1}$ with $M_{i,j+1}M_{i,j}=M_{i,j}$ for all $i=1,\ldots,k$ and $j=1,\ldots,n-2$.
Asking for the rank of a matrix to be equal to $n$ can be expressed as the existence of a submatrix of size $n\times n$ that has non-zero determinant.
This condition is semi-algebraic by Tarski--Seidenberg (\Cref{thm_TarskiSeidenberg}).
It follows that $\Flag(\F^n)^{(k)}$ is semi-algebraic.
\end{proof}

\begin{propo}
\label{propo_TripleDoubleRatioSemiAlg}
The triple and double ratios
\[T_{abc} \from \Flag(\F^n)^{(3)} \to \F, \quad D_a \from \Flag(\F^n)^{(4)} \to \F\]
are semi-algebraic maps.
\end{propo}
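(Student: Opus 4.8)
The plan is to exhibit $T_{abc}$ and $D_a$ as compositions of semi-algebraic maps, invoking \Cref{propo_CompSemiAlgMap} and \Cref{propo_ImSemiAlgMap}. The essential point is that both ratios are, after choosing representatives, ratios of polynomial expressions in the Plücker-type data of the flags, and ratios of polynomials are rational maps, hence semi-algebraic on the locus where the denominator does not vanish. The only subtlety is that the flag space $\Flag(\F^n)$ was presented in \Cref{lem_FlagSpacesAlg} via orthogonal projection matrices $P_V$, not via vectors or decomposable multivectors, so I first need to bridge from that model to an honest choice of vectors $e^{(1)}, \ldots, e^{(n-1)}$ (equivalently, decomposable elements $e^{(a)} \in \bigwedge^a\F^n$) depending semi-algebraically on the flag.

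First I would observe that there is a semi-algebraic map assigning to a flag $E = (P_1, \ldots, P_{n-1})$ a basis adapted to it: for instance, send $E$ to $(v_1, \ldots, v_n)$ where $v_a$ is a fixed semi-algebraic choice of unit vector in $E^{(a)} \cap (E^{(a-1)})^\perp = \im(P_a - P_{a-1})$ (with $P_0 = 0$, $P_n = \Id$), say the normalization of $(P_a - P_{a-1})$ applied to the first standard basis vector lying outside the relevant hyperplane — a choice one can make uniformly by a finite case distinction, each case a rational condition. Then $e^{(a)} := v_1 \wedge \cdots \wedge v_a$ is a nonzero element of $\bigwedge^a E^{(a)}$, and the assignment $E \mapsto (e^{(1)}, \ldots, e^{(n-1)})$ is semi-algebraic as a map $\Flag(\F^n) \to \prod_a \bigwedge^a\F^n$ (each $\bigwedge^a\F^n$ identified with some $\F^{\binom{n}{a}}$, wedge products being polynomial). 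Doing this simultaneously for the three (resp. four) flags, we get a semi-algebraic map from $\Flag(\F^n)^{(3)}$ (resp. $\Flag(\F^n)^{(4)}$), which is semi-algebraic by \Cref{lem_FlagSpacesAlg}, to a product of exterior powers.

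Next, each factor appearing in \Cref{dfn_TripleRatio} and \Cref{dfn_DoubleRatio}, such as $e^{(a+1)} \wedge f^{(b)} \wedge g^{(c-1)}$, is an element of $\bigwedge^n\F^n \cong \F$, and is a polynomial (in fact multilinear) function of the chosen multivectors; hence the numerator and denominator of $T_{abc}$ and $D_a$ are polynomial functions on the product of exterior powers, and so semi-algebraic. On $\Flag(\F^n)^{(3)}$ respectively $\Flag(\F^n)^{(4)}$ the transversality hypothesis guarantees every such wedge is nonzero — this is exactly the remark following \Cref{dfn_TripleRatio} — so the denominators never vanish, and the quotient map $\F \times (\F\setminus\{0\}) \to \F$, $(x,y) \mapsto x/y$, is semi-algebraic (its graph $\{(x,y,z) : zy = x,\ y \neq 0\}$ is basic semi-algebraic). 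Composing all of this via \Cref{propo_CompSemiAlgMap} shows $T_{abc}$ and $D_a$ are semi-algebraic. The one step that needs a little care, and which I expect to be the main technical nuisance rather than a genuine obstacle, is making the choice of adapted basis genuinely semi-algebraic and globally defined on all of $\Flag(\F^n)$ — but since a semi-algebraic map need not be continuous, one can simply partition $\Flag(\F^n)$ into finitely many semi-algebraic pieces on each of which an explicit rational formula for the basis works, and glue.

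\begin{proof}
By \Cref{lem_FlagSpacesAlg} the domains $\Flag(\F^n)^{(3)}$ and $\Flag(\F^n)^{(4)}$ are semi-algebraic, and in the proof of that lemma a flag $E$ is encoded by the tuple $(P_1,\ldots,P_{n-1})$ of orthogonal projections $P_a$ onto $E^{(a)}$. Set $P_0 = 0$ and $P_n = \Id$. For each subset $J = \{j_1 < \cdots < j_a\}$ of $\{1,\ldots,n\}$ of size $a$, let $U_{a,J}$ be the set of flags for which the vectors $(P_a - P_{a-1})\epsilon_{j}$, for $j$ running over the first coordinate of $J$ in increasing order, together span $\im(P_a - P_{a-1})$, where $\epsilon_1,\ldots,\epsilon_n$ is the standard basis; this is a semi-algebraic condition (nonvanishing of a determinant). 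These sets cover $\Flag(\F^n)$, and on a fixed choice of such data for all $a$ simultaneously the assignment
\[
E \longmapsto \bigl(e^{(1)},\ldots,e^{(n-1)}\bigr), \qquad e^{(a)} := v^{(a)}_1 \wedge \cdots \wedge v^{(a)}_a,
\]
with $v^{(a)}_i$ the indicated generators of $\im(P_a-P_{a-1})\subseteq E^{(a)}$ arranged so that $e^{(a)} \in \bigwedge^a E^{(a)}$ is nonzero, is given by polynomial (multilinear) formulas in the entries of the $P_a$, hence semi-algebraic as a map into $\prod_{a=1}^{n-1} \bigwedge^a \F^n \cong \prod_{a=1}^{n-1}\F^{\binom{n}{a}}$. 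Partitioning $\Flag(\F^n)$ into finitely many semi-algebraic pieces according to such choices yields a (not necessarily continuous) semi-algebraic map $\sigma\from \Flag(\F^n) \to \prod_{a} \bigwedge^a\F^n$ with $\sigma(E)_a$ a nonzero element of $\bigwedge^a E^{(a)}$ for every $a$.

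Applying $\sigma$ in each argument gives semi-algebraic maps
\[
\sigma^{(3)}\from \Flag(\F^n)^{(3)} \to \Bigl(\textstyle\prod_a \bigwedge^a\F^n\Bigr)^{3}, \qquad
\sigma^{(4)}\from \Flag(\F^n)^{(4)} \to \Bigl(\textstyle\prod_a \bigwedge^a\F^n\Bigr)^{4}
\]
by \Cref{propo_CompSemiAlgMap}. Each wedge expression occurring in \Cref{dfn_TripleRatio} and \Cref{dfn_DoubleRatio}, for instance $e^{(a+1)}\wedge f^{(b)}\wedge g^{(c-1)} \in \bigwedge^n\F^n \cong \F$, is a multilinear, in particular polynomial, function of the corresponding components of $\sigma^{(3)}$ or $\sigma^{(4)}$, hence a semi-algebraic function of $(E,F,G)$ respectively $(E,F,G,H)$. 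By the transversality of the tuple, all of these wedge expressions are nonzero on the respective domains, as noted after \Cref{dfn_TripleRatio}. Finally, the division map $\F \times (\F\setminus\{0\}) \to \F$, $(x,y)\mapsto x/y$, is semi-algebraic, since its graph $\{(x,y,z)\in\F^3 \mid zy = x,\ y\neq 0\}$ is semi-algebraic. Writing $T_{abc}$ and $D_a$ as the products of quotients of these semi-algebraic functions and composing via \Cref{propo_CompSemiAlgMap}, we conclude that $T_{abc}\from \Flag(\F^n)^{(3)} \to \F$ and $D_a\from \Flag(\F^n)^{(4)} \to \F$ are semi-algebraic maps.
\end{proof}
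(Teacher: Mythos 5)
Your proposal is correct and takes the same overall approach as the paper: express the triple and double ratios as (piecewise) rational functions of the orthogonal-projection coordinates on $\Flag(\F^n)$ from \Cref{lem_FlagSpacesAlg}, using transversality to guarantee the denominators never vanish. The difference is in the bookkeeping. You first construct a semi-algebraic section $\sigma$ picking out a decomposable representative $e^{(a)}\in\bigwedge^a E^{(a)}$ by choosing a flag-adapted basis, then observe that each wedge in \Cref{dfn_TripleRatio} and \Cref{dfn_DoubleRatio} is a polynomial in the coordinates of $\sigma$. The paper instead works directly with determinants of $n\times n$ submatrices of the concatenated $(n\times 3n)$-matrix $(E_a\,|\,F_{n-a-1}\,|\,G_1)$, and rather than partitioning the domain as you do, it shows explicitly that the relevant quotients of determinants are independent of the choice of submatrix. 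Both handle the same ambiguity (which minor is nonzero), yours by a finite semi-algebraic case distinction and the paper's by an invariance argument; your route is arguably more transparent about why the result is a semi-algebraic map, while the paper's yields a cleaner closed formula.

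One notational slip in your formal proof you should repair: you write ``$v^{(a)}_i$ the indicated generators of $\im(P_a-P_{a-1})$'', but $\im(P_a-P_{a-1})$ is $1$-dimensional, so wedging $a$ vectors all drawn from it would give $0$. What you mean (and what your preceding prose correctly says) is that $v_i$ is a nonzero element of $\im(P_i-P_{i-1})$ for $i=1,\dots,a$, and $e^{(a)}=v_1\wedge\cdots\wedge v_a$. Likewise the clause ``$j$ running over the first coordinate of $J$'' does not parse; you should either index by $j\in J$ (one index per level of the flag) or replace the flag-adapted construction by simply picking $a$ columns of $P_a$ that span $\im(P_a)$, which is closer to the paper. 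Neither repair changes the substance. Also, your use of unit vectors and normalization is unnecessary and introduces square roots for no gain; any nonzero vector in $\im(P_i-P_{i-1})$ does the job, since only ratios of wedges appear.
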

\begin{proof}
We will only prove that $D_a$ is semi-algebraic. 
The proof for $T_{abc}$ is analogous.
Fix $a$ between $1,\ldots,n-1$.
For $(E,F,G) \in \Flag(\F^n)^{(3)}$,  we know by transversality that the $(n \times 3n)$-matrix $M_a \coloneqq (E_{a}|F_{n-a-1}|G_1)$ has rank $n$,  where $E= (E_{1}, \ldots, E_{n-1}) \in H_1 \times \ldots \times H_{n-1}$ with $E_{i+1}E_{i}=E_{i}$ for all $i=1,\ldots,n-2$ (and analogous for $F$ and $G$); compare to the notation in \Cref{lem_FlagSpacesAlg}.
Thus there exist indices $I\coloneqq \{1\leq i_1<\ldots<i_n\leq 3n\}$ such that $\det(M_{a,I}) \neq 0$,  where $M_{a,I}$ denotes the $n\times n$-submatrix of $M_a$ with rows $1,\ldots,n$ and columns $i_1,\ldots,i_n$.
Since $E_a$ has rank $a$,  it follows that the first $a$ indices are in between $1$ and $n$.
Similarly, the next $n-a-1$ indices are between $n+1$ and $2n$, and the last index is between $2n+1$ and $3n$.
Note that $\det(M_{a,I})$ depends on the choice of the indices $I$.
If $(E,F,H) \in \Flag(\F^n)^{(3)}$ is another triple of transverse flags and $N_a \coloneqq (E_{a}|F_{n-a-1}|H_1)$,  then for the same set of indices $I$ as before we have $\det(N_{a,I})\neq 0$.
Furthermore,  the quotient $\tfrac{\det(M_{a,I})}{\det(N_{a,I})}$ does not depend on the choice of $i_1,\ldots, i_{n-1}$.
Indeed,  assume for simplicity we replace $i_1$ by $i_1' \in \{1,\ldots,n\}$ to obtain the index set $I'$ and such that $\det(M_{a,I'})\neq 0$.
Then we can write the $i_1'$-th column as a linear combination of the columns of $E_a$ corresponding to the indices $i_1,\ldots,i_a$. 
Furthermore, the coefficient $\alpha$ of the column corresponding to the index $i_1$ will the non-zero.
Since the determinant is multilinear and alternating in the columns we get $\det(M_{a,I'})=\alpha \det(M_{a,I})$.
Similarly, we have $\det(N_{a,I'})=\alpha \det(N_{a,I})$,  and hence their quotient $\tfrac{\det(M_{a,I})}{\det(N_{a,I})}$ only depends on the choice of $i_n$.

By the same argument we can find indices $J\coloneqq \{1\leq j_1<\ldots<j_n\leq 3n\}$ such that $\det(M_{a-1,J}) \neq 0$ and $j_n=i_n$.
Then $\det(N_{a-1,J})\neq 0$,  and the product of the quotients $\tfrac{\det(M_{a,I})}{\det(N_{a,I})} \cdot \tfrac{\det(N_{a-1,J})}{\det(M_{a-1,J})}$ does not depend on the choices of $I$ and $J$.
Furthermore,  
\[D_a(E,F,G,H) = - \frac{\det(M_{a,I})}{\det(N_{a,I})} \cdot \frac{\det(N_{a-1,J})}{\det(M_{a-1,J})}.\]
Since we expressed the double ratio as a rational function of the quadruple of transverse flags, we conclude that $D_a$ is semi-algebraic.
\end{proof}
\section{Positivity}
\label{section_Positivity}

\subsection{Positivity of tuples of flags}
The definitions from the last section allow us to define positive triples and quadruples of flags in $\F^n$ for $\F$ any ordered field.
It is defined analogously as in \cite{FockGoncharov_ModuliSpacesLocalSystemsHigherTeichmuellerTheory, BonahonDreyer_ParametrizingHitchinComponents} for $\R$.
We recall it here, and refer to \cite{FockGoncharov_ModuliSpacesLocalSystemsHigherTeichmuellerTheory, BonahonDreyer_ParametrizingHitchinComponents} for more details.

\begin{dfn}\label{dfn_PostivityTuples}
A triple $(E,F,G)$ of flags in $\F^n$ is called \emph{positive}, if the triple is transverse and all triple ratios are positive.
A quadruple $(E,F,G, H)$ of flags in $\F^n$ is called \emph{positive}, if the quadruple is transverse, the triples $(E,F,G)$ and $(E,G,H)$ are positive, and all double ratios of $(E,G,F,H)$ are positive.
\end{dfn}

It is useful in the following to also define a notion of positivity for $k$-tuples of flags with $k\geq 4$ by fixing some additional data, which we describe in the following.
Let $x_1,\ldots, x_k$ be distinct points on the unit circle $\mathbb{S}^1$ that are cyclically ordered in clockwise direction, and $P$ the inscribed polygon that we obtain by connecting consecutive points by a straight line.
An \emph{ideal triangulation} of $P$ is a collection of \emph{oriented diagonals} $\mathcal{E}=\{e_1,\ldots, e_{k-3}\}$, i.e.\ straight lines in $P$ that do not intersect and that connect two non-consecutive vertices,  such that $P\setminus \mathcal{E}$ is a union of $k-2$ triangles, together with a choice of preferred vertex for each triangle $\mathcal{V}=(v_{t_1}, \ldots, v_{t_{k-2}})$ with $v_{t_i} \in \{x_1,\ldots,x_k\}$ for all $i=1,\ldots,k-2$.
A choice of ideal triangulation $(\mathcal{E},\mathcal{V})$ gives the following information.
\begin{itemize}
\item[--] Each connected component $t$ of $P\setminus \mathcal{E}$, together with a choice of preferred vertex $v_t \in \mathcal{V}$, singles out a triple of vertices $(v_t, x_t', x_t'')$ of $P$ that appear in this clockwise order around the circle as the vertices of $t$.
\item[--] Each oriented diagonal $e \in \mathcal{E}$ is contained in the closure of exactly two connected components of $P\setminus \mathcal{E}$ with vertices $x_{e^+}, x_{e^r},  x_{e^-}$ and $x_{e^+},  x_{e^-},  x_{e^l}$ respectively,  and therefore $e$ singles out four vertices $x_{e^+}, x_{e^r},  x_{e^-},  x_{e^l}$ which appear in this clockwise order around the circle.
\end{itemize}
For every choice of ideal  triangulation $(\mathcal{E},\mathcal{V})$ we can define a map
\[ \phi_{(\mathcal{E},\mathcal{V})} \from \Flag(\F^n)^{(k)} \to \F^{\frac{(n-1)(n-2)}{2}(k-2)} \times \F^{(k-3)(n-1)} \]
by assigning to a $k$-tuple $(F_1,\ldots,F_k)$ of transverse flags the following data.
\begin{itemize}
\item[--] For each triangle $t$ of $P\setminus \mathcal{E}$, together with a choice of preferred vertex $v_t \in \mathcal{V}$,  we compute the $\frac{(n-1)(n-2)}{2}$ triple ratios $T_{abc}(F_{v_t}, F_{x_t'}, F_{x_t''})$.
\item[--] For each oriented diagonal $e \in \mathcal{E}$, we compute the $n-1$ double ratios $D_a(F_{x_{e^+}},  F_{x_{e^-}}, F_{x_{e^r}},  F_{x_{e^l}})$,
\end{itemize}
where $F_{x_i} \coloneqq F_i$ for all $i=1,\ldots,k$.
Note that the triple ratios depend on the choice of preferred vertex. 
Any two such choices permute the flags and are thus related by the equalities given in \Cref{lem_PropertiesOfRatiosUnderPermutation} (\ref{lem_PropertiesOfRatiosUnderPermutation_item_TripleRatio}).

\begin{dfn} \label{dfn_kTuplePosFlags}
Let $\mathcal{E}$ be an ideal triangulation of a polygon with $k$ vertices. 
A $k$-tuple of flags $(F_1, \ldots, F_k) \in \Flag(\F^n)^{(k)}$ is \emph{positive} if $ \phi_{(\mathcal{E},\mathcal{V})}(F_1, \ldots,F_k)$ has only positive coordinates.
We denote the space of positive $k$-tuples of flags by $\Flag(\F^n)^{(k,+)}$.
\end{dfn}

At first sight it seems that this definition depends on the choice of triangulation $(\mathcal{E},\mathcal{V})$.
Fock--Goncharov proved in \cite[Theorem 9.1]{FockGoncharov_ModuliSpacesLocalSystemsHigherTeichmuellerTheory} that if a $k$-tuple is positive with respect to one triangulation it is positive with respect to any other.
Their proof gives an explicit rational expression, which preserves positivity, for the change of coordinates under a \emph{flip of a diagonal}, that is, for a pair of adjacent triangles we remove the edge that forms the diagonal and add an edge that forms the other diagonal of that quadrilateral.
Since we can get from any ideal triangulation of a polygon to any other by a sequence of flips of diagonals \cite{Hatcher_TriangulationsSurfaces}, we obtain the desired result.
We also recommend \cite[Section 2.2]{Martone_SequencesHitchinReprTreeType} for a detailed account of how the flip of a diagonal changes the coordinates $\phi_{(\mathcal{E},\mathcal{V})}$.
Since the triple and double ratios are $\PGL(n,\F)$-invariant,  a $k$-tuple is positive if and only if a $k$-tuple in the same $\PGL(n,\F)$-orbit is.
We refer the reader to \cite{FockGoncharov_ModuliSpacesLocalSystemsHigherTeichmuellerTheory} for the more general definition of positivity for $k$-tuples of flags which clarifies the minus sign in the definition of the double ratios.

\begin{rem} \label{rem_ImageOfkCyclicallyOrderedPointsIsPositivekTuple}
In light of the above definition of positive $k$-tuples of flags a positive map $\Fix(S) \to \Flag(\F^n)$ (\Cref{dfn_Fpositivity}) can equivalently be defined as a map that
sends $k$-tuples of distinct points in $\Fix(S)$, occurring in this clockwise order, to positive
$k$-tuples of flags for any $k \geq 2$.
\end{rem}

\subsection{Total positivity}

\begin{dfn} \label{dfn_totpos}
Let $\K$ be an ordered field and $\mathcal{B}$ a basis of $\K^n$.
An element in $\GL(n,\K)$ is \emph{totally positive} with respect to $\mathcal{B}$, if all the minors of its matrix in the basis $\mathcal{B}$ are (strictly) positive.
A unipotent element in $\GL(n,\K)$ is \emph{totally positive upper triangular} with respect to $\mathcal{B}$ if its matrix in the basis $\mathcal{B}$ is upper triangular and all its minors,  which are not constant equal to zero because of the shape of the matrix,  are positive.
\end{dfn}

We denote the set of unipotent,  totally positive upper triangular matrices with respect to $\mathcal{B}$ by $\mathcal{U}^\urtriangle_{>0}(\mathcal{B})$.
Similarly,  we denote the set of unipotent,  totally positive lower triangular matrices with respect to $\mathcal{B}$ by $\mathcal{U}^\lltriangle_{>0}(\mathcal{B})$.

The following lemmas due to Fock--Goncharov shed light on the connection between positivity of tuples of flags and total positivity of matrices.
It can be formulated for $k$-tuples with $k>5$, but for our purposes these versions suffice.
For a more conceptual approach to total positivity in split real Lie groups we refer the reader to \cite{Lusztig_TotalPositivityReductiveGroups}.
Let $\F$ be a real closed field.

\begin{lem}[{\cite[Definition 5.2,  Theorem 5.2, Theorem 5.3,  Lemma 5.5,  Lemma 5.9]{FockGoncharov_ModuliSpacesLocalSystemsHigherTeichmuellerTheory}}]
\label{lem_PoskTupleNormalForm}
Let $k$ be either $0$,  $1$ or $2$.
A $(3+k)$-tuple $(F_1,\ldots,F_{3+k})$ of flags in $\F^n$ is positive if and only if there exists a basis $\mathcal{B}=(e_1,\ldots,e_n)$ of $\F^n$ such that $e_a \in F_1^{(a)} \cap F_3^{(n-a+1)}$ for all $a=1,\ldots,n$,  and there exist $u \in\mathcal{U}^\lltriangle_{>0}(\mathcal{B})$ and $v_1,\ldots, v_{k} \in \mathcal{U}^\urtriangle_{>0}(\mathcal{B})$ such that
\[F_2 = u F_1,  \quad F_i = v_1^{-1}\cdot \ldots \cdot v_{i-3}^{-1} F_3 \textnormal{ for all } i = 4, \ldots, 3+k.\]
\end{lem}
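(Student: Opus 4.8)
The plan is to deduce the statement over an arbitrary real closed field $\F$ from the case $\F=\R$ --- which is exactly the combination of results of Fock--Goncharov cited in the statement --- by means of the Tarski--Seidenberg transfer principle (\Cref{thm_TarskiSeidenberg}). The key observation is that, in contrast with the notion of a positive \emph{representation}, for a \emph{fixed} $(3+k)$-tuple of flags with $3+k\leq 5$ both sides of the asserted equivalence cut out semi-algebraic subsets of $\Flag(\F^n)^{3+k}$. Since over $\R$ these two subsets coincide, so do their $\F$-extensions, and it then remains to identify the $\F$-extensions with the two sides over $\F$.

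First I would verify that the left-hand side is semi-algebraic. Fix once and for all an ideal triangulation $(\mathcal{E},\mathcal{V})$ of the $(3+k)$-gon. By \Cref{dfn_PostivityTuples} and \Cref{dfn_kTuplePosFlags} the set $A\subseteq\Flag(\F^n)^{3+k}$ of positive tuples is the intersection of the semi-algebraic set $\Flag(\F^n)^{(3+k)}$ (\Cref{lem_FlagSpacesAlg}) with finitely many conditions $T_{abc}>0$ and $D_a>0$; writing each such ratio as $P/Q$ with $P,Q$ products of $n\times n$ minors and $Q\neq 0$ on transverse tuples, as in the proof of \Cref{propo_TripleDoubleRatioSemiAlg}, the condition ``ratio $>0$'' becomes the polynomial sign condition $PQ>0$. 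Hence $A$ is semi-algebraic and, being literally a Boolean combination of polynomial sign conditions, its $\F$-extension $A_\F$ is precisely the set of $\F$-positive tuples (\Cref{dfn_ExtSemiAlgSets}).

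Next I would show the right-hand side is semi-algebraic. Let $\tilde B$ be the set of tuples $\big((F_i)_{i=1}^{3+k},\,e_1,\dots,e_n,\,u,v_1,\dots,v_k\big)$, with $e_1,\dots,e_n\in\F^n$ and $u,v_1,\dots,v_k\in\GL(n,\F)$, satisfying: $\det(e_1|\cdots|e_n)\neq 0$; $e_a\in F_1^{(a)}\cap F_3^{(n-a+1)}$ for all $a$ (a rank condition, hence semi-algebraic); the matrix $E^{-1}uE$, where $E=(e_1|\cdots|e_n)$, is unipotent lower triangular with all minors not forced to vanish by its shape strictly positive, and the analogous ``upper triangular'' condition for each $E^{-1}v_iE$ (see \Cref{dfn_totpos}; these are semi-algebraic conditions on the entries of $E^{-1}uE$ and $E^{-1}v_iE$, which are rational in the $e_a$ and in $u,v_i$); and $F_2=uF_1$, $F_i=v_1^{-1}\cdots v_{i-3}^{-1}F_3$ for $i=4,\dots,3+k$, which are semi-algebraic since the $\GL(n,\F)$-action on $\Flag(\F^n)$ is semi-algebraic --- rational in the coordinates of \Cref{lem_FlagSpacesAlg} --- and matrix inversion is semi-algebraic. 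Thus $\tilde B$ is semi-algebraic, and if $B$ denotes its projection onto the $\Flag(\F^n)^{3+k}$-coordinates, then $B=\pr(\tilde B)$ is the right-hand side and is semi-algebraic by (iterated application of) \Cref{thm_TarskiSeidenberg}. Moreover $\tilde B_\F$ is the set of $\F$-points satisfying the same conditions, so \Cref{thm_TarskiSeidenberg} gives $B_\F=(\pr(\tilde B))_\F=\pr_\F(\tilde B_\F)$, which is exactly the set of tuples over $\F$ admitting a basis $\mathcal{B}$ and elements $u,v_1,\dots,v_k$ as in the statement.

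By the cited results of Fock--Goncharov, $A=B$ when $\F=\R$; since the $\F$-extension of a semi-algebraic set depends only on the set itself \cite[Proposition 5.1.1]{BochnakCosteRoy_RealAlgebraicGeometry}, this yields $A_\F=B_\F$, which by the identifications above is precisely the equivalence over $\F$. I expect the only laborious step to be the semi-algebraicity bookkeeping for $\tilde B$ --- in particular, expressing total positivity with respect to the varying basis $(e_1,\dots,e_n)$ and the flag equalities $F_2=uF_1$ and $F_i=v_1^{-1}\cdots v_{i-3}^{-1}F_3$ as polynomial (in)equalities in a fixed finite set of variables. Once that is in place the transfer is purely formal, and, in particular, Fock--Goncharov's proof need not be revisited over $\F$, only the truth of their statement over $\R$.
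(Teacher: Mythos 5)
Your proposal is correct, and it takes a genuinely different route from what the paper does. The paper does not prove \Cref{lem_PoskTupleNormalForm} at all: it is presented purely as a citation of Fock--Goncharov's statements over $\R$, with the implicit claim that their arguments are algebraic/combinatorial and hence carry over verbatim to an arbitrary real closed field. You instead promote the $\R$-statement to a statement over $\F$ by Tarski--Seidenberg, observing that both sides of the equivalence --- the positivity conditions on one hand, and the existence of a suitable basis and of elements of $\mathcal{U}^\lltriangle_{>0}(\mathcal{B})$, $\mathcal{U}^\urtriangle_{>0}(\mathcal{B})$ on the other --- cut out semi-algebraic subsets of $\Flag(\F^n)^{3+k}$ once $k\leq 2$ is fixed, and that the right-hand side is a projection to which \Cref{thm_TarskiSeidenberg} applies directly. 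This is a clean and self-contained justification that has the advantage you yourself note: one never has to inspect whether Fock--Goncharov's proofs are field-agnostic, only that their theorems are true over $\R$. The semi-algebraicity bookkeeping you flag is indeed the only laborious part, and it goes through using the projection-matrix model of \Cref{lem_FlagSpacesAlg} together with \Cref{propo_TripleDoubleRatioSemiAlg}, \Cref{lem_StableFlagSemiAlg}-style arguments for the change-of-basis conditions, and \Cref{thm_TarskiSeidenberg} to eliminate the auxiliary existential quantifiers.

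One small point to make the transfer fully rigorous as written: the lemma in the paper is stated for \emph{any} real closed field $\F$, not just extensions of $\R$, whereas the $\F$-extension formalism of \Cref{dfn_ExtSemiAlgSets} and \Cref{thm_TarskiSeidenberg} only transfers from a field to a \emph{larger} real closed field. Since all the polynomials involved have rational (indeed integer) coefficients, the sets $A$ and $B$ are already defined over the field $\R_{\mathrm{alg}}$ of real algebraic numbers, which is the prime real closed subfield of every real closed field. The equality $A=B$ over $\R$ descends to $\R_{\mathrm{alg}}$ (extension of semi-algebraic sets is injective, e.g.\ by \cite[Proposition 5.1.1]{BochnakCosteRoy_RealAlgebraicGeometry}) and then extends from $\R_{\mathrm{alg}}$ to any real closed $\F$. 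With that adjustment, your argument is complete and in fact supplies a justification the paper leaves implicit.
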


We say that $F_1$ is the \emph{ascending flag} and $F_3$ is the \emph{descending flag} of $\mathcal{B}$.
The following proposition shows that the double ratios mimic the monotonicity behavior of the cross-ratios for $n=2$ in higher dimensional flag varieties.

\begin{propo} \label{propo_5TuplePosInequDR}
Let $(F_1,F_2,F_3,F_4,F_5)$ be a positive 5-tuple of flags in $\F^n$.
Then for all $a=1,\ldots, n-1$ we have the following inequality between the $a$-th double ratios
\[D_a(F_1, F_3, F_2, F_4) < D_a(F_1, F_3, F_2,F_5).\]
\end{propo}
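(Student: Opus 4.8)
The plan is to reduce the inequality, via the normal form of \Cref{lem_PoskTupleNormalForm}, to an explicit statement about coordinates, and then to prove that statement using total positivity; for $n=2$ the proposition is exactly the monotonicity of the cross-ratio of five cyclically ordered points of $\mathbb{P}^1$, and the point is to transport this coordinate by coordinate.

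First I would apply \Cref{lem_PoskTupleNormalForm} with $k=2$ to fix a basis $\mathcal{B}=(e_1,\dots,e_n)$ with $e_a\in F_1^{(a)}\cap F_3^{(n-a+1)}$ — so that $F_1$ is the ascending and $F_3$ the descending flag of $\mathcal{B}$ — together with $u\in\mathcal{U}^\lltriangle_{>0}(\mathcal{B})$ and $v_1,v_2\in\mathcal{U}^\urtriangle_{>0}(\mathcal{B})$ such that $F_2=uF_1$, $F_4=v_1^{-1}F_3$ and $F_5=v_1^{-1}v_2^{-1}F_3$. A short computation in the exterior algebra shows that for any two flags $G,H$ transverse to $F_1$ and $F_3$, with generators $g=\sum_i g_ie_i$ of $G^{(1)}$ and $h=\sum_i h_ie_i$ of $H^{(1)}$, one has $D_a(F_1,F_3,G,H)=-\,g_{a+1}h_a/(g_ah_{a+1})$: in each of the four wedge products only one basis vector ($e_a$ or $e_{a+1}$) survives, and the resulting top forms cancel in pairs. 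Since a generator of $F_2^{(1)}$ is $ue_1$, whose coordinates $(u)_{i1}$ are all positive, applying this with $G=F_2$ and $H\in\{F_4,F_5\}$ turns the asserted inequality, for each fixed $a$, into
\[
 \frac{g^{(4)}_a}{g^{(4)}_{a+1}} \;>\; \frac{g^{(5)}_a}{g^{(5)}_{a+1}}, \qquad g^{(4)}:=v_1^{-1}e_n,\quad g^{(5)}:=v_1^{-1}v_2^{-1}e_n .
\]

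The substance of the proof is this last inequality. Using the cofactor formula for the inverse together with the positivity of the relevant non-vanishing minors of $v_1$ and $v_2$, one first establishes the sign pattern: the $i$-th coordinate of each of $g^{(4)}$, of $z:=v_2^{-1}e_n-e_n$, and of $v_1^{-1}z$ is nonzero and has sign $(-1)^{n-i}$ for $i<n$, and in the expansion of the $i$-th coordinate of $v_1^{-1}z$ every summand carries that same sign. Because $g^{(5)}=g^{(4)}+v_1^{-1}z$, this gives $|g^{(5)}_i|=|g^{(4)}_i|+|(v_1^{-1}z)_i|$ coordinatewise and shows both ratios above are negative, so the goal becomes $|r_a(g^{(5)})|>|r_a(g^{(4)})|$, where $r_a(v):=v_a/v_{a+1}$. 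Writing $\rho:=|r_a(g^{(4)})|=|(v_1^{-1})_{an}|/|(v_1^{-1})_{a+1,n}|$, the key estimate is $|(v_1^{-1})_{aj}|/|(v_1^{-1})_{a+1,j}|>\rho$ for every $j$ with $a+1\le j<n$: after conjugating $v_1^{-1}$ by the diagonal sign matrix $\mathrm{diag}((-1)^i)$ — which is again an element of $\mathcal{U}^\urtriangle_{>0}(\mathcal{B})$, a classical fact about inverses of totally positive matrices — this is precisely positivity of the $2\times2$ minor on rows $\{a,a+1\}$ and columns $\{j,n\}$. Combining this bound over $j$ with $|z_a|>0$ yields $|(v_1^{-1}z)_a|>\rho\,|(v_1^{-1}z)_{a+1}|$, hence $|g^{(5)}_a|=|g^{(4)}_a|+|(v_1^{-1}z)_a|>\rho\bigl(|g^{(4)}_{a+1}|+|(v_1^{-1}z)_{a+1}|\bigr)=\rho\,|g^{(5)}_{a+1}|$, which is the claim; the extreme case $a=n-1$ (where $g^{(4)}_n=g^{(5)}_n=1$) is immediate.

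The main obstacle is therefore bookkeeping rather than conceptual: one must keep precise track of which minors of $v_1$ and $v_2$ total positivity forces to be positive, and check that all the signs occurring in $v_1^{-1}e_n$ and $v_1^{-1}v_2^{-1}e_n$ align. The underlying reason the inequality holds is that in the two-dimensional quotient $\F^n/(F_1^{(a-1)}+F_3^{(n-a-1)})$ the double ratio $D_a$ becomes literally a cross-ratio of the images of $F_1^{(a)}$, $F_3^{(n-a)}$, $F_2^{(1)}$ and $F_4^{(1)}$ (resp.\ $F_5^{(1)}$), and the normal form makes the cyclic ordering of these images explicit, so that the classical monotonicity of the cross-ratio applies.
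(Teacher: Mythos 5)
Your proposal is correct and follows the same architecture as the paper's proof: reduce via \Cref{lem_PoskTupleNormalForm} to the normal form $F_1 = F^+$, $F_3 = F^-$, $F_2 = uF^+$, $F_4 = v_1^{-1}F^-$, $F_5 = v_1^{-1}v_2^{-1}F^-$, compute $D_a$ in these coordinates, and reduce everything to an inequality between ratios of consecutive entries of the last columns of $v_1^{-1}$ and $(v_2v_1)^{-1}$. Where you genuinely diverge is in how you establish that final inequality. The paper isolates it as \Cref{lem_TotPosMatricesInverses}, comparing $A^{-1}$ to $(BA)^{-1}$ directly, and proves it by a single Cauchy--Binet expansion of the $2\times 2$ determinant $y_az_{a+1}-y_{a+1}z_a$, reducing it to a sum of products $B_{n^c,l^c}\,A_{\{l,n\}^c,\{k,k+1\}^c}$ of complementary minors of $B$ and $A$, all nonnegative and not all zero. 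You instead write $g^{(5)}=g^{(4)}+v_1^{-1}z$ with $z=v_2^{-1}e_n-e_n$, track the alternating sign pattern so absolute values add, and then apply a mediant-type bound using that $j\mapsto |(v_1^{-1})_{aj}|/|(v_1^{-1})_{a+1,j}|$ dominates its value at $j=n$, which is the positivity of the $2\times 2$ minor $(Dv_1^{-1}D)_{\{a,a+1\},\{j,n\}}$. Since by Jacobi's complementary-minor identity $(A^{-1})_{\{a,a+1\},\{j,n\}}=\pm A_{\{j,n\}^c,\{a,a+1\}^c}$, your key positivity fact is the same as the paper's; what changes is the bookkeeping. Your additive decomposition makes the dependence on $v_2$ cleaner (only $|z_j|>0$ matters), at the cost of a sign-pattern analysis; the paper's version avoids sign tracking entirely by expanding the determinant, at the cost of invoking Cauchy--Binet twice. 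Both are fine, and both treat $a=n-1$ correctly.
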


A direct corollary is the monotonicity of the double ratios on the image of a positive map.

\begin{corol}
Let $D \subseteq \mathbb{S}^1$ be a subset and $\xi \from D \to \Flag(\F^n)$ a positive map.
For $x,z \in D$ distinct,  denote by $(x,z)$ the set of points $y \in D$ such $x,y,z$ are cyclically ordered in clockwise direction.
For all $x,z \in D$ distinct and $y \in (x,z)$, the map
\[ (z,x) \to \F, \quad D_a(t) \coloneqq D_a(\xi(x),\xi(z),\xi(y),\xi(t)) \]
is strictly monotone increasing,  i.e.\ for all $t \in (z,x)$ and $t' \in (t,x)$ we have $D_a(t)<D_a(t')$.
\begin{figure}[H]
\centering
\includegraphics[width=0.35\textwidth]{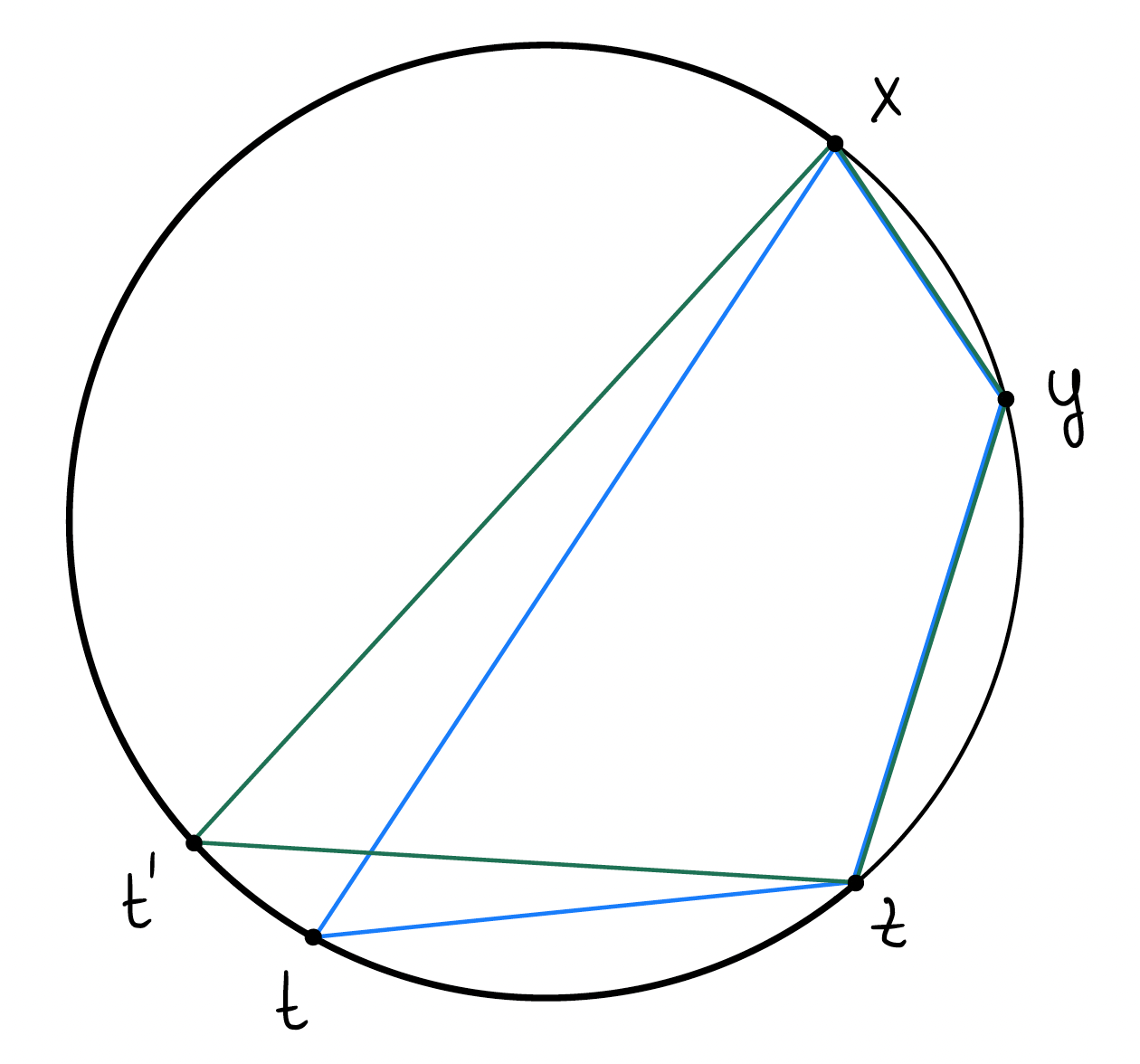}
\captionof{figure}{Monotonicity of the double ratios.} \label{fig_MonotonicityDoubleRatios}
\end{figure}
\end{corol}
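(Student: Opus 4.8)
The plan is to reduce the monotonicity statement directly to Proposition \ref{propo_5TuplePosInequDR} by exhibiting an appropriate positive $5$-tuple of flags in the image of $\xi$.

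First I would fix $x, z \in D$ distinct, $y \in (x,z)$, and $t \in (z,x)$, $t' \in (t,x)$, and check that the five points $x, y, z, t, t'$ are pairwise distinct and occur in exactly this clockwise cyclic order around $\mathbb{S}^1$. Distinctness: $x \neq z$ by hypothesis, $y$ lies on the open clockwise arc from $x$ to $z$ whereas $t, t'$ lie on the complementary arc from $z$ to $x$, so $y \notin \{x,z,t,t'\}$, and $t \neq t'$ since $t' \in (t,x)$. Ordering: $y \in (x,z)$ places $y$ between $x$ and $z$, $t \in (z,x)$ places $t$ between $z$ and $x$, and $t' \in (t,x) \subseteq (z,x)$ places $t'$ between $t$ and $x$, hence the clockwise order $x, y, z, t, t'$. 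A picture as in Figure \ref{fig_MonotonicityDoubleRatios} makes this transparent.

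By \Cref{rem_ImageOfkCyclicallyOrderedPointsIsPositivekTuple}, a positive map sends any tuple of distinct points occurring in clockwise order to a positive tuple of flags, so $(\xi(x), \xi(y), \xi(z), \xi(t), \xi(t'))$ is a positive $5$-tuple of flags in $\F^n$ (recall that positivity of a $5$-tuple of flags does not depend on the ideal triangulation used to test it, as established after \Cref{dfn_kTuplePosFlags}).

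Finally I would apply \Cref{propo_5TuplePosInequDR} to this $5$-tuple with $F_1 = \xi(x)$, $F_2 = \xi(y)$, $F_3 = \xi(z)$, $F_4 = \xi(t)$, $F_5 = \xi(t')$, obtaining for every $a = 1, \ldots, n-1$ the inequality
\[ D_a(\xi(x), \xi(z), \xi(y), \xi(t)) < D_a(\xi(x), \xi(z), \xi(y), \xi(t')), \]
which is exactly $D_a(t) < D_a(t')$. Since $t \in (z,x)$ and $t' \in (t,x)$ were arbitrary, the map $t \mapsto D_a(t)$ is strictly monotone increasing on $(z,x)$. There is no genuine difficulty in this argument; the only point requiring care is matching the clockwise labelling of the five points with the labelling of $F_1,\ldots,F_5$ in \Cref{propo_5TuplePosInequDR}.
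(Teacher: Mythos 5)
Your proposal is correct and is exactly the argument the paper intends when it calls this a ``direct corollary'' of \Cref{propo_5TuplePosInequDR} (the paper gives no explicit proof). You correctly verify the clockwise cyclic order $x,y,z,t,t'$, invoke \Cref{rem_ImageOfkCyclicallyOrderedPointsIsPositivekTuple} to get a positive $5$-tuple, and match the labelling of $F_1,\ldots,F_5$ so that the conclusion of \Cref{propo_5TuplePosInequDR} reads literally as $D_a(t) < D_a(t')$.
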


Before proving the above proposition we need some preliminary results on totally positive matrices.
We begin by recalling \emph{Cauchy--Binet's formula}, see for example \cite[Chapter 1]{Pinkus_TotallyPositiveMatrices}.
Let $A$ be an $(n\times n)$-matrix over any field.
For $I=\{1 \leq i_1 < \ldots < i_p \leq n\}$ and $J=\{1 \leq j_1 < \ldots < j_p \leq n\}$ denote by $A_{I,J}$ the $(p \times p)$-minor of $A$ obtained by taking the determinant of the submatrix of $A$ with rows $i_1, \ldots, i_p$ and columns $j_1,\ldots, j_p$.
If $A,B,C$ are $(n\times n)$-matrices over any field and $A=BC$, then Cauchy--Binet's formula states that
\[A_{I,J}=\sum_{1\leq k_1 < \ldots < k_p \leq n} B_{I,\{k_1,\ldots,k_p\}} C_{\{k_1,\ldots,k_p\},J}.\]
If $A$ is invertible, we have
\[(A^{-1})_{I,J}=\frac{(-1)^{\sum_{k=1}^p i_k+j_k}}{\det A} \, A_{J^c,I^c},\]
where $I^c=\{i_1'<\ldots<i_{n-p}'\}$ and $J^c=\{j_1'<\ldots<j_{n-p}'\}$ are so that $I \cup I^c = \{1,\ldots,n\}$ and $J\cup J^c = \{1,\ldots,n\}$.
If $I=\{i\}$ and $J=\{j\}$,  we write $A_{i,j}$ instead of $A_{\{i\},\{j\}}$ and $i^c$ instead of $\{i\}^c$.
A special case of the above formula is thus 
\[(A^{-1})_{i,j}= \frac{(-1)^{i+j}}{\det A} A_{j^c,i^c}.\]
The following corollary about products of totally positive (upper/lower triangular) matrices is a direct application of this formula and is classical.

\begin{corol} \label{corol_TotPosProduct}
Let $\K$ be an ordered field.
\begin{enumerate}
\item \label{corol_NonNegTotPosProduct_item1}
The product of two totally positive matrices is totally positive.
The same holds true if we restrict to the set of upper triangular matrices (or lower triangular matrices).
\item \label{corol_NonNegTotPosProduct_item2}
The product of a totally positive lower respectively upper triangular matrix with a totally positive upper respectively lower triangular matrix is totally positive.
\end{enumerate}
\end{corol}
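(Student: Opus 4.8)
The plan is to reduce everything to the Cauchy--Binet formula recalled just above, exactly as one does over $\R$; the only point worth checking is that no step uses completeness or any metric property of $\R$, so that the argument survives verbatim over an arbitrary ordered field $\K$. First I would treat part \eqref{corol_NonNegTotPosProduct_item1} for \emph{full} (not necessarily triangular) totally positive matrices $A, B \in \GL(n,\K)$. Write $C = AB$. For index sets $I = \{i_1 < \dots < i_p\}$ and $J = \{j_1 < \dots < j_p\}$, Cauchy--Binet gives
\[
C_{I,J} = \sum_{1 \le k_1 < \dots < k_p \le n} A_{I, \{k_1,\dots,k_p\}} \, B_{\{k_1,\dots,k_p\}, J}.
\]
Every minor $A_{I,K}$ and $B_{K,J}$ appearing here is a genuine $p\times p$ minor of a totally positive matrix, hence strictly positive; the sum is nonempty (e.g.\ $K = \{1,\dots,p\}$ contributes), so $C_{I,J} > 0$. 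Since $I, J$ were arbitrary of equal size, $C$ is totally positive.

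Next I would handle the triangular cases of \eqref{corol_NonNegTotPosProduct_item1}. Suppose $A, B$ are totally positive upper triangular (in the sense of \Cref{dfn_totpos}: upper triangular, and every minor that is not forced to vanish by the triangular shape is positive). The product $C = AB$ is again upper triangular. A minor $C_{I,J}$ with $I = \{i_1 < \dots < i_p\}$, $J = \{j_1 < \dots < j_p\}$ is forced to be zero precisely when $i_\ell > j_\ell$ for some $\ell$; otherwise I must show $C_{I,J} > 0$. In the Cauchy--Binet sum, a term indexed by $K = \{k_1 < \dots < k_p\}$ is nonzero only if $A_{I,K} \ne 0$ and $B_{K,J} \ne 0$, which by the triangular shape forces $i_\ell \le k_\ell$ and $k_\ell \le j_\ell$ for all $\ell$; for such $K$ both minors are strictly positive by hypothesis. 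So it remains to observe that whenever $i_\ell \le j_\ell$ for all $\ell$ there is at least one admissible $K$ — e.g.\ $K = J$ works, since $i_\ell \le j_\ell$ makes $A_{I,J}$ an allowed (hence positive) minor and $B_{J,J}$ is a positive principal minor. Hence $C_{I,J} > 0$, and $C$ is totally positive upper triangular. The lower triangular case is identical (transpose, or reverse the inequalities).

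Finally, for \eqref{corol_NonNegTotPosProduct_item2}, let $L$ be totally positive lower triangular and $U$ totally positive upper triangular, and set $C = LU$; I claim $C$ is totally positive (as a full matrix). Fix $I, J$ of size $p$. By Cauchy--Binet,
\[
C_{I,J} = \sum_{|K| = p} L_{I,K} \, U_{K,J},
\]
and a term survives only when $L_{I,K} \ne 0$ and $U_{K,J} \ne 0$, i.e.\ (by the triangular shapes) when $k_\ell \le i_\ell$ and $k_\ell \le j_\ell$ for all $\ell$; for every such $K$ both minors are strictly positive. Taking $K = \{1,\dots,p\}$ shows the sum is nonempty, so $C_{I,J} > 0$. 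The case $UL$ is symmetric. The mild subtlety — and the only place requiring care — is the bookkeeping of which minors are "forced zero'' by the triangular shape versus genuinely positive, so I would state once and for all that for an upper triangular matrix $A_{I,J} = 0$ unless $i_\ell \le j_\ell$ for all $\ell$ (and dually for lower triangular), and then the positivity of the surviving Cauchy--Binet terms is immediate from \Cref{dfn_totpos}. No analytic input is used anywhere, so the statement holds over any ordered field $\K$ as asserted.
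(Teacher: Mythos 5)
Your proposal is correct and takes essentially the same approach the paper intends: the paper states the corollary is ``a direct application of [Cauchy--Binet's] formula and is classical'' without giving details, and your argument is precisely that elaboration, with the right bookkeeping of which triangular minors are forced to vanish.
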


To prove \Cref{propo_5TuplePosInequDR} we need the following lemma, which is a direction application of Cauchy--Binet's formula.

\begin{lem} \label{lem_TotPosMatricesInverses}
Let $\K$ be an ordered field and let $A$ and $B$ be two totally positive upper triangular $(n \times n)$-matrices over $\K$ of determinant one.
For every $k=1,\ldots,n-1$ we have
\[ \frac{(A^{-1})_{k,n}}{(A^{-1})_{k+1,n}} > \frac{((BA)^{-1})_{k,n}}{((BA)^{-1})_{k+1,n}}.\]
\end{lem}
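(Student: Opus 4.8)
The inequality compares the ratio $(A^{-1})_{k,n}/(A^{-1})_{k+1,n}$ with the same ratio for $BA$ in place of $A$. The first move is to translate everything via the Cauchy--Binet identity for inverses recorded just above the lemma: since $\det A = \det B = 1$, we have $(A^{-1})_{i,n} = (-1)^{i+n} A_{n^c, i^c}$, where $n^c = \{1,\dots,n-1\}$ and $i^c = \{1,\dots,n\}\setminus\{i\}$. Thus $(A^{-1})_{k,n}$ and $(A^{-1})_{k+1,n}$ are, up to a common sign, the $(n-1)\times(n-1)$ minors of $A$ using all rows except $n$ and all columns except $k$ (resp.\ except $k+1$). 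Both of these minors are positive because $A$ is totally positive, so the quotient is a ratio of positive quantities; the same holds for $BA$ by Corollary~\ref{corol_TotPosProduct}. So the lemma is equivalent to the statement
\[
\frac{A_{n^c,\, k^c}}{A_{n^c,\, (k+1)^c}} \;>\; \frac{(BA)_{n^c,\, k^c}}{(BA)_{n^c,\, (k+1)^c}},
\]
again using that consecutive columns make the overall sign factor $(-1)^{k+n}$ versus $(-1)^{k+1+n}$ cancel correctly in the rearrangement.

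**Key steps.** Next I would apply Cauchy--Binet to each of the four $(n-1)\times(n-1)$ minors of $BA$: for a fixed index set $L = \{1 \le \ell_1 < \dots < \ell_{n-1} \le n\}$ of columns,
\[
(BA)_{n^c,\, L} \;=\; \sum_{K} B_{n^c,\, K}\, A_{K,\, L},
\]
the sum running over all $(n-1)$-subsets $K$ of $\{1,\dots,n\}$. Because $B$ is upper triangular, $B_{n^c,K}$ vanishes unless $K \supseteq \{1,\dots,n-1\}$ is... more precisely, $B_{\{1,\dots,n-1\},K}$ is nonzero only when $K = \{1,\dots,\hat j,\dots,n\}$ for some $j$ with $K$ not omitting too small an index; in fact the surviving terms are $K = j^c$ for $j = 1,\dots,n$, and $B_{n^c, j^c}$ is $0$ for... one should check the triangularity pattern, but the upshot is that $(BA)_{n^c, L}$ is a sum of the form $\sum_j \beta_j A_{j^c, L}$ with coefficients $\beta_j = (-1)^{\text{sign}} B_{n^c, j^c} \ge 0$ (in fact $>0$ for the relevant $j$ by total positivity), and these coefficients do not depend on $L$. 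So writing $x_j := A_{j^c,\, k^c} > 0$ and $y_j := A_{j^c,\, (k+1)^c} > 0$, the claim reduces to
\[
\frac{x_n}{y_n} \;>\; \frac{\sum_j \beta_j x_j}{\sum_j \beta_j y_j},
\]
i.e.\ to a "mediant" inequality. This follows from the elementary fact that a weighted mediant of fractions lies strictly between the smallest and largest of them, provided one knows $x_n/y_n > x_j/y_j$ for every other $j$ appearing with positive weight (and $x_n/y_n \ge x_j/y_j$, with strict inequality for at least one such $j$). That remaining input, $A_{j^c, k^c}\, A_{n^c, (k+1)^c} > A_{j^c, (k+1)^c}\, A_{n^c, k^c}$ for $j \ne n$, is exactly a determinantal (Plücker-type / Sylvester) inequality for the totally positive matrix $A$: both sides are products of two $(n-1)$-minors of $A$ obtained by shuffling the column sets $k^c$ and $(k+1)^c$ and the row sets $j^c$ and $n^c$, and the inequality is a consequence of the multiplicativity of such minors together with positivity — concretely it can be extracted from Cauchy--Binet applied to a suitable factorization, or cited from the standard theory of totally positive matrices (e.g.\ \cite{Pinkus_TotallyPositiveMatrices}).

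**Main obstacle.** The routine parts are the Cauchy--Binet bookkeeping and tracking the signs $(-1)^{\sum i_k + j_k}$ so that everything comes out with the stated strict inequality rather than its reverse; these are mechanical but error-prone. The genuine mathematical content — and the step I expect to be the real obstacle — is establishing the pairwise comparison $x_n/y_n > x_j/y_j$, i.e.\ the two-term determinantal inequality among minors of $A$ that differ by one column. One clean route is to reduce to the $2\times 2$-minor positivity of an auxiliary totally positive matrix built from $A$ by restricting to the rows $\{1,\dots,n-1\}$ and the two columns where $k^c$ and $(k+1)^c$ differ, then invoke that products of such minors satisfy the Muir/Sylvester identity; alternatively one invokes directly the known fact that for a totally positive matrix the ratios $A_{I,J}/A_{I,J'}$ are monotone in $I$ in the appropriate sense. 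I would present the argument by first doing the inverse-via-Cauchy--Binet rewriting, then the Cauchy--Binet expansion of $(BA)$, then reducing to the mediant inequality, and finally supplying (or citing) the single determinantal inequality that powers it.
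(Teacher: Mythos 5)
Your overall strategy is the same as the paper's: Cauchy--Binet to rewrite the inverse entries as $(n-1)$-minors, Cauchy--Binet again to expand the minors of $BA$ as a positive combination $\sum_l B_{n^c,l^c} A_{l^c,\cdot}$, and then a two-term determinantal inequality among minors of $A$. Packaging the last step as a ``mediant'' bound rather than clearing denominators is a cosmetic difference, not a different route.

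However, there are two compensating sign errors in your reduction. You claim that the sign factors $(-1)^{k+n}$ and $(-1)^{k+1+n}$ ``cancel correctly,'' but they do not: their quotient is $-1$, so $(A^{-1})_{k,n}/(A^{-1})_{k+1,n} = -A_{n^c,k^c}/A_{n^c,(k+1)^c}$, and the inequality to be proved is
\[
\frac{A_{n^c,k^c}}{A_{n^c,(k+1)^c}} \;<\; \frac{(BA)_{n^c,k^c}}{(BA)_{n^c,(k+1)^c}},
\]
the reverse of what you wrote. Consequently what you need is $x_n/y_n < x_j/y_j$ for all relevant $j$ (i.e.\ $x_n/y_n$ is a \emph{minimum}), not a maximum, and with positive $\beta_n$ plus some $\beta_j>0$ with strict inequality. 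You then flip the sign a second time when spelling out ``$x_n/y_n > x_j/y_j$'' as $A_{j^c,k^c}A_{n^c,(k+1)^c} > A_{j^c,(k+1)^c}A_{n^c,k^c}$ — since $x_n/y_n > x_j/y_j$ actually means $A_{n^c,k^c}A_{j^c,(k+1)^c} > A_{j^c,k^c}A_{n^c,(k+1)^c}$. The two reversals cancel, so the determinantal inequality you ultimately state,
\[
A_{j^c,k^c}\,A_{n^c,(k+1)^c} \;\ge\; A_{j^c,(k+1)^c}\,A_{n^c,k^c},
\]
is in fact the correct one and matches the paper. You should fix the intermediate steps so the argument reads correctly end to end. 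You also assert strict inequality ``for $j\neq n$,'' which is false: it is an equality for $j<k$ and strict only for $k\le j\le n-1$, so the mediant bound must be set up to tolerate equalities (which you gesture at but do not make precise).

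Finally, the determinantal inequality is the genuine content, and you defer it to ``standard theory'' without pinning it down. The paper proves it directly by a Pl\"ucker/Cauchy--Binet identity: it shows $A_{n^c,(k+1)^c}A_{l^c,k^c} - A_{n^c,k^c}A_{l^c,(k+1)^c} = A_{\{l,n\}^c,\{k,k+1\}^c}$, an $(n-2)$-minor of $A$, which is nonnegative by total positivity and positive exactly when $k\le l\le n-1$. You should either reproduce this identity or give a precise citation; as written, the key step of your proof remains an assertion.
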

\begin{proof}
By Cauchy--Binet's formula we have for all $j=1,\ldots,n$
\[(A^{-1})_{j,n}=(-1)^{j+n}A_{n^c,j^c}.\]
Since $A$ is totally positive upper triangular,  we have that $A_{n^c,j^c}>0$.
Because of \Cref{corol_TotPosProduct} the same holds true if we replace $A$ by $BA$.
Thus all of the numbers in the above inequality are non-zero.

Fix $k$ between $1$ and $n-1$.
By the same argument we remark that $(A^{-1})_{k+1,n}$ and $((BA)^{-1})_{k+1,n}$ have the same sign.
Thus the inequality amounts to showing that
\[ (A^{-1})_{k,n} ((BA)^{-1})_{k+1,n} - ((BA)^{-1})_{k,n} (A^{-1})_{k+1,n} >0.\]
By Cauchy--Binet's formula we have for all $j=1,\ldots,n$
\begin{align*}
((BA)^{-1})_{j,n}&=(-1)^{j+n}(BA)_{n^c,j^c}\\
&= (-1)^{j+n}\sum_{1\leq k_1<\ldots<k_{n-1}\leq n} B_{n^c,\{k_1,\ldots,k_{n-1}\}} A_{\{k_1,\ldots,k_{n-1}\},j^c}\\
&=(-1)^{j+n}\sum_{1\leq l \leq n} B_{n^c,l^c} A_{l^c,j^c}.
\end{align*}
Putting everything together we obtain
\begin{align}\label{calc_EntriesInvTotPos}
(A^{-1})_{k,n} &((BA)^{-1})_{k+1,n} - ((BA)^{-1})_{k,n} (A^{-1})_{k+1,n}\nonumber\\
&=(-1)^{2k+2n+1} A_{n^c,k^c} \bigg(\sum_{1\leq l \leq n} B_{n^c,l^c} A_{l^c,(k+1)^c}\bigg)\nonumber\\
&\quad\quad - \bigg[ (-1)^{2k+2n+1} A_{n^c,(k+1)^c} \bigg(\sum_{1\leq l \leq n} B_{n^c,l^c} A_{l^c,k^c}\bigg) \bigg]\nonumber\\
&= \sum_{1\leq l \leq n} B_{n^c,l^c} \bigg( A_{n^c,(k+1)^c} A_{l^c,k^c} - A_{n^c,k^c} A_{l^c,(k+1)^c} \bigg).
\end{align}
Since $B$ is totally positive upper triangular,  we know that $ B_{n^c,l^c}>0$ for all $l=1,\ldots,n$.
We now investigate the second factors in the summands of (\ref{calc_EntriesInvTotPos}).
For $l=n$ this factor is equal to zero.
We observe that for all $l=1,\ldots, n-1$ using again Cauchy--Binet's formula that
\begin{align*}
&A_{n^c,(k+1)^c} A_{l^c,k^c} - A_{n^c,k^c} A_{l^c,(k+1)^c}\\ 
&= (-1)^{n+2k+l+1} \left((A^{-1})_{k+1,n} (A^{-1})_{k,l} -(A^{-1})_{k,n}(A^{-1})_{k+1,l} \right)\\
&=  (-1)^{n+l+1}(A^{-1})_{\{k,k+1\},\{l,n\}}\\
&=  (-1)^{n+l+1}(-1)^{n+2k+l+1} A_{\{l,n\}^c,\{k,k+1\}^c}\\
&= A_{\{l,n\}^c,\{k,k+1\}^c}. 
\end{align*}
Whenever $k \leq l \leq n-1$ we have that $A_{\{l,n\}^c,\{k,k+1\}^c}>0$,  since $A$ is totally positive upper triangular (see for example \cite[Section 6.2]{Pinkus_TotallyPositiveMatrices}).
By the same argument,  if $l<k$, then $A_{\{l,n\}^c,\{k,k+1\}^c}=0$.
Thus all summands in (\ref{calc_EntriesInvTotPos}) are either zero or positive.
Since $1\leq k \leq n-1$,  there exists at least one $l$ for which one of the summands is positive.
This proves the lemma.
\end{proof}

\begin{proof}[Proof of \Cref{propo_5TuplePosInequDR}]
Since $(F_1,\ldots,F_5)$ is a positive 5-tuple of flags,  \Cref{lem_PoskTupleNormalForm} implies that there exists a basis $\mathcal{B}=(e_1,\ldots,e_n)$ of $\F^n$ in which $F_1$ is the ascending flag $F^+$ and $F_3$ is the descending flag $F^-$.
Furthermore, there exist $u \in \mathcal{U}^\lltriangle_{>0}(\mathcal{B})$ and $v,w \in \mathcal{U}^\urtriangle_{>0}(\mathcal{B})$ such that
\[F_2 = u F^+,  \quad F_4 = v^{-1}F^-, \quad F_5 = v^{-1} w^{-1} F^-.\]
We thus need to prove that for all $a=1,\ldots, n-1$ we have
\[D_a(F^+, F^-, u F^+, v^{-1} F^-) < D_a(F^+, F^-, u F^+, v^{-1} w^{-1} F^-).\]
Let $x_i, y_i, z_i \in \F$ such that $(u F^+)^{(1)} = \langle \sum_{i=1}^n x_i e_i \rangle$,  $(v^{-1}F^-)^{(1)} = \langle \sum_{i=1}^n y_i e_i \rangle$ and $(v^{-1}w^{-1}F^-)^{(1)} = \langle \sum_{i=1}^n z e_i \rangle$.
By transversality of all of the involved flags, we have $x_i, y_i, z_i \neq 0$ for all $i=1,\ldots,n$.
Note that since $u$ is totally positive lower triangular,  we can choose $x_i >0$ for all $i=1,\ldots,n$.
We compute
\begin{align*}
D_a(F^+, F^-, &u F^+, v^{-1} F^-)= - \frac{e_1 \wedge \ldots \wedge e_a \wedge e_n \wedge \ldots \wedge e_{a+2} \wedge \sum x_i e_i}{e_1 \wedge \ldots \wedge e_a \wedge e_n \wedge \ldots \wedge e_{a+2} \wedge \sum y_i e_i}\\
&\cdot \frac{e_1 \wedge \ldots \wedge e_{a-1} \wedge e_n \wedge \ldots \wedge e_{a+1} \wedge \sum y_i e_i}{e_1 \wedge \ldots \wedge e_{a-1} \wedge e_n \wedge \ldots \wedge e_{a+1} \wedge \sum x_i e_i}=- \frac{x_{a+1} y_a}{x_a y_{a+1}}.
\end{align*}
Similarly we obtain
\[D_a(F^+, F^-, u F^+, v^{-1} w^{-1} F^-)=- \frac{x_{a+1} z_a}{x_a z_{a+1}}.\]
Thus
\begin{align*}
&D_a(F^+, F^-, u F^+, v^{-1} w^{-1} F^-)-D_a(F^+, F^-, u F^+, v^{-1} F^-)\\
&\quad \quad\quad\quad= \frac{x_{a+1}}{x_a} \cdot \frac{y_{a} z_{a+1}-y_{a+1} z_a}{y_{a+1} z_{a+1}},
\end{align*}
which is positive, if and only if $\frac{y_{a} z_{a+1}-y_{a+1} z_a}{y_{a+1} z_{a+1}}$ is positive,  since all $x_i$ are positive.

By definition we can choose $y_a = (v^{-1})_{a,n}$ and $z_a = ((wv)^{-1})_{a,n}$ for all $a=1,\ldots,n$.
Since both $v$ and $wv$ are totally positive upper triangular and of determinant one, see \Cref{corol_TotPosProduct},  Cauchy--Binet's formula implies that $y_a$ and $z_a$ have the same sign,  and hence their product $y_a z_a$ is positive for all $a=1,\ldots,n$.
Applying \Cref{lem_TotPosMatricesInverses} to $A=v$ and $B=w$ shows that $y_{a} z_{a+1}-y_{a+1} z_a$ is positive for all $a=1,\ldots,n-1$.
This finishes the proof.
\end{proof}

\subsection{Positive hyperbolicity}

Let $\F$ be a real closed field.
\begin{dfn}
\label{dfn_PosHypPSL}
An element of $\PSL(n,\F)$ is \emph{positively hyperbolic} if one of its lifts to $\SL(n,\F)$ has distinct and only positive eigenvalues.
\end{dfn}

There is no ambiguity for odd $n$, since $\PSL(n,\F)=\SL(n,\F)$ for every real closed field $\F$.
If $n$ is even, an element in $\PSL(n,\F)$ admits two lifts that differ by sign, and its eigenvalues are only well-defined up to multiplication with $\pm 1$.

\begin{dfn}
\label{dfn:PosHypStableFlag}
For a positively hyperbolic element $M \in \PSL(n,\F)$ with eigenvalues $|\lambda_1| >  \ldots > |\lambda_n| >0$,  and corresponding eigenspaces $V_1, \ldots, V_n$, we define its \emph{stable flag $F^+_M$} and its \emph{unstable flag $F^-_M$} by
\begin{align*}
&F^+_M =\left( V_1,V_1 \oplus V_2,\ldots, V_1 \oplus \ldots \oplus V_{n-1}\right),\\
&F^-_M =\left( V_n, V_n \oplus V_{n-1}, \ldots, V_n \oplus \ldots \oplus V_{2}\right).
\end{align*}
\end{dfn}

Note that if $\F \neq \R$,  then a stable flag is not necessarily attracting in the dynamical sense.

\begin{lem}
\label{lem_PosHypMatricesSemiAlg}
Denote by $\Pos(n,\F)$ the set of positively hyperbolic elements in $\PSL(n,\F)$.
Then $\Pos(n,\F)$ is semi-algebraic.
\end{lem}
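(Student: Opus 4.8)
The plan is to reduce everything to $\SL(n,\F)$, where the condition of having $n$ distinct positive eigenvalues can be read off the characteristic polynomial, and then to push the resulting set forward along the semi-algebraic projection $\SL(n,\F)\to\PSL(n,\F)$ of \Cref{lem_ProjMapSemiAlg}. First I would record that
\[
P_n \coloneqq \{(c_1,\dots,c_n)\in\F^n \mid t^n+c_1t^{n-1}+\dots+c_n \text{ has $n$ distinct roots in }\F,\text{ all }>0\}
\]
is semi-algebraic: by Vieta's formulas $P_n$ is exactly the image of the semi-algebraic set $\{(\lambda_1,\dots,\lambda_n)\in\F^n\mid \lambda_1>\lambda_2>\dots>\lambda_n>0\}$ under the polynomial (hence semi-algebraic) map sending $(\lambda_i)_i$ to the coefficient vector of $\prod_{i=1}^n(t-\lambda_i)$, so $P_n$ is semi-algebraic by \Cref{propo_ImSemiAlgMap} (alternatively by the Tarski--Seidenberg principle, \Cref{thm_TarskiSeidenberg}).

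Next, the map $c\from\Mat(n,\F)\to\F^n$ assigning to $M$ the coefficient vector of its characteristic polynomial $\det(t\,\Id-M)$ is polynomial, hence semi-algebraic. The eigenvalues of $M\in\SL(n,\F)$ are precisely the roots of $\det(t\,\Id-M)$ in $\F[\sqrt{-1}]$, and a matrix whose $n$ eigenvalues are distinct and lie in $\F$ is diagonalizable over $\F$; therefore
\[
\mathcal{D}\coloneqq\{M\in\SL(n,\F)\mid M\text{ has $n$ distinct, strictly positive eigenvalues}\}=\SL(n,\F)\cap c^{-1}(P_n)
\]
is semi-algebraic, being the intersection of the algebraic set $\SL(n,\F)$ with the preimage of a semi-algebraic set under a semi-algebraic map (\Cref{propo_ImSemiAlgMap}). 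By \Cref{dfn_PosHypPSL} an element of $\PSL(n,\F)$ is positively hyperbolic if and only if one of its (one or two) lifts to $\SL(n,\F)$ lies in $\mathcal{D}$, so $\Pos(n,\F)=\pi(\mathcal{D})$ with $\pi\from\SL(n,\F)\to\PSL(n,\F)$ the natural projection; since $\pi$ is semi-algebraic by \Cref{lem_ProjMapSemiAlg} (for odd $n$ it is induced by $\Ad$ and $\PSL(n,\F)=\SL(n,\F)$), \Cref{propo_ImSemiAlgMap} gives that $\Pos(n,\F)=\pi(\mathcal{D})$ is semi-algebraic.

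There is no deep obstacle; the points needing care are bookkeeping: that "$n$ distinct positive eigenvalues'' is a condition on the characteristic polynomial alone, so that it transports through Vieta's map, and that for even $n$ passing to the full image $\pi(\mathcal{D})$ — rather than keeping $\mathcal{D}$ — correctly accounts for both sign choices of a lift. If one wished to bypass the characteristic-polynomial route entirely, $\mathcal{D}$ could instead be exhibited as the projection onto the $M$-coordinate of the semi-algebraic set of triples $(M,g,\lambda)\in\SL(n,\F)\times\GL(n,\F)\times\F^n$ with $\lambda_1>\dots>\lambda_n>0$ and $gM=\mathrm{diag}(\lambda)\,g$, invoking \Cref{thm_TarskiSeidenberg}; the identification with $\mathcal{D}$ again rests only on diagonalizability of matrices with $n$ distinct eigenvalues in $\F$.
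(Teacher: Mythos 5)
Your proof is correct, and your main route is genuinely different from the paper's. The paper exhibits $\Pos(n,\F)$ directly as one projection: it quantifies over a lift $M'\in\SL(n,\F)$ of $M$, a basis of eigenvectors $v_1,\dots,v_n\in\F^n$, and eigenvalues $\lambda_1>\dots>\lambda_n>0$ in $\F$, imposing $\Ad_\F(M')=M$, $\det(v_1|\dots|v_n)\neq 0$, and $M'v_i=\lambda_i v_i$, then invokes Tarski--Seidenberg once --- essentially the same formula as the alternative you sketch in your final paragraph, except phrased with eigenvectors in place of a diagonalizing matrix $g$. Your primary argument is instead modular: you isolate the fact that positive hyperbolicity of an element of $\SL(n,\F)$ is a condition on its characteristic polynomial alone (distinct positive roots implying diagonalizability over $\F$ is the key linear-algebra input), show the relevant coefficient locus $P_n$ is semi-algebraic via Vieta, pull back along the polynomial map $M\mapsto\charr(M)$, and only then push forward along the semi-algebraic projection $\SL(n,\F)\to\PSL(n,\F)$ from \Cref{lem_ProjMapSemiAlg}. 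What this buys you is a cleaner separation of concerns: the lift-and-sign bookkeeping for even $n$ is confined to the last step, and the semi-algebraicity at the $\SL$-level is manifestly intrinsic to the matrix (no auxiliary eigenvector variables). The paper's one-shot formula is terser, at the cost of carrying all quantifiers at once. Both arguments rest on the same engine (\Cref{thm_TarskiSeidenberg}, via \Cref{propo_ImSemiAlgMap}), and either is a complete proof.
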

\begin{proof}
The set of positively hyperbolic elements in $\PSL(n,\F)$ is described by
\begin{align*}
\Pos(n,\F) &= \{ M \in \PSL(n,\F) \mid \exists\,  M' \in \SL(n,\F), \, \exists\,  v_1,\ldots,v_n \in \F^n, \\
& \quad\quad\quad \exists\, \lambda_1 > \ldots > \lambda_n \in \F_{>0} : \Ad_\F(M') = M, \\
& \quad\quad\quad  \det(v_1 | \ldots |v_n) \neq 0, \,M'v_i = \lambda_i v_i \textrm{ for all } i=1,\ldots,n \},
\end{align*}
which is the projection of a semi-algebraic set,  hence semi-algebraic (\Cref{thm_TarskiSeidenberg}).
\end{proof}

\begin{lem}
\label{lem_StableFlagSemiAlg}
Let $\F$ be a real closed field.
The map
\[ f \from \Pos(n,\F) \to \Flag(\F^n), \quad M \mapsto F_M^+\]
is semi-algebraic.
The same is true if we replace the stable flag by the unstable flag.
\end{lem}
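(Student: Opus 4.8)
The plan is to exhibit the graph of $f$ as a semi-algebraic subset of $\Pos(n,\F) \times \Flag(\F^n)$ by writing it down with an explicit first-order formula in the language of ordered fields, and then invoke the Tarski--Seidenberg principle (\Cref{thm_TarskiSeidenberg}) to conclude. Both $\Pos(n,\F)$ and $\Flag(\F^n)$ are already known to be semi-algebraic (\Cref{lem_PosHypMatricesSemiAlg} and \Cref{lem_FlagSpacesAlg}), so it suffices to express the relation ``$E = F_M^+$'' semi-algebraically in the pair $(M,E)$.

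First I would spell out what it means for a flag $E = (E^{(1)} \subset \cdots \subset E^{(n-1)})$ to be the stable flag of $M$. Working with a lift $M' \in \SL(n,\F)$ of $M$ (existence of which is a semi-algebraic condition via $\Ad_\F$, as in the proof of \Cref{lem_PosHypMatricesSemiAlg}), the condition is: there exist vectors $v_1, \ldots, v_n \in \F^n$ and scalars $\lambda_1 > \cdots > \lambda_n > 0$ in $\F$ such that $M' v_i = \lambda_i v_i$ for all $i$, the $v_i$ are linearly independent (i.e.\ $\det(v_1 | \cdots | v_n) \neq 0$), and for each $a = 1, \ldots, n-1$ the subspace $E^{(a)}$ equals $\langle v_1, \ldots, v_a \rangle$. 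Using the projection description of the Grassmannian from \Cref{lem_FlagSpacesAlg}, where $E^{(a)}$ is encoded by its orthogonal projection matrix $M_a \in H_a$, the equality $E^{(a)} = \langle v_1, \ldots, v_a\rangle$ becomes the conjunction ``$M_a v_i = v_i$ for $i \le a$'' together with ``$M_a$ has rank $a$'', or equivalently ``$M_a w = 0$ whenever $w$ is orthogonal to all of $v_1, \ldots, v_a$'' — in any case a condition expressible with polynomial equalities, inequalities, and quantifiers over finitely many vector/scalar variables. Assembling all of this, $\Graph(f)$ is the projection onto the $(M,E)$-coordinates of a semi-algebraic set in a larger affine space, hence semi-algebraic by \Cref{thm_TarskiSeidenberg}; therefore $f$ is semi-algebraic by \Cref{dfn_SemiAlgSet}. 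The argument for the unstable flag $F_M^-$ is identical, using $\langle v_{n-a+1}, \ldots, v_n\rangle$ in place of $\langle v_1, \ldots, v_a\rangle$, or simply noting $F_M^- = F_{M^{-1}}^+$.

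I do not expect a genuine obstacle here; the only point requiring a little care is that ``$E^{(a)}$ is spanned by the top $a$ eigenvectors'' must be phrased so that it is insensitive to the choice of the $v_i$ (which are only determined up to nonzero scaling within each eigenline) and captures exactly the $a$ largest eigenvalues rather than an arbitrary $a$-element subset of the spectrum — this is handled automatically by imposing the strict ordering $\lambda_1 > \cdots > \lambda_n > 0$ on the full list of eigenvalues, which by positive hyperbolicity of $M$ exhausts the spectrum. Once the formula is written out, semi-algebraicity is immediate from Tarski--Seidenberg, with no estimates or calculations needed.
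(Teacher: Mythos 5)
Your proposal is correct and follows essentially the same route as the paper: write $\Graph(f)$ via a first-order formula quantifying over a lift $M'\in\SL(n,\F)$, eigenvectors $v_1,\ldots,v_n$, and ordered positive eigenvalues $\lambda_1>\cdots>\lambda_n$, encode the containment $v_1,\ldots,v_i\in F^{(i)}$ through the projection-matrix description of the Grassmannian from \Cref{lem_FlagSpacesAlg}, and invoke Tarski--Seidenberg. The only cosmetic difference is that you additionally impose a rank condition on the projections, which is redundant since $F$ is already constrained to lie in $\Flag(\F^n)$.
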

\begin{proof}
We need to prove that $\Graph(f)$ is semi-algebraic (\Cref{dfn_SemiAlgSet}). 
We have
\begin{align*}
\Graph(f)  &= \{ (M,F)\in \Pos(n,\F) \times \Flag(\F^n) \mid F=F_M^+ \}\\
&=\{ (M,F)\in \Pos(n,\F) \times \Flag(\F^n) \mid \exists\,  M' \in \SL(n,\F), \\
& \quad\quad\quad \exists\,  v_1,\ldots,v_n \in \F^n, \exists\, \lambda_1 > \ldots > \lambda_n \in \F_{>0} : \\
& \quad\quad\quad  \Ad_\F(M') = M, \, \det(v_1 | \ldots |v_n) \neq 0, \\
& \quad\quad\quad M'v_i = \lambda_i v_i \textrm{ and }v_1,\ldots,  v_i \in F^{(i)} \textrm{ for all } i=1,\ldots,n \}.
\end{align*}
We can conclude that $\Graph(f)$ is semi-algebraic as the projection of a semi-algebraic set to its first coordinates (\Cref{thm_TarskiSeidenberg}).
Note that the condition $v_1,\ldots,  v_i \in F^{(i)}$ can be expressed as $P_{F^{(i)}}(v_j) = v_j$ for all $j=1,\ldots,i$; see the proof of \Cref{lem_FlagSpacesAlg}.
\end{proof}
\section{Positive hyperbolicity of $\F$-Hitchin and $\F$-positive representations}
\label{section_PosHypRepr}

For this section let $\F$ be a real closed field extension of $\R$.
Recall from \Cref{dfn_PosHypPSL} that an element of $\PSL(n,\F)$ is positively hyperbolic if it has a positively hyperbolic lift to $\SL(n,\F)$, i.e.\ a matrix which has distinct and only positive eigenvalues.

\begin{dfn}
\label{dfn_PosHypRepr}
A representation $\rho \from \pi_1(S) \to \PSL(n,\F)$ is \emph{positively hyperbolic} if $\rho(\gamma)$ is positively hyperbolic for all $e \neq \gamma \in \pi_1(S)$.
\end{dfn}

\subsection{$\F$-Hitchin representations are positively hyperbolic}
\label{section_FHitchinRepr}

Recall from \Cref{lem_FHitchinHomHit} that an $\F$-Hitchin representation is a representation that lies in the $\F$-extension $\Hom_\Hit(\pi_1(S),\PSL(n,\F))$ of the set of real Hitchin representations $\Hom_\Hit(\pi_1(S),\PSL(n,\R))$.

\begin{propo} \label{lem_hitchinReprAreDiagonalizable}
Let $\rho \from \pi_1(S) \to \PSL(n,\F)$ be an $\F$-Hitchin representation.
Then $\rho$ is positively hyperbolic.
\end{propo}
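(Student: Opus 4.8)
The plan is to use the Tarski--Seidenberg transfer principle (\Cref{thm_TarskiSeidenberg}) to transport the known fact that real Hitchin representations are positively hyperbolic to the real closed extension $\F$. The key point is that ``$\rho$ is positively hyperbolic'' is, for each fixed element $\gamma \in \pi_1(S)$, a first-order condition on the tuple $(\rho(\gamma_1),\dots,\rho(\gamma_{2g}))$ in the language of ordered fields, hence expressible by a semi-algebraic condition that behaves well under $\F$-extension. First I would recall, via \Cref{lem_UniqueSemiAlgModel} and the identification of $\Hom_\Hit(\pi_1(S),\PSL(n,\R))$ with a semi-algebraic subset $X_F$ of some $\R^d$, that the real fact we rely on is the classical result (Labourie, Fock--Goncharov) that every real Hitchin representation sends each nontrivial $\gamma$ to a positively hyperbolic element of $\PSL(n,\R)$; equivalently, the set
\[
Y_\gamma \coloneqq \{\, x \in \Hom_\Hit(\pi_1(S),\PSL(n,\R)) \mid \rho_x(\gamma) \in \Pos(n,\R) \,\}
\]
equals all of $\Hom_\Hit(\pi_1(S),\PSL(n,\R))$.

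Next I would argue that $Y_\gamma$ is semi-algebraic: the evaluation map $x \mapsto \rho_x(\gamma)$ is a polynomial (hence semi-algebraic) map $\Hom(\pi_1(S),\PSL(n,\R)) \to \PSL(n,\R)$, since $\rho_x(\gamma)$ is obtained from the generators $\rho_x(\gamma_i)$ by finitely many group operations realized by polynomial maps on the algebraic group $\PGL(n,\R)$; and $\Pos(n,\R)$ is semi-algebraic by \Cref{lem_PosHypMatricesSemiAlg}. So $Y_\gamma$ is the preimage of a semi-algebraic set under a semi-algebraic map, hence semi-algebraic by \Cref{propo_ImSemiAlgMap}. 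Now the equality $Y_\gamma = \Hom_\Hit(\pi_1(S),\PSL(n,\R))$ is an equality of semi-algebraic subsets of $\R^d$; taking $\F$-extensions preserves such equalities (the $\F$-extension depends only on the set, not the defining formula, cf.\ \cite[Proposition 5.1.1]{BochnakCosteRoy_RealAlgebraicGeometry}), so $(Y_\gamma)_\F = \Hom_\Hit(\pi_1(S),\PSL(n,\F))$. It remains to check that $(Y_\gamma)_\F$ is exactly the set of $x \in \Hom_\Hit(\pi_1(S),\PSL(n,\F))$ with $\rho_x(\gamma) \in \Pos(n,\F)$: this follows because the evaluation map and the membership condition in $\Pos(n,\F)$ are given by the $\F$-extensions of the corresponding real semi-algebraic data (\Cref{thm_ExtSemiAlgMaps}, and the fact that $\Pos(n,\F) = \Pos(n,\R)_\F$, which one reads off from the defining formula in \Cref{lem_PosHypMatricesSemiAlg}). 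Hence $\rho(\gamma) \in \Pos(n,\F)$ for every $x$ in the $\F$-Hitchin component and every fixed $\gamma \neq e$.

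Finally I would run this over all $\gamma \in \pi_1(S) \setminus \{e\}$: since the above holds for each individual $\gamma$, and $\F$-positive hyperbolicity of $\rho$ means $\rho(\gamma) \in \Pos(n,\F)$ for all such $\gamma$, we conclude that every $\F$-Hitchin representation is positively hyperbolic in the sense of \Cref{dfn_PosHypRepr}. The main subtlety — and the step I would be most careful about — is the bookkeeping showing that the $\F$-extension of the real semi-algebraic set ``Hitchin representations with $\rho(\gamma)$ positively hyperbolic'' coincides set-theoretically with ``$\F$-Hitchin representations with $\rho(\gamma) \in \Pos(n,\F)$''; this is where one must invoke that $\F$-extension commutes with preimages under semi-algebraic maps and with the explicit first-order description of $\Pos(n,\cdot)$, rather than just quoting the transfer principle abstractly. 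Note also that no continuity or connectedness of a limit map is needed here; the argument is purely a transfer-of-sign-conditions argument, one element $\gamma$ at a time.
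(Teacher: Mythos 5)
Your proposal is correct and follows essentially the same argument as the paper: fix a nontrivial $\gamma$, observe that the set of real Hitchin representations with $\rho(\gamma)$ positively hyperbolic is all of $\Hom_\Hit(\pi_1(S),\PSL(n,\R))$ and is semi-algebraic, and then pass to the $\F$-extension via Tarski--Seidenberg. The only cosmetic difference is that you spell out the bookkeeping for why $(Y_\gamma)_\F$ is the set of $\F$-Hitchin representations with $\rho(\gamma) \in \Pos(n,\F)$, which the paper leaves implicit.
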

\begin{proof}
Fix a non-trivial $\gamma \in \pi_1(S)$.
Since $\Hom_\Hit(\pi_1(S),\PSL(n,\R))$ and $\Pos(n,\R)$ are real semi-algebraic we can consider the real semi-algebraic set
\[
X_\gamma \coloneqq \{ \rho' \in \Hom_\Hit(\pi_1(S),\PSL(n,\R)) \mid \rho'(\gamma) \in \Pos(n,\R) \}.
\]
Since real Hitchin representations are positively hyperbolic,  refer for example to \cite[Proposition 8 and Lemma 9]{BonahonDreyer_ParametrizingHitchinComponents}, we obtain $\Hom_\Hit(\pi_1(S),\PSL(n,\R))$ $=X_\gamma $.
By the Tarski--Seidenberg principle (\Cref{thm_TarskiSeidenberg}),  we obtain that for any real closed extension $\F$ of $\R$, 
\[
(X_{\gamma})_\F = \Hom_\Hit(\pi_1(S),\PSL(n,\F)).\]
This means that $\F$-Hitchin representations are positively hyperbolic.
\end{proof}

\subsection{$\F$-positive representations are positively hyperbolic}
\label{section_FpositiveRepr}

In the following we prove that $\F$-positive representations (Definition \ref{dfn_Fpositivity}) are positively hyperbolic and weakly dynamics-preserving in the following sense.

\begin{dfn}\label{dfn_DynPres}
An $\F$-positive representation $\rho \from \pi_1(S) \to \PSL(n,\F)$ with $\rho$-equivariant limit map $\xi_\rho$ is \emph{weakly dynamics-preserving} if for all $e \neq \gamma \in \pi_1(S)$ we have $\xi_\rho(\gamma^+) = F_{\rho(\gamma)}^+$,  where $\gamma^+$ denotes the attracting fixed point of $\gamma$ and $ F_{\rho(\gamma)}^+$ the stable flag of $\rho(\gamma)$; compare \Cref{dfn:PosHypStableFlag}.
\end{dfn}

This result will be crucial in proving that $\F$-positive representations are $\F$-Hitchin.
In the case $\F=\R$,  this result is proven implicitly in \cite[Theorem 7.2]{FockGoncharov_ModuliSpacesLocalSystemsHigherTeichmuellerTheory}.
They first extend the limit map continuously to a positive map on the whole $\partial_\infty \tilde{S}$, and then use continuity and dynamical arguments to prove the statement---a strategy that cannot be adapted to general real closed fields.
We provide a proof that uses positivity of the limit map directly.

\begin{propo} \label{propo_positiveReprHaveDistinctEigenvalues}
Let $\F$ be a real closed field and $\rho \from \pi_1(S) \to \PSL(n,\F)$ an $\F$-positive representation.
Then $\rho$ is positively hyperbolic and weakly dynamics-preserving.
In particular,  the limit map of an $\F$-positive representation is unique.
\end{propo}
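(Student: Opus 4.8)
The plan is to fix a nontrivial $\gamma \in \pi_1(S)$ and analyze the matrix $\rho(\gamma)$ using the positivity of the equivariant limit map $\xi_\rho$ together with the total-positivity machinery of \Cref{lem_PoskTupleNormalForm} and the monotonicity result \Cref{propo_5TuplePosInequDR}. Since $\gamma$ acts on $\partial_\infty \tilde S$ with an attracting fixed point $\gamma^+$ and a repelling fixed point $\gamma^-$, both lying in $\Fix(S)$, we may look at the images of a chain of points. Concretely, pick any $x \in \Fix(S)$ distinct from $\gamma^\pm$; then the bi-infinite sequence $\ldots, \gamma^{-2}x, \gamma^{-1}x, x, \gamma x, \gamma^2 x, \ldots$ is cyclically ordered (accumulating at $\gamma^-$ on one side and $\gamma^+$ on the other), so by \Cref{rem_ImageOfkCyclicallyOrderedPointsIsPositivekTuple} its image under $\xi_\rho$ is a positive tuple of flags of arbitrary length. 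By $\rho$-equivariance, $\rho(\gamma)$ shifts this sequence of flags by one. The idea is that this "positive $\Z$-chain" forces $\rho(\gamma)$ to be conjugate to a totally positive upper-triangular matrix (after normalizing by a suitable pair of transverse flags), and a totally positive matrix of determinant $\pm 1$ has distinct positive eigenvalues by the classical Perron–Frobenius / Gantmacher theory for totally positive matrices.

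In more detail, here are the key steps in order. First, using $\xi_\rho(\gamma^{-1}x)$, $\xi_\rho(x)$, and a third flag — say $\xi_\rho(\gamma x)$ or one of the fixed-point flags, whichever makes the tuple positive — apply \Cref{lem_PoskTupleNormalForm} to a positive $5$-tuple built from $\gamma^{-1}x, x, \gamma x, \gamma^2 x, \gamma^3 x$ (or similar) to obtain a basis $\mathcal{B} = (e_1,\ldots,e_n)$ in which two of these flags are the ascending and descending flags, and the successive flags differ by elements of $\mathcal{U}^\urtriangle_{>0}(\mathcal{B})$ or $\mathcal{U}^\lltriangle_{>0}(\mathcal{B})$. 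Second, observe that $\rho(\gamma)$ carries this whole configuration to the shifted one; matching up the shifts, one reads off that a conjugate of $\rho(\gamma)$ is given by a product of totally positive upper-triangular unipotents composed with the shift in the chain, which by \Cref{corol_TotPosProduct} is (a diagonal rescaling of) a totally positive matrix. Third, invoke the classical spectral theory of totally positive matrices: such a matrix has $n$ distinct real eigenvalues, all of the same sign, and since $\det = \pm 1 > 0$ after choosing the right lift to $\SL(n,\F)$ (here one uses that the relevant field-theoretic statement — "a totally positive matrix over an ordered field has distinct eigenvalues in its real closure" — holds over any real closed field, or is transferred from $\R$ via \Cref{thm_TarskiSeidenberg}), we get that the appropriate lift has distinct positive eigenvalues, i.e. $\rho(\gamma)$ is positively hyperbolic. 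Fourth, identify the stable flag: the attracting direction of this totally positive matrix is exactly the flag associated to $\gamma^+$ in the limit configuration, because the chain $\gamma^n x \to \gamma^+$ and the flags $\xi_\rho(\gamma^n x) = \rho(\gamma)^n \xi_\rho(x)$ converge (in the sense of the monotone double-ratio coordinates, using \Cref{propo_5TuplePosInequDR} and its corollary) to $F^+_{\rho(\gamma)}$; hence $\xi_\rho(\gamma^+) = F^+_{\rho(\gamma)}$ and $\rho$ is dynamics-preserving. Finally, uniqueness of the limit map is immediate: $\Fix(S)$ consists exactly of the fixed points $\gamma^\pm$ of nontrivial $\gamma$, and dynamics-preservation forces $\xi_\rho(\gamma^+) = F^+_{\rho(\gamma)}$ and $\xi_\rho(\gamma^-) = F^-_{\rho(\gamma)}$, pinning down $\xi_\rho$ on all of $\Fix(S)$.

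The main obstacle I anticipate is step three done \emph{without analytic tools} — the excerpt's "Method" paragraph explicitly flags the closed-leaf inequality and stresses that the proof must avoid limiting/analytic arguments since $\F$ may be non-Archimedean. Over $\R$ one would simply say "a totally positive matrix has distinct positive eigenvalues by Perron–Frobenius and induction (Gantmacher–Krein)," but over a general real closed field one cannot run a limiting or variational argument. The right move is either to transfer the purely algebraic statement — "for each fixed $n$, the assertion that an $n\times n$ totally positive matrix of determinant one has $n$ distinct eigenvalues in the real closure, all positive, with the eigenvectors arranged in the expected flag positions, is a first-order statement in the language of ordered fields" — from $\R$ via \Cref{thm_TarskiSeidenberg}, or to give a direct algebraic argument via the characteristic polynomial and descending principal minors (Descartes/variation-of-sign over an ordered field). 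A secondary subtlety is bookkeeping with the lift to $\SL(n,\F)$ when $n$ is even (choosing the lift that makes the eigenvalues positive rather than the one with all eigenvalues negated), and making sure the normal form from \Cref{lem_PoskTupleNormalForm} can indeed be set up so that $\rho(\gamma)$ itself — not merely a composite of unipotents between \emph{different} flags — is read off as a totally positive matrix; this likely requires choosing the $5$-tuple (or $6$-tuple) carefully so that the $\rho(\gamma)$-shift relates the ascending/descending pair to itself.
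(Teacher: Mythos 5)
Your proposal takes a genuinely different route from the paper, but it has a critical gap and also misses the structural shortcut that makes the argument clean.

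The missing idea is the one that collapses the whole problem: since $\gamma$ fixes $\gamma^+$ and $\gamma^-$, equivariance forces the flags $\xi_\rho(\gamma^+)$ and $\xi_\rho(\gamma^-)$ to be $\rho(\gamma)$-invariant. Choosing (via \Cref{lem_PoskTupleNormalForm} applied to the positive tuple built on $\gamma^+,\gamma^-$) a basis $\mathcal{B}=(e_1,\dots,e_n)$ in which $\xi_\rho(\gamma^+)$ is the ascending flag and $\xi_\rho(\gamma^-)$ is the descending flag, one gets that a lift of $\rho(\gamma)$ is \emph{diagonal}, say $\textnormal{diag}(\lambda_1,\dots,\lambda_n)$. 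One then applies \Cref{propo_5TuplePosInequDR} to the positive $5$-tuple $(\xi_\rho(\gamma^+),\xi_\rho(x),\xi_\rho(\gamma^-),\xi_\rho(y),\xi_\rho(\gamma y))$ for suitable $x,y\in\Fix(S)$, and a one-line computation of the double ratios in the basis $\mathcal{B}$ shows $D_a(\ldots,\rho(\gamma)F_4)=\tfrac{\lambda_a}{\lambda_{a+1}}D_a(\ldots,F_4)$, whence $\lambda_a/\lambda_{a+1}>1$ for every $a$. This gives distinctness, same sign, hence positive hyperbolicity, and the fact that $\xi_\rho(\gamma^+)$ is the ascending flag in $\mathcal{B}$ with $\lambda_1>\dots>\lambda_n$ is dynamics-preservation --- no totally positive matrix theory needed.

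Your route instead tries to exhibit $\rho(\gamma)$ as (conjugate to) a totally positive matrix and then cite Gantmacher--Krein transferred via Tarski--Seidenberg. There are two problems. First, the reduction is not pinned down: $\rho(\gamma)$ shifts your chain $\gamma^k x$, but it does not fix the moving reference frame, so "reading off" $\rho(\gamma)$ as a product of triangular unipotents times a shift requires controlling a diagonal correction whose positivity you never establish; in fact in any basis adapted to the \emph{invariant} pair $\xi_\rho(\gamma^\pm)$, $\rho(\gamma)$ is diagonal, not totally positive, so the TP-matrix framework does not apply to $\rho(\gamma)$ in the natural coordinates and has to be engineered. Second and more seriously, your step four identifies the stable flag by a convergence argument ("the flags $\rho(\gamma)^n\xi_\rho(x)$ converge to $F^+_{\rho(\gamma)}$"), which is exactly the analytic tool the paper must avoid: over a non-Archimedean real closed field there is no sequential convergence and $\Fix(S)$ carries no useful topology. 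The paper's method replaces convergence by the single algebraic inequality $\lambda_a/\lambda_{a+1}>1$ obtained from the monotonicity of double ratios. You do correctly identify both of these as the soft spots in your outline; the point is that neither is a mere bookkeeping issue --- the convergence step is unsalvageable as stated, and the TP reduction, while probably fixable, is needless once one uses the invariance of $\xi_\rho(\gamma^\pm)$.
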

\begin{proof}
Let $\gamma \in \pi_1(S)$ be non-trivial.
Denote by $\gamma^+$ respectively $\gamma^-$ the attracting respectively repelling fixed point of $\gamma$ in $\Fix(S) \subseteq \partial_\infty\tilde{S}$.
Let $x, y \in \Fix(S) \setminus \{\gamma^+,\gamma^-\}$ be two distinct points such that $x$ lies on the right hand side of the axis of $\gamma$ and $y$ lies on the left hand side of the axis of $\gamma$,  where the axis is oriented towards $\gamma^+$.
Then $(\gamma^+,x,\gamma^-,y)$ and $(\gamma^+,x,\gamma^-,y,\gamma y)$ are cyclically ordered in clockwise direction.
\begin{figure}[H]
\centering
\includegraphics[width=0.4\textwidth]{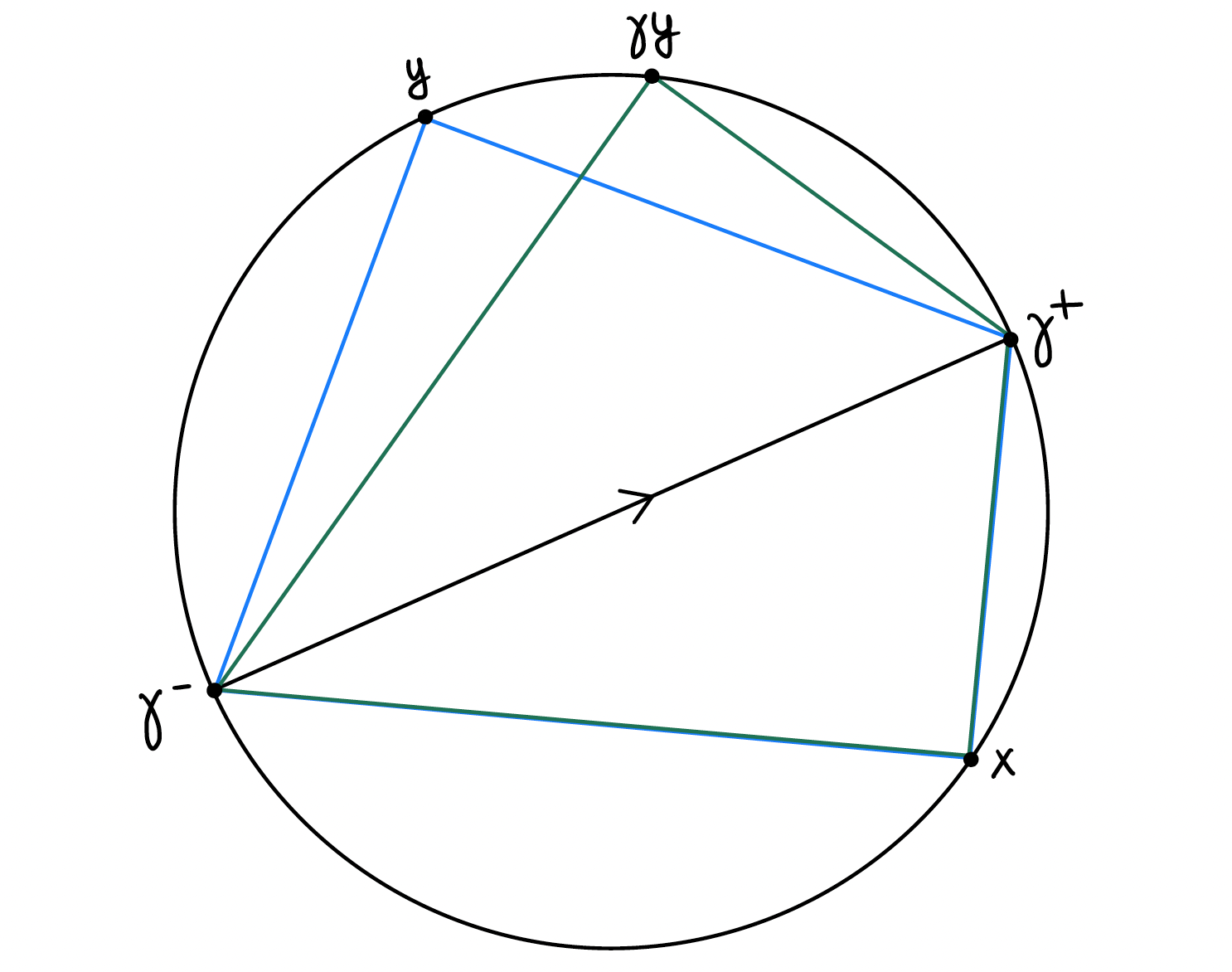}
\captionof{figure}{The cyclically ordered tuple $(\gamma^+,x,\gamma^-,y,\gamma y)$.} \label{fig_5TupleAxis}
\end{figure}
\noindent Denote by $\xi_\rho$ the $\rho$-equivariant positive map $\Fix(S) \to \Flag(\F^n)$ associated to the $\F$-positive representation $\rho$.
By positivity of $\xi_\rho$ and \Cref{rem_ImageOfkCyclicallyOrderedPointsIsPositivekTuple} the 5-tuple of flags 
\[(F_1,\ldots,F_5)\coloneqq (\xi_\rho(\gamma^+),\xi_\rho(x),\xi_\rho(\gamma^-),\xi_\rho(y),\xi_\rho(\gamma y))\]
is positive.
By \Cref{propo_5TuplePosInequDR} and $\rho$-equivariance of $\xi_\rho$ we have
\begin{equation} \label{calc_InequalityDR}
D_a(F_1, F_3, F_2, F_4)< D_a(F_1, F_3, F_2, \rho(\gamma)F_4).
\end{equation}
As in the proof of \Cref{propo_5TuplePosInequDR},  let $\mathcal{B}=(e_1,\ldots,e_n)$ be a basis of $\F^n$ in which $F_1$ is the ascending flag and $F_3$ is the descending flag.
Since both $F_1$ and $F_3$ are $\rho(\gamma)$-invariant,  in this basis there exists a lift $\widetilde{\rho(\gamma)} \in \SL(n,\F)$ of $\rho(\gamma)$ with $\widetilde{\rho(\gamma)} = \textrm{diag}(\lambda_1,\ldots,\lambda_n)$ for $\lambda_1,\ldots,\lambda_n \in \F\setminus \{0\}$.
Let $x_i, y_i  \in \F\setminus\{0\}$ such that $F_2^{(1)}= \langle \sum_{i=1}^n x_i e_i \rangle$ and $F_4^{(1)}= \langle \sum_{i=1}^n y_i e_i \rangle$.
Then $\rho(\gamma)F_4^{(1)} = \langle \sum_{i=1}^n \lambda_i y_i e_i \rangle$.
With these notations we compute as in the proof of \Cref{propo_5TuplePosInequDR}
\[D_a(F_1,F_3,F_2,F_4)= -\frac{x_{a+1} y_a}{x_a y_{a+1}}\eqqcolon d >0,\]
and 
\[D_a(F_1,F_3,F_2,\rho(\gamma)F_4)= -\frac{\lambda_a}{\lambda_{a+1}} \frac{x_{a+1} y_a}{x_a y_{a+1}} = \frac{\lambda_a}{\lambda_{a+1}} d.\]
By inequality (\ref{calc_InequalityDR}) we obtain that
\[d \bigg(\frac{\lambda_a}{\lambda_{a+1}}-1\bigg)>0.\]
Since $d$ is positive,  it follows that $\lambda_a/\lambda_{a+1}>1$, which is what we had to prove.
\end{proof}
\section{A variant of the Bonahon--Dreyer coordinates}
\label{section_VariantOfBDCoordinates}
In this section we describe an explicit semi-algebraic model for the Hitchin component in the sense of \Cref{dfn_SemiAlgModel}.
It is based on the seminal  \linebreak work of Fock--Goncharov \cite{FockGoncharov_ModuliSpacesLocalSystemsHigherTeichmuellerTheory}.
Originally defined for non-closed surfaces,  Bonahon--Dreyer \cite{BonahonDreyer_ParametrizingHitchinComponents} adapt the coordinates to closed surfaces.
In the case $n=2$ they agree with the shear coordinates of Teichm\"uller space, see \cite[Section 9]{Thurston_MinimalStretchMaps} or \cite[Theorem A]{Bonahon_ShearingHyperbolicSurfaces}.
We first focus on the case $\F=\R$, and later extend to other real closed fields in \Cref{subsection_BDExt}.
The coordinates build on the existence of positive limit maps into the flag variety associated to Hitchin representations.

\begin{theor}[{\cite[Theorem 1.15]{FockGoncharov_ModuliSpacesLocalSystemsHigherTeichmuellerTheory}}, {\cite[Theorem 1.4]{Labourie_AnosovFlowsSurfaceGroupsAndCurvesInProjectiveSpace}}]
\label{thm_FlagCurveHitchinRepr}
Let \linebreak$\rho \from \pi_1(S) \to \PSL(n,\R)$ be a Hitchin representation.
Then there exists a $\rho$-equivariant continuous map
\[ \xi_\rho \from \partial_\infty\tilde{S} \to \Flag(\R^n),\]
such that
\begin{enumerate}
\item \label{thm_FlagCurveHitchinRepr_stableFlag}
if $x \in \partial_\infty\tilde{S}$ is the attracting fixed point of $\gamma \in \pi_1(S)$ then $\xi_\rho(x)$ is the stable flag of $\rho(\gamma)$,
\item  \label{thm_FlagCurveHitchinRepr_transverse}
for $x, y \in  \partial_\infty\tilde{S}$ distinct, the flag tuple $\left(\xi_\rho(x), \xi_\rho(y)\right)$ is transverse,
\item  \label{thm_FlagCurveHitchinRepr_triplespos}
for $x, y, z \in  \partial_\infty\tilde{S}$ distinct, the flag triple $\left(\xi_\rho(x), \xi_\rho(y), \xi_\rho(z)\right)$ is positive,
\item  \label{thm_FlagCurveHitchinRepr_quadruplepos}
for $x, z,y, z' \in  \partial_\infty\tilde{S}$ distinct occurring in this clockwise order around the circle $\partial_{\infty} \tilde{S}$, the flag quadruple $\left(\xi_\rho(x), \xi_\rho(z), \xi_\rho(y), \xi_\rho(z')\right)$ is positive.
\end{enumerate}
\end{theor}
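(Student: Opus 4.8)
The plan is to prove the statement by a connectedness argument anchored at the Fuchsian locus, with the Anosov property of Hitchin representations as the main structural input, exactly in the spirit of \cite{Labourie_AnosovFlowsSurfaceGroupsAndCurvesInProjectiveSpace} and \cite{FockGoncharov_ModuliSpacesLocalSystemsHigherTeichmuellerTheory}.

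First I would treat the Fuchsian representation $\rho_0 = \iota_n \circ j$. Here $\partial_\infty\tilde S$ is $\rho_0$-equivariantly identified with $\PP^1(\R) = \partial\mathbb{H}^2$, and one postcomposes with the osculating-flag (Veronese--Frenet) map $\PP^1(\R) \to \Flag(\R^n)$, under the identification $\R^n \cong \mathrm{Sym}^{n-1}(\R^2)$, sending $[v]$ to the flag whose $k$-th term is spanned by $v, v', \ldots, v^{(k-1)}$. This map is continuous and $\rho_0$-equivariant, and a direct computation shows it satisfies (1)--(4): the osculating flag at the attracting fixed point of $\gamma$ is the stable flag of $\rho_0(\gamma)$; osculating flags at distinct points are transverse because the rational normal curve is in general position; and the triple and double ratios of osculating flags at cyclically ordered points are positive, which reduces to positivity of suitable generalized Vandermonde determinants (total positivity). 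So the theorem holds for $\rho_0$.

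Next I would invoke that $\rho_0$ is Borel--Anosov, that the set of Borel--Anosov representations in $\Hom(\pi_1(S),\PSL(n,\R))$ is open, and that it is also closed when intersected with the Hitchin component (using uniformity/properness estimates special to Hitchin representations); since $\Hom_\Hit(\pi_1(S),\PSL(n,\R))$ is connected, every Hitchin representation is Borel--Anosov. A Borel--Anosov $\rho$ then admits a unique continuous $\rho$-equivariant limit map $\xi_\rho \colon \partial_\infty\tilde S \to \Flag(\R^n)$ which is transverse on distinct points and dynamics-preserving, sending the attracting fixed point $\gamma^+$ to the attracting fixed point of $\rho(\gamma)$ acting on $\Flag(\R^n)$, i.e.\ to the stable flag $F^+_{\rho(\gamma)}$ (here one uses proximality of $\rho(\gamma)$ in all exterior powers, again part of Borel--Anosov). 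This establishes (1) and (2) for an arbitrary Hitchin $\rho$, and moreover $\xi_\rho$ depends continuously on $\rho$ by uniform convergence of limit maps.

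Finally, for (3) and (4) I would argue by sign-continuity. Fixing distinct $x,y,z \in \partial_\infty\tilde S$ and $a+b+c = n$, property (2) makes $\rho \mapsto T_{abc}(\xi_\rho(x),\xi_\rho(y),\xi_\rho(z))$ a well-defined, continuous, nowhere-vanishing function on the connected set $\Hom_\Hit(\pi_1(S),\PSL(n,\R))$; hence it keeps the positive sign it has at $\rho_0$, giving (3). The same argument applied to the double ratios of a clockwise-ordered quadruple $(\xi_\rho(x),\xi_\rho(z),\xi_\rho(y),\xi_\rho(z'))$, combined with (3) for its two sub-triples, yields (4) through \Cref{dfn_PostivityTuples}. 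I expect the main obstacle to be the closedness half of the connectedness argument for the Anosov property; alternatively, following Fock--Goncharov's positive-atlas approach instead, the bulk of the work shifts to setting up the positive structure on the moduli space and transporting it under flips. The Fuchsian total-positivity computation, though essential, is routine.
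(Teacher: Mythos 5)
This theorem is quoted in the paper from Fock--Goncharov and Labourie with no proof supplied, so there is no in-paper argument to compare against; I am therefore comparing your sketch against what the cited sources actually establish.

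Your Fuchsian computation and the sign-continuity philosophy are reasonable, but the argument as written has a genuine gap precisely where you try to leverage (2) to prove (3) and (4). From the Borel--Anosov property you only extract \emph{pairwise} transversality of the limit flags. However, for $T_{abc}(\xi_\rho(x),\xi_\rho(y),\xi_\rho(z))$ to be a well-defined, nowhere-vanishing continuous function of $\rho$, you need the \emph{triple} $(\xi_\rho(x),\xi_\rho(y),\xi_\rho(z))$ to be transverse in the sense of \Cref{dfn_TransverseFlags}, i.e.\ $E^{(a')}\oplus F^{(b')}\oplus G^{(c')}=\R^n$ for all $a'+b'+c'=n$, since these are exactly the numerators and denominators in \Cref{dfn_TripleRatio}. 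Pairwise transversality does not imply this: already for $n=3$ one can take
\[
E=\bigl(\langle e_1\rangle,\langle e_1,e_3\rangle\bigr),\quad
F=\bigl(\langle e_2\rangle,\langle e_2,e_3\rangle\bigr),\quad
G=\bigl(\langle e_1+e_2\rangle,\langle e_1+e_2,e_3\rangle\bigr),
\]
which is pairwise transverse but fails $E^{(1)}+F^{(1)}+G^{(1)}=\R^3$. The connectedness argument therefore does not get off the ground: the locus where the triple fails to be transverse is a priori a nonempty closed set that could separate $\Hom_\Hit$ into the positive-ratio region containing $\rho_0$ and a negative-ratio region, and you have given no reason for this locus to be empty. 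The same issue recurs for the quadruple transversality needed to define the double ratios in (4).

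What you are missing is the \emph{hyperconvexity} (Frenet) property: for any distinct $x_1,\dots,x_k\in\partial_\infty\tilde S$ and nonnegative integers with $\sum a_i\le n$, the sum $\xi_\rho(x_1)^{(a_1)}+\dots+\xi_\rho(x_k)^{(a_k)}$ is direct. This is considerably stronger than the Anosov transversality of pairs and is the actual content of Labourie's Theorem 1.4; it is not a formal consequence of Borel--Anosov plus an openness/closedness scheme of the shape you describe. Once hyperconvexity is in hand, all the triple and double ratios are nonvanishing, and your sign-continuity argument from the Fuchsian computation correctly gives (3) and (4) (with (1) and (2) coming from dynamics-preservation and transversality). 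So the proposal has the right skeleton, but it treats as routine exactly the step that is the theorem's real content: you must establish transversality of arbitrary $k$-tuples on the limit curve, not merely of pairs, before the triple and double ratios are even defined.
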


In the following, we give a slight variant of Bonahon--Dreyer's parametrization of the real Hitchin component presented in \cite{BonahonDreyer_ParametrizingHitchinComponents}.
We emphasize that this variant differs only from the original Bonahon--Dreyer coordinates by not taking logarithms and requiring an additional positivity condition. 
We recall the definition of the coordinates here. 
For a more detailed description and more information we recommend \cite{BonahonDreyer_ParametrizingHitchinComponents}.

\subsection{Coordinates}
\label{subsection_BDCoord}
In order to define the coordinates we need to fix some topological data on $S$.
Choose an auxiliary hyperbolic metric on $S$ and a maximal geodesic lamination $\lambda$ with finitely many leaves, and denote by $\tilde{\lambda} \subset \tilde{S}$ its lift to $\tilde{S}$.
A \emph{geodesic lamination} $\lambda$ on $S$ is a lamination of $S$ whose leaves are geodesics.
We say $\lambda$ is \emph{finite} if $\lambda$ has finitely many leaves.
It is called \emph{maximal} if it is maximal with respect to inclusion, or equivalently if its complement $S\setminus \lambda$ is a union of ideal triangles.

\begin{figure}[H]
\centering
\includegraphics[width=0.54\textwidth]{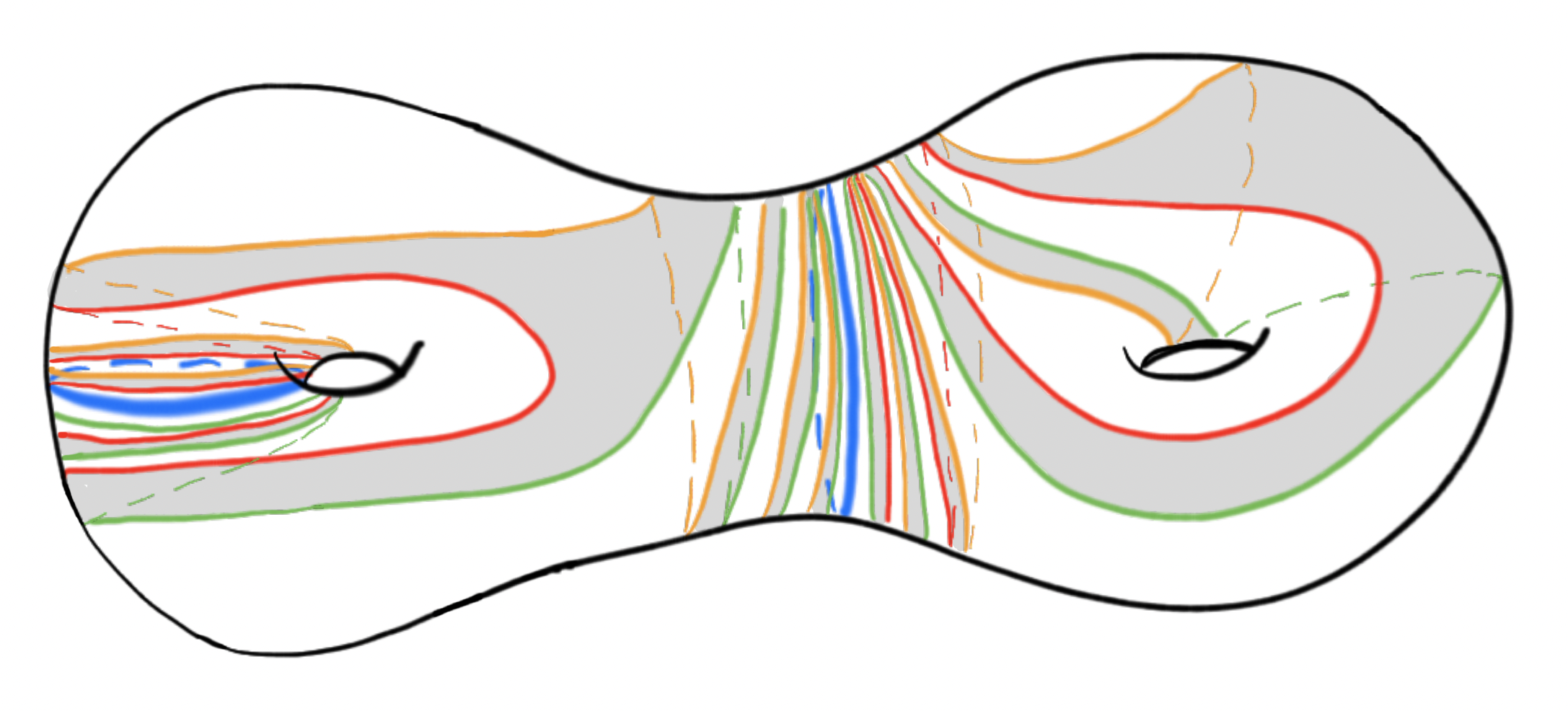}
\includegraphics[width=0.34\textwidth]{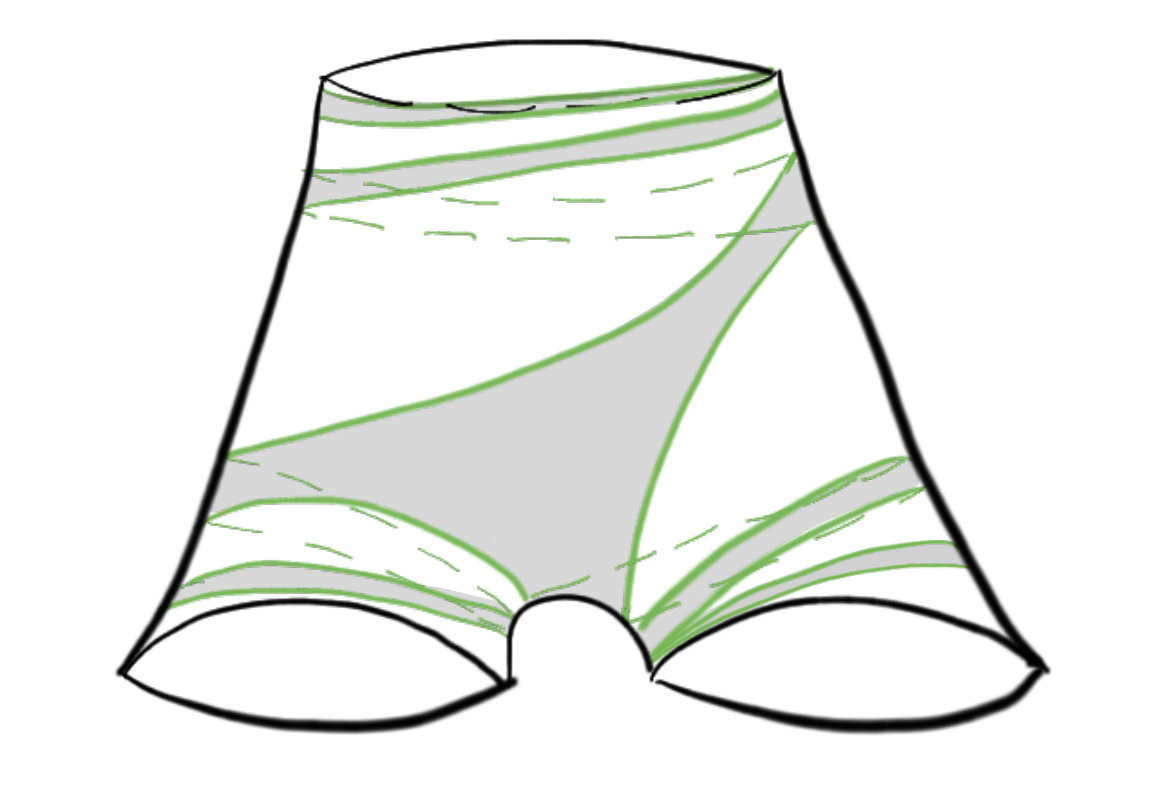}
\captionof{figure}{A maximal geodesic lamination of $S$ (left) and of a pair of pants (right).  The white and shaded regions are ideal triangles.} \label{fig_MaximalGeodLam}
\end{figure}
Although geodesic laminations can be defined in a metric-independent way and, in particular, are purely topological objects, see e.g.\ \cite[Proposition 8.9.4]{Thurston_TheGeometryAndTopologyOf3Mnfds} or \cite[Lemma 18]{Bonahon_GeodesicLaminationsOnSurfaces}, it will be convenient here to fix a hyperbolic metric on $S$.
Furthermore, we (arbitrarily) choose an orientation on each leaf of $\lambda$.
Note that the leaves of $\lambda$ come in two flavours: there are closed leaves and infinite leaves.
Finally, for every closed geodesic leaf $\gamma$ in $\lambda$ we choose an arc $k$ that is transverse to $\lambda$, cuts $\gamma$ in exactly one point and has endpoints in $S \setminus \lambda$.
We write $\partial_\infty\tilde{\lambda} \subseteq \partial_\infty\tilde{S}$ for the endpoints of the leaves of $\tilde{\lambda}$.
Since $\lambda$ is a closed subset of $S$, we observe that $\partial_\infty\tilde{\lambda} \subseteq \Fix(S)$.

We are now ready to describe the Bonahon--Dreyer coordinates that parametrize the Hitchin component.
Let $\rho \from \pi_1(S) \to \PSL(n,\R)$ be a Hitchin representation and $\xi_\rho \from \partial_{\infty} \tilde{S} \to \Flag(\R^n)$ the limit map of $\rho$ by \Cref{thm_FlagCurveHitchinRepr}.
We associate to $\rho$ the following invariants.

\begin{enumerate}
\item \label{item_triangleInvariants}
\emph{Triangle invariants}:
For every ideal triangle $t \subset S\setminus \lambda$ and every vertex $v$ of $t$, we associate for $a, b, c \in \N_{\geq 1}$ with $a+b+c = n$ the triangle invariant
\[ T_{abc}^\rho(t,v) \coloneqq  T_{abc}(\xi_\rho(\tilde{v}), \xi_\rho(\tilde{v}'), \xi_\rho(\tilde{v}'')), \]
where $\tilde{v}, \tilde{v}', \tilde{v}''$ are the vertices of a lift $\widetilde{t}$ of $t$ to $\tilde{S}$, and $\tilde{v}$ is the vertex corresponding to the vertex $v$ of $t$.
Note that the vertices $\tilde{v}, \tilde{v}', \tilde{v}'' \in \partial_\infty \tilde{S}$ are labelled in clockwise order around $\widetilde{t}$.

\item \label{item_shearInvariantsInfiniteLeaves}
\emph{Shear invariants for infinite leaves}:
For every infinite leaf $h \subset \lambda$ we associate for $a = 1, \ldots, n-1$ the shear invariant
\[ D^\rho_a(h) \coloneqq  D_a(\xi_\rho(h^+), \xi_\rho(h^-), \xi_\rho(z), \xi_\rho(z')), \]
where $\tilde{h}$ is a lift of $h$ to $\tilde{S}$, $h^\pm \in \partial_\infty \tilde{S}$ its positive respectively negative endpoint, and $z, z' \in \partial_\infty \tilde{S}$ the third vertices of $\widetilde{t}$ and $\widetilde{t}'$, respectively, where $\widetilde{t}$ and $\widetilde{t}'$ are the ideal triangles that lie on the left respectively right side of $\tilde{h}$ (where the orientation of $\tilde{h}$ comes from the orientation of $h$ which was part of the topological data), see \Cref{fig_ShearInvariantsInfiniteLeaves}.

\begin{minipage}{\linewidth}
\centering
\includegraphics[width=0.38\textwidth]{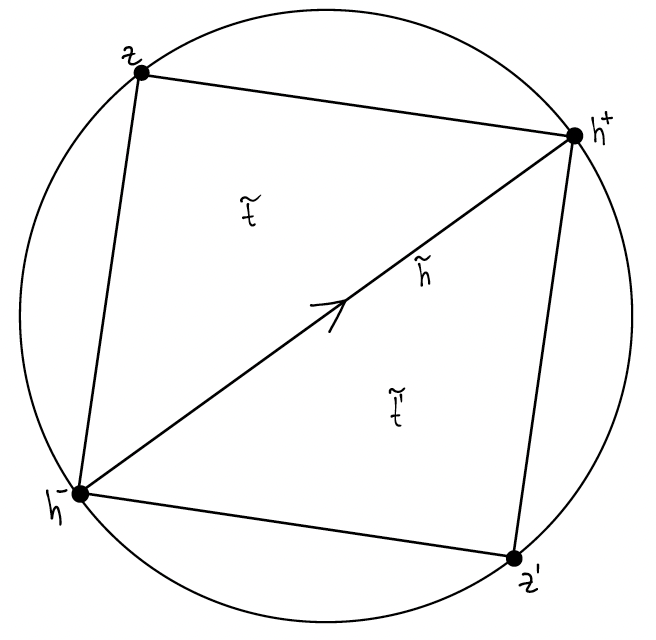}
\captionof{figure}{The construction of the shear invariants for infinite leaves.} \label{fig_ShearInvariantsInfiniteLeaves}
\end{minipage}

\item \label{item_shearInvariantsClosedLeaves}
\emph{Shear invariants for closed leaves}:
For every closed leaf $\gamma \subset \lambda$ we associate for $a = 1, \ldots, n-1$ the shear invariant
\[ D^\rho_a(\gamma) \coloneqq D_a(\xi_\rho(\gamma^+), \xi_\rho(\gamma^-), \xi_\rho(z), \xi_\rho(z')), \]
where $\tilde{\gamma}$ is a lift of $\gamma$ to $\tilde{S}$, $\gamma^\pm \in \partial_\infty \tilde{S}$ its positive respectively negative endpoint, and $z, z' \in \partial_\infty \tilde{S}$ are vertices of the triangles $\widetilde{t}$ and $\widetilde{t}'$ defined by the arc $k$ intersecting $\gamma$ in the following way:
Lift $k$ to an arc $\tilde{k}$ that intersects $\tilde{\gamma}$ in exactly one point.
The two endpoints of $\tilde{k}$ lie in two ideal triangles $\widetilde{t}$ and $\widetilde{t}'$, that each share one vertex with the endpoints of $\tilde{\gamma}$ (this is by definition of the arc $k$), and such that $\widetilde{t}$ lies to the left of $\tilde{\gamma}$ and $\widetilde{t}'$ to the right of $\tilde{\gamma}$.
Now $z, z'$ are the endpoints of $\widetilde{t}$ respectively $\widetilde{t}'$ that are that are not adjacent to a component of $\tilde{S} \setminus (\widetilde{t} \cup \widetilde{t}')$ that contains $\tilde{\gamma}$; see \Cref{fig_ShearInvariantsClosedLeaves} for one of the four possible configurations.

\begin{minipage}{\linewidth}
\centering
\includegraphics[width=0.4\textwidth]{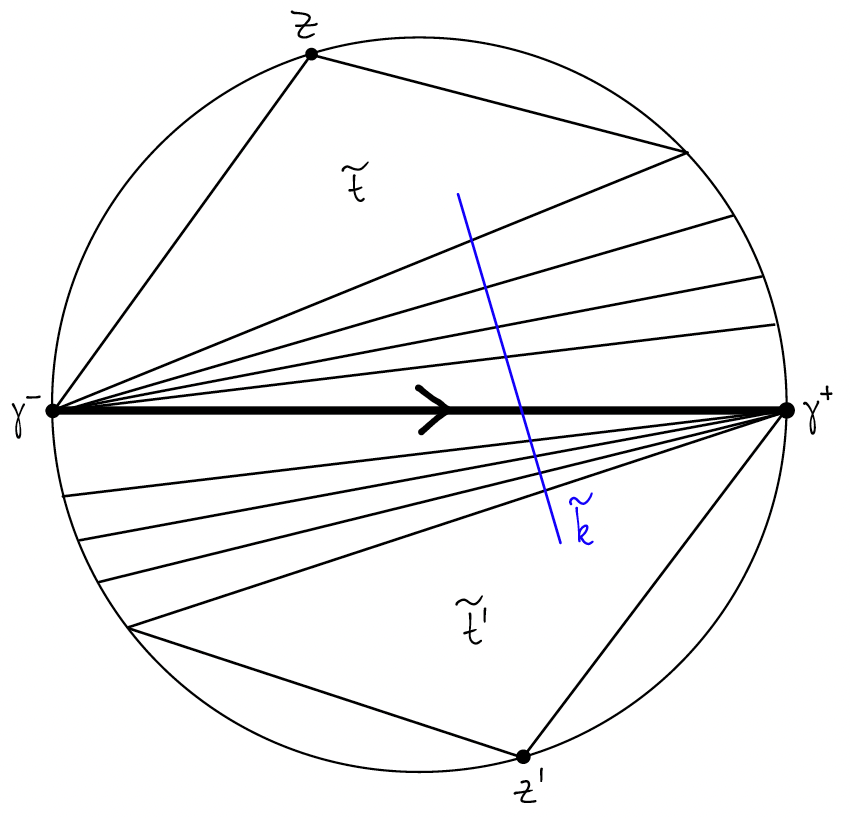}
\captionof{figure}{The construction of the shear invariants for closed leaves.}\label{fig_ShearInvariantsClosedLeaves}
\end{minipage}
\end{enumerate}

\begin{rem} \label{rem_invariantswelldefined}
If $\rho'$ and $\rho$ are conjugate by an element $g \in \PGL(n,\R)$ then $\xi_\rho = g \xi_{\rho'}$.
Thus conjugate representations the same invariants, since the triple and double ratios are $\PGL(n,\R)$-invariant.
\end{rem}

\begin{rem} \label{rem_flagdecoration}
Note that the definition of the above invariants only uses the \emph{flag decoration}, i.e.\ the restriction of $\xi_\rho$ to $\partial_\infty\tilde{\lambda}\subseteq \Fix(S)$.
\end{rem}

\subsection{Relations}
\label{subsection_BDRelations}
The Bonahon--Dreyer coordinates are not independent but satisfy the following relations.
\noindent\begin{enumerate}[label=(\roman*)]
\item\label{item_PositivityCondition}
\emph{Positivity condition}:
For every ideal triangle $t \subset S\setminus \lambda$ and every vertex $v$ of $t$, the triangle invariants $T_{abc}^\rho(t,v)$ are positive for all $a, b, c \in \N_{\geq 1}$ with $a+b+c = n$.
Similarly for every (infinite or closed) leaf $l \in \lambda$ the shear invariants $D_a^\rho(l)$ are positive for all $a=1, \ldots, n-1$.

\item \label{item_RotationCondition}
\emph{Rotation condition}: 
For every ideal triangle $t \subset S\setminus \lambda$ with vertices $v$ and $v'$ such that $v'$ immediately follows $v$ when going in clockwise direction around the boundary of $t$, and for every $a, b, c \in \N_{\geq 1}$ with $a+b+c = n$, we have
\[T_{abc}^\rho(t,v)=T_{bca}^\rho(t,v').\]

\item \label{item_ClosedLeafEquality}
\emph{Closed leaf equality}:
For every closed leaf $\gamma \subset \lambda$ and every $a = 1, \ldots, n-1$, we have
\[ L_a^{\textnormal{right}}(\gamma)=L_a^{\textnormal{left}}(\gamma), \]
where $L_a^{\textnormal{right}}(\gamma)$ and $L_a^{\textnormal{left}}(\gamma)$ will be defined in the following paragraph.

\item \label{item_ClosedLeafInquality}
\emph{Closed leaf inequality}:
For every closed leaf $\gamma \subset \lambda$ and every $a = 1, \ldots, n-1$, we have
\[ L_a^{\textnormal{right}}(\gamma) > 1 \]
\end{enumerate}
Following \cite{BonahonDreyer_ParametrizingHitchinComponents}, for $\gamma$ a closed leaf of the finite maximal lamination $\lambda$ choose a side of $\gamma$ for the chosen orientation.
Denote by $h_l$ and $t_l$ for $l=1, \ldots, k$ the infinite leaves and the ideal triangles that spiral on the chosen side of $\gamma$.
An infinite leaf will appear twice in this list if its two ends spiral on the selected side of $\gamma$.
The spiralling of the triangle $t_l$ on this side of $\gamma$ occurs in the direction of a vertex which we call $v_l$.
Define
\[ \overline{D}_a(h_l) \coloneqq
\begin{cases}  
D_a^\rho(h_l),\quad \; \textnormal{ if } h_l \textnormal{ is oriented toward } \gamma, \\
D_{n-a}^\rho(h_l), \; \textnormal{ if } h_l \textnormal{ is oriented away from } \gamma.
\end{cases}
\]
We define 
\[ L_a^{\textnormal{right}}(\gamma) \coloneqq  \prod_{l=1}^{k} \overline{D}_a(h_l) \prod_{l=1}^k \prod_{b+c=n-a} T_{abc}^\rho(t_{l}, v_l) ,\]
if the spiralling of the triangles occurs in the direction of the orientation of $\gamma$, and otherwise
\[ L_a^{\textnormal{right}}(\gamma) \coloneqq  \left( \prod_{l=1}^{k} \overline{D}_{n-a}(h_l) \prod_{l=1}^k \prod_{b+c=a} T_{(n-a)bc}^\rho(t_{l}, v_l) \right)^{-1}. \]
Similarly, we define
\[ L_a^{\textnormal{left}}(\gamma) \coloneqq \left( \prod_{l=1}^{k} \overline{D}_a(h_l) \prod_{l=1}^k \prod_{b+c=n-a} T_{abc}^\rho(t_{l}, v_l) \right)^{-1}, \]
if the spiralling of the triangles occurs in the direction of the orientation of $\gamma$, and otherwise
\[ L_a^{\textnormal{left}}(\gamma) \coloneqq \prod_{l=1}^{k} \overline{D}_{n-a}(h_l) \prod_{l=1}^k \prod_{b+c=a} T_{(n-a)bc}^\rho(t_{l}, v_l). \]
Note that the triangles and infinite leaves that spiral towards a side of $\gamma$ differ depending on which side of $\gamma$ we choose.

\subsection{Parametrization}
\label{subsection_BDParam}
Suppose the geodesic lamination $\lambda$ has $p$ \linebreak closed leaves and $q$ infinite leaves, and its complement $S \setminus \lambda$ consists of $r$ ideal triangles.
There are $(n-1)(n-2)/2$ triples of integers $a, b, c \geq 1$ with $a + b + c = n$. 
For
\[N = 3r \tfrac{(n-1)(n-2)}{2} + (p + q)(n - 1)\]
the functions $\{T_{abc}\}$ and $\{D_a\}$ define a map from the Hitchin component to $\R^N$ satisfying
the following properties.
\begin{enumerate}[label=(\Alph*)]
\item \label{item_CondPosTriangleCoord}
For all $a, b, c \in \N_{\geq 1}$ with $a+b+c=n$, the function $T_{abc}$ associates a positive real number $T_{abc}(t,v)$ to every triangle $t \subset S\setminus \lambda$ and to every vertex $v$ of $t$;
\item \label{item_CondPosShearCoord}
For all $a=1, \ldots, n-1$, the function $D_a$ associates a positive real number $D_a(l)$ to each leaf $l \subset \lambda$;
\item \label{item_CondRotation}
For every triangle $t \subset S\setminus \lambda$ and all indices $a, b, c \in \N_{\geq 1}$ with $a+b+c=n$, the functions $T_{abc}$ satisfy the rotation condition as in \Cref{subsection_BDRelations} \ref{item_RotationCondition};
\item  \label{item_CondClosedLeafEquInequ}
For every closed leaf $\gamma \subset \lambda$ and every index $a=1,\ldots, n-1$, the functions $T_{abc}$ and $D_a$ satisfy the closed leaf equality and the closed leaf inequality as in \Cref{subsection_BDRelations} \ref{item_ClosedLeafEquality} respectively \ref{item_ClosedLeafInquality}.
\end{enumerate}
Consider the semi-algebraic subset $\mathcal{X} \subseteq \R^N$ defined by the properties \ref{item_CondPosTriangleCoord}-\ref{item_CondClosedLeafEquInequ}.
More precisely,  let
\[ \left( x_{abc,t,v},y_{m,l} \right) \in \R^N\]
where $a+b+c=n$,  $m=1,\ldots,n-1$, $v$ is a vertex of a component $t \subset S \setminus \lambda$ and $l$ is a leaf of $\lambda$.
Then $\mathcal{X}$ is the set of all $\left( x_{abc,t,v},y_{m,l} \right) \in \R^N$ satisfying:
\begin{itemize}
\item[--] 
$x_{abc,t,v},\, y_{m,l}>0$ for all $a,b,c,t,v,m,l$ (Positivity condition \ref{item_PositivityCondition});
\item[--] 
$x_{abc,t,v}=x_{bca,t,v'}$ for all $a,b,c,t$, where $v'$ is the vertex of $t$ that follows $v$ in clockwise direction (Rotation condition \ref{item_RotationCondition});
\item[--]
For every closed leaf of $\lambda$ and every index $1,\ldots,n-1$ the coordinates $x_{abc,t,v}$,  $y_{m,l}$ satisfy the closed leaf equality and inequality as in \Cref{subsection_BDRelations} \ref{item_ClosedLeafEquality} respectively \ref{item_ClosedLeafInquality}.
\end{itemize}
We have seen in \Cref{subsection_BDCoord} and \Cref{subsection_BDRelations} that the map that assigns to a Hitchin representation its triangle and shear invariants has image in $\mathcal{X}$.
In fact, the image of the map is exactly $\mathcal{X}$.

\begin{theor}[{\cite[Theorem 17]{BonahonDreyer_ParametrizingHitchinComponents}}]
\label{thm_BDCoord}
The semi-algebraic set $\mathcal{X} \subseteq \R^N$ is homeomorphic to $\Hit(S,n)$.
\end{theor}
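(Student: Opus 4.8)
Following the reconstruction strategy of \cite{BonahonDreyer_ParametrizingHitchinComponents}, the plan is to prove that the map
\[ \Phi \from \Hit(S,n) \longrightarrow \mathcal{X}, \qquad [\rho] \longmapsto \bigl( T^\rho_{abc}(t,v),\, D^\rho_a(l) \bigr) \]
from \Cref{subsection_BDCoord} is a homeomorphism. That $\Phi$ is well defined, i.e.\ lands in $\mathcal{X}$, has been checked in \Cref{subsection_BDRelations}; that it factors through $\PGL(n,\R)$-conjugacy is \Cref{rem_invariantswelldefined}; and continuity is immediate, since the triple and double ratios are continuous (indeed semi-algebraic, \Cref{propo_TripleDoubleRatioSemiAlg}) and the limit map $\xi_\rho$ depends continuously on $\rho$ by \Cref{thm_FlagCurveHitchinRepr}. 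So the work is to show that $\Phi$ is bijective and that its inverse is continuous, and for this I would construct the inverse explicitly by reconstructing a pair $(\rho,\xi)$ from a point of $\mathcal{X}$.

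First, the reconstruction. Fix a base lift $\tilde t_0 \subset \tilde{S}$ of an ideal triangle of $S\setminus\lambda$ with vertices $x_0,x_1,x_2 \in \partial_\infty\tilde{\lambda}$ in clockwise order. Since $\PGL(n,\R)$ acts transitively on transverse pairs of flags and with trivial stabilizers on transverse triples (\Cref{propo_StabilizerTripleFlags}), while a positive triple is determined up to $\PGL(n,\R)$ by its triple ratios (Fock--Goncharov; compare the normal form of \Cref{lem_PoskTupleNormalForm}), the prescribed triangle invariants of $t_0$ single out a positive triple $\bigl(\xi(x_0),\xi(x_1),\xi(x_2)\bigr)$, unique up to a global element of $\PGL(n,\R)$. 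One then propagates along the dual tree of $\tilde{\lambda}$: crossing an infinite leaf $e$ from a triangle with already-defined flags into the adjacent triangle, the shear invariants of $e$ recover the line of the new vertex flag, and the triangle invariants of the new triangle then recover the full flag, uniquely, again by \Cref{lem_PoskTupleNormalForm}; the positivity condition \ref{item_PositivityCondition} keeps the configuration positive at every step. This yields an equivariant flag decoration $\xi \from \partial_\infty\tilde{\lambda} \to \Flag(\R^n)$ (\Cref{rem_flagdecoration}) and a representation $\rho$ with $\rho(\gamma)$ the unique element carrying $\bigl(\xi(x),\xi(y),\xi(z)\bigr)$ to $\bigl(\xi(\gamma x),\xi(\gamma y),\xi(\gamma z)\bigr)$. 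Two conditions make this coherent: the rotation condition \ref{item_RotationCondition} is exactly what makes the triple assigned to a triangle independent of the choice of preferred vertex, via \Cref{lem_PropertiesOfRatiosUnderPermutation}\,(\ref{lem_PropertiesOfRatiosUnderPermutation_item_TripleRatio}); and the closed leaf equality \ref{item_ClosedLeafEquality} is the matching condition for the two ways of encircling a closed leaf, ensuring that $\rho$ and $\xi$ are well defined around each closed leaf, hence everywhere. As this reconstruction is inverse to $\Phi$ and is given by explicit rational, hence continuous, formulas, $\Phi$ is injective and $\Phi^{-1}$ is continuous.

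Next, surjectivity: I need to show that the $(\rho,\xi)$ reconstructed from an arbitrary $\bigl(x_{abc,t,v},y_{m,l}\bigr) \in \mathcal{X}$ satisfies $[\rho] \in \Hit(S,n)$. The closed leaf inequality \ref{item_ClosedLeafInquality} enters here essentially: in a basis adapted to the $\rho(\gamma)$-invariant ascending and descending flags coming from the two ends of a closed leaf $\gamma$, the numbers $L^{\textnormal{right}}_a(\gamma)$ are precisely the ratios of consecutive eigenvalues of a suitable lift of $\rho(\gamma)$ to $\SL(n,\R)$, so $L^{\textnormal{right}}_a(\gamma)>1$ for all $a$ forces $\rho(\gamma)$ to be positively hyperbolic; together with positivity of $\xi$ this gives that $\rho$ is discrete and faithful. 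To see that $\rho$ is genuinely Hitchin, I would promote $\xi$ to a continuous positive $\rho$-equivariant map on all of $\partial_\infty\tilde{S}$: positivity of every cyclically ordered tuple drawn from $\partial_\infty\tilde{\lambda}$ follows by propagating \ref{item_PositivityCondition} through Fock--Goncharov's flip formulas for the change of triangulation (the invariance underlying \Cref{dfn_kTuplePosFlags}), and a limiting argument using density of such points extends $\xi$ continuously to $\partial_\infty\tilde{S}$. By the characterization of Hitchin representations via positive limit maps \cite[Theorems 1.14, 1.15]{FockGoncharov_ModuliSpacesLocalSystemsHigherTeichmuellerTheory}, \cite[Theorem 1.4]{Labourie_AnosovFlowsSurfaceGroupsAndCurvesInProjectiveSpace}, such a $\rho$ is Hitchin, and by construction $\Phi([\rho])$ is the point we started from. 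Since $\Phi$ is a continuous bijection whose inverse is continuous, it is the desired homeomorphism.

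The hard part is this surjectivity step, and within it the passage from ``$\mathcal{X}$ encodes a consistent positive configuration on the skeleton $\partial_\infty\tilde{\lambda}$'' to ``$\rho$ lies in the Hitchin component''. This needs both the careful propagation of positivity across the infinitely many triangles of $\tilde{\lambda}$, where the Fock--Goncharov flip formulas do the real work, and the extension of the flag decoration to an honest positive limit map on $\partial_\infty\tilde{S}$, after which the positive $\Leftrightarrow$ Hitchin dictionary closes the argument. Keeping straight the distinct roles of the closed leaf \emph{equality} (well-definedness of $\rho$) and the closed leaf \emph{inequality} (the correct hyperbolic dynamics of $\rho(\gamma)$, and hence discreteness and faithfulness) is the bookkeeping crux.
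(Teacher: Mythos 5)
The paper does not give its own proof of this statement: it is quoted verbatim as \cite[Theorem 17]{BonahonDreyer_ParametrizingHitchinComponents}, and the sentence that follows in the paper merely summarizes Bonahon--Dreyer's strategy (reconstruct a flag decoration from a point of $\mathcal{X}$, then find the unique representation for which it is equivariant). Your sketch is a faithful, if somewhat compressed, reconstruction of that strategy, and it is the same outline that the paper later adapts over a general real closed field $\F$ in the proof of \Cref{thm_BDFParam}: build the flag decoration on $\partial_\infty\tilde\lambda$ by propagation across the dual tree, use the rotation and closed leaf equality to make the reconstruction coherent, and recover $\rho$ via the triviality of the stabilizer of a transverse triple (\Cref{propo_StabilizerTripleFlags}), packaged there as \Cref{propo_FlagDecFromCoord}.

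Two points of imprecision worth noting, since they matter when the argument is carried out in earnest. First, the step ``$L_a^{\textnormal{right}}(\gamma)>1$ forces $\rho(\gamma)$ positively hyperbolic, which together with positivity gives discrete and faithful'' is stated as if it directly yields discreteness, but the closed leaf inequality only constrains the holonomy of the finitely many closed leaves of $\lambda$, not of an arbitrary $\gamma\in\pi_1(S)$; in Bonahon--Dreyer its actual role is to control the spiraling of the shear coordinates near a closed leaf, which is what is needed to extend the flag decoration continuously to $\partial_\infty\tilde S$ and, through the Anosov/positive-limit-map dictionary, to conclude that the reconstructed representation is Hitchin. Your subsequent paragraph (promoting $\xi$ and invoking the positive $\Leftrightarrow$ Hitchin characterization) is the real argument, and renders the earlier ``discrete and faithful'' claim both out of order and logically superfluous. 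Second, the continuity of $\Phi^{-1}$ being ``rational, hence continuous'' deserves the observation that $\rho$ is determined by its values on a finite generating set, and these depend only on finitely many of the reconstructed flags, each of which is a rational function of the coordinates; as stated the propagation visits infinitely many triangles, and one should make explicit why only finitely many matter.
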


From a point in $\mathcal{X}$ the authors construct a flag decoration $\partial_\infty \tilde{\lambda} \to \Flag(\R^n)$ and show that there is a unique representation $\pi_1(S) \to \PGL(n,\R)$ for which the flag decoration is equivariant, compare \Cref{propo_FlagDecFromCoord}.
We will use the same argument for $\F$-positive representations in the proof of \Cref{thm_BDFParam} in \Cref{section_ProofOfMainTheorem}.

\subsection{Bonahon--Dreyer coordinates as a semi-algebraic model}
This parametrization defines a semi-algebraic model for the Hitchin component in the sense of \Cref{dfn_SemiAlgModel}.

\begin{propo}
\label{lem_BDSemiAlgModel}
The map 
\[\pr^{\BD} \from \Hom_\Hit(\pi_1(S),\PSL(n,\R))\to \mathcal{X}\]
defined by Bonahon--Dreyer in \Cref{subsection_BDCoord} (\ref{item_triangleInvariants})-(\ref{item_shearInvariantsClosedLeaves}) is a semi-algebraic model for $\Hit(S,n)$.
\end{propo}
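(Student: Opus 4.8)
The plan is to split the statement according to \Cref{dfn_SemiAlgModel} into its topological part — $\pr^{\BD}$ is continuous, surjects onto $\mathcal{X}$, its fibres over $\mathcal{X}$ are exactly the $\PSL(n,\R)$-orbits, and it induces a homeomorphism of quotients — and the semi-algebraicity of $\pr^{\BD}$. The topological part is essentially Bonahon--Dreyer's theorem: since $\Hit(S,n)=\Hom_\Hit(\pi_1(S),\PSL(n,\R))/\PSL(n,\R)$ by definition, and conjugate representations have equal triangle and shear invariants (\Cref{rem_invariantswelldefined}), the map $\pr^{\BD}$ descends to the map on $\Hit(S,n)$ considered in \Cref{thm_BDCoord}, which is a homeomorphism onto $\mathcal{X}$. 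In particular $\pr^{\BD}$ is continuous, its image is $\mathcal{X}$, and each fibre of $\pr^{\BD}$ over a point of $\mathcal{X}$ — which is a union of $\PSL(n,\R)$-orbits by \Cref{rem_invariantswelldefined} — is a single orbit because the induced map $\Hom_\Hit/\PSL(n,\R)\xrightarrow{\sim}\mathcal{X}$ is the bijection of \Cref{thm_BDCoord}. So it remains to prove that $\pr^{\BD}$ is semi-algebraic as a map into $\R^N\supseteq\mathcal{X}$.

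I would prove this coordinate by coordinate. The map $\pr^{\BD}$ has $N$ components, each of the shape $\rho\mapsto T_{abc}(\xi_\rho(x_1),\xi_\rho(x_2),\xi_\rho(x_3))$ or $\rho\mapsto D_a(\xi_\rho(x_1),\xi_\rho(x_2),\xi_\rho(x_3),\xi_\rho(x_4))$, where the points $x_i$ lie in $\partial_\infty\tilde\lambda\subseteq\Fix(S)$ and are fixed once and for all by the chosen lamination, the chosen lifts of the ideal triangles and leaves, and the chosen orientations, independently of $\rho$. The crucial point is that every such $x_i$, lying in $\Fix(S)$, is the attracting fixed point of some element $\gamma_i\in\pi_1(S)$ (after possibly replacing $\gamma_i$ by $\gamma_i^{-1}$), so that $\xi_\rho(x_i)=F^+_{\rho(\gamma_i)}$ by \Cref{thm_FlagCurveHitchinRepr}~(\ref{thm_FlagCurveHitchinRepr_stableFlag}), while $\rho(\gamma_i)\in\Pos(n,\R)$ for every Hitchin representation $\rho$ by \Cref{lem_hitchinReprAreDiagonalizable}. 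Consequently, each component of $\pr^{\BD}$ factors through the evaluation $\rho\mapsto(\rho(\gamma_1),\dots,\rho(\gamma_k))\in\Pos(n,\R)^k$ ($k=3$ or $4$), which is rational hence semi-algebraic; followed by the $k$-fold product of the semi-algebraic stable-flag map $\Pos(n,\R)\to\Flag(\R^n)$ of \Cref{lem_StableFlagSemiAlg}, whose image lands in the transverse locus $\Flag(\R^n)^{(3)}$, resp. $\Flag(\R^n)^{(4)}$, by \Cref{thm_FlagCurveHitchinRepr}~(\ref{thm_FlagCurveHitchinRepr_transverse}); followed by the triple ratio $T_{abc}$, resp. the double ratio $D_a$, which is semi-algebraic by \Cref{propo_TripleDoubleRatioSemiAlg}. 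By \Cref{propo_CompSemiAlgMap} each component of $\pr^{\BD}$ is semi-algebraic, and since a map into $\R^N$ all of whose components are semi-algebraic has a semi-algebraic graph (a finite intersection of semi-algebraic sets), $\pr^{\BD}$ is semi-algebraic. Combined with the previous paragraph, $\pr^{\BD}$ is a semi-algebraic model for $\Hit(S,n)$.

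I expect the only genuinely delicate point to be the bookkeeping in the second step: checking that \emph{all} the boundary points entering the triangle and shear invariants — the three vertices of each lifted ideal triangle, the endpoints $h^\pm$, $\gamma^\pm$ of the infinite and closed leaves, and the auxiliary ``third vertices'' $z,z'$ — really do lie in $\partial_\infty\tilde\lambda$, so that they can be realised uniformly as attracting fixed points of $\rho$-independent elements of $\pi_1(S)$. This rests on $\lambda$ being a \emph{finite maximal} geodesic lamination, whence $\partial_\infty\tilde\lambda\subseteq\Fix(S)$ as recorded in \Cref{subsection_BDCoord}, together with an inspection of the definitions of the invariants in \Cref{subsection_BDCoord}~(\ref{item_triangleInvariants})--(\ref{item_shearInvariantsClosedLeaves}). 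Once this is settled, the argument is a formal composition of the semi-algebraicity results collected in \Cref{section_BackgroundRealAlgGeom}, \Cref{section_PreliminariesFlags} and \Cref{section_PosHypRepr}.
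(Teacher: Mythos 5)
Your proof is correct and follows essentially the same route as the paper: the topological part is delegated to Bonahon--Dreyer's Theorem (\Cref{thm_BDCoord}), and semi-algebraicity is obtained by writing each coordinate as the composition of evaluation, the stable-flag map of \Cref{lem_StableFlagSemiAlg} (using \Cref{thm_FlagCurveHitchinRepr}~(\ref{thm_FlagCurveHitchinRepr_stableFlag}) to identify $\xi_\rho(y_i)=F^+_{\rho(\gamma_i)}$), and the triple/double ratios of \Cref{propo_TripleDoubleRatioSemiAlg}. The only cosmetic difference is that the paper packages the argument as a single composition $\pr^\BD=\phi_\lambda\circ\xi_y$ with $\xi_y\colon\rho\mapsto(\xi_\rho(y_1),\dots,\xi_\rho(y_k))$ rather than component by component, but the underlying maps and citations are the same.
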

\begin{proof}
The map $\pr^{\BD}$ is continuous and its image $\mathcal{X}$ is by \Cref{thm_BDCoord} homeomorphic to $\Hit(S,n)$, thus the fibres over $\mathcal{X}$ are exactly the $\PSL(n,\R)$-orbits.
We need to prove that $\pr^{\BD}$ is a semi-algebraic map.
Let $y=(y_1,\ldots,y_k)$ be the finite set of points in $\Fix(S)$ that are endpoints of the lifts of the leaves of the lamination $\lambda$ needed to define the Bonahon--Dreyer coordinates.
The points $y_1,\ldots,y_k$ define a polygon $P$.
Given a $k$-tuple of transverse flags we can assign it to the polygon $P$.
In the same way as in the definition of the Bonahon--Dreyer coordinates we then obtain a map 
\[\phi_{\lambda} \from \Flag(\R^n)^{(k)} \to \R^N,\]
for $N$ as in \Cref{subsection_BDParam} by assigning the triangle and shear invariants as in \Cref{subsection_BDCoord} (\ref{item_triangleInvariants})-(\ref{item_shearInvariantsClosedLeaves}) according to $\lambda$.
The map $\pr^\BD$ is then the composition of the maps $\phi_\lambda \circ \xi_y$, where the latter is defined as
\begin{align*}
\xi_y \from \Hom_\Hit(\pi_1(S),\PSL(n,\R)) &\to \Flag(\R^n)^{(k)} ,  \\
\rho &\mapsto (\xi_\rho(y_1),\ldots,\xi_\rho(y_k)).
\end{align*}
We show that $\phi_\lambda$ and $\xi_y$ are semi-algebraic.
Refer to \Cref{lem_FlagSpacesAlg} for the real semi-algebraic structure on $\Flag(\R^n)^{(k)}$.
The map $\phi_\lambda$ is given by the triple and double ratios, and is hence semi-algebraic by \Cref{propo_TripleDoubleRatioSemiAlg}.

Let us discuss $\xi_y$.
Let $\gamma_1, \ldots, \gamma_k \in \pi_1(S)$ with $\gamma_i^+=y_i$.
Recall from \Cref{thm_FlagCurveHitchinRepr} (\ref{thm_FlagCurveHitchinRepr_stableFlag}) that $\xi_\rho(y_i) = F_{\rho(\gamma_i)}^+$ for all $i=1,\ldots,k$.
By \Cref{lem_StableFlagSemiAlg} it follows that $\xi_y$ is semi-algebraic.
Putting the two maps together we obtain that $\pr^{\BD}$ is semi-algebraic by \Cref{propo_CompSemiAlgMap}.
\end{proof}

\subsection{Extension to real closed fields}
\label{subsection_BDExt}

Let $\F$ be a real closed field extension of $\R$.
We would like to generalize the coordinates to the $\F$-Hitchin component.

\begin{corol}
\label{corol_CoordBDF}
The $\F$-extension of the Bonahon--Dreyer coordinates induces a bijection between the semi-algebraic set $\mathcal{X}_\F \subseteq \F^N$ and $\Hit(S,n)_\F$.
\end{corol}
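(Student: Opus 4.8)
The plan is to obtain this corollary as a formal consequence of \Cref{lem_BDSemiAlgModel} together with the behaviour of semi-algebraic models under field extension recorded in \Cref{lem_ExtSemiAlgModel}; no new geometric input beyond the real case should be required.

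First I would invoke \Cref{lem_BDSemiAlgModel}: the map $\pr^{\BD}\from\Hom_\Hit(\pi_1(S),\PSL(n,\R))\to\mathcal{X}$ is a semi-algebraic model for $\Hit(S,n)$ in the sense of \Cref{dfn_SemiAlgModel}, with $\mathcal{X}=\pr^{\BD}(\Hom_\Hit(\pi_1(S),\PSL(n,\R)))$. Applying \Cref{lem_ExtSemiAlgModel} to this model, with $\mathcal{C}=\Hom_\Hit(\pi_1(S),\PSL(n,\R))$ and $G=\PSL(n,\R)$, the $\F$-extension $\pr^{\BD}_\F\from\Hom_\Hit(\pi_1(S),\PSL(n,\F))\to\F^N$ has image $\mathcal{X}_\F$ and its fibres over $\mathcal{X}_\F$ are exactly the $\PSL(n,\F)$-orbits; surjectivity onto $\mathcal{X}_\F$ also follows from \Cref{thm_ExtSemiAlgMaps}. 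By \Cref{lem_FHitchinHomHit}, $\Hom_\Hit(\pi_1(S),\PSL(n,\F))$ is precisely the set of $\F$-Hitchin representations, so $\pr^{\BD}_\F$ descends to a bijection between $\Hom_\Hit(\pi_1(S),\PSL(n,\F))/\PSL(n,\F)$ and $\mathcal{X}_\F$.

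It then remains to identify $\Hom_\Hit(\pi_1(S),\PSL(n,\F))/\PSL(n,\F)$ with $\Hit(S,n)_\F$. Since $\Hit(S,n)$ is, as a semi-algebraic set, the image of $\Hom_\Hit(\pi_1(S),\PSL(n,\R))$ under the restriction of the Richardson--Slodowy semi-algebraic model of $\chi(S,\PSL(n,\R))$, a second application of \Cref{lem_ExtSemiAlgModel} shows that the $\F$-extension of that model descends to a bijection between $\Hom_\Hit(\pi_1(S),\PSL(n,\F))/\PSL(n,\F)$ and $\Hit(S,n)_\F$. Composing with the previous bijection yields a bijection $\mathcal{X}_\F\to\Hit(S,n)_\F$, which by construction is the map induced by the $\F$-extension of the Bonahon--Dreyer coordinates. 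Alternatively, \Cref{lem_UniqueSemiAlgModel} provides a semi-algebraic homeomorphism $\bar p\from\mathcal{X}\to\Hit(S,n)$ between the two models, and \Cref{thm_ExtSemiAlgMaps} extends it to a semi-algebraic bijection $\bar p_\F\from\mathcal{X}_\F\to\Hit(S,n)_\F$, automatically preserving bijectivity.

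There is no genuine obstacle here; the only point I would take care to spell out is that the ``Bonahon--Dreyer coordinates over $\F$''---obtained by $\F$-extending the real coordinate maps---are constant on $\PSL(n,\F)$-conjugacy classes of $\F$-Hitchin representations and separate them. This is not a priori clear, since these coordinates are computed from a possibly discontinuous limit map rather than directly from the representation, but for $\F$-Hitchin representations it is exactly the fibres-are-orbits statement of \Cref{lem_ExtSemiAlgModel}, which transfers from $\R$ by Tarski--Seidenberg. Everything thus reduces to the transfer machinery of \Cref{section_BackgroundRealAlgGeom}.
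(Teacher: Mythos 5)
Your proposal is correct and takes essentially the same approach as the paper: apply \Cref{lem_ExtSemiAlgModel} to the Bonahon--Dreyer semi-algebraic model from \Cref{lem_BDSemiAlgModel} to conclude that $\pr^\BD_\F$ descends to the desired bijection. The paper's own proof is more terse and leaves implicit the identification of $\Hom_\Hit(\pi_1(S),\PSL(n,\F))/\PSL(n,\F)$ with $\Hit(S,n)_\F$; your second application of \Cref{lem_ExtSemiAlgModel} to the Richardson--Slodowy model (or the alternative via \Cref{lem_UniqueSemiAlgModel} and \Cref{thm_ExtSemiAlgMaps}) spells that step out, which is a small but reasonable addition.
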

\begin{proof}
We consider the $\F$-extension of the semi-algebraic model for $\Hit(S,n)$,  i.e.\
\[\pr^{\BD}_\F \from \Hom_\Hit(\pi_1(S),\PSL(n,\F)) \to \mathcal{X}_\F.\]
By \Cref{lem_ExtSemiAlgModel},  $\pr^\BD_\F$ induces a bijection between $\Hit(S,n)_\F$ and $\mathcal{X}_\F$.
\end{proof}
\section{%
\texorpdfstring{%
Proof of \Cref{thm_FHitchinEquivFPositive}}%
{Proof of Theorem 1.1}}
\label{section_ProofOfMainTheorem}

We recall \Cref{thm_FHitchinEquivFPositive}.

\thmA*

\begin{proof}[($\implies$)]
We first prove the ``only if" direction, namely we have to show that an $\F$-Hitchin representation is $\F$-positive in the sense of \Cref{dfn_Fpositivity}.
Let thus $\rho \from \pi_1(S) \to \PSL(n,\F)$ be an $\F$-Hitchin representation.
The main idea is to use the Tarski--Seidenberg transfer principle to deduce results for $\F$-Hitchin representations from properties of $\R$-Hitchin representations.
For all $x \in \Fix(S)$, choose a non-trivial $\gamma \in \pi_1(S)$ with $x=\gamma^+$, where we write $\gamma^{+}$, $\gamma^-$ for the attracting respectively repelling fixed point of $\gamma$ acting on $\Fix(S) \subseteq \partial_\infty\tilde{S}$.
By \Cref{lem_hitchinReprAreDiagonalizable} we know that $\rho(\gamma)$ is positively hyperbolic for all $e \neq \gamma \in \pi_1(S)$.
Thus we can define $\xi_\rho$ by
\[ \xi_\rho \from  \Fix(S) \to \Flag(\F^n), \quad x \mapsto F_{\rho(\gamma)}^+, \]
where $F_{\rho(\gamma)}^+$ denotes the stable flag of $\rho(\gamma)$ (see \Cref{dfn:PosHypStableFlag}).
We need to check that this map is well-defined since we chose $\gamma$ with $x=\gamma^+$.
Indeed, for any non-trivial $\gamma_1, \gamma_2 \in \pi_1(S)$ with $x=\gamma_1^+ = \gamma_2^+$, we have $\gamma_1^- = \gamma_2^-$, and thus $\gamma_1^{k_1} = \gamma_2^{k_2}$ for some $k_1, k_2 \in \Z$.
But then, since we know that both $\rho(\gamma_1)$ and $\rho(\gamma_2)$ are diagonalizable and they agree up to some power (since $\gamma_1$ and $\gamma_2$ do), their lifts to $\SL(n,\F)$ must have the same eigenspaces and hence $F_{\rho(\gamma_1)}^+=F_{\rho(\gamma_2)}^+$, which shows that the map $\xi_\rho$ is well-defined.

The map $\xi_\rho$ is $\rho$-equivariant: Let $\gamma \in \pi_1(S)$ be a non-trivial element and $\eta^+ \in \Fix(S)$ for some non-trivial $\eta \in \pi_1(S)$. 
Then
\[ \xi_\rho(\gamma \cdot \eta^+) = \xi_\rho ( (\gamma \eta \gamma^{-1})^+ ) = \rho(\gamma) \xi_\rho ( \eta^+), \]
by observing that $\gamma \cdot \eta^+$ is the attracting fixed point of $\gamma \eta \gamma^{-1}$, and that the stable flag of $\rho(\gamma \eta \gamma^{-1})= \rho(\gamma) \rho(\eta) \rho(\gamma)^{-1}$ is obtained by applying $\rho(\gamma)$ to the stable flag of $\rho(\eta)$.

To verify that $\xi_\rho$ is a positive map (\Cref{dfn_Fpositivity}), we use the Tarski--Seidenberg transfer principle, in the same way as we did in \Cref{lem_hitchinReprAreDiagonalizable}.
Let $x=\gamma^+$, $y=\eta^+$ be distinct points in $\Fix(S)$ for some non-trivial $\gamma \neq \eta \in \pi_1(S)$.
We need to show that the flag tuple $(\xi_\rho(\gamma^+), \xi_\rho(\eta^+))$ is transverse.
The set
\[
X_{\gamma,\eta} \coloneqq \big\{ \rho' \in \Hom_\Hit(\pi_1(S),\PSL(n,\R)) \bigm| \big(F^+_{\rho'(\gamma)},F^+_{\rho'(\eta)}\big) \in \Flag(\R^n)^{(2)}\big\}
\]
is semi-algebraic and agrees with $\Hom_\Hit(\pi_1(S),\PSL(n,\R))$ by \Cref{lem_FlagSpacesAlg},  \Cref{lem_StableFlagSemiAlg} and \Cref{thm_FlagCurveHitchinRepr} (\ref{thm_FlagCurveHitchinRepr_stableFlag}) and (\ref{thm_FlagCurveHitchinRepr_transverse}).
Thus the same is true for their $\F$-extensions.
Since $\xi_\rho(\gamma^+)$ is defined as the stable flag of $\rho(\gamma)$ for all $\gamma^+ \in \Fix(S)$,  we conclude that $(\xi_\rho(\gamma^+), \xi_\rho(\eta^+))$ is transverse.

The argument for positivity is the same.
Let $x_1=\gamma_1^+$, $x_2=\gamma_2^+$, $x_3=\gamma_3^+$  be distinct points in $\Fix(S)$ for some non-trivial $\gamma_1, \gamma_2, \gamma_3  \in \pi_1(S)$, positively oriented around the circle.
We need to show that the flag triple $(\xi_\rho(\gamma_1^+), \xi_\rho(\gamma_2^+), \xi_\rho(\gamma_3^+))$ is positive, i.e.\ it is transverse and all triple ratios are positive.
Since assigning triple ratios is a semi-algebraic map (\Cref{propo_TripleDoubleRatioSemiAlg}), the set
\begin{align*}
X_{\gamma_1,\gamma_2,\gamma_3} &\coloneqq \big\{ \rho' \in \Hom_\Hit(\pi_1(S),\PSL(n,\R)) \bigm| \big(F^+_{\rho'(\gamma_1)},F^+_{\rho'(\gamma_2)},F^+_{\rho'(\gamma_3)}\big) \\
& \quad\quad\quad\quad \in \Flag(\R^n)^{(3)}, \, T_{abc}\big(F^+_{\rho'(\gamma_1)},F^+_{\rho'(\gamma_2)},F^+_{\rho'(\gamma_3)}\big)>0\\
& \quad\quad\quad\quad \textnormal{ for all } a,b,c = 1,\ldots, n-2 \textnormal{ with } a+b+c=n \big\}
\end{align*}
is semi-algebraic.
Since all Hitchin representations satisfy this property (\Cref{thm_FlagCurveHitchinRepr} (\ref{thm_FlagCurveHitchinRepr_triplespos})) the set $X_{\gamma_1,\gamma_2,\gamma_3}$ agrees with $\Hom_\Hit(\pi_1(S),\PSL(n,\R))$,  and thus their $\F$-extensions agree (\Cref{thm_TarskiSeidenberg}).
For cyclically ordered quadruples we just add the condition about the positivity of double ratios, which is a semi-algebraic condition again by \Cref{propo_TripleDoubleRatioSemiAlg}, and conclude by  \Cref{thm_FlagCurveHitchinRepr} (\ref{thm_FlagCurveHitchinRepr_quadruplepos}).
This proves one direction.
\end{proof}
For the ``if" direction we establish \Cref{thm_BDFParam}, which we recall here.

\thmB*

\begin{proof}
For the proof we use \Cref{propo_positiveReprHaveDistinctEigenvalues} and the multiplicative variant of the Bonahon--Dreyer coordinates for parametrizing the Hitchin component as in \Cref{section_VariantOfBDCoordinates}.
Fix therefore the necessary topological data $\lambda$ on the surface $S$, see \Cref{subsection_BDCoord}.
We use the same notation as in \Cref{section_VariantOfBDCoordinates}.
Suppose the geodesic lamination $\lambda$ has $p$ closed leaves and $q$ infinite leaves, and its complement $S\setminus \lambda$ consists of $r$ ideal triangles.
Set
\[ N = 3r\tfrac{(n-1)(n-2)}{2} + (p+q)(n-1).\]
The idea of the proof is as follows.
We define a map $\Psi$ from the set of $\F$-positive representations in $\Hom(\pi_1(S),\PSL(n,\F))$ to $\F^N$ by associating to a representation the $\F$-valued triangle and shear invariants using the positive limit map as in \Cref{subsection_BDCoord}.
If $\rho \from \pi_1(S) \to \PSL(n,\F)$ is an $\F$-positive representation, we show that $\Psi(\rho)$ satisfies the relations \ref{item_PositivityCondition}-\ref{item_ClosedLeafInquality} of \Cref{subsection_BDRelations}.
In other words, $\Psi(\rho) \in \mathcal{X}_\F$---the $\F$-extension of the semi-algebraic subset $\mathcal{X} \subseteq \R^N$ that is homeomorphic to the Hitchin component $\Hit(S,n)$, compare \Cref{subsection_BDParam} and \Cref{subsection_BDExt}.
Lastly,  we prove that if $\Psi(\rho)=\Psi(\rho')$ for two $\F$-positive representations,  then $\rho$ and $\rho'$ are conjugate by an element of $\PGL(n,\F)$.

The triangle and shear invariants can be defined using a flag decoration $\partial_\infty \tilde{\lambda} \to \Flag(\F^n)$ (\Cref{rem_flagdecoration}).
Thus, in the same way as in (\ref{item_triangleInvariants})-(\ref{item_shearInvariantsClosedLeaves}) in \Cref{subsection_BDCoord} we can associate triangle and shear invariants to $\F$-positive representations using only the associated limit map restricted to $\partial_\infty \tilde{\lambda} \subseteq \Fix(S)$.
This defines the map $\Psi$.

Let us show that $\Psi$ satisfies \ref{item_CondPosTriangleCoord}-\ref{item_CondClosedLeafEquInequ} in \Cref{subsection_BDParam} over $\F$.
The positivity conditions \ref{item_CondPosTriangleCoord} and \ref{item_CondPosShearCoord} follow from the positivity of the limit map.
Note that the rotation condition \ref{item_CondRotation} follows directly since it is a property of triple ratios and how they behave under permutation of the flags, see \Cref{lem_PropertiesOfRatiosUnderPermutation} (\ref{lem_PropertiesOfRatiosUnderPermutation_item_TripleRatio}).
Left to show are therefore the closed leaf equality and closed leaf inequality \ref{item_CondClosedLeafEquInequ}.
For Hitchin representations, the proof of the closed leaf equalities relies on \cite[Proposition 13]{BonahonDreyer_ParametrizingHitchinComponents}.
The multiplicative version of this proposition for the coordinates associated to $\F$-positive representations can be proven in an analogue fashion; see also \cite[Remark 15]{BonahonDreyer_ParametrizingHitchinComponents} for a multiplicative version of the statement of \cite[Proposition 13]{BonahonDreyer_ParametrizingHitchinComponents}.

\begin{propo}
\label{propo_ClosedLeafInequality}
Let $\rho \from \pi_1(S) \to \PSL(n,\F)$ be an $\F$-positive representation with limit map $\xi_\rho$.
For every closed leaf $\gamma \subset \lambda$ and every $a=1, \ldots, n-1$, we have
\[L_a^\textnormal{right}(\gamma) = \frac{\lambda_a(\widetilde{\rho(\gamma)})}{\lambda_{a+1}(\widetilde{\rho(\gamma)})} = L_a^\textnormal{left}(\gamma) ,\]
where $\lambda_a(\widetilde{\rho(\gamma)})$ is the eigenvalue of a lift $\widetilde{\rho(\gamma)}$ of $\rho(\gamma)$ to $\SL(n,\F)$ corresponding to the eigenspace $\xi_\rho(\gamma^+)^{(a)} \cap \xi_\rho(\gamma^-)^{(n-a+1)}$.
\end{propo}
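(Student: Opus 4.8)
The plan is to reduce the statement to a single exterior-algebra computation — essentially the one already carried out inside the proof of \Cref{propo_positiveReprHaveDistinctEigenvalues} — after rewriting $L_a^{\textnormal{right}}(\gamma)$ and $L_a^{\textnormal{left}}(\gamma)$ as telescoping products of double ratios based at the pair $\bigl(\xi_\rho(\gamma^+),\xi_\rho(\gamma^-)\bigr)$. First I would apply \Cref{propo_positiveReprHaveDistinctEigenvalues}: since $\rho$ is $\F$-positive it is positively hyperbolic and dynamics-preserving, with a unique limit map. Fix the lift $\widetilde{\rho(\gamma)}\in\SL(n,\F)$ having distinct positive eigenvalues $\lambda_1>\cdots>\lambda_n>0$ and an eigenbasis $(e_1,\dots,e_n)$. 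Dynamics-preservation gives $\xi_\rho(\gamma^+)=F^+_{\rho(\gamma)}$, and applying it to $\gamma^{-1}$, whose attracting fixed point is $\gamma^-$, gives $\xi_\rho(\gamma^-)=F^+_{\rho(\gamma)^{-1}}=F^-_{\rho(\gamma)}$. Thus in the basis $(e_1,\dots,e_n)$ the flag $\xi_\rho(\gamma^+)$ is ascending and $\xi_\rho(\gamma^-)$ is descending, so $\xi_\rho(\gamma^+)^{(a)}\cap\xi_\rho(\gamma^-)^{(n-a+1)}=\langle e_a\rangle$ and $\lambda_a(\widetilde{\rho(\gamma)})=\lambda_a$. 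In particular $\lambda_a/\lambda_{a+1}>1$, so once the two displayed equalities are established they yield the closed leaf inequality \ref{item_ClosedLeafInquality} and, since both sides then agree, the closed leaf equality \ref{item_ClosedLeafEquality} of \Cref{subsection_BDRelations} for $\F$-positive representations.

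The elementary ingredient is the behaviour of a double ratio based at $\bigl(\xi_\rho(\gamma^+),\xi_\rho(\gamma^-)\bigr)$ when $\rho(\gamma)$ acts on its last entry: for flags $F',F$ for which the relevant quadruples are transverse,
\[
D_a\bigl(\xi_\rho(\gamma^+),\xi_\rho(\gamma^-),F',\rho(\gamma)F\bigr)=\frac{\lambda_a}{\lambda_{a+1}}\cdot D_a\bigl(\xi_\rho(\gamma^+),\xi_\rho(\gamma^-),F',F\bigr),
\]
and in particular $D_a\bigl(\xi_\rho(\gamma^+),\xi_\rho(\gamma^-),F,\rho(\gamma)F\bigr)=-\lambda_a/\lambda_{a+1}$. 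Writing $\xi_\rho(\gamma^\pm)$ in the eigenbasis, each wedge expression in \Cref{dfn_DoubleRatio} reduces, up to a nonzero scalar which cancels, to the $a$th or $(a{+}1)$st coordinate of the line spanning $F'$ respectively $F$ — exactly the computation performed in the proof of \Cref{propo_positiveReprHaveDistinctEigenvalues} — and replacing $F$ by $\rho(\gamma)F$ rescales these coordinates by $\lambda_a$ and $\lambda_{a+1}$, the actual flags and the signs of the coordinates being irrelevant to the ratio.

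It then remains to rewrite $L_a^{\textnormal{right}}(\gamma)$, and symmetrically $L_a^{\textnormal{left}}(\gamma)$, as such a ratio. Choosing compatible lifts of the infinite leaves $h_1,\dots,h_k$ and triangles $t_1,\dots,t_k$ spiralling to the chosen side of $\gamma$, one full turn of the spiral is realised by the deck transformation $\gamma$, so the associated sequence of third vertices forms a chain $F_0,F_1,F_2,\dots$ with $\xi_\rho(F_{l+k})=\rho(\gamma)\,\xi_\rho(F_l)$ by $\rho$-equivariance. Following Bonahon--Dreyer (\cite[Proposition 13]{BonahonDreyer_ParametrizingHitchinComponents}, in the multiplicative form indicated in \cite[Remark 15]{BonahonDreyer_ParametrizingHitchinComponents}), and using only the permutation identities of \Cref{lem_PropertiesOfRatiosUnderPermutation} together with the cocycle relation $D_a(E,F,G,H)\,D_a(E,F,H,K)=-D_a(E,F,G,K)$, every factor $\overline D_a(h_l)$ and every product $\prod_{b+c=n-a}T^\rho_{abc}(t_l,v_l)$ combines with its neighbours into ratios of consecutive double ratios $D_a\bigl(\xi_\rho(\gamma^+),\xi_\rho(\gamma^-),\xi_\rho(F_{l-1}),\xi_\rho(F_l)\bigr)$ along the chain, so the whole product collapses to a single double ratio of the form $\pm D_a\bigl(\xi_\rho(\gamma^+),\xi_\rho(\gamma^-),\xi_\rho(F_0),\rho(\gamma)\xi_\rho(F_0)\bigr)$. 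By the ingredient above this is $\lambda_a/\lambda_{a+1}$ (the sign being forced by positivity of the triangle and shear invariants), and the identical computation on the other side of $\gamma$ yields the same value for $L_a^{\textnormal{left}}(\gamma)$.

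I expect the main obstacle to be this last step: the bookkeeping in the telescoping — tracking the orientations of the leaves (the $\overline D_a$ versus $D_a$ convention of \Cref{subsection_BDRelations}), the preferred vertices $v_l$, and the two cases according to whether the triangles spiral along or against the orientation of $\gamma$. Conceptually, however, it is a transcription of Bonahon--Dreyer's argument, since it uses only the algebra of triple and double ratios and $\rho$-equivariance of $\xi_\rho$, none of which needs $\F=\R$; the genuinely new point, replacing their analytic input, is that \Cref{propo_positiveReprHaveDistinctEigenvalues} already supplies positive hyperbolicity, so the telescoped quantity $\lambda_a/\lambda_{a+1}$ is automatically $\F$-valued, positive, and distinct from $1$.
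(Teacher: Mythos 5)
Your proposal is correct and follows essentially the same route as the paper's proof: invoke \Cref{propo_positiveReprHaveDistinctEigenvalues} to obtain a diagonalizable lift $\widetilde{\rho(\gamma)}$ with $\xi_\rho(\gamma^\pm)$ equal to the stable and unstable flags, then reduce $L_a^{\textnormal{right}}(\gamma)$ and $L_a^{\textnormal{left}}(\gamma)$ to $\lambda_a/\lambda_{a+1}$ by re-running the computation of \cite[Proposition 13, Remark 15]{BonahonDreyer_ParametrizingHitchinComponents} in multiplicative form. You make the one-step identity $D_a\bigl(\xi_\rho(\gamma^+),\xi_\rho(\gamma^-),F,\rho(\gamma)F\bigr)=-\lambda_a/\lambda_{a+1}$ and the cocycle/telescoping mechanism more explicit than the paper does, but both arguments ultimately defer the combinatorial bookkeeping of the spiral to Bonahon--Dreyer.
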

\begin{proof}
By \Cref{propo_positiveReprHaveDistinctEigenvalues}, we can lift $\rho(\gamma) \in \PSL(n,\F)$ to $\widetilde{\rho(\gamma)} \in \SL(n,\F)$ such that the matrix $\widetilde{\rho(\gamma)}$ is diagonalizable with eigenvalues $\lambda_a(\widetilde{\rho(\gamma)})\neq 0$,  and $\widetilde{\rho(\gamma)}$ acts by multiplication with $\lambda_a(\widetilde{\rho(\gamma)})$ on $\xi_\rho(\gamma^+)^{(a)} / \xi_\rho(\gamma^+)^{(a-1)}$.
Note that $\frac{\lambda_a(\widetilde{\rho(\gamma)})}{\lambda_{a+1}(\widetilde{\rho(\gamma)})}$ is independent of the choice of lift of $\rho(\gamma)$ since two lifts differ by multiplication with $\pm \Id_n$.

The rest of the proof is a computation.
It works in the same way as the one of \cite[Proposition 13]{BonahonDreyer_ParametrizingHitchinComponents} by replacing $\tau_{abc}$ by $T_{abc}$ and $\sigma_a$ by $D_a$ (where $\tau_{abc}=\log(T_{abc})$ and $\sigma_a= \log(D_a)$ following the notation from \cite{BonahonDreyer_ParametrizingHitchinComponents}) and by writing everything multiplicatively.
We refer to this reference for more details on the computations.
\end{proof}

The above proposition immediately implies the closed leaf equality in \ref{item_CondClosedLeafEquInequ}.
Furthermore it implies that the closed leaf inequality amounts to showing that 
\[ \frac{\lambda_a(\widetilde{\rho(\gamma)})}{\lambda_{a+1}(\widetilde{\rho(\gamma)})}  > 1 \]
for all $a =1, \ldots, n-1$.
This follows from the second statement in \Cref{propo_positiveReprHaveDistinctEigenvalues}, that says that $\rho$ is weakly dynamics-preserving, i.e.\ $\xi_\rho(\gamma^+)=F_{\rho(\gamma)}^+$.
Thus $\Psi$ satisfies conditions \ref{item_CondPosTriangleCoord}-\ref{item_CondClosedLeafEquInequ}, in other words $\Im(\Psi) \subseteq \mathcal{X}_\F$.
The forward direction of \Cref{thm_FHitchinEquivFPositive} implies that $\Im(\Psi)=\mathcal{X}_\F$.
Since the triple and double ratios are $\PGL(n,\F)$-invariant,  conjugate $\F$-positive representations will have the same value under $\Psi$, see \Cref{rem_invariantswelldefined}.

We are left to show that the coordinates parametrize the set of conjugacy classes of $\F$-positive representations.
Let thus $\rho, \rho' \from \pi_1(S) \to \PSL(n,\F)$ be two $\F$-positive representations with limit map $\xi_\rho$ respectively $\xi_{\rho'}$,  and assume $\Psi(\rho)=\Psi(\rho')$.
We prove that $\rho$ and $\rho'$ are conjugate in $\PGL(n,\F)$.
Let $\Psi(\rho)=\Psi(\rho')= (x_{abc,t,v},y_{m,l}) \in \F^N$, where $a,b, c \geq 1 $ are integers with $a+b+c=n$,  $v$ is a vertex of a component $t \subset S \setminus \lambda$,  $m=1,\ldots,n-1$ and $l$ is a leaf of $\lambda$.
The idea of the proof is to reconstruct a flag decoration $\mathcal{F} \from \partial_\infty\tilde{\lambda}\to \Flag(\F^n)$ from the triple and double ratios,  which will agree up to an element of $\PGL(n,\F)$ with $\xi_\rho$ and $\xi_{\rho'}$,  when the latter are restricted to $\partial_\infty \tilde{\lambda}$.
Recall the notations from (\ref{item_triangleInvariants})-(\ref{item_shearInvariantsClosedLeaves})
in \Cref{subsection_BDCoord} in the definition of the Bonahon--Dreyer coordinates.
The statement of the following lemma in the real case is \cite[Lemma 24]{BonahonDreyer_ParametrizingHitchinComponents}.
It can be proven in a complete analogue fashion for general real closed fields by writing everything multiplicatively.
For the uniqueness result use \Cref{propo_StabilizerTripleFlags}.

\begin{propo}
\label{propo_FlagDecFromCoord}
Let $(x_{abc,t,v},y_{m,l}) \in \mathcal{X}_\F$.
Then there exists a flag decoration $\mathcal{F} \from \partial_\infty \tilde{\lambda} \to \Flag(\F^n)$ such that
\begin{enumerate}
\item 
$x_{abc,t,v} = T^{\mathcal{F}}_{abc}(t,v)$ for every component $t \subset S \setminus \lambda$, for every vertex $v$ of $t$, and integers $a,b,c \geq 1$ with $a+b+c=n$;
\item
$y_{m,l} = D^{\mathcal{F}}_m(l)$ for every leaf $l \subset \lambda$ and integer $m=1,\ldots,n-1$.
\end{enumerate}
Furthermore, $\mathcal{F}$ is unique up to postcomposition by an element of $\PGL(n,\F)$.
\end{propo}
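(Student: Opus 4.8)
The plan is to carry over the proof of Bonahon--Dreyer's \cite[Lemma 24]{BonahonDreyer_ParametrizingHitchinComponents} to the real closed field $\F$ essentially word for word: every step of that argument is an explicit computation in the exterior algebra of $\F^n$ or a combinatorial manipulation on the ideal triangulation of $\tilde S$ cut out by $\tilde\lambda$, and none of it uses any property of $\R$ not shared by every real closed field, once one rewrites the additive quantities $\tau_{abc}=\log T_{abc}$, $\sigma_a=\log D_a$ of \cite{BonahonDreyer_ParametrizingHitchinComponents} multiplicatively as $T_{abc}$, $D_a$. Recall that the leaves of $\tilde\lambda$ cut $\tilde S$ into ideal triangles and that the dual graph of this triangulation --- one vertex per complementary triangle, one edge per leaf --- is a tree, because a closed curve in $\tilde S$ meets each separating geodesic leaf an even number of times. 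The flag decoration $\mathcal F$ is built by propagating flags outward from a base triangle along this tree.

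First I would fix a lift $\tilde t_0$ of a triangle $t_0$ of $S\setminus\lambda$ and choose a transverse triple of flags in $\F^n$ whose triple ratios, in the clockwise vertex order of $\tilde t_0$, equal the prescribed positive numbers $x_{abc,t_0,v}$. Over $\R$, on triples in normal form --- first flag ascending, third flag descending, generating line of the second flag the all-ones vector in the adapted basis --- the triple-ratio map onto the positive orthant of $\R^{(n-1)(n-2)/2}$ is a semi-algebraic bijection by Fock--Goncharov; by \Cref{thm_ExtSemiAlgMaps} its $\F$-extension is again a bijection, which furnishes the base triple and makes it unique once the ambient basis is fixed. Then I would extend $\mathcal F$ one leaf at a time. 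If a triangle $\tilde t$ abuts an already decorated triangle along a leaf projecting to a leaf $l$ of $\lambda$, the flags $E,F$ at the two endpoints of that leaf and the flag $G$ at the remaining vertex of the decorated neighbour are already assigned. Since the double ratios $D_a(E,F,G,H)$ involve $E,F$ and only the lines $G^{(1)},H^{(1)}$, and equal $-g_{a+1}h_a/(g_ah_{a+1})$ in the basis adapted to $E,F$, as computed in the proof of \Cref{propo_5TuplePosInequDR}, the equations $D_a(E,F,G,H)=y_{a,l}$, $a=1,\dots,n-1$, determine the line $H^{(1)}$ uniquely; the triple ratios of the new triangle then determine the remaining subspaces of $H$ uniquely, again by the Fock--Goncharov normal form over $\F$. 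Every denominator that occurs is nonzero thanks to the transversality built into the positivity conditions defining $\mathcal X_\F$, and the rotation condition, combined with \Cref{lem_PropertiesOfRatiosUnderPermutation} (\ref{lem_PropertiesOfRatiosUnderPermutation_item_TripleRatio}), ensures that the flag placed at a new vertex does not depend on the choices of preferred vertices. As the dual graph is a tree, the propagation never reaches a triangle twice, so $\mathcal F\from\partial_\infty\tilde\lambda\to\Flag(\F^n)$ is well defined, and by construction $T^{\mathcal F}_{abc}(t,v)=x_{abc,t,v}$ and $D^{\mathcal F}_m(l)=y_{m,l}$.

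For uniqueness, let $\mathcal F,\mathcal F'$ both realize the given coordinates. On $\tilde t_0$ they give two transverse triples with the same triple ratios, so by the bijectivity above, transferred to $\F$, there is $g\in\PGL(n,\F)$ with $g\mathcal F'=\mathcal F$ on $\tilde t_0$, and $g$ is unique by \Cref{propo_StabilizerTripleFlags}, a transverse triple having trivial stabilizer. Replacing $\mathcal F'$ by $g\mathcal F'$, the uniqueness in the one-leaf extension step propagates along the dual tree and forces $\mathcal F=\mathcal F'$ on all of $\partial_\infty\tilde\lambda$.

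The main obstacle is bookkeeping rather than substance: one must rewrite the computations of \cite[Lemma 24]{BonahonDreyer_ParametrizingHitchinComponents} --- and of \cite[Proposition 13]{BonahonDreyer_ParametrizingHitchinComponents}, used for the closed leaf equality, cf.\ \cite[Remark 15]{BonahonDreyer_ParametrizingHitchinComponents} --- multiplicatively, and check at every step that the tuples of flags involved remain transverse, so that all ratios and linear systems stay meaningful over $\F$; the positivity relations in the definition of $\mathcal X_\F$ are exactly what makes this work. I note that the closed leaf equality and inequality play no role in this proposition itself; they enter afterwards, when one upgrades $\mathcal F$ to an equivariant flag decoration and so produces the representation whose limit map restricts to $\mathcal F$.
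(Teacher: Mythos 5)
Your overall plan is exactly the paper's: transport Bonahon--Dreyer's \cite[Lemma 24]{BonahonDreyer_ParametrizingHitchinComponents} to $\F$ by rewriting it multiplicatively, using \Cref{lem_PoskTupleNormalForm} and the explicit double-ratio computation from the proof of \Cref{propo_5TuplePosInequDR} for the extension step, and invoking \Cref{propo_StabilizerTripleFlags} for uniqueness. The one-leaf extension across an infinite leaf, the role of the rotation condition, and the uniqueness argument are all sound, as is your remark that the closed leaf equality and inequality are not used here.

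There is, however, a gap in the propagation. The dual graph of $\tilde S\setminus\tilde\lambda$ (one vertex per complementary ideal triangle, one edge per shared boundary geodesic) is a forest, not a tree. A lift $\tilde\gamma$ of a closed leaf of $\lambda$ is not a side of any complementary triangle: the lifts of the infinite leaves spiralling into $\gamma$ share an endpoint with $\tilde\gamma$ at infinity, so the triangles only accumulate on $\tilde\gamma$ without having it as a boundary geodesic. Since $\tilde\gamma$ separates $\H^2$ and no leaf of $\tilde\lambda$ crosses it, no dual edge connects the two sides, and the dual graph has one component for every lift of every piece of $S$ cut along the closed leaves of $\lambda$. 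Consequently the propagation you describe decorates only the component of the base triangle, and your sentence ``if a triangle abuts an already decorated triangle along a leaf'' never applies when the leaf is closed. To decorate the rest of $\partial_\infty\tilde\lambda$ one must cross the closed leaves: once one side of $\tilde\gamma$ is decorated, in particular the transverse pair $\bigl(\mathcal F(\gamma^+),\mathcal F(\gamma^-)\bigr)$ is fixed, the decoration of the other side is determined by its own triangle and (infinite-leaf) shear invariants only up to the $(n-1)$-dimensional maximal torus stabilising that transverse pair, and it is precisely the closed-leaf shear invariants $y_{m,\gamma}$ that pin down this torus element, via the formula $D_a=-g_{a+1}h_a/(g_ah_{a+1})$ and the bijectivity of $(t_1,\dots,t_n)\mapsto(t_1/t_2,\dots,t_{n-1}/t_n)$ on the positive diagonal torus. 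So the closed-leaf shear invariants are essential to the construction, not just to the later equivariance argument; your sketch needs this gluing step across closed leaves to be complete.
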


Since $\Psi(\rho)=\Psi(\rho')$,  the uniqueness statement in the above lemma implies that there exists $g \in \PGL(n,\F)$ with $g \xi_\rho|_{\partial_\infty \tilde{\lambda}} = \xi_{\rho'}|_{\partial_\infty \tilde{\lambda}}$.
Thus for $x \in \partial_\infty\tilde{\lambda}$ and for all $\gamma \in \pi_1(S)$ we have
\[(g \rho(\gamma) g^{-1}) ( \xi_{\rho'}(x)) = g \rho(\gamma) \xi_\rho(x) = g\xi_{\rho}(\gamma x)=\xi_{\rho'}(\gamma x).\]
Similarly, for a triple of distinct points $x,y,z \in \partial_\infty \tilde{\lambda}$ we have for all $\gamma \in \pi_1(S)$
\[(g \rho(\gamma) g^{-1})( \xi_{\rho'}(x), \xi_{\rho'}(y),\xi_{\rho'}(z))= (\xi_{\rho'}(\gamma x),\xi_{\rho'}(\gamma y),\xi_{\rho'}(\gamma z)),\]
which implies that $g \rho(\gamma) g^{-1} = \rho'(\gamma)$ by $\rho'$-equivariance of $\xi_{\rho'}$, transversality of $\xi_{\rho'}$ and \Cref{propo_StabilizerTripleFlags}.
Thus $\rho$ and $\rho'$ are conjugate,  which finishes the proof.
\end{proof}

\begin{corol}
\label{propo_SameCoordImplyConj}
Let $\rho, \rho' \from \pi_1(S) \to \PSL(n,\F)$ be two representations, where $\rho$ is $\F$-positive and $\rho'$ is $\F$-Hitchin.
Assume that $\Psi(\rho)=\pr^\BD_\F(\rho')$.
Then $\rho$ and $\rho'$ are conjugate in $\PGL(n,\F)$.
\end{corol}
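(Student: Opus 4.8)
\emph{Proof proposal.} The plan is to reduce this to the final step in the proof of \Cref{thm_BDFParam}. First, by the forward direction of \Cref{thm_FHitchinEquivFPositive}, which is already established above, the $\F$-Hitchin representation $\rho'$ is $\F$-positive, so $\Psi(\rho')$ is defined. It therefore suffices to show
\[ \pr^\BD_\F(\rho') = \Psi(\rho'), \]
since then $\Psi(\rho) = \pr^\BD_\F(\rho') = \Psi(\rho')$, and the part of the proof of \Cref{thm_BDFParam} asserting that two $\F$-positive representations with equal $\Psi$-coordinates are conjugate under $\PGL(n,\F)$ gives the claim.

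To prove this equality I would unwind both sides. On the one hand, $\pr^\BD_\F$ is the $\F$-extension of the real semi-algebraic model $\pr^\BD = \phi_\lambda \circ \xi_y$ of \Cref{lem_BDSemiAlgModel}, where $\xi_y(\rho') = (\xi_{\rho'}(y_1), \ldots, \xi_{\rho'}(y_k))$ with $y_i = \gamma_i^+$ and $\xi_{\rho'}(y_i) = F^+_{\rho'(\gamma_i)}$ by \Cref{thm_FlagCurveHitchinRepr} (\ref{thm_FlagCurveHitchinRepr_stableFlag}). Since assigning the stable flag to a positively hyperbolic element is semi-algebraic (\Cref{lem_StableFlagSemiAlg}) and $\F$-Hitchin representations are positively hyperbolic (\Cref{lem_hitchinReprAreDiagonalizable}), \Cref{thm_ExtSemiAlgMaps} gives $\pr^\BD_\F = (\phi_\lambda)_\F \circ (\xi_y)_\F$ with $(\xi_y)_\F(\rho') = (F^+_{\rho'(\gamma_1)}, \ldots, F^+_{\rho'(\gamma_k)})$; hence $\pr^\BD_\F(\rho')$ is precisely the tuple of triple and double ratios of the flags $F^+_{\rho'(\gamma_i)}$ assembled according to $\lambda$. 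On the other hand, $\Psi(\rho')$ is the same assembly of triple and double ratios, this time of the flag decoration $\xi_{\rho'}|_{\partial_\infty\tilde{\lambda}}$ of $\rho'$ regarded as an $\F$-positive representation (\Cref{rem_flagdecoration}). By \Cref{propo_positiveReprHaveDistinctEigenvalues} this limit map is unique and dynamics-preserving, so $\xi_{\rho'}(\gamma_i^+) = F^+_{\rho'(\gamma_i)}$ for all $i$, and the two tuples coincide.

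The only substantive point is this matching of the two limit maps of $\rho'$: the flag decoration implicitly used by $\pr^\BD_\F$ (attracting fixed point $\mapsto$ stable flag of the image) must agree with the unique limit map of $\rho'$ as an $\F$-positive representation, and this is exactly what the dynamics-preserving half of \Cref{propo_positiveReprHaveDistinctEigenvalues} provides; everything else is bookkeeping with the semi-algebraicity statements of \Cref{section_PreliminariesFlags} and \Cref{section_Positivity} and with the Tarski--Seidenberg transfer principle. With the equality $\pr^\BD_\F(\rho') = \Psi(\rho')$ in place, the conclusion follows at once from \Cref{thm_BDFParam}.
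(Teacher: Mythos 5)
Your proposal is correct and follows essentially the same route as the paper's own proof: view $\rho'$ as $\F$-positive via the forward direction of Theorem~\ref{thm_FHitchinEquivFPositive}, observe that $\Psi(\rho')=\pr^\BD_\F(\rho')$, and then invoke Theorem~\ref{thm_BDFParam}. The only cosmetic difference is that the paper says ``we see immediately from the definitions'' while you unwind both sides explicitly and identify the limit map of $\rho'$ through the uniqueness and dynamics-preserving statements of Proposition~\ref{propo_positiveReprHaveDistinctEigenvalues}, whereas the paper simply names the limit map as the one constructed in the forward direction (which is by construction the stable-flag map); these are logically equivalent.
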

\begin{proof}
Denote the limit map of $\rho'$ constructed in the proof of the forward direction of \Cref{thm_FHitchinEquivFPositive} by $\xi_{\rho'}$, and let $\xi_\rho$ denote the limit map of $\rho$.
Viewing $\rho'$ as an $\F$-positive representation, we see immediately from the definitions of $\Psi$ and $\pr^\BD_\F$ that
\[ \Psi(\rho')=\pr^\BD_\F(\rho')=\Psi(\rho).\]
We conclude using \Cref{thm_BDFParam}.
\end{proof}

We can now finish the proof of \Cref{thm_FHitchinEquivFPositive}.

\begin{proof}[Proof of \Cref{thm_FHitchinEquivFPositive} ($\impliedby$)]
Let $\rho$ be an $\F$-positive representation.
Then its $\PGL(n,\F)$-equivalence class corresponds under the identification from \Cref{thm_BDFParam} to a point in $\mathcal{X}_\F$.
By \Cref{corol_CoordBDF} there exists \linebreak$\rho' \in \Hom_\Hit(S,\PSL(n,\F))$ with $\Psi(\rho) = \pr^\BD_\F(\rho')$.
\Cref{propo_SameCoordImplyConj} implies that $\rho$ and $\rho'$ are conjugate in $\PGL(n,\F)$, which was to prove.
\end{proof}
\section{Properties of $\F$-Hitchin representations}
\label{section_PropBoundaryRepr}
In this section we collect additional properties of $\F$-Hitchin representations.
For properties that are invariant under $\PGL(n,\F)$-conjugation we can use the notions $\F$-positive and $\F$-Hitchin representations interchangeably by \Cref{thm_FHitchinEquivFPositive} .
Most properties follow from the equivalent statements for real Hitchin representations and the Tarski--Seidenberg principle (\Cref{thm_TarskiSeidenberg}).

\subsection{Lifting to $\SL(n,\R)$}
We recall the notions of \Cref{subsection_FHit}.
The goal is to show that $\F$-Hitchin representations lift to $\SL(n,\F)$.

\begin{dfn}
We define
\[\Hom_\Hit(\pi_1(S),\SL(n,\F))\coloneqq \Ad_\F^{-1}\left(\Hom_\Hit(\pi_1(S),\PSL(n,\F))\right)\]
as the set of \emph{$\F$-Hitchin representations to $\SL(n,\F)$} (as opposed to $\PSL(n,\F)$).
\end{dfn}

\begin{propo}
\label{propo_FHitchinReprLift}
The set $\Hom_\Hit(\pi_1(S),\SL(n,\F))$ is non-empty and semi-algebraic.
Furthermore, every $\F$-Hitchin representation lifts to an $\F$-Hitchin representation into $\SL(n,\F)$.
\end{propo}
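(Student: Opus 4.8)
Throughout I will write everything in terms of the semi-algebraic map $\Ad$ between homomorphism spaces introduced in \Cref{subsection_SemiAlgHom} and its $\F$-extension $\Ad_\F$ (\Cref{thm_ExtSemiAlgMaps}).

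\emph{Semi-algebraicity} is immediate: $\Hom_\Hit(\pi_1(S),\SL(n,\F))=\Ad_\F^{-1}\big(\Hom_\Hit(\pi_1(S),\PSL(n,\F))\big)$ is the preimage of a semi-algebraic set (\Cref{lem_FHitchinHomHit}) under a semi-algebraic map, hence semi-algebraic by \Cref{propo_ImSemiAlgMap}.

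For the two remaining assertions the plan is to reduce to the corresponding fact over $\R$. First I would record that $\Hom_\Hit(\pi_1(S),\SL(n,\F))$ is precisely the $\F$-extension of $\Hom_\Hit(\pi_1(S),\SL(n,\R))\coloneqq\Ad^{-1}\big(\Hom_\Hit(\pi_1(S),\PSL(n,\R))\big)$. Indeed, $\F$-extension commutes with preimages under semi-algebraic maps: writing $\Ad^{-1}(Y)=\pr\big(\Graph(\Ad)\cap(\,\bullet\times Y)\big)$ and applying \Cref{thm_TarskiSeidenberg} (together with the fact that extension commutes with products and intersections) gives $(\Ad^{-1}(Y))_\F=\Ad_\F^{-1}(Y_\F)$; taking $Y=\Hom_\Hit(\pi_1(S),\PSL(n,\R))$ and using $Y_\F=\Hom_\Hit(\pi_1(S),\PSL(n,\F))$ (\Cref{lem_FHitchinHomHit}) yields the claim. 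Granting this, it suffices to prove the \emph{real} statement that $\Ad$ maps $\Hom_\Hit(\pi_1(S),\SL(n,\R))$ \emph{onto} $\Hom_\Hit(\pi_1(S),\PSL(n,\R))$, i.e.\ that every real Hitchin representation lifts to $\SL(n,\R)$. Then \Cref{thm_ExtSemiAlgMaps} transfers surjectivity to the $\F$-extensions — which is exactly the asserted lifting property — and non-emptiness over $\F$ follows because the target $\Hom_\Hit(\pi_1(S),\PSL(n,\F))$ is non-empty (it contains $\iota_n\circ j$) and the $\F$-extension of a non-empty real semi-algebraic set is non-empty.

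It remains to prove that every $\rho\colon\pi_1(S)\to\PSL(n,\R)$ in the Hitchin component lifts to $\SL(n,\R)$. For $n$ odd, $\Ad\colon\SL(n,\R)\to\PSL(n,\R)$ is an isomorphism and there is nothing to do. For $n$ even, lifting $\rho$ through the central extension $1\to\{\pm\Id\}\to\SL(n,\R)\to\PSL(n,\R)\to 1$ is obstructed by a class $w(\rho)\in H^2(\pi_1(S);\Z/2)\cong\Z/2$ which depends locally constantly on $\rho$, hence is constant on the connected set $\Hom_\Hit(\pi_1(S),\PSL(n,\R))$ (\Cref{dfn_HitchinHom}). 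It vanishes at $\iota_n\circ j$: the Euler number of the Fuchsian representation $j$ equals $\pm(2g-2)$, which is even, so $j$ lifts to some $\tilde{j}\colon\pi_1(S)\to\SL(2,\R)$, and composing $\tilde{j}$ with the irreducible $n$-dimensional representation $\SL(2,\R)\to\SL(n,\R)$ (which descends to $\iota_n$ after projection to $\PSL$) produces a lift of $\iota_n\circ j$. Hence $w\equiv 0$ on the whole Hitchin component, so every real Hitchin representation lifts.

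The only genuinely non-formal input is this last real lifting statement, and inside it the two standard facts that the Stiefel--Whitney lifting obstruction is locally constant on $\Hom(\pi_1(S),\PSL(n,\R))$ and vanishes at the Fuchsian locus; everything else is bookkeeping with the transfer principle. The one point that requires a little care is the commutation of $\F$-extension with $\Ad^{-1}$, which is precisely where the machinery of \Cref{section_BackgroundRealAlgGeom} is needed.
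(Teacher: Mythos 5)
Your argument is correct and follows essentially the same structure as the paper's: show the real Hitchin component lifts to $\SL(n,\R)$ using connectedness and that $\iota_n\circ j$ lifts, then transfer to $\F$ via extension of semi-algebraic maps, with semi-algebraicity coming from the preimage characterization. Two small differences in flavour: you make explicit the (implicitly used by the paper) fact that $\F$-extension commutes with $\Ad^{-1}$, and where the paper simply cites Goldman's theorem that discrete faithful $\PSL(2,\R)$-representations lift, you instead unwind the Stiefel--Whitney/Euler-number obstruction and observe that $\pm(2g-2)$ is even---the same content, just proved rather than quoted.
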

\begin{proof}
Goldman proved that representations $j \from \pi_1(S)\to \PSL(2,\R)$ that are discrete and faithful lift to representations into $\SL(2,\R)$ \cite[Theorem A]{Goldman_Thesis},  and hence $\iota_n \circ j$ lifts to $\SL(n,\R) \subseteq \SL(n,\F)$ (\Cref{dfn_HitchinComp}).
Thus $\Hom_\Hit(\pi_1(S),\SL(n,\F))$ is non-empty.
The set $\Hom_\Hit(\pi_1(S),\SL(n,\F))$ is semi-algebraic as it is the preimage of a semi-algebraic set under a semi-algebraic map, see \Cref{propo_ImSemiAlgMap}. 

Since the set of Hitchin representations is connected, it follows that all Hitchin representations lift.
In other words,  the map $\Ad$, when restricted to the set of Hitchin representations into $\SL(n,\R)$,
\[\Ad \from \Hom_\Hit(\pi_1(S),\SL(n,\R)) \to \Hom_\Hit(\pi_1(S),\PSL(n,\R))\]
is surjective.
If now $\F$ is a real closed field extension of $\R$, the extension of the map $\Ad$ to $\F$, when restricted to $\F$-Hitchin representations into $\SL(n,\F)$,
\[\Ad_\F \from \Hom_\Hit(\pi_1(S),\SL(n,\F)) \to \Hom_\Hit(\pi_1(S),\PSL(n,\F))\]
is still surjective,  see \Cref{thm_ExtSemiAlgMaps}.
Hence every $\F$-Hitchin representation is the image of an $\F$-Hitchin representation into $\SL(n,\F)$ under the map $\Ad_\F$, and thus every $\F$-Hitchin representation lifts to an $\F$-Hitchin representation into $\SL(n,\F)$.
\end{proof}

\begin{rem}
Note that $\Hom_\Hit(\pi_1(S),\SL(n,\F))$ is only semi-algebraically connected if $n$ is odd.
Otherwise,  it is a union of $2^{2g}$ semi-algebraically connected components,  where $g$ is the genus of $S$.
This follows from the equivalent statement over $\R$ and \Cref{thm_ExtConnComp}.
\end{rem}

\subsection{Irreducibility}
\label{section_FHitchinIrr}
Let $\F$ be any field and $\Gamma$ a finitely generated group.
\begin{dfn}
A representation $\rho \from \Gamma \to \GL(n,\F)$ is \emph{irreducible} if the only $\rho(\Gamma)$-invariant subspaces of  $\F^n$ are $\{0\}$ and $\F^n$.
Denote by $\Gr(\F^n)$ the set of all non-trivial proper subspaces of $\F^n$.
A representation $\rho \from \Gamma \to \PGL(n,\F)$ is \emph{irreducible} if its action on $\Gr(\F^n)$ has no fixed point.
\end{dfn}

\begin{rem}
Let $\gamma_1, \ldots, \gamma_k$ be a finite generating set for $\Gamma$ and $W \subseteq \F^n$ a vector subspace.
Then $\rho(\Gamma) W \subseteq W$ if and only if $\rho(\gamma_j) W \subseteq W$ for all $j=1,\ldots,k$.
A similar statement holds true for projective representations.
\end{rem}

Let now $\F$ be real closed.
We would like to show that $\F$-Hitchin representations when restricted to every finite index subgroup are irreducible.
We already know by \cite[Lemma 10.1]{Labourie_AnosovFlowsSurfaceGroupsAndCurvesInProjectiveSpace} that real Hitchin representations have this same property.
Thus the following proposition is an easy consequence of the Tarski--Seidenberg transfer principle.

\begin{propo}
\label{propo_FHitchinIrr}
Let $\rho \from\pi_1(S) \to\PSL(n,\F)$ be $\F$-Hitchin.
Then the restriction of $\rho$ to every finite index subgroup is irreducible.
\end{propo}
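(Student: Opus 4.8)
The plan is to fix a finite index subgroup $H\leq\pi_1(S)$ and transfer the corresponding statement for real Hitchin representations, following the same scheme as in the proof of \Cref{lem_hitchinReprAreDiagonalizable}: one shows that ``$\rho|_H$ is irreducible'' cuts out a semi-algebraic subset of $\Hom_\Hit(\pi_1(S),\PSL(n,\R))$, observes that this subset is everything by \cite[Lemma 10.1]{Labourie_AnosovFlowsSurfaceGroupsAndCurvesInProjectiveSpace}, and applies the Tarski--Seidenberg principle (\Cref{thm_TarskiSeidenberg}).

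First I would note that $H$ is finitely generated, being a finite index subgroup of the finitely generated group $\pi_1(S)$, and fix generators $\eta_1,\dots,\eta_m$ of $H$, each written as a word $w_j$ in a fixed generating set $\gamma_1,\dots,\gamma_{2g}$ of $\pi_1(S)$. By the remark preceding the proposition, for $\rho'\from\pi_1(S)\to\PSL(n,\R)$ the restriction $\rho'|_H$ is reducible precisely when there is some $k\in\{1,\dots,n-1\}$ and some $W\in\Gr(k,\R^n)$ preserved by all $\rho'(\eta_j)$. Using the identification $\Gr(k,\F^n)\cong H_k$ via orthogonal projections from \Cref{lem_FlagSpacesAlg}, a subspace $W$ is preserved by an invertible map $g$ exactly when $P_W\,g\,P_W=g\,P_W$; this is a polynomial condition in the entries of $P_W$ and of a lift of $g$ to $\SL(n,\R)$, it is invariant under rescaling $g$, and since the projection $\SL(n,\R)\to\PSL(n,\R)$ is semi-algebraic (\Cref{lem_ProjMapSemiAlg}) and each $\rho'(\eta_j)=w_j(\rho'(\gamma_1),\dots,\rho'(\gamma_{2g}))$ depends semi-algebraically on $\rho'$, the set of pairs $(\rho',P_W)$ with $P_W\in H_1\cup\dots\cup H_{n-1}$ and $\rho'(\eta_j)W\subseteq W$ for all $j$ is semi-algebraic. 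Projecting away the $P_W$-coordinate (\Cref{thm_TarskiSeidenberg}) and taking the complement inside $\Hom_\Hit(\pi_1(S),\PSL(n,\R))$ shows that
\[
X_H:=\{\rho'\in\Hom_\Hit(\pi_1(S),\PSL(n,\R))\mid\rho'|_H\text{ is irreducible}\}
\]
is semi-algebraic.

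By \cite[Lemma 10.1]{Labourie_AnosovFlowsSurfaceGroupsAndCurvesInProjectiveSpace} every real Hitchin representation restricts to an irreducible representation on each finite index subgroup, so $X_H=\Hom_\Hit(\pi_1(S),\PSL(n,\R))$. The Tarski--Seidenberg principle then yields $(X_H)_\F=\Hom_\Hit(\pi_1(S),\PSL(n,\F))$, which by \Cref{lem_FHitchinHomHit} is exactly the set of $\F$-Hitchin representations; hence every $\F$-Hitchin representation restricts to an irreducible representation on $H$. As $H$ was an arbitrary finite index subgroup, the proposition follows.

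I expect the only genuinely delicate point to be the semi-algebraicity of $X_H$, i.e.\ checking that ``possessing a common invariant proper nonzero subspace'' is a first-order semi-algebraic property. This relies on two ingredients already available in the excerpt: $H$ being finitely generated, so that invariance needs to be tested only against the finitely many matrices $\rho'(\eta_j)$, and the explicit algebraic model of $\Gr(k,\F^n)$ from \Cref{lem_FlagSpacesAlg} in which ``$g$ preserves $W$'' is algebraic. Everything else is a verbatim transcription of the transfer argument used for positive hyperbolicity.
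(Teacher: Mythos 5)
Your proof is correct and uses the same transfer machinery as the paper (Labourie's \cite[Lemma 10.1]{Labourie_AnosovFlowsSurfaceGroupsAndCurvesInProjectiveSpace}, semi-algebraicity of the reducibility locus, Tarski--Seidenberg), but you handle the finite-index subgroup differently. The paper proves irreducibility of $\rho$ on $\pi_1(S)$ itself: it forms the incidence set of pairs $(\rho',V)$ with $V\in\Gr(\R^n)$ fixed by $\rho'(\gamma_1),\dots,\rho'(\gamma_{2g})$, notes it is empty over $\R$, transfers the emptiness, and then disposes of an arbitrary finite-index $H$ in one sentence by observing that $\rho|_H$ is an $\F$-Hitchin representation of the covering surface $S_H\to S$, so the already-proved case applies. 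You instead fix generators $\eta_1,\dots,\eta_m$ of $H$, write each as a word in $\gamma_1,\dots,\gamma_{2g}$, and run the same transfer argument with these word maps, so the incidence set becomes a little more elaborate. The trade-off is clean: your version is slightly more self-contained, in that it never needs the (true and standard, but in the paper unargued) fact that restricting an $\F$-Hitchin representation to a finite-index subgroup produces an $\F$-Hitchin representation of the corresponding cover; the paper's version keeps the semi-algebraic encoding minimal at the cost of invoking that covering-space compatibility. Both arguments are sound.
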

\begin{proof}
Let $\gamma_1,\ldots,\gamma_{2g}$ be a finite set of generators for $\pi_1(S)$.
We saw in \Cref{lem_FlagSpacesAlg} that $\Gr(k,\R^n)$ is algebraic for all $k=1,\ldots,n-1$.
Since $\Gr(\R^n)$ is the finite union of $\Gr(1,\R^n), \ldots, \Gr(n-1,\R^n)$ it is also algebraic.
Set $\Gr \coloneqq \Gr(\R^n)$.
Consider the real semi-algebraic set
\[ \{ (\rho', V)\in \Hom_\Hit(\pi_1(S),\PSL(n,\R)) \times \Gr \mid \rho'(\gamma_1)V=V, \ldots, \rho'(\gamma_{2g})V=V\}.\]
Then \cite[Theorem 10.1]{Labourie_AnosovFlowsSurfaceGroupsAndCurvesInProjectiveSpace} implies that this set is empty.
By the Tarski--Seidenberg transfer principle (\Cref{thm_TarskiSeidenberg}), so is its $\F$-extension.
Since the $\F$-extension $\Gr_\F$ of $\Gr$ is exactly $\Gr(\F^n)$, this implies that $\F$-Hitchin representations are irreducible.
The restriction of $\rho$ to every finite index subgroup of $\pi_1(S)$ corresponds to an $\F$-Hitchin representation of the corresponding cover of $S$,  thus the restricted representation is also irreducible.
\end{proof}

\subsection{Discreteness}
We finish by proving that $\F$-Hitchin representations are discrete.
The order on a real closed field induces a topology, which is Hausdorff.
However real closed fields different from $\R$ are not locally compact,  hence discrete representations have a more complicated behaviour.
It might be desirable to replace this notion in the context of real closed fields with a better suited candidate.

The proof follows \cite[Theorem 1.10, Theorem 1.13 (i)]{FockGoncharov_ModuliSpacesLocalSystemsHigherTeichmuellerTheory}.
In the case of Teichm\"uller space this was proven in \cite[Proposition 6.3]{Brumfiel_RSCTeichmullerSpace}.

\begin{propo}
\label{propo_FHitchinDiscrete}
Let $\F$ be a real closed field and $\rho \from \pi_1(S) \to \PSL(n,\F)$ an $\F$-positive representation.
Then $\rho$ has discrete image.
\end{propo}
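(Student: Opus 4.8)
The plan is to exhibit a semi-algebraic open neighbourhood $U$ of $\Id$ in $\PSL(n,\F)$ with $U \cap \rho(\pi_1(S)) = \{\Id\}$; since left translations are homeomorphisms, this makes every point of $\rho(\pi_1(S))$ isolated, i.e.\ $\rho$ discrete.

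By \Cref{propo_positiveReprHaveDistinctEigenvalues} the representation $\rho$ is positively hyperbolic, so for every $e \neq \gamma \in \pi_1(S)$ the element $\rho(\gamma)$ lifts to a matrix in $\SL(n,\F)$ with distinct positive eigenvalues $\lambda_1(\rho(\gamma)) > \dots > \lambda_n(\rho(\gamma)) > 0$ of product $1$; in particular $\lambda_1(\rho(\gamma)) > 1$. The heart of the argument is a \emph{uniform} version of this, which we call the systole bound: there is $R \in \F$ with $R > 1$ such that $\lambda_1(\rho(\gamma)) \geq R$ for all $e \neq \gamma$. To prove it, use \Cref{thm_FHitchinEquivFPositive} to assume, after a conjugation that does not affect discreteness, that $\rho$ is $\F$-Hitchin, so by \Cref{corol_CoordBDF} it has Bonahon--Dreyer triangle and shear invariants with respect to the fixed maximal finite lamination $\lambda$, which are \emph{fixed} positive elements of $\F$. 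A non-trivial $\gamma$ is either conjugate to a closed leaf $\delta$ of $\lambda$ (or to a power of one), in which case $\lambda_1(\rho(\gamma))/\lambda_n(\rho(\gamma))$ equals a positive power of $\prod_{a=1}^{n-1} L_a^{\textnormal{right}}(\delta)$, a product of the finitely many closed-leaf ratios, each $>1$ by \Cref{propo_ClosedLeafInequality}; or its geodesic representative crosses $\lambda$ transversally, the complement of $\lambda$ being a union of ideal triangles that contain no essential closed curve. In the second case $\rho(\gamma)$ is built from the triangle and shear invariants along the crossing sequence of $\gamma$, exactly as in the length computations of \cite{BonahonDreyer_ParametrizingHitchinComponents}; combining \Cref{corol_TotPosProduct} with the total-positivity estimates of \Cref{section_Positivity} bounds $\lambda_1(\rho(\gamma))/\lambda_n(\rho(\gamma))$ below by a quantity $c>1$ depending only on the (finitely many) invariants, uniformly in $\gamma$, because $\gamma \neq e$ forces crossings while the per-crossing contribution is a fixed positive element of $\F$ and the number of crossings is a positive integer. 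Taking $R := c^{1/n}$, and using $1 = \prod_i \lambda_i(\rho(\gamma)) \leq \lambda_1(\rho(\gamma))^{n-1}\lambda_n(\rho(\gamma))$ to pass from the ratio to $\lambda_1$, gives the systole bound.

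Granting this, put
\[ U := \bigl\{\, g \in \PSL(n,\F) : \text{every eigenvalue in } \F[\sqrt{-1}] \text{ of a lift of } g \text{ has modulus} < R \,\bigr\}. \]
This is a first-order condition on the coefficients of the characteristic polynomial of $g$, hence $U$ is semi-algebraic; it is open by continuity of the roots of a polynomial in its coefficients, and it contains $\Id$ since $1 < R$. For $e \neq \gamma$ the matrix $\rho(\gamma)$ has the real positive eigenvalue $\lambda_1(\rho(\gamma)) \geq R$, so $\rho(\gamma) \notin U$; therefore $U \cap \rho(\pi_1(S)) = \{\Id\}$ and $\rho$ is discrete.

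The main obstacle is the systole bound. Over $\R$ it is trivial, because $\rho(\pi_1(S))$ acts properly discontinuously on the associated Riemannian symmetric space and the systole of the quotient is positive; over a general real closed field there is no such space, so the bound must be extracted from the combinatorics of the maximal lamination and the total positivity of the Bonahon--Dreyer factors. This is also the point at which the proof genuinely uses $\F$-positivity, through positive hyperbolicity and the positivity of the triangle and shear invariants, rather than a bare application of the Tarski--Seidenberg principle (\Cref{thm_TarskiSeidenberg}), since discreteness quantifies over the infinite group $\pi_1(S)$ and is not a first-order property of $\rho$.
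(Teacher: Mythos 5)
Your approach differs fundamentally from the paper's, and its key step is asserted rather than proven. The paper makes no spectral estimate at all: it fixes one ideal triangle $\widetilde{t}$ of $\tilde{S}\setminus\tilde{\lambda}$ with ideal vertices $a,b,c$, uses the normal form of the positive triple $(\xi_\rho(a),\xi_\rho(b),\xi_\rho(c))$ from \Cref{lem_PoskTupleNormalForm} to place $\xi_\rho(b)$ in the open subset $\mathcal{F}_1^{ac}=\{uF^+\mid u\in\mathcal{U}^\lltriangle_{>0}(\mathcal{B}^{ac})\}$ of $\Flag(\F^n)$, and then observes that for any $e\ne\gamma$ the translate $\gamma\widetilde{t}$ is disjoint from $\widetilde{t}$, so positivity of $\xi_\rho$ on the resulting quadruple forces $\rho(\gamma)\xi_\rho(b)=\xi_\rho(\gamma b)$ into the disjoint open set $\{v^{-1}F^-\mid v\in\mathcal{U}^\urtriangle_{>0}(\mathcal{B}^{ac})\}$ (or outside $\mathcal{F}_1^{ac}$ in the edge cases). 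Intersecting the finitely many neighbourhoods of $\Id$ built this way from the ordered pairs among $a,b,c$ yields an open $O\ni\Id$ avoiding every $\rho(\gamma)$ with $\gamma\ne e$. The disjointness of $\widetilde{t}$ and $\gamma\widetilde{t}$ does all the work.

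The gap in your proof is the systole bound $\lambda_1(\rho(\gamma))\ge R>1$ uniformly over $e\ne\gamma$. For closed leaves and their powers \Cref{propo_ClosedLeafInequality} does the job, since there are finitely many. But for an element transverse to $\lambda$, the claim that ``the per-crossing contribution is a fixed positive element of $\F$'' giving a uniform multiplicative lower bound on $\lambda_1/\lambda_n$ is false at the level of a single crossing: crossing one infinite leaf contributes a shear matrix, which is unipotent with $\lambda_1/\lambda_n=1$, so it contributes nothing to such a product bound. Neither \Cref{corol_TotPosProduct} nor the total-positivity material of \Cref{section_Positivity} asserts that the product of the per-crossing factors along a period acquires a spectral gap bounded below uniformly in $\gamma$; that such products are positively hyperbolic with a \emph{uniform} gap is precisely the delicate point. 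As you yourself note, it is not first-order, so Tarski--Seidenberg is unavailable; over $\R$ it follows from the Anosov property, which has no direct $\F$-analogue. So your central lemma requires an argument at least as substantial as the discreteness statement itself. The remaining ingredients of your write-up --- lifting, the semi-algebraicity and openness of the eigenvalue-modulus sublevel set $U$, and deducing $U\cap\rho(\pi_1(S))=\{\Id\}$ from the bound --- are fine; the issue lies entirely in establishing the bound.
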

\begin{proof}
Let $\lambda$ be a maximal geodesic lamination on $S$ and $\tilde{\lambda}$ its lift to the universal cover $\tilde{S}$ of $S$.
Choose a component $\widetilde{t}$ of $\tilde{S} \setminus \tilde{\lambda}$.
Denote its vertices in $\partial_\infty\tilde{S}$ by $a,b,c$ ordered in clockwise direction.
Let $\xi_\rho$ be the positive limit map of $\rho$.
Since $\xi_\rho$ is positive, the tuple $(\xi_\rho(a),\xi_\rho(b),\xi_\rho(c))$ is positive,  thus there exists a basis $\mathcal{B}^{ac}$ of $\F^n$ in which $\xi_\rho(a)$ is the ascending flag $F^+$ and $\xi_\rho(c)$ is the descending flag $F^-$, and $u \in\mathcal{U}^\lltriangle_{>0}(\mathcal{B}^{ac})$ such that $\xi_\rho(b) = u F^+$,  see \Cref{lem_PoskTupleNormalForm}.
By a similar argument as in the proof of \Cref{propo_StabilizerTripleFlags},  this basis is unique up to scaling each basis vector by a positive number.
The subset $\mathcal{F}_1^{ac} \coloneqq \{u F^+ \mid u \in\mathcal{U}^\lltriangle_{>0}(\mathcal{B}^{ac})\}\subset \Flag(\F^n)$ is open and contains $\xi_\rho(b)$,  thus there exists an open neighbourhood $U_{\xi_\rho(b)} \subset \mathcal{F}_1^{ac}$ of $\xi_\rho(b)$ and therefore an open neighbourhood $O^{ac}_b \subset \PSL(n,\F)$ of the identity such that $g\xi_\rho(b) \in U_{\xi_\rho(b)}$ for all $g \in O^{ac}_b$.
Exchanging the roles of $a$, $b$ and $c$ we can find neighbourhoods $O^{ca}_b, O^{ab}_c, O^{ba}_c, O^{bc}_a,O^{cb}_a$ in $\PSL(n,\F)$ of the identity with the same property.
Let $O \coloneqq O^{ac}_b \cap O^{ca}_b \cap O^{ab}_c \cap O^{ba}_c \cap O^{bc}_a \cap O^{cb}_a$.
Then $O \subset \PSL(n,\F)$ is an open neighbourhood of the identity.
We show that $\rho(\gamma) \neq O$ for all $e \neq \gamma \in \pi_1(S)$,  which proves that $\rho$ is discrete.

Let $e\neq \gamma \in \pi_1(S)$.
Let $a'=\gamma a$, $b' =\gamma b$ and $c'=\gamma c$ denote the vertices of $\gamma \widetilde{t}$.
Then $\gamma \widetilde{t}$ is a component of $\tilde{S} \setminus \tilde{\lambda}$, thus $\widetilde{t}$ and $\gamma\widetilde{t}$ do not have common interior points.
The points $a',b',c'$ hence lie in the closure of exactly one component of $\partial_\infty \tilde{S} \setminus \{a,b,c\}$.
Assume that $a',b',c'$ lie on the (closed) arc between $a$ and $c$ that does not contain $b$ as in \Cref{fig_discreteness}.
\begin{figure}[H]
\centering
\includegraphics[width=0.35\textwidth]{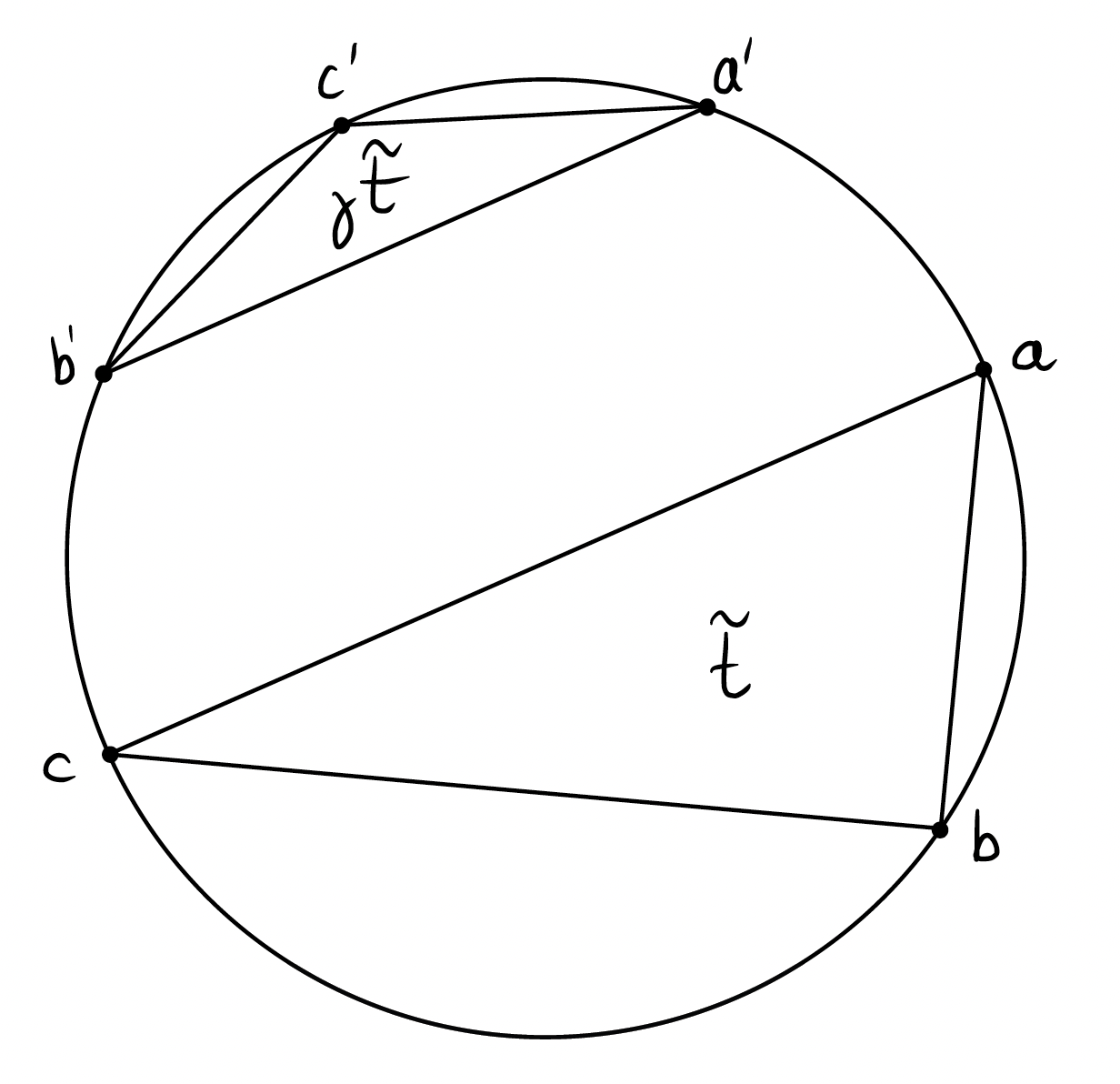}
\captionof{figure}{The component $\widetilde{t}$ and its $\gamma$-translate.} \label{fig_discreteness}
\end{figure}
\noindent It is possible that $b'$ agrees with $a$ or $c$.
Assume first that $b'$ does not agree with either point.
Since $\xi_\rho$ is positive, the tuple $(\xi_\rho(a),\xi_\rho(b),\xi_\rho(c),\xi_\rho(b'))$ is positive.
By the uniqueness (up to positive scaling of each basis vector) of $\mathcal{B}^{ac}$ and  \Cref{lem_PoskTupleNormalForm} there exists $v \in \mathcal{U}^\urtriangle_{>0}(\mathcal{B}^{ac})$ such that $\xi_\rho(\gamma b) = v^{-1}F^-$.
Consider the set $\mathcal{F}_1 \coloneqq \mathcal{F}_1^{ac} = \{u F^+ \mid u \in\mathcal{U}^\lltriangle_{>0}(\mathcal{B}^{ac})\}$ as before,  and define the open subset $\mathcal{F}_2 \coloneqq \{v^{-1} F^- \mid v \in\mathcal{U}^\urtriangle_{>0}(\mathcal{B}^{ac})\}$ of $\Flag(\F^n)$.
Then $\mathcal{F}_1 \cap \mathcal{F}_2= \emptyset$: Indeed if $uF^+ = v^{-1} F^-$, then $vuF^+=F^-$. 
Since $vu$ is totally positive (see \Cref{corol_TotPosProduct} (\ref{corol_NonNegTotPosProduct_item2})), this is impossible.
We note that $\xi_\rho(b) \in \mathcal{F}_1$ and $\xi_\rho(b') \in \mathcal{F}_2$.
In particular $\xi_\rho(b') = \rho(\gamma) \xi_\rho(b) \notin U_{\xi_\rho(b)}$, which is equivalent to saying that $\rho(\gamma) \notin O^{ac}_b$.
Thus $\rho(\gamma) \notin O$.

If $b'=c$, then $\xi_\rho(b') = \xi_\rho(c) =F^-$,  but $F^- \notin \mathcal{F}_1$, thus $\xi_\rho(b')\notin U_{\xi_\rho(b)}$,  hence $\rho(\gamma) \notin O^{ac}_b$.
If $b'=a$, then we consider the basis $\mathcal{B}^{ca}$  instead of $\mathcal{B}^{ac}$ and we conclude $\rho(\gamma) \notin O^{ca}_b$ by the same argument, and hence $\rho(\gamma) \notin O$.

If $a',b',c'$ lie in the closure of a different component of $\partial_\infty \tilde{S} \setminus \{a,b,c\}$ then we do the same argument with the appropriate bases and open sets.
\end{proof}

\section{Geodesic currents for $\F$-positive representations}
\label{section_GeodCurrentsFPosRepr}

The goal of this section is to show how to associate geodesic currents to $\F$-positive representations.
This result was announced in \cite[Theorem 47]{BurgerIozziParreauPozzetti_RSCCharacterVarieties}.
The analogue version was proven for maximal representations by Burger–Iozzi–Parreau–Pozzetti \cite[Theorem 1.2]{BIPP_PositiveCR},  and recently generalized for positively ratioed representations \cite[Corollary 9.3]{BurgerIozziParreauPozzetti_RSCCharacterVarieties2}.

\subsection{Positive cross-ratios and  intersection currents}
\color{black}
More information on geodesic currents can be found in \cite{ErlandssonSouto_MirzakhaniCurveCountingGeodCurrents} or \cite{Martelli_IntroGeomTop}.

As was hinted at in the introduction, the space of geodesic currents $\mathscr{C}(S)$, see \Cref{dfn_GeodesicCurrent},  comes equipped with an intersection form generalizing the geometric intersection number.
Recall that we endow $S$ with an auxiliary hyperbolic structure.
Denote by $\mathcal{G}$ the space of (unoriented and unparametrized) geodesics of $\tilde{S}$.



\begin{dfn}
\label{dfn_IntersectionGeodCurr}
Consider $\mathcal{G}^{(2)} \subseteq \mathcal{G}\times \mathcal{G}$ the set of pairs of transversely intersecting geodesics.
We define the \emph{intersection form} 
\[i: \mathscr{C}(S)\times\mathscr{C}(S) \to \mathbb{R}_{\geq 0}, \quad i(\mu,\eta):=(\mu \times \eta)(\mathcal{G}^{(2)}/\pi_1(S)),\]
where $\mu \times \eta$ is the product measure.
\end{dfn}

Bonahon proved that $i$ is finite, continuous, symmetric and bilinear, and generalizes the geometric intersection number of homotopy classes of closed curves on $S$ \cite[Proposition 4.5]{Bonahon_BoutsVarHypDim3}.


To associate to a representation in a higher rank Teichm\"uller space a geodesic current, we make use of positive cross-ratios.
There are many (non-equivalent definitions) of cross-ratios.
The convention of how to arrange the arguments in the cross-ratio follows \cite[Definition 2.4]{MartoneZhang_PositivelyRatioedRepr}.
However the cross-ratio is defined on a smaller set and is not assumed to be continuous as in \cite[Definition 3.1]{BIPP_PositiveCR}.
Compare also to \cite[Remark 3.2]{BIPP_PositiveCR} for a comparison between the various definitions in the literature.

\begin{dfn}
\label{dfn_PositiveCR}
Let $X\subseteq \partial_\infty\tilde{S}$ be a $\pi_1(S)$-invariant non-empty subset,  e.g.\ $\Fix(S)$,  and let $X^{[4]}$ denote the set of positively oriented, i.e.\ cyclically ordered (in clockwise direction),  quadruples in $X$.
A \emph{cross-ratio} is a $\pi_1(S)$-invariant function 
\[B \from X^{[4]} \to \R,\]
that is 
\begin{enumerate}
\item symmetric, i.e.\ for all $x=(x,y,z,w) \in X$ we have $B(x,y,z,w)=B(z,w,x,y)$, and
\item additive, i.e.\ $B(x,y,z,w) + B(x,y,w,t) = B(x,y,z,t)$ for all  \linebreak $x,y,z,w,t \in X$ that are positively oriented.
\end{enumerate}
A cross-ratio $B$ is \emph{positive} if $B(x,y,z,w) \geq 0$ for all $(x,y,z,w) \in X^{[4]}$.
Denote the set of positive cross-ratios on $X$ by $\CR^+(X)$.
The \emph{$B$-period} of a non-trivial $\gamma \in \pi_1(S)$ with $\gamma^\pm \in X$ is 
\[\ell_B(\gamma) \coloneqq B(\gamma^+, \gamma^-,x,\gamma x)\]
for some (any) $x \in X \setminus \{\gamma^{\pm}\}$ such that $(\gamma^+, \gamma^-,x,\gamma x) \in X^{[4]}$, where $\gamma^+$ respectively $\gamma^-$ is the attracting respectively repelling fixed point of $\gamma$.
A geodesic current $\mu \in \mathscr{C}(S)$ is an \emph{intersection current for $B$} if for every non-trivial $\gamma \in \pi_1(S)$ with $\gamma^\pm\in X$ we have $\ell_B(\gamma) = i(\mu,\gamma)$, where we view $\gamma$ as a geodesic current; see e.g.\ \cite[\S 1]{Bonahon_GeometryTeichmuellerSpaceGeodesicCurrents}.
\end{dfn}

The Liouville current is an intersection current for the pull-back of the standard cross-ratio on $\partial \H^2$ via the developing map \cite[Proposition 14]{Bonahon_GeometryTeichmuellerSpaceGeodesicCurrents}.

For a general cross-ratio, intersection currents do not need to exist.
If we assume $X = \partial_\infty\tilde{S}$ and $B$ continuous, then an observation of Hamenst\"adt in \cite{Hamenstaedt_CocyclesSymplStrInters}, refer to \cite[Appendix A]{MartoneZhang_PositivelyRatioedRepr} for a detailed proof,  implies that if $B$ is positive, then there exists a unique intersection current for $B$. 
More generally, we have the following.

\begin{theor}[{\cite[Theorem 1.6]{BIPP_PositiveCR}}]
\label{thm_BIPPGeodCurrAssToPosCR}
Let $X \subseteq \partial_\infty\tilde{S}$ be a $\pi_1(S)$-invariant non-empty subset and $B$ a positive cross-ratio on $X$. 
Then there is a geodesic current $\mu$ on $S$ such that for all $e \neq \gamma \in \pi_1(S)$
\[\ell_B(\gamma) = i(\mu,\gamma).\]
The geodesic current $\mu$ depends continuously on the cross-ratio $B$, where $\CR^+(X)$ is endowed with the subspace topology of the topological vector space of cross-ratios on $X$ with the topology of pointwise convergence.
\end{theor}
\subsection{Valuations and big elements}

For an introduction to valuations we refer the reader to \cite{EnglerPrestel_ValuedFields}.

\begin{dfn}
\label{dfn_OrderCompVal}
Let $\F$ be a field.
A \emph{valuation} on $\F$ is a map 
\[\upsilon \from \F \to \R \cup \{\infty\}\]
such that $e^{-\upsilon} \from \F \to \R$ is a norm that satisfies the triangle inequality.
The valuation is \emph{trivial} if $\upsilon(a)=0$ for all $a \in \F^\times$, otherwise \emph{non-trivial}.

If $\F$ is ordered, we say that a valuation $\upsilon$ is \emph{order-compatible} if for all $0 < x \leq y \in \F$ we have $\upsilon(x) \geq \upsilon(y)$.
We say that $\upsilon$ is \emph{non-Archimedean} if $\upsilon(x+y) \geq \min\{\upsilon(x),\upsilon(y)\}$.
Two valuations $\upsilon$ and $\upsilon'$ are \emph{equivalent} if there exists $r \in \R$ positive with $\upsilon=r \upsilon'$.
\end{dfn}

\begin{lem}
\label{lem_ValuationsProperties}
Let $\upsilon$ be a valuation on $\F$.
Then $\upsilon(1)=0$,  $\upsilon(\zeta)=0$ for $\zeta$ a root of unity in $\F$,  $\upsilon(x^{-1})=-\upsilon(x)$ and $\upsilon(-x)=\upsilon(x)$ for all $x \in \F$.
If $\upsilon$ is non-Archimedean and $x,y \in \F$ with $\upsilon(x) \neq \upsilon(y)$, then $\upsilon(x+y)=\min\{\upsilon(x),\upsilon(y)\}$.
If $\F$ is ordered and $\upsilon$ is non-Archimedean order-compatible, then for $x,y \in \F_{>0}$ we have $\upsilon(x+y)=\min\{\upsilon(x),\upsilon(y)\}$.
\end{lem}
\begin{proof}
For the first statements see {\cite[Section 1.3 and (1.3.4)]{EnglerPrestel_ValuedFields}}.
For the last statement we can without loss of generality assume that $\upsilon(x)<\upsilon(y)$.
We already know $\upsilon(x+y) \geq \upsilon(x)$.
Since $x,y >0$ we have $x+y \geq x$,  and thus by order-compatibility of $\upsilon$ we have $\upsilon(x+y) \leq \upsilon(x)$.
\end{proof}

\begin{examp}
\label{examp_OrderCompVal}
The function $-\log$ is an order-compatible valuation on $\R$.
The map $\R(X) \to \R\cup \{\infty\}$, $\tfrac{p}{q} \mapsto \deg(q)-\deg(p)$,  where $\deg$ denotes the degree of the polynomial, is an order-compatible non-Archimedean valuation on $\R(X)$ with the order $+\infty$, compare \Cref{examp_RSpR}.
\end{examp}

\begin{dfn}
Let $\F$ be an ordered field.
A positive element $b \in \F$ is \emph{big} if for all $x \in\F$ there exists $n \in \N$ such that $x < b^n$.
For $b$ a big element, the \emph{logarithm with basis $b$} is defined by
\[\log_{b} \from \F_{>0} \to \R, \quad x \mapsto \inf \{ q \in \Q \mid x \leq b^q\}.\]
\end{dfn}

Note that a big element is always bigger than $1$.

\begin{lem}[{\cite[Proposition 5.2.(f)-(g)]{Brumfiel_RSCTeichmullerSpace}}]
\label{lem_ValBigElem}
Let $b$ be a big element in an ordered field $\F$.
To $b$ we can associate the following non-trivial order-compatible valuation
\[ \upsilon_b \from \F \to \R \cup \{\infty\}, \quad x \mapsto
\begin{cases}
-\log_b(|x|) &\textrm{ if } x \neq 0,\\
\infty &\textrm{ if } x =0.
\end{cases}
\]
If $\F$ is non-Archimedean, so is $\upsilon_b$.
\end{lem}

\begin{lem}[{\cite[Proposition 5.2.(d)]{Brumfiel_RSCTeichmullerSpace}}]
\label{lem_LogEquiv}
Let $b$ and $b'\in \F$ be two big elements.
Then $\log_{b'} = \log_{b'}(b) \cdot \log_b$, i.e.\ the logarithms differ by a positive scalar multiple.
\end{lem}

It follows that the associated valuations $\upsilon_b$ and $\upsilon_b'$ are equivalent (with scaling factor $-\upsilon_{b'}(b)=\log_{b'}(b)>0$).

\begin{examp}
In $\R$ every element larger than $1$ is a big element in the above sense.
If $\F$ is an Archimedean ordered field, i.e.\ a subfield of $\R$,  then $\log_b$ is the usual logarithm, for which the last lemma is known.

In $\R(X)$ with the order $+\infty$,  $X$ is a big element.
The valuation $\upsilon_X$ associated to $X$ is the one from \Cref{examp_OrderCompVal}.
If $\F$ is any non-Archimedean field then no rational number is big.
\end{examp}

\begin{lem}
\label{lem_ValUnique}
Let $\F$ be an ordered field with a big element $b \in \F$ and $\upsilon$ any order-compatible valuation on $\F$.
Then 
\[\upsilon = - \upsilon(b) \cdot \upsilon_b.\]
In other words, up to scaling, all order-compatible valuations come from the construction in \Cref{lem_ValBigElem}, hence are equivalent by \Cref{lem_LogEquiv}.
\end{lem}
\begin{proof}
Let $x \in \F$ be positive.
Then $\upsilon_b(x) = -\inf \{q \in \Q \mid x \leq b^q\}$.
Assume $x \leq b^q$,  then $\upsilon(x) \geq q \upsilon(b)$ by order-compatibility of $\upsilon$.
Since $b$ is big, $\upsilon(b) <0$ and hence $\upsilon(x)/\upsilon(b) \leq q$.
This implies that $\upsilon(x)/\upsilon(b) \geq -\upsilon_b(x)$, as the latter is defined as the infimum over all such $q$.

Take now $q' \in \Q$ with $q'<\upsilon(x)/\upsilon(b)$. 
Assume $x \leq b^{q'}$.
By order-compatibility $\upsilon(x) \geq q' \upsilon(b)$ and hence $\upsilon(x)/\upsilon(b) \leq q'$, a contradiction.
Thus $\upsilon(x) = -\upsilon(b) \cdot \upsilon_b(x)$.
By \Cref{lem_ValuationsProperties} the same conclusion holds for $x<0$, since $\upsilon(-x)=\upsilon(x)$.
\end{proof}

\begin{lem}[{\cite[\S 5]{Brumfiel_RSCTeichmullerSpace},  \cite[Chapter VI, §10]{Bourbaki_CommAlg}}]
\label{lem_FinTransDegreeBigElements}
Let $\K \subseteq \F$ be an ordered field of finite transcendence over a subfield $\K$ which contains a big element.
Then $\F$ contains a big element.
\end{lem}

\subsection{Positive cross-ratios for $\F$-positive representations}

\begin{theor}[{\cite[Proposition 2.24, Theorem 3.4]{MartoneZhang_PositivelyRatioedRepr}}]
\label{thm_MZHitchinPosRatioed}
Let $\rho \from \pi_1(S) \to \PSL(n,\R)$ be a Hitchin representation.
Then for all $k=1,\ldots, n-1$,  there exists a unique cross-ratio $B_k^\rho$ such that for all $\gamma \in \pi_1(S)$ non-trivial
\[\ell_{B_k^\rho}(\gamma) = \log \frac{\lambda_1(\rho(\gamma))\cdot \ldots\cdot\lambda_k(\rho(\gamma))}{\lambda_{n-k+1}(\rho(\gamma))\cdot \ldots\cdot\lambda_n(\rho(\gamma))},\]
where $\lambda_1(\rho(\gamma)) > \ldots> \lambda_n(\rho(\gamma))>0$ are the absolute values of the eigenvalues of $\rho(\gamma)$.
Furthermore,  the cross-ratio $B_k^\rho$ is positive.
\end{theor}

In particular,  it follows from this together with \Cref{thm_BIPPGeodCurrAssToPosCR}, that there exists an intersection current $\mu_\rho^k$ for $\rho$.
Similar results hold for maximal representations in $\textrm{Sp}(2n,\R)$ \cite[Section 4.2]{Labourie_CRAnosovReprEnergyFunc} and $\Theta$-positive representations in $\textrm{PO}(p,q)$ \cite[Theorem 4.9]{BeyrerPozzetti_PosSurfaceGroupReprPOpq}.

The following lemma is a generalization of the above theorem to $\F$-Hitchin representations.
Recall that a representation $\rho \from \pi_1(S) \to \PSL(n,\F)$ is called $\F$-Hitchin if it lies in the $\F$-extension of the real semi-algebraic Hitchin component $\Hit(S,n)$, see \Cref{dfn_FHitchin}.
Let now $\F$ be a real closed field together with a non-trivial order-compatible valuation $\upsilon$.
Recall from the introduction that for $g \in \SL(n,\F)$ totally hyperbolic and for $k=1,\ldots,n-1$ we define the \emph{$k$-length of $g$} as
\[L_k(g) \coloneqq -\sum_{j=1}^k \upsilon(\lambda_j(g)) + \sum_{j=n-k+1}^n \upsilon(\lambda_j(g)).\]

\begin{lem}
\label{lem_PosCRFHitchin}
Let $\F\supseteq \R$ be a real closed field with non-trivial order-compatible valuation $\upsilon$ (if $\F=\R$ we assume $\upsilon=-\log$).
Let $\rho \from \pi_1(S) \to \PSL(n,\F)$ be $\F$-positive with associated $\F$-positive limit map $\xi_\rho \from \Fix(S) \to \Flag(\F^n)$.
Then for all $k=1,\ldots,n-1$
\[B_k^\rho \coloneqq \tfrac{1}{2}\big(\tilde{B}_k^\rho +\tilde{B}_{n-k}^\rho\big),\]
is a positive cross-ratio on $\Fix(S)$, where for $x=(x_1,\ldots,x_4) \in \Fix(S)^{[4]}$ we define
\begin{align*}
&\widetilde{M}_k^\rho(x) \coloneqq \frac{\xi_\rho(x_1)^{(n-k)} \wedge \xi_\rho(x_3)^{(k)}}{\xi_\rho(x_1)^{(n-k)} \wedge \xi_\rho(x_4)^{(k)}} \cdot \frac{\xi_\rho(x_2)^{(n-k)} \wedge \xi_\rho(x_4)^{(k)}}{\xi_\rho(x_2)^{(n-k)} \wedge \xi_\rho(x_3)^{(k)}},\textrm{ and } \\
&\tilde{B}_k^\rho(x) \coloneqq -\upsilon(\widetilde{M}_k^\rho(x)).
\end{align*}
Here we adapt the same notation as in \Cref{section_PreliminariesFlags} to define $\widetilde{M}_k^\rho(x)$.
Furthermore,  for all $e \neq \gamma \in \pi_1(S)$ we have
\[\ell_{B_k^\rho}(\gamma) = L_k(\rho(\gamma)).\]
\end{lem}
\begin{proof}
Since $\xi_\rho$ is $\rho$-equivariant, it follows that $B_k^\rho$ is $\pi_1(S)$-invariant.
Symmetry and additivity are computations.

For $\F=\R$ and $\upsilon=-\log$ in the proof of \Cref{thm_MZHitchinPosRatioed},  Martone--Zhang prove that already the expression $\tilde{B}_k^\rho(x)$ is positive for all $x \in \Fix(S)^{[4]}$.
Equivalently $\widetilde{M}_k^\rho(x) \geq 1$ for all $x \in \Fix(S)^{[4]}$.
Thus for fixed $x \in \Fix(S)^{[4]}$ the set
\[S_x \coloneqq \big\{ \rho \in \Hom_\Hit(\pi_1(S),\PSL(n,\R)) \mid \widetilde{M}_k^\rho(x) \geq 1 \big\} \]
is semi-algebraic by \Cref{lem_StableFlagSemiAlg}, and equal to $\Hom_\Hit(\pi_1(S),\PSL(n,\R))$.
Hence by the Tarski--Seidenberg transfer principle (\Cref{thm_TarskiSeidenberg}) the same holds true for its $\F$-extension,
i.e.\ $(S_x)_\F=\Hom_\Hit(\pi_1(S),\PSL(n,\F))$ for all $x \in \Fix(S)^{[4]}$.
Since $\F$-positive representations are $\PGL(n,\F)$-conjugate to $\F$-Hitchin representations (\Cref{thm_FHitchinEquivFPositive}),  $B_k^\rho$ is $\PGL(n,\F)$-invariant, and $\upsilon$ is order-compatible this concludes the positivity of $B_k^\rho$.

To show that $\ell_{B_k^\rho}(\gamma) = L_k(\rho(\gamma))$ we use similar arguments as in the proof of \Cref{propo_ClosedLeafInequality}.
Hence for $y=(\gamma^+,\gamma^-,x,\gamma x)$ we have using the same observation as in the aforementioned proof that
\begin{align*}
	\widetilde{M}_k^\rho(y)  &= \frac{\xi_\rho(\gamma^+)^{(n-k)} \wedge \xi_\rho(x)^{(k)}}{\xi_\rho(\gamma^+)^{(n-k)} \wedge \rho(\gamma) \xi_\rho(x)^{(k)}} \cdot \frac{\xi_\rho(\gamma^-)^{(n-k)} \wedge \rho(\gamma) \xi_\rho(x)^{(k)}}{\xi_\rho(\gamma^-)^{(n-k)} \wedge \xi_\rho(x)^{(k)}} \\
	&= \frac{\frac{1}{\lambda_{k+1} \cdot \ldots \cdot \lambda_{n}}}{\frac{1}{\lambda_1 \cdot \ldots \cdot \lambda_{n-k}}}
	= \frac{\lambda_1 \cdot \ldots \cdot \lambda_{k}}{\lambda_{n-k+1} \cdot \ldots \cdot \lambda_{n}},
\end{align*}
where $\lambda_1 > \ldots > \lambda_n$ are the eigenvalues (of the same sign) of a lift of $\rho(\gamma)$ to $\SL(n,\F)$.
With the definition of $B_k^\rho$ and the order-compatibility of $\upsilon$, the claim follows.
\end{proof}

We are ready to prove \Cref{thm_FHitchinCR}.

\begin{proof}[Proof of \Cref{thm_FHitchinCR}]
\Cref{lem_PosCRFHitchin} shows that for every $k=1,\ldots,n-1$ we can associate to $\rho$ a positive cross-ratio $B_k^\rho$, whose period is equal to the $k$-length.
\Cref{thm_BIPPGeodCurrAssToPosCR} applied to $X=\Fix(S)$ and $B=B_k^\rho$ provides a geodesic current $\mu_\rho^k$ with the desired intersection property.

Left to show is the last statement.
We only need to consider the non-Archimedean case.
If $g \in \SL(n,\F)$ is totally hyperbolic with positive eigenvalues $\lambda_1(g) > \ldots > \lambda_n(g)>0$,  then by order-compatibility of $\upsilon$ we have $\upsilon(\lambda_1(g)) \leq \ldots \leq \upsilon(\lambda_n(g))$.
Thus by an iterated application of the last statement of \Cref{lem_ValuationsProperties} we have
\[ \upsilon(\tr(g)) = \upsilon \bigg(\sum_{j=1}^n \lambda_j(g) \bigg) = \min_{j=1,\ldots,n} \upsilon(\lambda_j(g)) = \upsilon(\lambda_1(g)).\]
Furthermore, since $\det(g)=1 $ we have $\lambda_n(g)<1$ and thus $\upsilon(\lambda_n(g))\geq 0$.

We first show one direction of the claim for $k=1$.
Recall that $\upsilon(x)=\upsilon(-x)$ for all $x \in \F$ (\Cref{lem_ValuationsProperties}), and hence $\upsilon(\tr(\rho(\gamma))$ is independent of a choice of lift of $\rho(\gamma)$ to $\SL(n,\F)$.
If there exists $\gamma \in \pi_1(S)$ with $\upsilon(\tr(\rho(\gamma))) <0$, then by the above remark, we have
\[i(\mu_\rho^1, \gamma) = L_1(\rho(\gamma)) = - \upsilon(\lambda_1(\rho(\gamma))) + \upsilon(\lambda_n(\rho(\gamma))) \geq -\upsilon(\tr(\rho(\gamma))) > 0,\]
and hence $\mu_\rho^1$ is non-zero.
For $k > 1$ we note that $L_k(\rho(\gamma)) \geq L_1(\rho(\gamma))$ and hence $\mu_\rho^k \neq 0$.

Conversely, assume that $\mu_\rho^1$ is non-zero.
Otal \cite[Th\'eor\`eme 2]{Otal_SpectreMarqueLongSurfacesCourNeg} proved that geodesic currents are determined by their intersection function, and hence there exists $e \neq \gamma \in \pi_1(S)$ with
\[0 < i(\mu_\rho^1,\gamma) = L_1(\rho(\gamma))= -\upsilon \left( \tfrac{\lambda_1(\rho(\gamma))}{\lambda_n(\rho(\gamma))}\right).\]
Now assume by contradiction that $\upsilon(\tr(\rho(\gamma^s))) \geq 0$ for all $s \in \N$.
Newton's identities, see e.g.\ \cite{Kalman_NewtonId}, imply that the coefficients of the characteristic polynomial of $\rho(\gamma)$ belong to the ring $\mathcal{O} \coloneqq \{x \in \F \mid \upsilon(x)\geq 0\}$.
Since $\mathcal{O} \subseteq \F$ is a valuation ring, it is integrally closed in $\F$, see \cite[Theorem 3.1.3.(1)]{EnglerPrestel_ValuedFields}, and hence $\lambda_1(\rho(\gamma)), \ldots, \lambda_n(\rho(\gamma)) \in \mathcal{O}$.
Since $\prod_{j=1}^n \lambda_j(\rho(\gamma))=1$, it follows that $\tfrac{\lambda_1(\rho(\gamma))}{\lambda_n(\rho(\gamma))} = \lambda_1(\rho(\gamma))^2 \lambda_2(\rho(\gamma)) \cdots \lambda_{n-1}(\rho(\gamma)) \in \mathcal{O}$, a contradiction.
The same argument works for general $k>1$.
\end{proof}

The above theorem allows us to assign to closed points in the real spectrum compactification of the Hitchin component $\rsp^\cl(\Hit(S,n))$ a projective class of a geodesic current.
Let us recall the following characterization from \Cref{subs_RSCCharVar}:
\[
\rsp^\cl(\Hit(S,n))\cong \left\{ (\rho, \F_\rho) \ \middle\vert 
\begin{array}{l}
	\rho \from \pi_1(S) \to \PSL(n,\F_\rho) \textrm{ is } \F_\rho\textrm{-Hitchin,} \\
	\F_\rho \supseteq \R \textrm{ real closed, } \rho\textrm{-minimal,}\\
	\F_\rho \textrm{ Archimedean over } \R[\tr(\Ad(\rho))]
\end{array}
\right\}_{\Big/ \sim} ,
\]
where $\F_\rho$ is the $\rho$-minimal field; see \Cref{dfn_RhoMinField} and \Cref{thm_RSpHitchinComponent}.

\begin{lem} \label{lem_PosCrAssociatedtoFHitchin}
For all $k=1,\ldots,n-1$ the map
\begin{align*}
\varphi_k \from \rsp^\cl(\Hit(S,n)) &\to \PP \CR^+(\Fix(S)),\\
[(\rho,\F_\rho)] &\mapsto [ B_k^\rho]
\end{align*}
is well-defined and continuous.
\end{lem}

Let us show how this lemma implies \Cref{corol_RSpHitGeodCurr}.

\begin{proof}[Proof of \Cref{corol_RSpHitGeodCurr}]
For every $k=1,\ldots,n-1$ the map is given as the following composition of maps
\[\rsp^\cl(\Hit(S,n)) \to \PP\CR^+(\Fix(S)) \to  \PP \mathscr{C}(S),\]
where the first map is continuous by \Cref{lem_PosCrAssociatedtoFHitchin}, and the second map is given by \Cref{thm_BIPPGeodCurrAssToPosCR} (before projectivizing) and proven to be continuous.
\end{proof}

Before we prove \Cref{lem_PosCrAssociatedtoFHitchin} we need some preliminary considerations.

\begin{lem}
Let $(\rho, \F)$ represent a point in $\rsp^\cl(\Hit(S,n))$ with $\F=\F_\rho$ the $\rho$-minimal field (\Cref{dfn_RhoMinField}).
Let $F$ be a finite symmetric generating set for $\pi_1(S)$ and $E=F^{2^n-1}$.
Set
\begin{align}
\label{eqn_BigElements}
b_\rho \coloneqq \sum_{\gamma \in E} \tr(\rho(\gamma))^2 \in \F.
\end{align}
Then $b_\rho$ is positive and a big element in $\F$.
\end{lem}
\begin{proof}
Since $b_\rho$ is a sum of squares, it is clearly positive.
If $(\rho,\F)$ represents a closed point then $\F$, the minimal field of definition, is non-Archimedean by \Cref{thm_RSpHitchinComponent} and of finite transcendence degree over $\R$, hence contains a big element (\Cref{lem_FinTransDegreeBigElements}).
Since the trace determines the representation (see the considerations at the beginning of \Cref{subsection_SemiAlgModels}),  we have that $\tr(\rho(\gamma))^2$ is big for some $\gamma \in \pi_1(S)$.
A result of Procesi \cite{Procesi_InvariantsNTimesNMatrices} tells us that there is a finite set of traces that we need to consider (here the traces of the images of $\gamma \in E$ under $\rho$ suffice).
Thus $b_\rho$ is big.
\end{proof}

Thus to $(\rho,\F)$ we can associate the non-trivial order-compatible valuation $\upsilon_\rho \coloneqq \upsilon_{b_\rho}$ from \Cref{lem_ValBigElem}, which is unique up to scaling by \Cref{lem_ValUnique}.

\begin{lem}
\label{lem_CRWellDef}
If $(\rho,\F_\rho)$ and $(\rho',\F_{\rho'})$ represent the same point in \linebreak
$\rsp^\cl(\Hit(S,n))$, then $B_k^\rho =  B_k^{\rho'}$.
\end{lem}
\begin{proof}
Let us abbreviate $\F\coloneqq \F_\rho$ and $\F'\coloneqq \F_{\rho'}$.
Also write $b\coloneqq b_\rho$ and $b' \coloneqq b_{\rho'}$ for the corresponding big elements defined in \Cref{eqn_BigElements}.
Since $(\rho,\F)$ and $(\rho',\F')$ represent the same point, we can without loss of generality assume that there exists an order-preserving isomorphism $\alpha \from \F \to \F'$ and $g \in \PSL(n,\F')$ such that for all $\gamma \in \pi_1(S)$
\[ \alpha(\rho(\gamma))= g \rho'(\gamma) g^{-1}.\]
We note that if $\xi_{\rho'}$ is the limit map of $\rho'$ then $g \xi_{\rho'}$ is the limit map of $\alpha \circ \rho$.
It is easy to see that $\alpha \widetilde{M}_k^{ \rho} = \widetilde{M}_k^{\alpha \circ \rho} = \widetilde{M}_k^{ \rho'}$.
Since the trace is conjugation invariant we also have $b_{\rho'}=b_{\alpha \circ \rho} = \alpha (b)$.
Hence 
\begin{align*}
B_k^\rho &= \tfrac{1}{2} \big(\log_{b} (\widetilde{M}_k^{ \rho}) +\log_b (\widetilde{M}_{n-k}^{ \rho})\big)\\
&=\tfrac{1}{2} \big(\log_{\alpha(b)} (\widetilde{M}_k^{\rho'} ) +\log_{\alpha(b)} (\widetilde{M}_{n-k}^{\rho'})\big)\\
&= \tfrac{1}{2} \big(\log_{b'} (\widetilde{M}_k^{\rho'} ) +\log_{b'} (\widetilde{M}_{n-k}^{\rho'})\big)\\
&= B_k^{\rho'}. \qedhere
\end{align*}
\end{proof}

\begin{proof}[Proof of Lemma \ref{lem_PosCrAssociatedtoFHitchin}]
For all $k=1,\ldots,n-1$ the map $\varphi_k$ is well-defined by \Cref{lem_CRWellDef}.
We would like to show that $\varphi_k$ is continuous, where $\rsp^\cl(\Hit(S,n))$ is endowed with the spectral topology,  $\CR^+(\Fix(S))$ with the topology of pointwise convergence and $\PP \CR^+(\Fix(S))$ with the quotient topology.
The following sets form a subbasis of closed sets for the topology on $\CR^+(\Fix(S))$: For $U \subseteq \R_{\geq 0}$ closed and $x=(x_1,x_2,x_3,x_4) \in \Fix(S)^{[4]}$,  we define the closed set
\[C(U,x) = \{b \in \CR^+(\Fix(S)) \mid b(x) \in U\}.\]
A basic closed set is of the form
\[C([0,t],x) = \{b \in \CR^+(\Fix(S)) \mid b(x) \leq t\}\]
for $t >0$.

By \Cref{lem_CRWellDef} we can lift $\varphi_k$ to a map \[\widetilde{\varphi}_k \from \rsp^\cl(\Hit(S,n)) \to \CR^+(\Fix(S)), \quad [(\rho,\F_\rho)] \mapsto B_k^{\rho}\] such that the diagram
\[
\begin{tikzcd}
\rsp^\cl(\Hit(S,n)) \arrow[r,"\widetilde{\varphi}_k"] \arrow[rd,swap,"\varphi_k"] & \CR^+(\Fix(S))\arrow[d]\\
& \PP \CR^+(\Fix(S))
\end{tikzcd}
\]
commutes.
It suffices thus to show that $\widetilde{\varphi}_k$ is continuous.
We show that $\widetilde{\varphi}_k^{-1}(C([0,t],x))$ is closed in $\rsp^\cl(\Hit(S,n))$ for all $t>0$ and $x \in \Fix(S)^{[4]}$.
We have
\begin{align*}
\widetilde{\varphi}_k^{-1}(C&([0,t],x)) = \big\{ [(\rho,\F)] \in \rsp^\cl(\Hit(S,n)) \mid \widetilde{\varphi}_k([(\rho, \F)]) \in C([0,t],x) \big\} \\
&=\big\{ [(\rho,\F)] \in \rsp^\cl(\Hit(S,n)) \mid B_k^\rho(x) \leq t \big\} \\
&=\big\{ [(\rho,\F)] \in \rsp^\cl(\Hit(S,n)) \mid \log_{b_\rho} \big( (\widetilde{M}_k^\rho(x) \widetilde{M}_{n-k}^\rho(x))^{1/2}\big) \leq t \big\} \\
&=\bigcap_{\frac{p}{q}>t} \big\{[(\rho,\F)] \in \rsp^\cl(\Hit(S,n)) \mid \big(\widetilde{M}_k^\rho(x) \widetilde{M}_{n-k}^\rho(x)\big)^{1/2} \leq b_\rho^{\ p/q} \big\} \\
&=\bigcap_{\frac{p}{q}>t} \big\{[(\rho,\F)] \in \rsp^\cl(\Hit(S,n)) \mid
\big(\widetilde{M}_k^\rho(x) \widetilde{M}_{n-k}^\rho(x) \big)^q \leq b_\rho^{2p}\big\}
\end{align*}
The left hand side of the expression
\[\big(\widetilde{M}_k^\rho(x) \widetilde{M}_{n-k}^\rho(x) \big)^q \leq b_\rho^{2p}\]
depends rationally on $\rho$, since all involved flags that are needed to define the left hand side are the stable flags of $\rho(\gamma_1), \ldots, \rho(\gamma_4)$,  where $x_i = \gamma_i^+$.
By \Cref{lem_StableFlagSemiAlg},  we know that the stable flag depends rationally on the positive hyperbolic element.
Thus
\[\bigcap_{\frac{p}{q}>t} \big\{[(\rho,\F)] \in \rsp^\cl(\Hit(S,n)) \mid 
\big(\widetilde{M}_k^\rho(x) \widetilde{M}_{n-k}^\rho(x) \big)^q \leq b_\rho^{2p}\big\}\]
is closed, which was to prove.
\end{proof}
\section{%
\texorpdfstring{%
Proof of \Cref{corol_PiecewiseSemiAlgPath} and \Cref{corol_DualSpace}}%
{Proof of Corollary 1.11 and Corollary 1.12}}
\label{section_ProofApplications}

Before we prove \Cref{corol_PiecewiseSemiAlgPath} we introduce the Weyl chamber length compactification \cite{Parreau_CompactificationsReprSpacesFinGenGroups},  which generalizes the Morgan--Shalen compactification \cite{MorganShalen_ValuationsTreesDegHypStructures} to higher rank.
For more details we refer to \cite{Parreau_CompactificationsReprSpacesFinGenGroups} and \cite{BurgerIozziParreauPozzetti_RSCCharacterVarieties2},  especially the later for its connection to the real spectrum compactification.
Again let us simplify only to the case where $G=\SL(n,\R)$ or $\PSL(n,\R)$.
The following definitions and result, however, apply more generally.

Recall from the introduction the Weyl chamber length function.
Let $\F$ be a real closed field with an order-compatible valuation $\upsilon \from \F \to \R \cup \{\infty\}$ (if $\F=\R$,  we take $\upsilon=-\log$). 
Given $\rho \from \pi_1(S) \to G_\F$ a representation,  then $\rho(\gamma)$ has (generalized) eigenvalues $\lambda_1(\rho(\gamma)), \ldots, \lambda_n(\rho(\gamma))$ in $\F[\sqrt{-1}]$ (potentially of a lift of $\rho(\gamma)$ to $\SL(n,\F)$) sorted such that their $\F$-valued absolute values satisfy $|\lambda_1(\rho(\gamma))| \geq \ldots \geq |\lambda_n(\rho(\gamma))|>0$.
The \emph{Weyl chamber length function} is the function
\begin{align*}
L_\rho \from \pi_1(S)&\to \overline{\mathfrak{a}}^+,\\
\gamma &\mapsto (-\upsilon(|\lambda_1(\rho(\gamma))|),\ldots,-\upsilon(| \lambda_n(\rho(\gamma))|)),
\end{align*}
where $\mathfrak{a}$ denotes the Cartan subalgebra of the Lie algebra of $\SL(n,\R)$ consisting of diagonal matrices of trace zero,  and $\overline{\mathfrak{a}}^+$ the closed Weyl chamber consisting of those elements $\textnormal{diag}(a_1,\ldots,a_n)\in \mathfrak{a}$ with $a_1\geq \ldots\geq a_n$.
Note that $L_\rho$ only depends on the conjugacy class of $\rho$.
If $L_\rho$ does not vanish identically,  we have a well-defined map $\PP L_\rho \from \pi_1(S) \to \PP\left(\overline{\mathfrak{a}}^+\right)$.

Let $\Hom_X \subseteq \Hom_\red(\pi_1(S),G)$ be a closed $G$-invariant semi-algebraic set,  and $X$ its image under a semi-algebraic model for $\chi(S,G)$,  which is closed and semi-algebraic as well,  see \Cref{subsection_SemiAlgModels}.
We denote by $\widehat{X}$ the Alexandrov one-point compactification of $X$,  which is the topological space $X \cup \{\infty\}$,  where a fundamental set of neighborhoods of $\infty$ is given by the complements of the compact sets of $X$.
The \emph{Weyl chamber length compactification} $\textnormal{WL}(X)$ of $X$ is the closure of the image of the embedding
\[ X \to \widehat{X} \times \PP\left(\big(\overline{\mathfrak{a}}^+\big)^{\pi_1(S)}\right),  \quad [\rho] \mapsto ([\rho],\PP L_\rho).\]
We denote by $\partial \textnormal{WL}(X) \coloneqq  \textnormal{WL}(X) \setminus X$ the set of length functions in the boundary of the Weyl chamber length compactification of $X$.

We have the following relation between the real spectrum and the Weyl chamber length compactification.

\begin{theor}[{\cite[Theorem 8.2]{BurgerIozziParreauPozzetti_RSCCharacterVarieties2}}]
\label{thm_ProjWLComp}
For any $G$-invariant,  closed,  semi-algebraic subset $\Hom_X \subseteq \Hom_\red(\pi_1(S),G)$ avoiding representations for which $L$ vanishes identically,  the map
\[ \widehat{\PP L} \from \rsp^\cl(X) \to  \textnormal{WL}(X), \quad [\rho] \mapsto \begin{cases} ([\rho], \PP L_\rho),  \textnormal{ if } [\rho] \in X, \\
(\infty,  \PP L_\rho),  \textnormal{ if } [\rho] \in \partial\rsp^\cl(X),
\end{cases}
 \]
is continuous and surjective.
\end{theor}

That the map is well-defined even on the closed points of the boundary of the real spectrum compactification follows from \cite[Theorem 1.2 and Proposition 5.21]{BurgerIozziParreauPozzetti_RSCCharacterVarieties2}.
By construction,  the map $\widehat{\PP L}$ is injective on $X$ (but not in general) and sends boundary points to boundary points.
We have the following accessibility result,  that follows from general properties of the real spectrum \cite[Section 3.2]{BurgerIozziParreauPozzetti_RSCCharacterVarieties2}.

\begin{corol}[{\cite[Corollary 8.4]{BurgerIozziParreauPozzetti_RSCCharacterVarieties2}}]
\label{corol_Accessibility}
For every $[L] \in \partial \textnormal{WL}(X)$ there exists a continuous locally semialgebraic path $\rho_t \from [0,\infty) \to \Hom_X$ such that, for every $\gamma \in \pi_1(S)$ we have
\[\lim_{t\to \infty} \frac{L_{\rho_t}(\gamma)}{t}= L(\gamma).\]
\end{corol}

With this result at hand we can finish the proof of \Cref{corol_PiecewiseSemiAlgPath}.

\begin{proof}[Proof of \Cref{corol_PiecewiseSemiAlgPath}]
Let $\Hom_X=\Hom_\Hit(\pi_1(S),\PSL(n,\R))$ be the closed $\PSL(n,\R)$-invariant semi-algebraic subset of $\Hom_\red(\pi_1(S),\PSL(n,\R))$ of Hitchin representations.
Since these are positively hyperbolic,  their length functions do not vanish identically.
The semi-algebraic set $X$ is homeomorphic to the Hitchin component $\Hit(S,n)$.

By assumption,  the length function $L_\rho$ does not vanish identically,  thus $(\rho,\F_\rho)$ represents a closed point in $\rsp(X)$,  see \Cref{thm_RSpHitchinComponent} and the above characterization of closed points in terms of not identically vanishing length functions.
By \Cref{thm_ProjWLComp},  it follows that $\PP L_\rho \in \partial \textnormal{WL}(X)$.
Thus \Cref{corol_Accessibility} implies that there exists a continuous locally semi-algebraic path $\rho_t \from [0,\infty) \to \Hom_\Hit(\pi_1(S),\PSL(n,\R))$ and $c>0$ such that for every $\gamma \in \pi_1(S)$ we have
\[\lim_{t\to \infty} \frac{L_{\rho_t}(\gamma)}{t}= c L_\rho(\gamma),\]
which is what we wanted to prove.
\end{proof}

\medskip

Let us now turn to the second application concerning dual spaces of geodesic currents.
These should be thought of as the generalization of an $\R$-tree associated to a measured lamination.
For a detailed introduction we refer to \cite{DeRosaMartinezGranado_DualSpacesGeodesicCurrents}.
Choose an auxiliary hyperbolic structure on $S$ and denote by $\tilde{S}$ the universal cover of $S$.

\begin{dfn}[{\cite[Definition 3.1]{DeRosaMartinezGranado_DualSpacesGeodesicCurrents}}]
A geodesic current $\mu$ induces a pseudo-distance on $\tilde{S}$ given by
\[d_\mu(x, y) = \frac{1}{2}(\mu(G[x, y)) + \mu(G(x, y]),\]
where $G[x, y)$ denotes the set of hyperbolic geodesics of $\tilde{S}$ transverse to the geodesic segment $[x,y)$,  for all $x, y \in \tilde{S}$.
The \emph{dual space} of $\mu$, denoted by $X_\mu$, is defined as the quotient metric space of $\tilde{S}$,  where two points are identified if their pseudo-distance is equal to $0$.
\end{dfn}

The set of all dual spaces of geodesic currents will be denoted by $\mathcal{D}(S)$ and is equipped with the equivariant Gromov--Hausdorff topology, first introduced and studied by Paulin \cite{Paulin_TopGromovEquivStructuresHypArbresReels}.
This is a variation of the Gromov--Hausdorff topology that takes the action of a group into account.
Recall that $\mathscr{C}(S)$ denotes the space of geodesic currents equipped with the weak$^\star$-topology.

\begin{theor}[{\cite[Theorem 10.3]{DeRosaMartinezGranado_DualSpacesGeodesicCurrents}}]
The map 
\[\Psi \from \mathscr{C}(S) \to \mathcal{D}(X), \quad \mu \mapsto X_\mu\]
is continuous.
\end{theor}

\Cref{corol_DualSpace} is then a direct consequence of  \Cref{corol_PiecewiseSemiAlgPath} and this theorem.

\begin{proof}[Proof of \Cref{corol_DualSpace}]
From \Cref{corol_PiecewiseSemiAlgPath},  we already know of the existence of the sequence of real representations $\rho_t \from \pi_1(S) \to \PSL(3,\R)$,  whose length functions converge to the $c$-multiple of the ones of $\rho$.
Thus the sequence of geodesic currents $\tfrac{1}{t}\mu_{\rho_t}$ converges to $c \mu_\rho$,  since geodesic currents are determined by their length functions \cite[Th\'eor\`eme 2]{Otal_SpectreMarqueLongSurfacesCourNeg} and the intersection form is continuous.
By continuity of the map $\Psi$ from the above theorem,  we have that $\Psi(\tfrac{1}{t}\mu_{\rho_t})$ converges to $\Psi(c \mu_\rho)$ in the equivariant Gromov--Hausdorff topology.
Now \cite[Proposition 5.13]{DeRosaMartinezGranado_DualSpacesGeodesicCurrents} implies that for all $t>0$ the dual metric space $X_{\mu_{\rho_t}/t}$ is isometric to $(\Omega_{\rho_t},\tfrac{1}{t} d_{\Omega_{\rho_t}})$,  where $\Omega_{\rho_t}$ is the properly convex subset of the real projective space invariant under $\rho_t$,  endowed with its Hilbert metric $d_{\Omega_{\rho_t}}$.
The dual space of $c \mu_\rho$ is isometric to the dual space of $\mu_\rho$,  where we scale the distance by $c$.
This finishes the proof.
\end{proof}

\bibliographystyle{alpha}
\bibliography{bibl}

\end{document}